\documentclass[10pt,a4paper]{amsart}
\usepackage{verbatim}
\usepackage[dvips]{color}
\usepackage{amssymb}
\usepackage{mdframed}
\usepackage[active]{srcltx}
\usepackage[breakable]{tcolorbox}
\tcbuselibrary{many}

\usepackage[toc]{appendix}
\usepackage[T1]{fontenc}
\DeclareMathOperator{\argmin}{argmin}
\newcommand{\tail}{\textnormal{\texttt{Tail}}}
\usepackage{graphicx}
\usepackage{enumerate}
\usepackage{amsmath,amsfonts,amssymb}
\usepackage{color}
\usepackage{hyperref}
\def\loc{\operatorname{loc}}
\usepackage{cite}
\usepackage{latexsym }
\definecolor{citation}{rgb}{0.11,0.67,0.84}
\definecolor{formula}{rgb}{0.1,0.2,0.6}
\definecolor{url}{rgb}{0.11,0.67,0.84}
\usepackage{pgf,tikz}
\usepackage{mathrsfs}
\usepackage{fancyhdr}
\usepackage{dutchcal}

\newcommand{\medint}{-\kern -,375cm\int}

\newcommand{\medintinrigo}{-\kern -,315cm\int}
\makeatletter
\newcommand{\linethrough}{\mathpalette\@thickbar}
\newcommand{\@thickbar}[2]{{#1\mkern0mu\vbox{
    \sbox\z@{$#1#2\mkern-0.5mu$}%
    \dimen@=\dimexpr\ht\tw@-\ht\z@+2\p@\relax 
    \hrule\@height0.5\p@ 
    \vskip\dimen@
    \box\z@}}
}
\makeatother

\newcommand{\mathstrike}[1]{\ensuremath{\linethrough{#1}}}

\newcommand{\nra}[1]{\mathstrike{\lVert} #1 \rVert}
\newcommand{\snra}[1]{\mathstrike{[} #1 ]}

\newtheorem{theorem}{Theorem}[section]
\newtheorem{lemma}[theorem]{Lemma}
\newtheorem{proposition}[theorem]{Proposition}

\newtheorem{definition}[theorem]{Definition}
\newtheorem{remark}[theorem]{Remark}
\numberwithin{equation}{section}

\usepackage{dsfont}

\newcommand{\reqnomode}{\tagsleft@false}

\vfuzz5pt 
\vbadness10000
\hfuzz5pt 


\textwidth = 16.2 cm
\textheight = 23.6 cm 
\oddsidemargin = 0.2cm
\evensidemargin = 0.2cm 
\topmargin = 1mm
\headheight = 0.2 cm
\headsep = 0.7 cm

%
%
\usepackage{hyperref}
\hypersetup{hypertexnames=false}

\def\mfh{\mathfrak{h}}

\newcommand\ttB{\textnormal{\texttt{B}}}
\newcommand{\rad}{\textnormal{\texttt{rad}}}

\def\dxy{\,{\rm d}x{\rm d}y}
\def\dyx{\,{\rm d}y{\rm d}x}
\def\dzx{\,{\rm d}z{\rm d}x}
\def\dxyt{\,{\rm d}\ti{x}{\rm d}\ti{y}}
\def\dyxt{\,{\rm d}\ti{y}{\rm d}\ti{x}}
\def\dx{\,{\rm d}x}
\def\MMM{\textnormal{\texttt{M}}}
\def\cchh{c_{\textnormal{\texttt{h}}}}
\def\dz{\,{\rm d}z}

\def\dlam{\,{\rm d}\lambda}

\def\BBB{\textnormal{B}}

\def\dy{\,{\rm d}y}
 
\def \d{\,{\rm d}}
\def \diver{\,{\rm div}}
\def\dist{\,{\rm dist}}

\def\aaat{\textnormal{\texttt{a}}}
\def\bbt{\textnormal{\texttt{b}}}

\def\diam{\,{\rm diam}}

\def\aa{{\bf a}}
\def\aat{\ti{{\bf a}}}

\allowdisplaybreaks
\makeatletter
\DeclareRobustCommand*{\bfseries}{%
  \not@math@alphabet\bfseries\mathbf
  \fontseries\bfdefault\selectfont
  \boldmath
}

\DeclareMathOperator*{\osc}{osc}

\makeatother

\newlength{\defbaselineskip} 
\setlength{\defbaselineskip}{\baselineskip}

\newcommand{\mint}{\mathop{\int\hskip -1,05em -\, \!\!\!}\nolimits}


\def \diver{\,{\rm div}}

\def\er{\mathbb R}
\def\en{\mathbb N}

\newcommand{\ddim}{\textnormal{dim}_{\mathcal H}}

\newcommand{\ppsi}{\sigma^{2s}\nra{f}_{L^\chi(B_{\sigma})}}
\newcommand{\pplam}{\lambda^{2s}\nra{f}_{L^\chi(B_{\lambda})}}
\newcommand{\ppsix}{\sigma^{2s}\nra{f}_{L^\chi(B_{\sigma}(x))}}

\newcommand{\ppsixo}{\sigma^{2s}\nra{f}_{L^\chi(B_{\sigma}(x_{0}))}}
\newcommand{\pprhoo}{\rr^{2s}\nra{f}_{L^\chi(B_{\rr}(x_{0}))}}
\newcommand{\pptt}{t^{2s}\nra{f}_{L^\chi(B_{t})}}
\newcommand{\pptta}{\lambda^{2s}\nra{f}_{L^1(B_{\lambda})}}
\newcommand{\pptto}{t^{2s}\nra{f}_{L^\chi(B_{t}(x_{0}))}}
\newcommand{\ppttx}{t^{2s}\nra{f}_{L^\chi(B_{t}(x))}}

\newcommand{\ppjj}{\rr_j^{2s}\nra{f}_{L^\chi(B_{j})}}
\newcommand{\eps}{\varepsilon}
\newcommand{\ti}[1]{\tilde{#1}}
\newcommand{\epsb}[1]{\varepsilon_{\textnormal{\texttt{#1}}}}
\newcommand{\rhob}[1]{\rr_{\textnormal{\texttt{#1}}}}

\newcommand{\data}{\textnormal{\texttt{data}}}

\newcommand{\dd}{\mathrm{d}}

\newcommand{\rrr}{\textnormal{\texttt{r}}}

\newcommand{\rr}{\varrho}
\newcommand{\snr}[1]{\lvert #1\rvert}
\newcommand{\nr}[1]{\lVert #1 \rVert}
\newcommand{\rif}[1]{(\ref{#1})}
\newcommand{\tx}[1]{\textnormal{\texttt{#1}}}
\newcommand{\stackleq}[1]{\stackrel{\rif{#1}}{ \leq}}

\def\loc{\operatorname{loc}}

\def\eqn#1$$#2$${\begin{equation}\label#1#2\end{equation}}

\def\Xint#1{\mathchoice
   {\XXint\displaystyle\textstyle{#1}}%
   {\XXint\textstyle\scriptstyle{#1}}%
   {\XXint\scriptstyle\scriptscriptstyle{#1}}%
   {\XXint\scriptscriptstyle\scriptscriptstyle{#1}}%
   \!\int}
\def\XXint#1#2#3{{\setbox0=\hbox{$#1{#2#3}{\int}$}
     \vcenter{\hbox{$#2#3$}}\kern-.5\wd0}}

\def\dashint{\Xint-}

\delimitershortfall=-0.1pt

\makeatletter
\@ifundefined{subjclassname@2020}{%
  \@namedef{subjclassname@2020}{\textup{2020} Mathematics Subject Classification}%
}{}
\makeatother

\title{Partial regularity in nonlocal systems I}

\author[De Filippis]{Cristiana De Filippis}  \address{Cristiana De Filippis\\Dipartimento SMFI, Universit\`a di Parma\\ Parco Area delle Scienze 53/A, 43124 Parma, Italy} \email{\url{cristiana.defilippis@unipr.it}}
\author[Mingione]{Giuseppe Mingione}  \address{Giuseppe Mingione\\Dipartimento SMFI, Universit\`a di Parma, Parco Area delle Scienze 53/a, Campus, 43124 Parma, Italy} \email{\url{giuseppe.mingione@unipr.it}}
\author[Nowak]{Simon Nowak}  \address{Simon Nowak\\Fakult\"at f\"ur Mathematik, Universit\"at Bielefeld\\Postfach 100131, 33501 Bielefeld, Germany} \email{\url{simon.nowak@uni-bielefeld.de}}

\begin{document}

to appear in Advances in Mathematics
  \vspace{8mm}

\subjclass[2020]{35B65, 35R11, 31C45} 

\keywords{Nonlocal elliptic systems, Partial regularity, Nonlinear potentials}{\vspace{1mm}}

\begin{abstract}
Solutions to nonlinear vectorial integro-differential equations are regular outside a negligible closed subset whose Hausdorff dimension can be explicitly bounded from above. This subset can be characterized using quantitative, universal energy thresholds for nonlocal excess functionals. The analysis is carried out via the use of nonlinear potentials and allows to derive fine properties of solutions under sharp assumptions on data and kernel coefficients. 
 \end{abstract}

\maketitle

\vspace{-3mm}
 
 \centerline{To Enrico Giusti, in memoriam}

  \vspace{8mm}

\setcounter{tocdepth}{1}
{\small \tableofcontents}

\section{Introduction}
\subsection{Partial regularity: from local to nonlocal}\label{bebe} In this paper we propose a systematic study of partial regularity of solutions to general nonlocal and nonlinear elliptic systems via nonlinear potential theoretic methods. The results cover a wide range of situations, spanning from the most basic ones, where the ingredients are more regular, to sharp fine regularity of solutions, where nonlinear potentials play a crucial role and coefficients are potentially discontinuous. The local story is by now classical. Whereas in the scalar case solutions to linear elliptic equations with measurable coefficients are locally H\"older continuous by De Giorgi-Nash-Moser theory, this is not the case for systems, starting by classical counterexamples of De Giorgi \cite{dgce}
 and Maz'ya \cite{mazya}. The next step is then to look at partial regularity, i.e., determining regularity of solutions outside negligible closed subsets. This viewpoint, linked to the so-called $\eps$-regularity theorems and first developed in the setting of minimal surfaces \cite{deg}, is today the blueprint for the vast majority of regularity results available in the vectorial setting. The first partial regularity results go back to the fundamental works of Giusti \& Miranda  
 \cite{giumi} and Morrey \cite{morrey}, who considered systems of the type
 \eqn{localmodel}
 $$
 -\diver  \left(A(x, u)Du\right)=- D_i (A_{\alpha\beta}^{ij}(x,u) D_j u^\beta)=0 \qquad \mbox{in $\Omega$}
 $$ 
with $i, j\in \{1, \ldots, n\}, \alpha, \beta \in \{1, \ldots, N\}$. Here $\Omega\subset \er^n$ is an open set and $n\geq 2, N\geq 1$ will be two fixed integers (representing the ambient and the target dimensions, respectively).  
In \rif{localmodel} the matrix $A(\cdot)$ has uniformly continuous entries in $\Omega \times \er^N$ and satisfies 
 $$
 \Lambda^{-1}|\xi|^2\leq \langle  A(x, v)\xi,  \xi\rangle\,, \, \qquad |A(x, v)| \leq  \Lambda 
 $$
 for all $x\in \Omega$, $v\in \er^N$, $\xi \in \er^{N\times n}$, where $ \Lambda\geq 1$. In this case it turns out that solutions are locally H\"older continuous outside a closed subset whose Hausdorff dimension is  smaller than $n-2$:
 \eqn{ggresult}
 $$
 u \in C^{0, \beta}_{\loc}(\Omega_u;\er^N) \, \ \  \mbox{ for every $\beta <1$}\,, \qquad \ddim(\Omega\setminus \Omega_u)< n-2\,,
 $$
where $\Omega_u\subset \Omega$ is an open subset called the {\em regular set} of $u$ and its complement $\Omega\setminus \Omega_u$ is the {\em singular set} of $u$. Accordingly, a regular point of $u$ is a point $x_0\in \Omega$ where $u$ is regular - according to the degree of regularity of the context - in a neighbourhood of $x_0$. 
Singularities of solutions might occur even when the entries of $A(\cdot)$ are smooth functions. Indeed, inspired by De Giorgi's famous counterexample \cite{dgce} for  systems of the type $ \diver \, (A(x)Du)=0$, with measurable coefficients $A(\cdot)$, again Giusti \& Miranda \cite{gmce}, \cite[Section 9.1.2]{giagreen} provided a counterexample to regularity of solutions to systems of the specific type $\diver \, (A(u)Du)=0$ with $A(\cdot)$ being this time analytic. There is today a large literature devoted to partial regularity in the vectorial setting, especially in geometric problems from the Calculus of Variations. For this we refer to the classical treatises \cite{giaorange, giagreen} and to the overview paper \cite{min06}. Proof of results as in \rif{ggresult} are in a way or in another achieved by local linearization procedures. Roughly speaking, one starts on a ball $B_{\rr}\equiv B_{\rr}(x_{0})$ where a quantity called excess is small, i.e., 
  \eqn{eccessino}
$$
\nra{u-(u)_{B_{\rr}}}_{L^{2}(B_{\rr})} < \epsb{b}
$$  
holds for a universal number $\epsb{b}>0$, that is, depending only on the ellipticity and regularity properties of the matrix $A(\cdot)$. The one in \rif{eccessino} is a functional bound to measure the oscillations of $u$ by its mean square deviation from the average on $B_{\rr}$. Then one locally compares $u$ to solutions $v$ to linear systems with constant coefficients of the type $\diver\, (A(x_{0}, (u)_{B_{\rr}})Dv)=0$. Thanks to \rif{eccessino}, from the regularity of $v$ one by comparison obtains a suitable decay estimate on the excess (``improvement of flatness") at all scales, i.e., 
  \eqn{eccessino2}
$$
\nra{u-(u)_{B_{\sigma}}}_{L^{2}(B_{\sigma})}\lesssim \sigma^{\beta}\,, \qquad \forall \ \sigma \leq \rr\,.
$$
This eventually leads to local $C^{0, \beta}$-regularity of $u$ in a neighbourhood of $x_{0}$ as \rif{eccessino} is clearly an open condition. The initial requirement in \rif{eccessino} can be then verified only up to a negligible closed subset and this leads to partial regularity. The final outcome comes with an explicit characterization of the regular set, that is 
\eqn{chara}
$$
\Omega_{u}= \{x \in \Omega\, \colon \, \lim_{\sigma\to 0 } \nra{u-(u)_{B_{\sigma}(x)}}_{L^{2}(B_{\sigma}(x))} =0\}\,.
$$
Note that, given a general Sobolev function $u$, the set in \rif{chara} is not open, but it is part of the partial regularity proof to show that it actually is when $u$ solves \rif{localmodel}. 
For the notation used in \rif{eccessino}-\rif{chara} and in the following, we recommend the reader to look at Section \ref{preliminari}. 
 
In this paper, and in its follow-up \cite{follow}, we establish some results and methods concerning partial regularity of solutions to certain general nonlocal elliptic systems. These topics were already treated in certain relevant, specific situations, see for instance the recent, remarkable works of Mazowiecka, Miśkiewicz \& Schikorra \cite{mazo, mazo2} for an overview and references, and the comments in Section \ref{furthersec} below. Here we propose a general approach with methods designed to apply in a variety of nonlinear settings. We remark that our methods are intrinsically nonlocal and do not make use of any extension operator transforming nonlocal problems into degenerate local ones. Among the other things, we shall deliver a full nonlocal analog of the classical results of Giusti \& Miranda and Morrey by considering nonlocal elliptic systems of the type 
	\eqn{nonlocaleqn}
	$$
	-\mathcal{L}_{a}u=f\qquad \mbox{in} \ \ \Omega \subset \mathbb{R}^n
	$$
	where, for some elliptic and bounded coefficient $N\times N$ matrix $a(\cdot)$ and $0<s<1$, the (vector-valued) nonlocal operator $-\mathcal{L}_{a}$ is defined by 
$$
		\langle - \mathcal{L}_{a} u, \varphi \rangle =\int_{\mathbb{R}^n} \int_{\mathbb{R}^n} \langle a(x,y,u(x),u(y))(u(x)-u(y)),\varphi(x)-\varphi(y) \rangle \frac{\dxy}{|x-y|^{n+2s}}\,, 
$$
	for every $\varphi\in C^{\infty}(\er^n;\er^N)$ with compact support in $\Omega$.  
For the rest of the paper $\Omega \subset \er^n$ will always denote a bounded open subset\footnote{Since all the results in this paper are local we can restrict to bounded domains with no loss of generality.}, with $n\geq 2$, while $a\colon \mathbb{R}^{2n}  \times \er^{2N}\to \mathbb{R}^{N\times N}$, $N\geq 1$, is a bounded, Carath\'eodory-regular matrix\footnote{This means, as usual, that $(x,y)\mapsto a(x,y, v,w)$ is measurable for every choice of $v,w\in \er^N$ and that there exists a negligible (with respect to the Lebesgue measure in $\er^{2n}$) set $\mathcal N \subset \er^{2n}$ such that $(v,w) \mapsto a(x,y,v,w)$ is continuous for every choice $(x,y)\not \in \mathcal N$. This guarantees that the composition $a(x,y,v(x),w(y))$ is measurable as long as $v,w$ are measurable maps.}.
The notion of solution to \rif{nonlocaleqn} involves the so-called $\tail$ space \cite{DKP, kokupa, KMS1} 
$$
L^{1}_{2s} := \left\{w\in L^{1}_{\loc}(\er^n;\mathbb{R}^N) \, \colon \, \int_{\er^n} \frac{\snr{w(x)}}{1+\snr{x}^{n+2s}} \dx< \infty\right\}\footnote{It is not difficult to see that $W^{s,2}(\er^n;\er^N)\subset L^2(\er^n;\er^N)\subset L^{1}_{2s}$.}\,.
$$ 
\begin{definition} \label{def:weaksol}
Under the assumption \eqref{bs.1}$_2$ below, with $f \in L^{2_*}(\Omega)$ and $2_{*}:=\frac{2n}{n+2s}$\footnote{In this paper we are also denoting $2^*=2n/(n-2s)$ the fractional Sobolev embedding exponent. Note that $1/2_{*}+1/2^{*}=1$, i.e., $2_{*}$ and $2^{*}$ are conjugate exponents.}, a map $u \in W^{s,2}_{\loc}(\Omega;\mathbb{R}^N) \cap L^{1}_{2s}$ is a weak solution of \eqref{nonlocaleqn} when
\eqn{eqweaksol}
$$
			\int_{\mathbb{R}^n} \int_{\mathbb{R}^n} \langle a(x,y,u(x),u(y))(u(x)-u(y)),\varphi(x)-\varphi(y) \rangle \frac{\dxy}{|x-y|^{n+2s}} = \int_{\Omega} \langle f, \varphi \rangle\dx
$$
		holds for every $\varphi \in W^{s,2}(\er^n;\er^N)$ with compact support in $\Omega$\footnote{Note that \rif{eqweaksol} in particular holds whenever $\varphi$ is of the form $\varphi=\eta v$ with $v \in W^{s,2}_{\loc}(\Omega;\er^{N})$ and $\eta \in C^{\infty}$ has compact support in $\Omega$.}.
	\end{definition}
When considering a solution to $-\mathcal{L}_{a}u=f$, we shall usually refer to Definition \ref{def:weaksol}, and we shall recall this fact from time to time, to avoid ambiguities in the case when solutions to certain nonlocal Dirichlet problems will be considered. It is at this stage worth noting that the energy type condition $u \in W^{s,2}_{\loc}(\Omega;\mathbb{R}^N) \cap L^{1}_{2s}$ makes the duality coupling in \rif{eqweaksol} finite. We now come to the precise hypotheses for \rif{eqweaksol}. We assume that $a(\cdot)$ satisfies the following:
\begin{itemize}
\item There exists $\Lambda\ge 1$ such that
\eqn{bs.1}
$$
\begin{cases}
\Lambda^{-1}\snr{\xi}^{2} \leq \langle a(x,y,v,w)\xi,\xi\rangle \\[4pt]
\qquad \snr{a(x,y,v,w)}\le \Lambda,
\end{cases}
$$
hold for all $x,y\in \mathbb{R}^n$, $v,w,\xi\in \er^N$.
\item Symmetry with respect to all entries, i.e., 
\eqn{bs.2}
$$
a(x,y,v,w)=a(y,x,w,v)
$$
holds for all $x,y\in \mathbb{R}^n$, $v,w\in \er^N$.
\item There exists a modulus of continuity $\omega\colon [0, \infty)\to [0,1]$, i.e. a continuous, concave and non-decreasing function, with $\omega(0)=0$, such that
\eqn{bs.4}
$$
\snr{a(x,y,v_{1},w_{1})-a(x,y,v_{2},w_{2})}\le \Lambda \omega(\snr{v_{1}-v_{2}}+\snr{w_{1}-w_{2}}),
$$
holds for every choice of $x, y \in \er^n$ and of $v_1, v_2, w_1, w_2 \in \er^N$.
\end{itemize}
Assumptions \rif{bs.1}-\rif{bs.2} essentially reproduce those that are typically considered in classical partial regularity results for local systems as in \rif{localmodel}. As for the right-hand side $f$, that we always extend to $\er^n$ letting $f\equiv 0_{\er^N}$ outside $\Omega$, from now on, the initial assumption we make is 
\eqn{bs.5}
$$
f\in L^{\chi}(\er^n;\er^N)\equiv L^{\chi}\qquad \mbox{where} \  \ 2_{*}=\frac{2n}{n+2s}< \chi < \frac{n}{2s} 
\footnote{Of course the real assumption here is that $\chi > 2_*$. The  restriction 
$\chi<n/(2s)$ 
is only used to choose a convenient subcritical working exponent for the potential-theoretic and Marcinkiewicz estimates, for instance when considering the potentials $\mathbf{I}^{f}_{2s,\chi}$ and $\mathbf{I}^{f}_{2s-1, \chi}$ and applying the criterion in \rif{qualo2}. Such a restriction can  be made without loss of generality as $f\equiv 0$ outside $\Omega$ and $\Omega$ is bounded. }\,.
$$
Summarizing, for the rest of the paper $s\in (0,1)$ will always denote the fixed number linked to the differentiation degree of the operator in \rif{nonlocaleqn} and $\chi$ the number appearing in \rif{bs.5}. In order to shorten the notation concerning the dependence on the various constants, we shall abbreviate as follows
\eqn{idati}
$$
\data := (n,N,s,\Lambda, \chi)
$$
i.e., the set of parameters defining the ellipticity properties of the operator in $-\mathcal{L}_{a}$ together with the basic integrability exponent of $f$. Very often, for  readability, a constant depending on a strict subset of the parameters in \rif{idati} will be still indicated as to depend on the whole set $\data$. This for instance will happen in those cases where $\nr{f}_{L^\chi}$ does not play a role or when $f\equiv 0$, when the dependence on $\data$ will just indicate a dependence on $n,N,s,\Lambda$.  

We shall derive a number of partial regularity results for solutions to \rif{nonlocaleqn}, under various and optimal assumptions on the matrix coefficients $a(\cdot)$. Among  our results we single out the following one, that can be recovered from Theorem \ref{ureg5} below, and which is the exact analogue of the classical Giusti \& Miranda's result \cite{giumi} reported in \rif{ggresult}, including the characterization of the singular set in \eqref{chara}. 
\begin{theorem}[Nonlocal partial regularity - toy model case]\label{ureg0}
Under assumptions \eqref{bs.1}-\eqref{bs.2}, let $u$ be a weak solution to \eqref{nonlocaleqn} and let $\Omega_u$ be defined in \eqref{chara} (as in the local case). If $a(\cdot)$ is uniformly continuous\footnote{Here by uniform continuity we mean the existence of a modulus of continuity $\tilde{\omega}(\cdot)$ as in \rif{bs.4} and such that 
$$
\snr{a(x_1,y_1,v_{1},w_{1})-a(x_2,y_2,v_{2},w_{2})}\le \Lambda \tilde{ \omega}(\snr{x_{1}-x_{2}}+\snr{y_{1}-y_{2}}+\snr{v_{1}-v_{2}}+\snr{w_{1}-w_{2}}),
$$
for every choice of $x_1, x_2, y_1,y_2 \in \er^n$ and $v_1, v_2, w_1,w_2 \in \er^N$.} in $\er^{2n}\times \er^{2N}$ and $f\in L^{\infty}$, then $\Omega_u$ is open and there exists a positive number $\gamma \equiv \gamma (\data)\leq n-2s$ such that 
\eqn{partclassic}
 $$
 \begin{cases}
 \, u \in C^{0, \beta}_{\loc}(\Omega_u;\er^N) \,  \mbox{ for every $\beta <\min\{1, 2s\}$}\\
\,  \ddim(\Omega\setminus \Omega_u)\leq  n-2s-\gamma\,.
 \end{cases}
 $$
\end{theorem}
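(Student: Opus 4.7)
The plan is to follow the classical $\varepsilon$-regularity paradigm suggested in the introduction, adapted to the nonlocal setting, using only translation- and value-invariant comparison problems together with a Hausdorff dimension argument for the complement of the smallness set.

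First, I would fix $x_0 \in \Omega$ and $\rr>0$ with $B_{\rr}(x_0)\Subset \Omega$, and introduce a nonlocal excess functional of Campanato type combining an interior $L^2$-oscillation with a tail of the corresponding deviation, schematically
\[
\mathcal{E}(x_0,\rr) \ := \ \nra{u-(u)_{B_{\rr}(x_0)}}_{L^{2}(B_{\rr}(x_0))}+\rr^{2s}\nr{f}_{L^{\chi}(B_{\rr}(x_0))}.
\]
The core technical step is an $\eps$-regularity theorem: there exists $\epsb{b}=\epsb{b}(\data,\omega)>0$ such that if $\mathcal{E}(x_0,\rr)<\epsb{b}$ for some admissible $(x_0,\rr)$, then for every $\beta<\min\{1,2s\}$ the solution $u$ is in $C^{0,\beta}(B_{\rr/2}(x_0);\er^N)$. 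Once this is in place, the regular set
\[
\Omega_u:=\Bigl\{x_0\in \Omega \, : \, \mathcal{E}(x_0,\rr)<\epsb{b}\ \text{ for some admissible }\rr\Bigr\}
\]
is manifestly open, so only the Hausdorff dimension bound remains to be proven.

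The proof of $\eps$-regularity is carried out by harmonic-type comparison. At a ball $B_{\rr}(x_0)$ with small excess I would freeze the coefficient matrix at $\bar a := a(x_0,x_0,(u)_{B_{\rr}(x_0)},(u)_{B_{\rr}(x_0)})$, which is a constant matrix still satisfying \rif{bs.1}-\rif{bs.2}, and introduce the $\bar a$-harmonic comparison map $v$ solving $-\mathcal{L}_{\bar a}v=0$ in $B_{\rr}(x_0)$ with $v\equiv u$ outside. For such translation-invariant, constant-coefficient nonlocal systems the available Schauder-type theory produces, for every $\alpha<\min\{1,2s\}$, a decay estimate of Campanato type on $\nra{v-(v)_{B_{\sigma}(x_0)}}_{L^{2}(B_{\sigma}(x_0))}$ in $\sigma/\rr$. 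A comparison estimate for $u-v$, exploiting the uniform continuity of $a(\cdot)$ via its modulus $\omega$ together with the smallness of the excess, shows that $u$ is close enough to $v$ in $W^{s,2}\cap L^1_{2s}$ to transfer the decay from $v$ to $u$: combined with the $L^\chi$-contribution of $f$, this yields an almost-geometric improvement of flatness
\[
\mathcal{E}(x_0,\sigma)\le c\Bigl[(\sigma/\rr)^{\beta}+\eta(\epsb{b})\Bigr]\mathcal{E}(x_0,\rr)+c\,\sigma^{2s-n/\chi}\nr{f}_{L^{\infty}},
\]
where $\eta(\cdot)\to 0$ at $0$. A standard iteration, tuning $\sigma/\rr$ and $\epsb{b}$, then gives a Morrey-Campanato characterization of $C^{0,\beta}$ on a neighbourhood of $x_0$ for every $\beta<\min\{1,2s\}$.

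Finally, for the dimension bound the idea is to show that $\Omega\setminus\Omega_u$ is contained in the set where the $W^{s,2}$-energy density fails to decay, i.e.\ in
\[
\Sigma := \Bigl\{x_0\in\Omega \, : \, \limsup_{\rr\to 0} \rr^{-(n-2s)}\int_{B_{\rr}(x_0)}\int_{B_{\rr}(x_0)}\tfrac{\snr{u(x)-u(y)}^{2}}{\snr{x-y}^{n+2s}}\dxy>0\Bigr\},
\]
together with analogous conditions for the tail term and the $L^{\chi}$-energy of $f$. A Gehring-type self-improving estimate for solutions of \rif{nonlocaleqn}, together with fractional Poincaré inequalities, bumps the exponent $n-2s$ to $n-2s-\delta$ for a universal $\delta>0$, and a standard measure-density argument à la Giusti then gives $\ddim(\Sigma)\le n-2s-\delta<n-2s$, as required. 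The main obstacle will be the systematic control of the nonlocal tail along the iteration: freezing the coefficients does not decouple the global behaviour of $u$ from the local one, so the excess must be chosen and manipulated carefully so that the tail contribution decays at a rate compatible with the Campanato decay of the local oscillation, and the dependence of $a(\cdot)$ on both arguments $u(x),u(y)$ must be handled simultaneously in the comparison estimate.
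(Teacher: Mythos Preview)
Your overall strategy---$\varepsilon$-regularity via comparison with a constant-coefficient problem, Campanato iteration, and a self-improving higher-integrability argument for the singular set---matches the paper's route. The singular-set part is essentially what the paper does (higher $W^{t,p}$ regularity from a Gehring-type lemma, then a density argument).

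There is, however, a genuine gap in the core iteration. Your excess $\mathcal{E}(x_0,\rr)$ omits the nonlocal tail $\tail(u-(u)_{B_\rr};B_\rr)$, and this is not a detail you can postpone: the decay estimate for the frozen-coefficient comparison map $v$ necessarily involves $\tail(v-(v)_{B_\rr};B_\rr)=\tail(u-(u)_{B_\rr};B_\rr)$, since $v\equiv u$ outside $B_\rr$. When you pass to a smaller scale $\sigma$, the tail at scale $\sigma$ picks up contributions from $B_\rr\setminus B_\sigma$ \emph{and} from $\er^n\setminus B_\rr$; the latter is exactly the tail at scale $\rr$ and does not decay unless it is already part of the iterated quantity. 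Your stated improvement-of-flatness inequality therefore cannot hold as written: the right-hand side must contain the tail, and then the left-hand side must as well, or the iteration does not close. You flag this as ``the main obstacle'' at the end, but the resolution---building the tail into the excess from the start and proving that the \emph{full} quantity decays---is precisely the new idea needed, not a technicality. The paper does exactly this: its excess is
\[
\tx{E}_u(x_0,\rr)=\sqrt{\nra{u-(u)_{B_\rr}}_{L^2(B_\rr)}^2+\tail(u-(u)_{B_\rr};B_\rr)^2},
\]
and the one-step decay (Proposition~\ref{psp}) and iteration (Proposition~\ref{cor.1}) are carried out for this quantity. The tail is controlled along the iteration via Lemma~\ref{dedicata}, which expresses $\tail(\cdot;B_\sigma)$ in terms of $\tail(\cdot;B_\rr)$ plus an integral of local $L^1$-oscillations over intermediate scales; these are in turn bounded using the decay already obtained.

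A secondary methodological difference: the paper does not use a direct energy comparison $u$ vs.\ $v$ at this stage but an $s$-harmonic approximation lemma (Lemma~\ref{shar}), proved via a H\"older truncation argument and higher differentiability, after a blow-up and a localization (cut-off) step that turns the tail into a bounded right-hand side. A direct comparison as you propose is also viable (the paper uses one later, for gradient estimates), but either way the tail must be carried through the excess.
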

\section{Results} 
Theorem \ref{ureg0} is a sample of the regularity results obtained in this paper. In fact, in the process, we shall update the standard partial regularity approach developed in classical papers and treatises like \cite{giaorange, giagreen, giusti}
 with more recent advances on fine regularity of solutions to local problems, as for instance detailed in \cite{by, dqc, ds, kumi}. Specifically, we shall give a number of results bound to sharply describe the kind of partial regularity of solutions one obtains in conjunction with precise assumptions on coefficients and data $f$. This will be done via the use of nonlinear potential theoretic methods. The outcome will be a rather complete picture sharply describing how regularity of external ingredients affects that of solutions, of course modulo a singular set, which is unavoidable in the vectorial setting. This will be in full analogy both with the results available in the local case, as described for instance in \cite[Corollary 1]{kumig} and \cite{KMSvec}, see also \cite{kmjfa}, and in the nonlocal one \cite{bls, KMS1, sn, sn1}. More comments on such aspects will be given along the way. 

As mentioned, the techniques we use incorporate information from recent advances in Nonlinear Potential Theory \cite{dqc, kumig, kumi,KMSvec}.  For this we need Havin-Mazya-Wolff potentials \cite{HM}, that in this setting will be represented by the Riesz type potential
\eqn{diadico}
$$
\mathbf{I}^{f}_{2s,\chi}(x,\sigma) := \int_{0}^{\sigma} \lambda^{2s}\nra{f}_{L^\chi(B_{\lambda}(x))}  \, \frac{\dlam}{\lambda} \gtrsim  \sum_{i\geq 1} \left(\frac{\sigma}{2^i}\right)^{2s}\nra{f}_{L^\chi(B_{\sigma/2^i}(x))}
$$
where $\chi$ is as in \rif{bs.5}. 
See \rif{shalldenote} for notation here. Note that, when formally taking $\chi=1$, the potential $\mathbf{I}^{f}_{2s,\chi}$ coincides with the classical (truncated) Riesz potential $\mathbf{I}^{f}_{2s}$ \cite{kumig} whose kernel is equivalent to the fundamental solution of the fractional Laplacean $-\Delta^{s} $. Moreover, as a simple consequence of H\"older inequality, we have 
\eqn{dueto}
$$
\mathbf{I}^{f}_{2s}(x,\sigma) =\mathbf{I}^{f}_{2s,1}(x,\sigma)= \int_{0}^{\sigma} \pptta \, \frac{\dlam}{\lambda} \leq \mathbf{I}^{f}_{2s,\chi}(x,\sigma) \,.
$$
Due to \rif{dueto}, the potential $\mathbf{I}^{f}_{2s,\chi}$ essentially shares the same scaling and mapping properties of the classical Riesz potential $\mathbf{I}^{f}_{2s}$ provided $f \in L^{\chi}$, see \cite{dmn, dmn2,  kumig}. Indeed, such potentials provide a precise description of the fine properties of solutions to nonlinear elliptic equations in a similar way the Riesz potentials with integer index describe the fine properties of usual harmonic functions. We refer to \cite{finnish, kumig} for an account of the main results and for the relevant literature and to \cite{dqc, dmn, kumig, kumi} for their use in regularity theory of nonlinear, potentially degenerate equations. By their use we can indeed reformulate in the present setting several of the fine properties criteria from classical linear and Nonlinear Potential Theory; see for instance Theorem \ref{ureg3} below.

We shall consider various types of regularity on coefficients $a(\cdot)$, i.e., the regularity of the partial map $(x,y)\mapsto a(x,y,\cdot)$. In particular, we shall use suitable analogs of the concept of BMO/VMO regularity of coefficients following for instance \cite[Section 1.1]{sn}. Compare what follows with the classical definitions of BMO/VMO-functions in \rif{bmodef}-\rif{vmodef} below. 
\begin{definition}\label{deltadef}
		Let $\delta, r_0>0$.
		\begin{itemize}
			\item We say that $a(\cdot)$ is $\delta$-vanishing in the ball $B_{r}(x_{0})\subset \Omega$, iff
			\eqn{gestire} $$ h_{a}(x_{0}, r):=\sup_{0<\sigma \leq r; v,w\in \er^N} \, \mint_{B_{\sigma}(x_{0})}\mint_{B_{\sigma}(x_{0})}\snr{a(x,y,v,w)-a_{B_{\sigma}(x_{0})}(v,w)}\dxy\leq \delta  \,,$$
			where  
$$a_{B_{\sigma}(x_{0})}(v,w):=\mint_{B_{\sigma}(x_{0})}\mint_{B_{\sigma}(x_{0})}a(x,y,v,w)\dxy\,.$$
\item We say that $a(\cdot)$ is $(\delta,r_{0})$-\textnormal{BMO} in $\Omega$, if $a(\cdot)$ is $\delta$-vanishing in every ball $B_r(x_{0})\subset \Omega$ with $r \leq r_{0}$. 
						\item We say that $a(\cdot)$ is \textnormal{VMO} in $\Omega$, if for every $\delta>0$ there exists $r_0\equiv r_0 (\delta)>0$ such that $a(\cdot)$ is $(\delta,r_{0})$-\textnormal{BMO} in $\Omega$. Equivalently, if
						$$\lim_{r\to 0}\, h_{a}(x_{0}, r)=0 \quad  \mbox{uniformly with respect to $x_{0}$, with $B_{r}(x_0)\subset \Omega$} \,.$$
		\end{itemize}
	\end{definition}
Observe that a uniformly continuous matrix $a(\cdot)$ in the sense used in Theorem \ref{ureg0} is automatically VMO-regular. Note also that if $a(\cdot)$ is $\delta$-vanishing in the ball $B_{r}(x_{0})\subset \Omega$, then $a(\cdot)$ is also $\delta$-vanishing in every ball $B_{\sigma}(x_{0})$ with $\sigma \leq r$. An object whose features we shall sistematically exploit is a suitable notion of excess functional. In the local case \cite{giaorange, giumi}, this is traditionally a functional of the type in \rif{eccessino}. 
In our case long-range interactions play a crucial role in the regularization process of solutions. In order to control them along the iteration processes we need to adopt the following extended notion of excess functional, already identified and used in \cite{KMS1}. For this we first need the notion of $\tail$. For $w\in L^{1}_{\loc}(\er^n;\er^N)$ the $\tail$ of $w$ with respect to the ball $B_{\rr}(x)\subset \er^n$ is defined \cite{BK, DKP, kokupa, KMS1, silve} as 
\eqn{lacoda}
$$
\tail(w;B_{\rr}(x)):= \rr^{2s}\int_{\er^n\setminus B_{\rr}(x)} \frac{\snr{w(y)}}{\snr{y-x}^{n+2s}} \dy \,.
$$
In this respect note that 
$
w \in L^{1}_{2s}$ implies that $\tail(w;B_{\rr}(x))$ is finite for every ball $B_{\rr}(x)$. 
\begin{definition}[Nonlocal Excess]\label{leccesso}
Given $w \in L^{2}(B_{\rr};\mathbb{R}^N)\cap L^{1}_{2s}$, where $B_{\rr}\equiv B_{\rr}(x_{0})\subset \er^n$ is a ball, the excess functional $ \tx{E}_{w}(x_{0},\rr)$ is defined by
\eqn{exc}
 $$
 \tx{E}_{w}(x_{0},\rr):= \sqrt{\nra{w-(w)_{B_{\rr}}}_{L^{2}(B_{\rr})}^2+\tail(w-(w)_{B_{\rr}};B_{\rr})^2}\,.
 $$
When $s>1/2$, let $\ell$ be an affine map; the (affine) excess functional $\tx{E}_{w}(\ell;x_{0},\rr)$ is instead defined by 
\eqn{excl}
$$
\tx{E}_{w}(\ell;x_{0},\rr):=\sqrt{\nra{w-\ell}_{L^{2}(B_{\rr})}^2+\tail(w-\ell;B_{\rr})^2} \,.
$$
\end{definition}
We remark that $w \in L^{2}(B_{\rr};\mathbb{R}^N)\cap L^{1}_{2s}$ guarantees that $ \tx{E}_{w}(x_{0},\rr)$ is always finite and that $\tx{E}_{w}(\ell;x_{0},\rr)$ is always finite when in addition we also require that $s>1/2$; for this see Section \ref{affinetail} and \rif{tritri}$_2$ below. We are now ready to describe our main results, that will be presented according to increasing degrees of regularity for solutions. They will be in perfect accordance with both the optimal regularity results known in the local case  \cite{kumig} and with those available in the scalar nonlocal case for nonlinear problems \cite{bls}. The  statements span from zero order oscillations integral estimates, Theorems \ref{ureg1}-\ref{ureg2}, to maximal gradient H\"older continuity, Theorem \ref{gradreg3}, passing through delicate borderline cases of partial gradient continuity as in Theorem \ref{gradreg2}. In particular, Theorem \ref{ureg5} below contains, under more general assumptions, the nonlocal version of Giusti \& Miranda's and Morrey's original results, that is Theorem \ref{ureg0} above. To aid the reader, we will separate the results into two categories. The first category addresses the oscillatory properties of the solution $u$ and are valid for any value $s \in (0,1)$ of the differentiability order of the operator $-\mathcal{L}_{a}$. The second category focuses on the oscillatory behavior of the gradient  $Du$, where we require $s >1/2$ in alignment with the nonlinear potential theory approach developed in this work. See also \cite{dnowak, KMS1, kns} for potential estimates in the scalar case. 
\subsection{Oscillations and potential estimates for $u$} The starting point of our results is concerned with  the weakest  regularity assertion on the oscillations of $u$, that is BMO-regularity. For this we recall that a map $w\in L^{1}(\mathcal A;\er^N)$ defined on an open subset $\mathcal A \subset \er^n$ is BMO-regular in $\mathcal A$, i.e., $w\in \textnormal{BMO} (\mathcal A;\er^N)$, iff 
\eqn{bmodef}
$$ 
\sup_{B\subset \mathcal A} \, \nra{w-(w)_{B}}_{L^1(B)} < \infty\,.
$$
In \rif{bmodef} $B$ denotes the generic ball contained in $\mathcal A$; the local variant $\textnormal{BMO}_{\loc}(\mathcal A;\er^N)$ is defined in the obvious fashion.  Accordingly, $w$ is VMO-regular in $\mathcal A$, i.e., $w\in \textnormal{VMO} (\mathcal A;\er^N)$, iff
\eqn{vmodef}
$$
\lim_{\sigma \to 0}\sup_{B\subset \mathcal A, \snr{B} \leq \sigma} \, \nra{w-(w)_{B}}_{L^1(B)} =0\,, 
$$
again with the local variant defined in the usual fashion. BMO and VMO regularity are classical concepts in modern analysis and they considerably impact both Harmonic Analysis and the analysis of partial differential equations. They were introduced in \cite{john} and \cite{sarason}, respectively. 
\begin{theorem}[Partial BMO regularity]\label{ureg1}
Under assumptions \eqref{bs.1}-\eqref{bs.5}, let $u$ be a weak solution to \eqref{nonlocaleqn}. There exist $\epsb{b}\equiv \epsb{b}(\data,\omega(\cdot)) \in (0,1)$, $\delta \equiv \delta(\data) \in (0,1)$ such that if $a(\cdot)$ is $(\delta,r_{0})$-\textnormal{BMO} in $\Omega$ for some $r_{0}>0$ and
\eqn{f.bmo}
$$ 
\sup_{B_{\sigma} \Subset \Omega; \sigma \leq \sigma_{0}} \ppsi<\epsb{b}
$$
for some $\sigma_{0} >0$, then there exists an open set $\Omega_{u}\subset \Omega$, and a positive number $\gamma \equiv \gamma (\data)\leq n-2s$, such that $
u\in \textnormal{BMO}_{\loc}(\Omega_{u};\er^N)$ and $ \ddim(\Omega\setminus \Omega_u)\leq n-2s-\gamma.
$
\end{theorem}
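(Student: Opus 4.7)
The plan is to adapt the classical $\varepsilon$-regularity blueprint to the nonlocal setting, with the excess $\tx{E}_u$ of Definition \ref{leccesso} as the driving quantity, and to iterate its decay on geometric sequences of balls. Fix $B_\rr(x_{0}) \Subset \Omega$ with $\rr \leq r_0$ and $\tx{E}_u(x_{0}, \rr) \leq \epsb{b}$. Freeze the coefficient by defining $\bar a := a_{B_\rr(x_{0})}((u)_{B_\rr(x_{0})},(u)_{B_\rr(x_{0})})$, a \emph{constant} matrix satisfying \eqref{bs.1}, and let $v$ solve the Dirichlet problem $-\mathcal{L}_{\bar a} v = 0$ in $B_\rr$ with $v = u$ on $\er^n \setminus B_\rr$. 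Testing with $u - v \in W^{s,2}_{0}(B_\rr;\er^N)$ and combining coercivity from \eqref{bs.1}, the $\delta$-BMO smallness \eqref{gestire} of $a$ in $(x,y)$, the modulus $\omega$ of \eqref{bs.4} applied to the oscillation of $u$ (estimated by $\tx{E}_u$ via fractional Poincar\'e), and the integrability assumption on $f$, I expect the comparison estimate
\begin{equation*}
\nra{u-v}_{L^2(B_\rr)} \lesssim \bigl(\delta + \omega(\tx{E}_u(x_{0}, \rr))\bigr)\,\tx{E}_u(x_{0}, \rr) + \pprhoo.
\end{equation*}
Since $\bar a$ is constant, solutions of $-\mathcal{L}_{\bar a} v = 0$ are smooth and enjoy a Campanato-type decay $\tx{E}_v(x_{0}, \sigma) \leq C_* (\sigma/\rr)^\alpha\, \tx{E}_v(x_{0}, \rr)$ for $\sigma \leq \rr/2$ with $\alpha = \alpha(\data) > 0$, accessible by Fourier/pseudodifferential techniques or a Caccioppoli + fractional Sobolev iteration. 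Triangle inequality on the $L^2$ and tail parts of $\tx{E}$, together with the fact that $u = v$ outside $B_\rr$ (so that at any intermediate scale the tail of $u-v$ is confined to $B_\rr \setminus B_\sigma$ and controlled by the $L^2$-comparison), yields, for $\sigma = \tau\rr$ with $C_*\tau^\alpha \leq 1/4$, the excess decay
\begin{equation*}
\tx{E}_u(x_{0}, \tau\rr) \leq \tfrac12\,\tx{E}_u(x_{0}, \rr) + C\bigl(\delta + \omega(\tx{E}_u(x_{0}, \rr))\bigr)\tx{E}_u(x_{0}, \rr) + C\pprhoo.
\end{equation*}

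Choosing $\delta$ and $\epsb{b}$ so small that $C(\delta + \omega(\epsb{b})) \leq 1/4$, the inequality iterates on $\rr_j := \tau^j \rr$, and \eqref{f.bmo} gives summability of the potential $\mathbf{I}^f_{2s,\chi}(x_{0}, \rr)$, producing $\sup_{j \geq 0} \tx{E}_u(x_{0}, \rr_j) \lesssim \tx{E}_u(x_{0}, \rr) + \mathbf{I}^f_{2s,\chi}(x_{0}, \rr) \lesssim \epsb{b}$. A standard interpolation between dyadic scales promotes this to a continuous bound on $\nra{u-(u)_{B_\sigma(x_{0})}}_{L^2(B_\sigma(x_{0}))}$ for every $\sigma \leq \rr$. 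Defining
\begin{equation*}
\Omega_u := \bigl\{x_{0} \in \Omega : B_\rr(x_{0}) \Subset \Omega \text{ and } \tx{E}_u(x_{0}, \rr) < \epsb{b} \text{ for some admissible } \rr\bigr\},
\end{equation*}
continuity of $\tx{E}_u$ in the center (which only uses $u \in L^2_\loc \cap L^1_{2s}$) makes $\Omega_u$ open, and the preceding uniform bound gives $u \in \textnormal{BMO}_\loc(\Omega_u;\er^N)$.

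The complement $\Omega \setminus \Omega_u$ is contained in $\{x_{0} : \liminf_{\rr \to 0}\tx{E}_u(x_{0}, \rr) \geq \epsb{b}/2\}$; a direct density argument based on $u \in W^{s,2}_\loc$ already gives the a priori bound $\ddim(\Omega \setminus \Omega_u) \leq n - 2s$. For the strict improvement by $\gamma > 0$, I would invoke a nonlocal Gehring-type self-improvement lemma (in the spirit of \cite{KMS1}) establishing, under \eqref{f.bmo}, that $u \in W^{s, 2+\eta}_\loc$ for some $\eta = \eta(\data) > 0$; the Giusti-type Hausdorff density criterion (see \cite{giagreen, giusti}) then upgrades the bound to $\ddim(\Omega \setminus \Omega_u) \leq n - 2s - \gamma$ with $\gamma$ quantified from $\eta$.

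The delicate point will be the treatment of tail contributions in the excess-decay step: at every intermediate scale $\sigma < \rr$ the tail of $u$ at $B_\sigma$ receives both an inner piece from $B_\rr \setminus B_\sigma$ (controlled by the $L^2$-comparison via an $L^2$-to-tail transfer that degrades by a factor $(\sigma/\rr)^{2s}$) and an outer piece from $\er^n \setminus B_\rr$ (inherited from the input $\tx{E}_u(x_{0}, \rr)$), and the two must be carefully balanced against each other and against the geometric decay factor $\tau^\alpha$ so that the iteration closes. In parallel, the two smallness parameters $\delta$ (BMO in $(x,y)$) and $\omega$ (continuity in $(v,w)$) act on disjoint arguments of $a$ yet must both be made small in a coordinated fashion to absorb the comparison error, a feature absent from the standard local setting.
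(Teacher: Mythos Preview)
Your blueprint matches the paper's, but the comparison step has a genuine gap. Testing \eqref{eqweaksol} against $u-v$ leaves, on the near-diagonal $B_\rr\times B_\rr$, an error of the form
\[
\int_{B_\rr}\int_{B_\rr} |a(x,y,u(x),u(y))-\bar a|\,\frac{|u(x)-u(y)|\,|(u-v)(x)-(u-v)(y)|}{|x-y|^{n+2s}}\dxy,
\]
and the coefficient difference is small only \emph{in average}: the $(v,w)$-part contributes $\omega(|u(x)-(u)_{B_\rr}|+|u(y)-(u)_{B_\rr}|)$, controlled through Jensen, while the $(x,y)$-part is governed by the $\delta$-BMO condition \eqref{gestire}. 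Cauchy--Schwarz on the Gagliardo kernel pairs the two factors $|x-y|^{-(n+2s)/2}$ with $[u]_{s,2;B_\rr}$ and $[u-v]_{s,2;B_\rr}$ and leaves no room for an $L^p$ factor of the coefficient. This is exactly the obstruction the paper points out after \eqref{harmonic}: the naive duality bound fails because $C^{0,s}\not\hookrightarrow W^{s,2}$. The paper's remedy is the $s$-harmonic approximation Lemma~\ref{shar}, in which the approximate-harmonicity condition \eqref{sh.1} is tested against $\varphi\in C^{0,t}$ with $t>s$; the gap $t-s$ absorbs a fractional power of the averaged coefficient smallness via a three-exponent H\"older inequality (see the bound \eqref{merge1} on term (I) in Proposition~\ref{psp}). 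Producing such $\varphi$ from $w=u-v$ requires the KMS-type higher differentiability $u\in W^{t,p}_{\loc}$, $t>s$, $p>2$ (Theorem~\ref{lamaggiore}) followed by H\"older truncation (Proposition~\ref{htrunc}). You invoke self-improvement only for the singular-set dimension, but it is already indispensable at the comparison stage.

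Two further issues. First, the paper needs $\delta$-vanishing on an enlarged ball $B_{R\rr}(x_0)$ with $R\equiv R(\data,\gamma_0)\geq 1$ from \eqref{sceltona}, precisely to control long-range mixed terms (the pieces $\mbox{(II)}_5$--$\mbox{(II)}_8$ in Step~3 of Proposition~\ref{psp}); smallness on $B_\rr$ alone does not suffice. Second, transferring the Campanato decay of $v$ to $\tx{E}_u(x_0,\tau\rr)$ is more than a triangle inequality on the $L^2$ and tail parts: the tail over $B_\rr\setminus B_{\tau\rr}$ must be split via \eqref{scatail} and controlled using the regularity of the comparison map at \emph{every} intermediate scale $\lambda\in(\tau\rr,\rr)$, not only at $\rr$ (Step~5 of Proposition~\ref{psp}).
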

\begin{theorem}[Partial VMO regularity]\label{ureg2} Under assumptions \eqref{bs.1}-\eqref{bs.5}, let $u$ be a weak solution to \eqref{nonlocaleqn}. There exists $\delta \equiv \delta(\data) \in (0,1)$ such that if $a(\cdot)$ is $(\delta,r_{0})$-\textnormal{BMO} in $\Omega$ for some $r_{0}>0$ and
\eqn{f.vmo}
$$
\lim_{\sigma\to 0}\, \ppsix=0\qquad \mbox{uniformly with respect to $x\in \bar{\Omega}$}\,,
$$
then there exists an open set $\Omega_{u}\subset \Omega$, and a positive number $\gamma \equiv \gamma (\data)\leq n-2s$, such that $
u\in \textnormal{VMO}_{\loc}(\Omega_{u};\er^N) $ and $\ddim(\Omega\setminus \Omega_u)\leq n-2s-\gamma.$
Specifically, 
\eqn{vazero}
$$
\lim_{\sigma \to 0}  \tx{E}_{u}(x,\sigma) =0 \quad \mbox{locally uniformly in $\Omega_{u}$}\,,
$$
where $\tx{E}_{u}$ has been defined in \eqref{exc}. 
\end{theorem}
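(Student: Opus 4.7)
My approach is to deduce Theorem \ref{ureg2} from Theorem \ref{ureg1} by revisiting the excess-decay iteration that underlies the BMO result. First, observe that the VMO hypothesis \eqref{f.vmo} automatically implies \eqref{f.bmo} for the universal threshold $\epsb{b}$ delivered by Theorem \ref{ureg1}: simply take the $\sigma_{0}$ produced by \eqref{f.vmo} for $\varepsilon=\epsb{b}$. Hence Theorem \ref{ureg1} applies, and yields at once the open regular set $\Omega_{u}\subset \Omega$ and the Hausdorff-dimension bound $\ddim(\Omega\setminus \Omega_u)\leq n-2s-\gamma$. Only the VMO strengthening of $u$ on $\Omega_{u}$ remains, and since $\nra{u-(u)_{B_\sigma(x)}}_{L^{1}(B_\sigma(x))}\leq \tx{E}_{u}(x,\sigma)$, it is enough to establish \eqref{vazero}; the VMO conclusion then follows from the very definition \eqref{vmodef}.

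For this, I would isolate the one-step excess-decay step built in the proof of Theorem \ref{ureg1} by harmonic-type comparison with solutions to a frozen, constant-coefficient nonlocal system. That step, run at any point where the initial excess, the coefficient oscillation $h_{a}(x,\sigma)$, and $\sigma^{2s}\nra{f}_{L^{\chi}(B_\sigma(x))}$ are all small, should produce an additive decay inequality of the form
$$
\tx{E}_{u}(x,\theta\sigma)\leq C\theta^{\beta}\tx{E}_{u}(x,\sigma)+C\theta^{-M}\bigl(h_{a}(x,\sigma)\tx{E}_{u}(x,\sigma)+\sigma^{2s}\nra{f}_{L^{\chi}(B_\sigma(x))}\bigr),
$$
valid for universal $\beta\in (0,1)$, $M\ge 0$ and arbitrary $\theta\in (0,1)$. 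Picking $\theta$ small enough to absorb $C\theta^{\beta}$ against $1/2$, fixing $x_{0}\in \Omega_{u}$ and a compact neighbourhood $K\Subset \Omega_{u}$ of $x_{0}$, the already-established partial BMO regularity makes $\tx{E}_{u}(x,\sigma)$ uniformly bounded for $x\in K$ and $\sigma$ below a fixed radius $\rr_{K}$. The VMO hypothesis on $a(\cdot)$ together with \eqref{f.vmo} then forces the bracket above to be arbitrarily small, uniformly in $x\in K$, once $\sigma$ is sufficiently small. Iterating the decay on dyadically shrinking balls yields $\tx{E}_{u}(x,\sigma)\to 0$ uniformly in $x\in K$, which is exactly \eqref{vazero}.

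The main technical point, and the only genuine obstacle, is making sure that the one-step decay really carries the additive structure above, with the $f$-dependence appearing as the \emph{local} quantity $\sigma^{2s}\nra{f}_{L^{\chi}(B_\sigma(x))}$ rather than being hidden inside the initial excess; only with this form can the VMO input propagate from data to solution. This is precisely what the nonlinear-potential framework developed for Theorem \ref{ureg1} is designed to produce on the last dyadic scale, so the argument goes through. All remaining ingredients — the built-in tail control inside $\tx{E}_{u}$, the harmonic-type excess decay for the frozen nonlocal operator, and the $(v,w)$-continuity input from \eqref{bs.4} needed to absorb the $a(\cdot)$-freezing error — are inherited verbatim from the BMO setup, and do not need to be reproved here.
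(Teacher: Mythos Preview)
Your strategy coincides with the paper's: reduce to Theorem \ref{ureg1} for $\Omega_u$ and the singular-set estimate, then upgrade BMO to VMO via the excess-decay iteration combined with \eqref{f.vmo}. The paper packages the one-step decay as Proposition \ref{psp.2} (and its iterated form, Proposition \ref{cor.1}), where the coefficient-oscillation smallness enters as a \emph{precondition}---the $(\delta,r_0)$-BMO assumption with $\delta$ fixed sufficiently small---rather than as an additive error $h_a(x,\sigma)\,\tx{E}_u(x,\sigma)$ as you write.

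This is the one point where your sketch slips: you invoke a ``VMO hypothesis on $a(\cdot)$'' to force that term to vanish, but the theorem only assumes $(\delta,r_0)$-BMO, not VMO on the coefficients. The fix is immediate---once $\theta$ is fixed, choose the threshold $\delta$ small enough that $C\theta^{-M}\delta\le 1/4$, and the $h_a$-term is absorbed into the contraction factor. This is exactly what the paper's choice of $\delta\equiv\delta(\data)$ accomplishes (see \eqref{dep.1}), and after this correction your iteration matches the paper's argument \eqref{decadenza}--\eqref{convy} essentially verbatim.
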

\begin{theorem}[Oscillations bounds and precise representative]\label{ureg3} Under assumptions \eqref{bs.1}-\eqref{bs.5}, let $u$ be a weak solution to \eqref{nonlocaleqn} and $B_{\rr}(x_0)\Subset \Omega$ be a ball. There exist $\epsb{b}\equiv \epsb{b}(\data,\omega(\cdot))\in (0,1)$, $\delta \equiv \delta(\data) \in (0,1)$ and $R=R(\data) \geq 1$ such that if $a(\cdot)$ is $\delta$-vanishing in the ball $B_{R\rr}(x_{0})\subset \Omega$ and the smallness conditions
\eqn{osc.t0}
$$
 \tx{E}_u(x_{0},\rr)< \epsb{b} \qquad \mbox{and}\qquad \sup_{\sigma \leq \rr} \ppsixo<\epsb{b}
 $$
are verified, then 
the local oscillation estimate
\eqn{oscosc}
$$
\snr{u(x_{0})-(u)_{B_{\sigma}(x_{0})}}\le c\, \tx{E}_{u}(x_{0},\sigma)+c\, \mathbf{I}^{f}_{2s,\chi}(x_{0},\sigma)
$$ 
holds whenever $\sigma \leq \rr$ and $B_{\sigma}\equiv B_{\sigma}(x_{0})$, where $c\equiv c (\data)$, provided the right-hand side is finite. In \eqref{oscosc}, $u(x_0)$ denotes the precise representative of $u$ at $x_0$, i.e., 
\eqn{lebp}
$$
u(x_{0}):=\lim_{\sigma\to 0}(u)_{B_{\sigma}(x_{0})}
$$
whose existence is indeed implied by the finiteness of $\mathbf{I}^{f}_{2s,\chi}(x_{0},\sigma)$. 
\end{theorem}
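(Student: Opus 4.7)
The plan is to establish \eqref{oscosc} by iterating a one-step excess-decay estimate along a dyadic chain of concentric balls centered at $x_{0}$, and then converting the telescoping sum of the mean-value increments into the potential $\mathbf{I}^{f}_{2s,\chi}$ through the dyadic lower bound in \eqref{diadico}. The precise representative \eqref{lebp} will come out automatically once the averages $(u)_{B_{\sigma}(x_{0})}$ are shown to be Cauchy as $\sigma\to 0$.

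\textbf{Step 1: excess decay.} First I would prove a one-step lemma of the form
$$\tx{E}_{u}(x_{0},\theta\sigma)\le C_{*}\theta^{\beta_{0}}\,\tx{E}_{u}(x_{0},\sigma)+C_{*}\sigma^{2s}\nra{f}_{L^{\chi}(B_{\sigma}(x_{0}))}$$
for every $\sigma\le\rr$, some $\theta\in(0,1/2)$ and $\beta_{0}\in(0,2s)$, provided the running smallness of $\tx{E}_{u}(x_{0},\sigma)$ and of the scaled $L^{\chi}$-norm of $f$, plus the $\delta$-vanishing of $a(\cdot)$. The scheme is the standard freeze-and-compare: on $B_{\sigma}(x_{0})$ compare $u$ with the solution $v$ of the linear Dirichlet problem $-\mathcal{L}_{\bar a}v=0$ sharing exterior datum with $u$, where $\bar a$ is the frozen constant matrix $a_{B_{\sigma}(x_{0})}((u)_{B_{\sigma}},(u)_{B_{\sigma}})$. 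Testing with $u-v$, the ellipticity \eqref{bs.1} and the continuity \eqref{bs.4}, together with the BMO smallness $h_{a}\le\delta$, yield $\nra{u-v}_{L^{2}(B_{\sigma})}\le \eta\,\tx{E}_{u}(x_{0},\sigma)+C\sigma^{2s}\nra{f}_{L^{\chi}(B_{\sigma})}$ where $\eta=\eta(\delta,\omega,\epsb{b})$ can be made arbitrarily small. The constant-coefficient comparison $v$ enjoys the self-improving tail-augmented decay $\tx{E}_{v}(x_{0},\theta\sigma)\lesssim \theta^{\beta_{0}}\tx{E}_{v}(x_{0},\sigma)$ from the regularity theory of linear constant-coefficient nonlocal systems. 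Combining the two, first picking $\theta$ small enough for the $v$-decay, then $\delta,\epsb{b}$ small so that $\eta$ overcomes the $\theta^{-n}$ losses coming from tail reassembling, produces the displayed decay.

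\textbf{Step 2: dyadic induction and summation.} Set $\sigma_{i}:=\theta^{i}\rr$ and fix $\theta$ with $C_{*}\theta^{\beta_{0}}\le 1/2$. An induction in $i$ propagates the smallness $\tx{E}_{u}(x_{0},\sigma_{i})<\epsb{b}$ (so the decay lemma re-applies at every step) and the recursion
$$E_{i+1}\le \tfrac{1}{2}E_{i}+C\sigma_{i}^{2s}\nra{f}_{L^{\chi}(B_{\sigma_{i}}(x_{0}))},\qquad E_{i}:=\tx{E}_{u}(x_{0},\sigma_{i}),$$
closed by \eqref{osc.t0}. Unwrapping and summing in $i$ delivers $\sum_{i\ge 0}E_{i}\lesssim \tx{E}_{u}(x_{0},\rr)+\mathbf{I}^{f}_{2s,\chi}(x_{0},\rr)$ through \eqref{diadico}. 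Since $\snr{(u)_{B_{\sigma_{i+1}}}-(u)_{B_{\sigma_{i}}}}\lesssim \theta^{-n/2}\nra{u-(u)_{B_{\sigma_{i}}}}_{L^{2}(B_{\sigma_{i}})}\le CE_{i}$ by elementary Jensen, the sequence $\{(u)_{B_{\sigma_{i}}}\}$ is Cauchy, its limit defines the precise representative \eqref{lebp}, and telescoping yields $\snr{u(x_{0})-(u)_{B_{\sigma_{k}}}}\le C\sum_{i\ge k}E_{i}$. Re-running the whole induction with starting radius $\sigma\in(0,\rr]$ in place of $\rr$ (the smallness hypotheses \eqref{osc.t0} at $\sigma$ are already ensured by Step 2) and interpolating between a generic $\sigma$ and the nearest dyadic $\sigma_{k}$ at a cost depending only on $\theta$, and hence on $\data$, produces \eqref{oscosc}.

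\textbf{Main obstacle.} The most delicate point is tracking the tail in Step 1: when passing from $B_{\sigma}$ to $B_{\theta\sigma}$, the annular mass of $u-(u)_{B_{\sigma}}$ on $B_{\sigma}\setminus B_{\theta\sigma}$ re-enters $\tail(u-(u)_{B_{\theta\sigma}};B_{\theta\sigma})$ with a loss of order $\theta^{-n}$ and must be reabsorbed against the geometric gain $\theta^{\beta_{0}}$. This is the structural reason why the tail is built into the excess of Definition \ref{leccesso} and why the constant-coefficient decay for $v$ must be stated in tail-augmented form. The second delicate point is the coupling of the non-polynomial modulus $\omega$ with the $L^{2}$-excess in the energy inequality: preserving the geometric factor $\theta^{\beta_{0}}$ through the iteration requires choosing $\epsb{b}$ small in a way compatible with $\omega(\cdot)$, which is why $\epsb{b}$ ends up depending on $\omega(\cdot)$ in the statement.
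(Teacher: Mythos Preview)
Your iteration scheme in Step 2 and the conversion to the potential via \eqref{diadico} match the paper's argument (radii $\rr_i=\tau^{i+1}\rr$ with the parameters of Section \ref{parametri} at $\gamma_0=s$, the summation \eqref{5,3}, and the telescoping in \eqref{5,4}). The substantive divergence is in Step 1, and there your sketch has a real gap.

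The paper does \emph{not} obtain the one-step decay by solving a frozen-coefficient Dirichlet problem and testing with $u-v$. Instead, Proposition \ref{psp} proceeds by (i) a blow-up normalizing $\tx{E}_u(x_0,\rr)=1$; (ii) a localization $\tilde u_\rr=\eta u_\rr$ (Lemma \ref{cflem}); (iii) verifying that $\tilde u_\rr$ is \emph{approximately} $s$-harmonic with respect to the averaged matrix $a_{\rr,\ttB_R}(0,0)$ in the sense of the $s$-harmonic approximation Lemma \ref{shar}; and (iv) reading off the decay from the regularity of the approximant $h$ via Proposition \ref{cdg}. Lemma \ref{shar} in turn rests on two nontrivial ingredients you do not invoke: the self-improving higher differentiability of \cite{KMS} (Theorems \ref{maggiore}--\ref{lamaggiore}) and the H\"older truncation of Proposition \ref{htrunc}.

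Your direct comparison hides precisely the obstruction these tools are built to overcome. Testing with $u-v\in W^{s,2}$ yields an error of the form
\[
\int\int |a(x,y,u(x),u(y))-\bar a|\,\frac{|u(x)-u(y)|\,|(u-v)(x)-(u-v)(y)|}{|x-y|^{n+2s}}\dx\dy,
\]
and after Cauchy--Schwarz one must show that $\int\int \omega(|u-(u)_{B_\sigma}|)^2 |u(x)-u(y)|^2/|x-y|^{n+2s}$ is small. But $\omega(|u-(u)_{B_\sigma}|)$ is small only \emph{in average}, and the Gagliardo kernel $|x-y|^{-n-2s}$ leaves no integrability room to split this factor off by H\"older at the $W^{s,2}$ level (the would-be complementary integral diverges). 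One needs $u\in W^{t,p}$ with $t>s$, $p>2$ first, so that the extra $|x-y|^{2(t-s)}$ absorbs the singularity; this is exactly why Lemma \ref{shar} is formulated with test functions in $C^{0,t}$ for some $t>s$ rather than in $W^{s,2}$ (cf.\ the discussion around \eqref{harmonic}) and why the self-improving result is invoked. Your claim that ``testing with $u-v$ \dots\ yield $\nra{u-v}_{L^{2}(B_\sigma)}\le \eta\,\tx{E}_u+\ldots$ with $\eta$ arbitrarily small'' is not justified without this mechanism. A secondary point: in the paper the frozen matrix is averaged over the \emph{enlarged} ball $\ttB_R$ (after blow-up), not over the current scale; the parameter $R$ in the statement enters because the off-diagonal coefficient error in the approximate-harmonicity estimate is made small only through a tail decay of order $R^{-2s}$ (see \eqref{dep.1.1}), something a direct comparison on $B_\sigma$ alone does not produce.
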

Theorems \ref{ureg1} and \ref{ureg2} closely parallel the results established in the local case \cite{kumig, KMSvec}, with the notable exception of the singular set $\Omega\setminus \Omega_{u}$, a distinctive feature specific to the vectorial setting. The numbers $\epsb{b}, \delta$ are the same throughout Theorems \ref{ureg1}-\ref{ureg3} 
 and should be considered as universal thresholds for the formation of singularities as well as the number $R$ from Theorem \ref{ureg3}. They only depend on the operator, but are independent of the solution considered. As it will be clear from the proofs, such numbers serve to fix three smallness conditions, i.e., 
\eqn{linearizzanti}
$$ 
\begin{cases}
 \displaystyle \tx{E}_u(x_{0},\rr)< \epsb{b} \\[2pt]
   \displaystyle \mbox{$a(\cdot)$ is $\delta$-vanishing in $B_{R\rr}(x_{0})$}\\[2pt]
  \displaystyle \sup_{\sigma \leq \rr} \ppsixo<\epsb{b}
\end{cases}
$$
under which the system in \rif{nonlocaleqn} looks as a homogenous constant coefficients system in the ball $B_{\rr}(x_{0})$.  
These in fact determine the regular set $\Omega_{u}$, where higher regularity of $u$ eventually holds under natural assumptions. See Section \ref{parametri} for a more precise quantitative description. 
The occurrence of \rif{linearizzanti} allows us to locally linearize \eqref{nonlocaleqn} in the ball $B_{\rr}{(x_{0})}$ towards comparison estimates with solutions with constant coefficients nonlocal systems. This is in analogy with the local case shortly described in Section \ref{bebe}. 
It is interesting to note an additional peculiar nonlocal phenomenon displayed in \rif{linearizzanti}$_2$. In order to get estimate \rif{oscosc} in a fixed ball $B_{\rr}{(x_{0})}$, one has to prescribe small $\delta$-oscillations of $a(\cdot)$ in the sense of \rif{gestire} in the larger ball $B_{R\rr}{(x_{0})}$. This is essential in order to treat certain nonlocal interaction terms as lower order terms and make them negligible. Note also that the smallness oscillation condition on coefficients \rif{linearizzanti}$_2$ is now naturally formulated in a BMO-fashion, rather than in $L^\infty$, i.e., coefficients are not required to be necessarily continuous. Here we adopt this principle via the $(\delta,r_{0})$-\textnormal{BMO} condition in Definition \ref{deltadef} and \rif{linearizzanti}$_2$. In this respect, let us mention that assuming a control on the oscillations of coefficients is necessary. In the general vectorial, local case, singularities may occur on a dense subset when coefficients are merely measurable \cite{johnmaly}.

Theorem \ref{ureg3} presents a coherent analogue of the description of precise representatives from classical Potential Theory. Indeed, both in the linear and in the nonlinear local case, solutions to equations of the type $-\diver\, a(Du)=f$ with $\partial a(\cdot)\approx \mathbb I_n$, admit a precise representative at a point $x_{0}$ provided the standard Riesz potential $\mathbf{I}^{f}_{2}(x_{0},\sigma)$ is finite; see \cite[Theorem 16]{kumig} and \cite[Theorem 1.2]{KMS1}. In this setting we have essentially two differences. First, the classical Riesz potential $\mathbf{I}^{f}_{2s}$ is replaced by the nonlinear potential $\mathbf{I}^{f}_{2s, \chi}$. The second is that, on the contrary to the standard scalar case, both local and nonlocal, we assume unavoidable smallness conditions in \rif{osc.t0}. Also note that  assuming the finiteness of $\mathbf{I}^{f}_{2s, \chi}(x_{0}, \cdot)$ guarantees that \rif{linearizzanti}$_3$ is satisfied at a certain scale. In fact, since 
\eqn{azero}
$$ 
\mbox{$\mathbf{I}^{f}_{2s,\chi}(x_{0},\rr) < \infty$ for some $\rr$} \Longrightarrow \lim_{\sigma \to 0}\, \mathbf{I}^{f}_{2s,\chi}(x_{0},\sigma)=0\,,
$$
as a consequence of the absolute continuity of the integral, then by \rif{diadico} we have that all the terms of the type $\ppsixo$ are smaller than $\epsb{b}$ provided we choose the scale $B_{\sigma}$ small enough. This leads to a possible alternative formulation of Theorem \ref{ureg3} where the fixed scale $B_{\rr}$ at which $\epsb{b}$ is small is not a priori prescribed and therefore \rif{oscosc} holds for some small enough but unknown $\ti{\varrho}\leq \varrho$. The advantage is of course that the second condition in \rif{osc.t0} can be dropped from the assumptions as when proving \rif{oscosc} we can always assume that the right-hand side is finite. An example of this is visible in the next
\begin{theorem}[Partial continuity]\label{ureg4}
Under assumptions \eqref{bs.1}-\eqref{bs.5}, let $u$ be a weak solution to  \eqref{nonlocaleqn}. There exists $\delta \equiv \delta(\data) \in (0,1)$ such that if $a(\cdot)$ is $(\delta,r_{0})$-\textnormal{BMO} in $\Omega$ for some $r_0>0$, and if
\eqn{unic}
$$
\lim_{\sigma\to 0}\, \mathbf{I}^{f}_{2s,\chi}(x,\sigma)=0\quad \mbox{uniformly with respect to $x\in \bar{\Omega}$}\,,
$$
then there exists an open set $\Omega_{u}\subset \Omega$, and a positive number $\gamma \equiv \gamma (\data)\leq n-2s$, such that
$
u$ is continuous in $\Omega_{u}$ and   $\ddim(\Omega\setminus \Omega_u)\leq n-2s-\gamma.
$
\end{theorem}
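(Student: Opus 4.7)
The plan is to deduce Theorem \ref{ureg4} as a direct consequence of Theorem \ref{ureg3} by using the hypothesis \rif{unic} to guarantee, locally uniformly, the nonlocal linearization conditions \rif{linearizzanti}$_2$--\rif{linearizzanti}$_3$. The regular set $\Omega_{u}$ will then be exactly the open set where the excess threshold \rif{linearizzanti}$_1$ is achieved at some scale, and continuity will follow from \rif{oscosc} combined with the absolute continuity of both the excess and the nonlocal potential.

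More precisely, let $\delta,\epsb{b},R$ be the constants provided by Theorem \ref{ureg3}, and assume $a(\cdot)$ is $(\delta,r_{0})$-\textnormal{BMO} in $\Omega$ for this $\delta$. The uniform convergence in \rif{unic} furnishes $\rhob{u}>0$ such that $\sup_{x\in \Omega}\, \mathbf{I}^{f}_{2s,\chi}(x,\rhob{u}) < \epsb{b}$, and in view of \rif{diadico} this forces $\ppsix<\epsb{b}$ for every $x\in \Omega$ and every $\sigma \leq \rhob{u}$. In particular, \rif{linearizzanti}$_3$ holds at all base points $x\in \Omega$ and all scales $\rr \leq \rhob{u}$, while \rif{linearizzanti}$_2$ holds at every scale $\rr \leq r_{0}/R$. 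Setting $\rr_{*}:=\min\{\rhob{u},r_{0}/R\}$, define
\[
\Omega_{u}:=\big\{x_{0}\in \Omega\,:\,\exists\,\rr \leq \rr_{*}\text{ with }B_{R\rr}(x_{0})\Subset \Omega\text{ and }\tx{E}_{u}(x_{0},\rr)< \epsb{b}\big\}.
\]
Because $y\mapsto \tx{E}_{u}(y,\rr)$ is continuous for each fixed $\rr$ (continuity of $L^{2}$-translations and of the tail integral), $\Omega_{u}$ is open and each $x_{0}\in \Omega_{u}$ admits a neighborhood $U\subset \Omega_{u}$ on which the three linearization conditions \rif{linearizzanti} are simultaneously satisfied at a common scale $\rr$. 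Theorem \ref{ureg3} then applies at every $y\in U$, giving the precise representative $u(y):=\lim_{\sigma\to 0}(u)_{B_{\sigma}(y)}$ together with the pointwise oscillation bound \rif{oscosc}.

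For continuity, the proof of Theorem \ref{ureg3} will produce, via the underlying improvement-of-flatness iteration, a power-type decay $\tx{E}_{u}(y,\sigma)\to 0$ as $\sigma\to 0$, uniformly in $y\in U$; analogously, \rif{unic} gives the uniform vanishing of $\mathbf{I}^{f}_{2s,\chi}(y,\sigma)$. Combined with \rif{oscosc}, this yields $(u)_{B_{\sigma}(\cdot)}\to u(\cdot)$ uniformly on compact subsets of $\Omega_{u}$, and since $y\mapsto (u)_{B_{\sigma}(y)}$ is continuous for every fixed $\sigma>0$, $u$ itself is continuous on $\Omega_{u}$. The singular set is contained in $\{x_{0}\in \Omega\,:\,\tx{E}_{u}(x_{0},\rr)\geq \epsb{b}\text{ for every }\rr\leq \rr_{*}\}$, i.e., in the set of points at which \rif{linearizzanti}$_1$ fails at every small scale; this coincides with the singular set handled in Theorems \ref{ureg1}--\ref{ureg3}, and the Hausdorff dimension bound $\ddim(\Omega\setminus \Omega_{u})\leq n-2s-\gamma$ with $\gamma\equiv \gamma(\data)>0$ follows by combining a Gehring-type self-improvement of the fractional energy inequality with a Frostman-style covering argument, in complete analogy with those theorems.

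The principal obstacle is the local uniformity on $\Omega_{u}$ of the decay $\tx{E}_{u}(y,\sigma)\to 0$: the nonlocal excess \rif{exc} carries a tail contribution whose dependence on the base point must be controlled along the iteration, and the three smallness conditions \rif{linearizzanti} must be shown to propagate to every smaller scale once they hold at some fixed scale $\rr\leq \rr_{*}$. Both issues are addressed inside the proof of Theorem \ref{ureg3}; here the remaining work consists in packaging those estimates uniformly in $y$, leveraging the uniform absolute continuity \rif{unic} of the potential together with the uniform BMO-smallness of $a(\cdot)$ to ensure that the excess and potential terms in \rif{oscosc} vanish jointly and uniformly as $\sigma\to 0$ on each compact subset of the regular set.
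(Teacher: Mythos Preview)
Your proposal is correct and follows essentially the same route as the paper: reduce \eqref{unic} to the uniform $f$-smallness condition, define $\Omega_{u}$ via the excess threshold \eqref{ilsingolare}, apply Theorem \ref{ureg3} pointwise on a reference neighbourhood to obtain \eqref{oscosc}, and conclude continuity from the uniform convergence of the continuous averages $(u)_{B_{\sigma}(\cdot)}$. The only organisational difference is that the paper routes the ``principal obstacle'' you flag---the locally uniform decay $\tx{E}_{u}(y,\sigma)\to 0$---explicitly through Theorem \ref{ureg2} (namely \eqref{convy}, which in turn rests on the excess decay of Proposition \ref{cor.1}), rather than extracting it from the proof of Theorem \ref{ureg3}; but this is the same improvement-of-flatness machinery you invoke.
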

A particularly neat criterion implied by Theorem \ref{ureg4} can be given in terms of Lorentz spaces $L(d,1)$. The Lorentz space 
$L(d,1)(\mathcal A)$, with $1\leq d <\infty$ and $\mathcal A \subset \er^n$ being a measurable set, 
 is defined as the set of measurable maps  $f\colon \er^n \to \er^N$ such that 
$$
\nr{f}_{d,1;\mathcal A} := \int_0^{\infty}   \snr{\{|f|> \lambda\}\cap \mathcal A}^{1/d} \dd\lambda < \infty\,.
$$
In regularity theory, these are the most convenient and natural spaces in order to formulate sharp results for solutions. Using \rif{qualo2} below, and recalling \rif{bs.5}, at this point we have (recall that $\Omega$ has finite measure)
 \eqn{sarto1}
$$
f\in L^p\,, \ p> \frac{n}{2s}  \Longrightarrow f\in L\left(\frac{n}{2s},1\right) \Longrightarrow \eqref{unic} \Longrightarrow \mbox{$u$   is continuous in $\Omega_{u}$}\,.
 $$
The continuity criterion in Theorem \ref{ureg4}, and its corollary \rif{sarto1}, are the sharp analogue of the ones known in the classical local case \cite[Theorem 1]{kumig} and in the scalar nonlocal one \cite[Corollary 1.2]{KMS1}.  For the next result we need the notion of  Marcinkiewicz space $\mathcal  M^d \equiv L(d, \infty)$; its definition prescribes that a measurable map $f\colon \er^n \to \er^N$ belongs to $\mathcal  M^d(\mathcal A)$, $d\geq 1$, $\mathcal A \subset \er^n$ being a measurable set, iff
\eqn{mardef}
$$
\nr{f}_{\mathcal M^{d}(\mathcal A)}^d
:=\sup_{\lambda\geq 0} \, \lambda^d \snr{\{|f|> \lambda\}\cap \mathcal A} <\infty\,.
$$ 
\begin{theorem}[Sharp partial H\"older regularity]\label{ureg5}
Under assumptions \eqref{bs.1}-\eqref{bs.5}, let $u$ be a weak solution to  \eqref{nonlocaleqn}, and assume that 
\eqn{asbeta}
$$
f \in \mathcal M^{\frac{n}{2s-\beta}}\,, \qquad 0< \beta <\min\left\{2s,1\right\} \,.
$$
There exists $\delta \equiv \delta(\data,\beta) \in (0,1)$ such that if $a(\cdot)$ is $(\delta,r_{0})$-\textnormal{BMO} in $\Omega$ for some $r_{0}>0$,
then there exists an open set $\Omega_{u}\subset \Omega$, and a positive number $\gamma \equiv \gamma (\data)\leq n-2s$, such that $
u\in C^{0,\beta}_{\loc}(\Omega_{u};\er^N)$ and $ \ddim(\Omega\setminus \Omega_u)\leq n-2s-\gamma.
$
Moreover, for every $x_{0}\in \Omega_{u}$, there exist positive radii $r_{x_{0}}, \rr_{x_{0}}$, with $r_{x_{0}}\leq \rr_{x_{0}}/8$, such that $B_{r_{x_{0}}}(x_{0})\subset \Omega_u$ 
and 
\eqn{holdest}
$$
[u]_{0,\beta;B_{r_{x_{0}}}(x_{0})}\le c\rr_{x_{0}}^{-\beta}\tx{E}_{u}(x_{0},\rr_{x_{0}})+c \nr{f}_{\mathcal M^{\frac{n}{2s-\beta}}(B_{\rr_{x_{0}}}(x_{0}))}
$$
holds, where $c\equiv c(\data,\beta)$.
\end{theorem}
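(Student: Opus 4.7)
The plan is to upgrade the pointwise oscillation bound \eqref{oscosc} of Theorem \ref{ureg3} to a H\"older one by proving a Campanato-type power decay of the nonlocal excess $\tx{E}_u(x_0,\cdot)$, and by exploiting the H\"older-scaling behaviour of the potential $\mathbf{I}^f_{2s,\chi}$ encoded by the Marcinkiewicz assumption \eqref{asbeta}. First I would \emph{define} the regular set $\Omega_u$ to be the (open) set of points $x_0 \in \Omega$ at which, for some $\rr=\rr_{x_0}$, the three linearizing smallness conditions \eqref{linearizzanti} simultaneously hold, with $\epsb{b},\delta,R$ as in Theorem \ref{ureg3}. Exactly as in Theorems \ref{ureg1}-\ref{ureg2}, a standard measure-theoretic/capacitary argument based on the $W^{s,2}_{\loc}$-regularity of $u$, the $(\delta,r_0)$-BMO hypothesis on $a(\cdot)$, and the Morrey-type bound on $f$ (see below) gives $\ddim(\Omega\setminus\Omega_u)\leq n-2s-\gamma$ with the same $\gamma=\gamma(\data)$.

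Next I would translate assumption \eqref{asbeta} into a scale-wise bound on the nonlinear potential. Since $\chi<n/(2s)$ and $n/(2s-\beta)>\chi$, the Marcinkiewicz/Lorentz embedding together with H\"older's inequality yields
\[
\sigma^{2s}\nra{f}_{L^\chi(B_\sigma(x))} \le c(\data,\beta)\,\nr{f}_{\mathcal M^{n/(2s-\beta)}(B_\sigma(x))}\,\sigma^\beta
\]
for every ball $B_\sigma(x)\subset \Omega$. Integrating, one obtains $\mathbf{I}^f_{2s,\chi}(x,\sigma)\le c\,\sigma^\beta\nr{f}_{\mathcal M^{n/(2s-\beta)}(B_\sigma(x))}$, which both (i) guarantees \eqref{linearizzanti}$_3$ holds down to small scales once $\beta>0$, hence $\Omega_u$ is non-trivial wherever \eqref{linearizzanti}$_{1,2}$ are met, and (ii) provides the desired $\beta$-H\"older contribution of the data term in \eqref{oscosc}.

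The core analytical step is a \emph{nonlocal excess-decay lemma} of the form
\[
\tx{E}_u(x_0,\tau\rr) \le c_*\,\tau^\beta\,\tx{E}_u(x_0,\rr) + c_*\,\tau^{-n/2}\,\rr^\beta\nr{f}_{\mathcal M^{n/(2s-\beta)}(B_\rr(x_0))}
\]
for all $\tau\in(0,1/4)$ and all $\rr\le \rr_{x_0}$, with $c_*\equiv c_*(\data,\beta)$. I would prove this by the classical linearization scheme of Giusti--Miranda adapted to the nonlocal setting: on $B_\rr(x_0)$ freeze the coefficient to $\bar a(v,w):=a_{B_\rr(x_0)}((u)_{B_\rr(x_0)},(u)_{B_\rr(x_0)})$ and compare $u$ with the solution $v$ of the constant-coefficient nonlocal Dirichlet problem $-\mathcal{L}_{\bar a}v=0$ in $B_\rr(x_0)$ with $v=u$ outside. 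The $\delta$-BMO smallness of $a$ in the enlarged ball $B_{R\rr}(x_0)$, the continuity assumption \eqref{bs.4}, and the smallness of $\tx{E}_u(x_0,\rr)$ make the comparison error controllable by $\tx{E}_u(x_0,\rr)\omega(\cdot)+\delta+\mathbf{I}^f_{2s,\chi}(x_0,\rr)$; on the other hand the (vector-valued) constant-coefficient solution $v$ satisfies the sharp decay $\tx{E}_v(x_0,\tau\rr)\lesssim \tau^\beta \tx{E}_v(x_0,\rr)$ for every $\beta<\min\{2s,1\}$ by the nonlocal regularity theory for $\mathcal L_{\bar a}$ available in earlier sections of the paper.

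Iterating this excess-decay inequality on a geometric sequence of radii $\rr_k=\tau^k\rr_{x_0}$, choosing $\tau$ small so that $c_*\tau^\beta\le \tau^{\beta'}/2$ for a $\beta'$ slightly larger than $\beta$, yields $\tx{E}_u(x_0,\sigma)\le c\,(\sigma/\rr_{x_0})^\beta\bigl(\tx{E}_u(x_0,\rr_{x_0})+\rr_{x_0}^\beta\nr{f}_{\mathcal M^{n/(2s-\beta)}(B_{\rr_{x_0}}(x_0))}\bigr)$ for every $\sigma\le \rr_{x_0}$. Since \eqref{linearizzanti} is an open condition, the same bound holds uniformly on a slightly smaller ball $B_{r_{x_0}}(x_0)$. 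Feeding this back into the pointwise estimate \eqref{oscosc} of Theorem \ref{ureg3}, applied at every $x\in B_{r_{x_0}}(x_0)$ with the corresponding precise representative \eqref{lebp}, and using the Campanato--Morrey characterization of H\"older continuity in terms of the zero-order excess $\tx{E}_u$, produces \eqref{holdest} and hence $u\in C^{0,\beta}_{\loc}(\Omega_u;\er^N)$. The main obstacle is the excess-decay lemma: the nonlocal excess carries a tail contribution that couples the behaviour of $u$ on $B_\rr$ with its values on all of $\er^n$, so in the passage from scale $\rr$ to scale $\tau\rr$ the tail of $u-(u)_{B_{\tau\rr}}$ must be re-expressed in terms of $\tx{E}_u(x_0,\rr)$ plus a controllable remainder, and the linearization error must be kept compatible with this geometric-sequence iteration uniformly in $\tau$; handling this delicate tail bookkeeping, together with the vector-valued (as opposed to scalar) comparison step, is where the technical weight of the proof lies.
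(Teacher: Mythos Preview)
Your overall strategy matches the paper's: translate \eqref{asbeta} into the scale-wise bound $\sigma^{2s}\nra{f}_{L^\chi(B_\sigma)}\lesssim \sigma^\beta\nr{f}_{\mathcal M^{n/(2s-\beta)}}$ via \eqref{marhol}, derive a one-step nonlocal excess decay from the linearization/comparison machinery (this is exactly Proposition~\ref{cor.1}), iterate, and conclude by Campanato. Two points deserve correction.

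First, your excess-decay lemma and the subsequent iteration are stated with the wrong exponent. With $c_*\tau^\beta$ on the right you \emph{cannot} choose $\tau$ so that $c_*\tau^\beta\le \tau^{\beta'}/2$ for some $\beta'>\beta$: for $\tau<1$ this forces $c_*\le \tau^{\beta'-\beta}/2<1/2$, impossible. The cure is the one you yourself hint at when you say the constant-coefficient solution decays for \emph{every} exponent below $\min\{2s,1\}$: take $\gamma_0:=(\beta+\min\{2s,1\})/2>\beta$ as in \eqref{tantigamma}, prove the one-step decay with $\tau^{\gamma_0}$ (this is \eqref{5,8}), and then pick $\tau$ so that $c_1\tau^{\gamma_0-\beta}\le 1/2$. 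This is not cosmetic; without the strict gap $\gamma_0>\beta$ the iteration only gives $C^{0,\beta'}$ for $\beta'<\beta$.

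Second, the paper does not pass through Theorem~\ref{ureg3}; that detour is harmless but redundant, since the iterated decay of $\tx{E}_u(x,\sigma)$ already controls $\nra{u-(u)_{B_\sigma(x)}}_{L^2(B_\sigma(x))}$ directly. Instead the paper repackages the iteration via the truncated sharp fractional maximal operator $\MMM^{\#,\beta}_{\rr^\star,\rr}(x)=\sup_{\rr^\star\le\sigma\le\rr}\sigma^{-\beta}\tx{E}_u(x,\sigma)$: one writes the one-step decay as a self-bound for $\MMM^{\#,\beta}$, reabsorbs, lets $\rr^\star\to 0$, and invokes Lemma~\ref{campmax}. This buys the sharper intermediate estimate \eqref{holdest-anc} involving $\MMM^{2s-\beta}_{\rr_{x_0}/2,\chi}(f)$, from which \eqref{holdest} follows by one more application of \eqref{marhol}; your direct geometric iteration would reach \eqref{holdest} but not \eqref{holdest-anc}.
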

The local estimate \rif{holdest} around regular points $x_{0}\in \Omega_{u}$ is analogous to the one that holds everywhere in the scalar case found in \cite[Theorem 1.4]{bls}, \cite[Theorem 1.3]{fall} and \cite[Theorem 4.2]{feros}, under the slightly stronger integrability condition $f \in L^{\frac{n}{2s-\beta}}$. Needless to say,  a main point in Theorem \ref{ureg5} is the direct and sharp connection between the integrability of $f$ in \rif{asbeta} and the rate of local H\"older continuity of the solution, via the exponent $\beta$. Indeed, the requirement \rif{asbeta} cannot be weakened \cite[Examples 1.5-1.6]{bls}. Estimate \rif{holdest} is in turn a corollary of a more general estimate involving fractional maximal operators, i.e., 
\eqn{holdest-anc}
$$
[u]_{0,\beta;B_{r_{x_{0}}}(x_{0})}\le c\rr_{x_{0}}^{-\beta}\tx{E}_{u}(x_{0},\rr_{x_{0}})+c \nr{\MMM^{2s-\beta}_{\rr_{x_{0}}/2, \chi}(f)}_{L^{\infty}(B_{r_{x_0}}(x_{0}))}
$$
where
\eqn{defimax}
$$ \MMM^{2s-\beta}_{\rr_{x_{0}}/2, \chi}(f)(x):= \sup_{ \sigma \leq \rr_{x_{0}}/2 }\, \sigma^{2s-\beta}\nra{f}_{L^\chi(B_{\sigma}(x))}\;.$$
\subsection{Oscillations and potential estimates for the gradient} When passing to gradient regularity we use the nonlinear potential $\mathbf{I}^{f}_{2s-1,\chi}$ to obtain sharp partial gradient regularity results, provided, of course, $s>1/2$, an  assumption that we shall keep from now on. Needless to say, $a(\cdot)$ must satisfy stronger, yet necessary regularity assumptions. Specifically, we assume that $a(\cdot)$ satisfies \eqref{bs.4} with the explicit modulus of continuity 
\eqn{modco1}
$$\omega(r)=  \min\{r^\alpha,1\}, \qquad \alpha \in (0,2s-1)\,.$$
Moreover, we assume that for all $v,w \in \mathbb{R}^N$, and any ball $B_r(x_{0}) \Subset \Omega$, $a(\cdot)$ satisfies
\eqn{modco2}
$$ \sup_{x,y \in B_{r}(x_{0})} |a(x,y,v,w)-a(x_{0},x_{0},v,w)| \leq \Lambda \omega_{\star}(r) $$
for some modulus of continuity $\omega_{\star}(\cdot)$ whose features will be determined later on. Note that, as standard from classical Schauder estimates, when asking for gradient regularity results, such as for instance gradient H\"older continuity, additional assumptions as \rif{modco1}-\rif{modco2} are necessary. We present the results in increasing order of regularity.
\begin{theorem}[Partial Lipschitz regularity] \label{gradreg1}
	Under assumptions \eqref{bs.1}-\eqref{bs.5} and \eqref{modco1}-\eqref{modco2}, and with $s>1/2$, let $u$ be a weak solution to  \eqref{nonlocaleqn}. If 
	\eqn{assilip} 
	$$\int_0 \omega_{\star}(\lambda)\frac{\dlam}{\lambda} < \infty $$
	and
		\eqn{assilipanc} 
	$$\quad \sup_{x\in \Omega}\, \mathbf{I}^{f}_{2s-1,\chi}(x, 1)<\infty\,,$$
	then there exists an open set $\Omega_{u}\subset \Omega$, and a positive number $\gamma \equiv \gamma (\data)\leq n-2s$, such that
$
u\in C^{0,1}_{\loc}(\Omega_{u};\er^N) $ and $  \ddim(\Omega\setminus \Omega_u)\leq n-2s-\gamma. 
$ 
\end{theorem}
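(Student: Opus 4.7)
\textbf{Proof proposal for Theorem \ref{gradreg1}.}

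The strategy is to adapt the classical linearization/excess-decay scheme of Giusti-Miranda to the affine excess $\tx{E}_u(\ell;x_0,\rr)$ of Definition \ref{leccesso}, which is finite precisely because $s>1/2$. The regular set $\Omega_u$ will consist of those points $x_0\in\Omega$ at which the nonlocal smallness conditions (analogous to \eqref{linearizzanti} but now involving the affine excess in place of $\tx{E}_u(x_{0},\rr)$) are met at some scale $\rr$. The bound on $\ddim(\Omega\setminus\Omega_u)$ then follows by the same universal-threshold Vitali-covering argument used for Theorems \ref{ureg1}--\ref{ureg5}, based on the fact that the failure of the smallness of the excess controls the measure of bad points. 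Note that the finiteness of $\mathbf{I}^{f}_{2s-1,\chi}(x,1)$ implies the finiteness of $\mathbf{I}^{f}_{2s,\chi}(x,1)$ by H\"older's inequality in the radial variable, so Theorem \ref{ureg3} is applicable and the precise representative \eqref{lebp} is available at every regular point.

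At a regular point $x_0\in\Omega_u$, pick a small ball $B_{\rr}(x_0)\subset\Omega$ and let $\ell_{\rr}$ be the $L^2$-projection of $u$ onto affine maps on $B_{\rr}$. I would compare $u$ with the solution $v$ of the frozen constant-coefficient system $-\mathcal{L}_{a_0}v=0$ in $B_{\rr}$ with $v=u$ outside $B_{\rr}$, where $a_0:=a(x_0,x_0,u(x_0),u(x_0))$. Using \eqref{bs.4}, \eqref{modco1} to control the oscillation of $a(\cdot)$ in the $(v,w)$-variables by the pointwise oscillation estimate \eqref{oscosc} from Theorem \ref{ureg3}, and using \eqref{modco2} to control its oscillation in the $(x,y)$-variables by $\omega_{\star}(\rr)$, the comparison would yield
\[
\tx{E}_{u-v}(x_0,\rr)\ \lesssim\ \bigl[\omega_{\star}(\rr)+\omega(\tx{osc}\,u)\bigr]\,\tx{E}_u(\ell_{\rr};x_0,\rr)\ +\ \rr^{2s-1}\nra{f}_{L^\chi(B_{\rr})}\,.
\]
For the constant-coefficient solution $v$, the linear nonlocal theory provides an affine excess decay estimate of $C^{1,\alpha'}$-type: for some $\alpha'>0$ and any $\tau\in(0,1)$, there exists an affine map $\tilde\ell$ with
\[
\tx{E}_v(\tilde\ell;x_0,\tau\rr)\ \leq\ C\tau^{1+\alpha'}\,\tx{E}_v(\ell_{\rr};x_0,\rr)\,.
\]

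Combining the two previous displays and choosing $\tau$ small enough (independently of the point and scale, depending only on $\data$) to beat the decay factor into $1/4$, yields the excess decay inequality
\[
\tx{E}_u(\ell_{\tau\rr};x_0,\tau\rr)\ \leq\ \tfrac{1}{2}\,\tx{E}_u(\ell_{\rr};x_0,\rr)\ +\ C\,\omega_{\star}(\rr)\,\tx{E}_u(\ell_{\rr};x_0,\rr)\ +\ C\,\rr^{2s-1}\nra{f}_{L^\chi(B_{\rr})}\,.
\]
Iterating along the dyadic scales $\rr_k:=\tau^k\rr_0$, the first term gives a geometric contraction, the second is absorbed thanks to the Dini assumption \eqref{assilip} which guarantees $\sum_k\omega_{\star}(\rr_k)<\infty$, and the third is summed via the definition \eqref{diadico} of $\mathbf{I}^{f}_{2s-1,\chi}$ using assumption \eqref{assilipanc}. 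This gives a uniform bound on the linear parts $|\nabla\ell_{\rr_k}|$ across all scales, which is equivalent to Lipschitz continuity of $u$ at $x_0$ with locally uniform constants throughout $\Omega_u$.

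The main obstacle lies in the rigorous derivation of the comparison estimate, since the affine excess incorporates the nonlocal tail $\tail(u-\ell;B_{\rr})$, and $\ell$ being affine, this tail is finite (the integrand decays like $|y|^{1-n-2s}$ which is integrable at infinity precisely because $s>1/2$) but highly sensitive to the choice of $\ell$; one must carefully track how the tail rescales when passing from $\ell_{\rr}$ to $\ell_{\tau\rr}$, since $|\nabla(\ell_{\rr}-\ell_{\tau\rr})|$ generates tail terms that must be compared to $\tx{E}_u(\ell_{\rr};x_0,\rr)$. A secondary delicate point is that the freezing $a_0=a(x_0,x_0,u(x_0),u(x_0))$ requires the pointwise representative and quantitative continuity from Theorem \ref{ureg3}, whose applicability in turn depends on the BMO smallness of $a(\cdot)$ at $x_0$: one therefore has to show that the $\delta$-vanishing hypothesis needed for Theorem \ref{ureg3} is implied by \eqref{modco1}--\eqref{modco2}, so that the two layers of smallness conditions are compatibly stitched together on $\Omega_u$.
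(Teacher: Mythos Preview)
Your overall architecture (freeze coefficients at $a_0=a(x_0,x_0,u(x_0),u(x_0))$, compare with a constant-coefficient solution, use affine excess decay, iterate and sum via the Dini condition and the potential) matches the paper's Section~\ref{final}. However, two of your displayed inequalities are genuinely wrong in the nonlocal setting, and a third step is missing; together they prevent the proof from closing.

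\textbf{The contraction factor.} Your iteration
\[
\tx{E}_{u}(\ell_{\tau\rr};x_0,\tau\rr)\ \le\ \tfrac12\,\tx{E}_{u}(\ell_{\rr};x_0,\rr)\ +\ \ldots
\]
does \emph{not} yield Lipschitz continuity. With $E_k:=\tx{E}_u(\ell_{\rr_k};x_0,\rr_k)$ and $\rr_k=\tau^k\rr_0$ you get $E_k\lesssim (1/2)^k$, but
\(
|D\ell_{\rr_{k+1}}-D\ell_{\rr_k}|\lesssim \rr_k^{-1}E_k\approx (2\tau)^{-k},
\)
which diverges for $\tau<1/2$. What you need is a factor $\le c\tau^{\gamma_1}$ with $\gamma_1>1$ in the \emph{unnormalized} excess, equivalently a contraction of the \emph{normalized} quantity $\mathcal E_1(x,\rr):=\rr^{-1}\inf_\ell\tx{E}_u(\ell;x,\rr)$. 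This is exactly what the paper derives in \eqref{firstorderex}--\eqref{E1est} using the $C^{\infty}$ decay of the frozen problem (Proposition~\ref{phr}) with $\gamma_1\in(1,2s)$.

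\textbf{The comparison estimate is too local.} In the nonlocal setting, testing the difference equation \eqref{weq} produces the term $\textnormal{I}_3$ in \eqref{combi}, where $\tilde y$ ranges over $\mathbb{R}^n\setminus B_{2\rr}$. You cannot bound $|a(\tilde x,\tilde y,\cdot)-a_0|$ by $\omega_\star(\rr)$ there: one must decompose into dyadic annuli up to a fixed reference scale $\rr_\star$, and then use the trivial bound $|a-a_0|\le 2\Lambda$ beyond $\rr_\star$. The correct comparison (Lemma~\ref{comparisonestimate}) therefore reads
\[
\nra{w}_{L^2(B_\rr)}\ \le\ c\,\rr\,\mathbf{R}(x,\rr),\qquad
\mathbf{R}(x,\rr)=\Big[\sum_{i=0}^{N_\rr}2^{(1-2s)i}\omega_\star(2^i\rr)+\Big(\tfrac{\rr}{\rr_\star}\Big)^{\alpha\beta}\Big]\MMM^{\#,1}_{\rr,\rr_\star}(x)+\rr^{2s-1}\nra{f}_{L^\chi(B_{4\rr})}\,,
\]
where $\MMM^{\#,1}_{\rr,\rr_\star}(x)=\sup_{\rr\le\sigma\le\rr_\star}\sigma^{-1}\tx{E}_u(x,\sigma)$ is essentially the Lipschitz quantity you are trying to bound. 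Your formula with $[\omega_\star(\rr)+\omega(\operatorname{osc}u)]\,\tx{E}_u(\ell_\rr;x_0,\rr)$ misses both the dyadic sum and, more seriously, this self-referential factor.

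\textbf{The missing absorption.} Because $\mathbf{R}(x,\rr)$ contains $\MMM^{\#,1}_{\rr,\rr_\star}(x)$, summing \eqref{E1est} over scales yields (see \eqref{maxf1}--\eqref{combina00}) an inequality of the form
\(
\MMM^{\#,1}_{\rr_k,\rr_\star}(x)\le c_4\sum_j\mathbf{R}(x,\rr_j)+\text{data},
\)
and the right-hand side again contains $\MMM^{\#,1}_{\rr_k,\rr_\star}(x)$. The Dini hypothesis \eqref{assilip} enters precisely here: via the discrete Fubini \eqref{smalldini0} it forces the total coefficient in front of $\MMM^{\#,1}_{\rr_k,\rr_\star}(x)$ to be $\le 1/4$ once $\rr_\star$ is chosen small (see \eqref{meetme}--\eqref{smalldini}), and the term $(\rr_j/\rr_\star)^{\alpha\beta}$ is handled by splitting at a large index $j_0$ (see \eqref{smalldini2}). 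This absorption step is the heart of the argument and is absent from your scheme.

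Two smaller points. The regular set $\Omega_u$ in the paper is \emph{not} defined via the affine excess but via the zeroth-order excess $\tx{E}_u(x,\rr)$, exactly as in Theorem~\ref{ureg5} with $\beta=1/2$ (Section~\ref{identfinal}); this is what feeds into Proposition~\ref{regp4} for the Hausdorff dimension bound and what supplies the prior H\"older continuity \eqref{holdest22} used to control $(\operatorname{osc} u)^\alpha\lesssim r^{\alpha\beta}$ in \eqref{osci}. And the oscillation control you invoke from Theorem~\ref{ureg3} is a deviation-from-average bound, not an $L^\infty$ oscillation bound; the paper instead uses the genuine $C^{0,\beta}$ estimate \eqref{holdest22} on the reference neighbourhood.
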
  
Theorem \ref{gradreg1} can be considered as a partial regularity analog of the nonlocal potential estimates recently obtained in \cite{dnowak, kns} both in the linear and in the nonlinear case, and extending the local ones originally proved in \cite{min11}.  In our setting we need a new treatment that in fact allows us to work under the integral condition on $\omega_{\star}(\cdot)$ in \rif{assilip}. This is the well-known Dini continuity of  coefficients and it is a necessary and sufficient condition for gradient boundedness already in the local case \cite{mazyadini}. Assumption \rif{assilipanc} implies that 
	$$\lim_{\sigma\to 0}\, \mathbf{I}^{f}_{2s-1,\chi}(x,\sigma)=0\ \quad \mbox{for every  $x\in \Omega$}\,.$$
Strengthening this into a locally uniform convergence leads to upgrade gradient boundedness to gradient continuity and to reach the delicate borderline case of Schauder estimates. 
\begin{theorem}[Partial gradient continuity under Dini coefficients] \label{gradreg2}
	Under assumptions \eqref{bs.1}-\eqref{bs.5} and \eqref{modco1}-\eqref{modco2}, and with $s>1/2$, let $u$ be a weak solution to \eqref{nonlocaleqn}. If \eqref{assilip}  holds together with 
	\eqn{assilipul}
	$$\lim_{\sigma\to 0}\, \mathbf{I}^{f}_{2s-1,\chi}(x,\sigma)=0\ \ \mbox{uniformly with respect to $x\in \bar{\Omega}$}\,,$$
	then there exists an open set $\Omega_{u}\subset \Omega$, and a positive number $\gamma \equiv \gamma (\data)\leq n-2s$, such that
$Du$ is continuous in $\Omega_{u}$ and $\ddim(\Omega\setminus \Omega_u)\leq n-2s-\gamma$.
\end{theorem}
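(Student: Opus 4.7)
The plan is to upgrade the partial Lipschitz estimate of Theorem \ref{gradreg1} into a continuity statement for $Du$ through a Campanato-type iteration on the affine excess $\tx{E}_u(\ell;x_{0},\sigma)$ from \eqref{excl}. The regular set $\Omega_{u}$ is the same one identified in Theorem \ref{gradreg1}: it consists of those $x_{0} \in \Omega$ around which the smallness package \eqref{linearizzanti}, adapted to the affine setting, can be activated at some scale $\varrho$. The Hausdorff dimension bound $\ddim(\Omega \setminus \Omega_u) \leq n-2s-\gamma$ is therefore inherited from the construction already performed there and does not need to be redone.

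Fix $x_{0} \in \Omega_{u}$ and a small ball $B_\rho(x_{0}) \Subset \Omega_{u}$ on which the relevant smallness holds. I would freeze coefficients by setting $\bar{a} := a(x_{0},x_{0},(u)_{B_\rho},(u)_{B_\rho})$ and, denoting by $\ell_\rho$ the best affine $L^2$-approximant of $u$ on $B_\rho$, solve the constant-coefficient nonlocal system $-\mathcal{L}_{\bar{a}}(v-\ell_\rho) = 0$ in $B_\rho(x_{0})$ with exterior datum matching $u-\ell_\rho$. Regularity for this homogeneous constant-coefficient problem, already invoked in the proof of Theorem \ref{gradreg1}, yields a Campanato-type affine decay
\[
\tx{E}_v(\ell_{v,\tau\rho}; x_{0}, \tau\rho) \leq c_{0}\,\tau^{1+\alpha_{0}}\,\tx{E}_v(\ell_\rho; x_{0}, \rho)
\]
for universal $\alpha_{0}>0$ and all $\tau \in (0,1/4)$. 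The comparison estimate for $u-v$ is controlled via \eqref{modco1}-\eqref{modco2} and the BMO smallness of $a(\cdot)$ by $\bigl[\omega_{\star}(\rho) + \delta + \omega(\tx{E}_u(x_{0},\rho))\bigr]$ times the current affine excess, plus the data contribution $\rho^{2s-1}\nra{f}_{L^\chi(B_\rho(x_{0}))}$. Combining, choosing $\tau$ with $c_{0}\tau^{\alpha_{0}} \leq 1/4$, and then $\delta$ small enough to be absorbed, I arrive at the iteration
\[
\tx{E}_u(\ell_{\tau\rho}; x_{0}, \tau\rho) \leq \tfrac{1}{2}\,\tx{E}_u(\ell_\rho; x_{0}, \rho) + c\bigl[\omega_{\star}(\rho) + \rho^{2s-1}\nra{f}_{L^\chi(B_\rho(x_{0}))}\bigr]\,.
\]
Iterating at $\rho_k := \tau^k \rho_{0}$ and summing, the Dini condition \eqref{assilip} makes $\sum_k \omega_{\star}(\rho_k)$ finite, while \eqref{assilipul} makes $\sum_k \rho_k^{2s-1}\nra{f}_{L^\chi(B_{\rho_k}(x))}$ vanish uniformly in $x$ on compact subsets of $\Omega_{u}$. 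Hence the sequence of affine gradients $\{D\ell_{\rho_k}\}_k$ is uniformly Cauchy, and its limit, depending continuously on the base point, is identified with $Du$.

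The principal obstacle, and precisely what distinguishes this argument from the one for Theorem \ref{gradreg1}, is upgrading the pointwise summability used there into \emph{locally uniform} control. Three issues require care: (i) the entry conditions to the iteration, now phrased in terms of the affine excess, must persist in a neighbourhood of every regular point, i.e.\ one must verify that the affine-excess variant of $\Omega_{u}$ is open; (ii) the composite modulus $\omega(\tx{E}_u(x_{0},\rho))$ coming from \eqref{bs.4} must be propagated through the iteration in tandem with $\omega_{\star}$, using that $\tx{E}_u(x_{0},\rho) \to 0$ uniformly on compacts of $\Omega_{u}$ thanks to Theorem \ref{ureg4}; and (iii) the restrictions $s>1/2$ and $\alpha \in (0,2s-1)$ are used precisely to absorb the tails of the affine approximants $\ell_{\rho_k}$ into the scheme, since those tails carry a $\rho^{2s-1}$ factor that must dominate the $\rho^\alpha$ arising from \eqref{modco1} when summing the geometric perturbation series.
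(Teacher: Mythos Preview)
Your overall strategy—iterate the affine excess, control the accumulated coefficient error by the Dini condition \eqref{assilip}, control the data error by the uniform convergence \eqref{assilipul}, and conclude that the sequence of best-affine gradients $\{D\ell_{\rho_k}\}$ is uniformly Cauchy—is exactly the paper's. The regular set and the dimension bound are indeed inherited from Theorem~\ref{gradreg1}. However, the comparison estimate you postulate is a \emph{local} one, and it understates the nonlocal structure in a way that matters for the proof.

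When you solve the frozen problem on $B_\rho$ with exterior datum $u-\ell_\rho$ and test with $w=u-\ell_\rho-v$, the tail portion of the energy forces you to split $\mathbb{R}^n\setminus B_{2\rho}$ into dyadic annuli up to a fixed reference scale $\rr_\star$; on the $i$-th annulus the coefficient error is $\omega_\star(2^i\rho)$ (plus the $u$-oscillation part), so the comparison error is not $c\,\omega_\star(\rho)$ but rather, after normalising by $\rho$, the quantity $\mathbf R(x,\rho)$ in \eqref{compestuvx2}, which carries the dyadic sum $\sum_{i=0}^{N_\rho}2^{(1-2s)i}\omega_\star(2^i\rho)$ together with a remainder $(\rho/\rr_\star)^{\alpha}$ coming from the far tail where no continuity of $a(\cdot,\cdot,u,u)$ is available at all. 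Two consequences: (a) the paper freezes at $a(x,x,u(x),u(x))$ using the \emph{Lipschitz} bound from Theorem~\ref{gradreg1} (not merely $\tx{E}_u\to 0$), which is what produces the clean power $(\rho/\rr_\star)^\alpha$ and lets one replace the maximal function $\MMM^{\#,1}_{\rho,\rr_\star}$ by the fixed constant $H_2$; (b) to show the tail of the series $\sum_{j\ge k_1}\mathbf R(x,\rho_j)$ is small—which is what ``uniformly Cauchy'' actually requires—the dyadic sum forces an intermediate scale $\rr_{k_0}$ (Remark~\ref{amoregen}) so that the double sum re-organises, via discrete Fubini as in \eqref{smalldini0}, into $\int_0^{2\rr_{k_0}}\omega_\star(\lambda)\,d\lambda/\lambda$, small by Dini. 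Your single-scale term $\omega_\star(\rho)$ would make this step trivial, but it is not what the nonlocal comparison actually delivers.

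In short: the scheme is right, but the one-step inequality you wrote down is the local version; the nonlocal version is Lemma~\ref{comparisonestimate}, and closing the uniform-Cauchy argument from it needs the three-scale device $\rr_{k_2}<\rr_{k_1}<\rr_{k_0}$ of Step~2 in Section~\ref{final2}, together with the preliminary VMO-type decay of Step~1 there (your claimed use of Theorem~\ref{ureg4} is replaced by a direct affine-excess version, \eqref{uniform2}).
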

Theorem \ref{gradreg2} is the natural gradient version of Theorem \ref{ureg4}. Indeed, again invoking \rif{qualo2}, we have 
$$
f\in L^p\,, \ p> \frac{n}{2s-1}  \Longrightarrow f\in L\left(\frac{n}{2s-1},1\right) \Longrightarrow \eqref{assilipul} \Longrightarrow \mbox{$Du$   is continuous in $\Omega_{u}$}\,.
$$
Note that formally taking $s=1$ we have a partial regularity version of a well-known theorem of Stein claiming that solutions to $\Delta u\in L(n,1)$ are of class $C^1$ \cite{stein}. As for coefficients, Theorem \ref{gradreg2} perfectly parallels the results available in the local case, where both a uniform convergence of potentials and the Dini continuity of coefficients are sufficient conditions for gradient continuity. See for instance \cite{kumig, KMdini, yanyanli} and related references.

Finally, considering H\"older continuous coefficients, so that \rif{modco1} is automatically satisfied, leads to a partial regularity analog of classical Schauder estimates, which is again optimal with respect to data (see also \cite{feros, KiWe} for the scalar case). 
\begin{theorem}[Partial Schauder] \label{gradreg3}
	Under assumptions \eqref{bs.1}-\eqref{bs.5} and \eqref{modco1}-\eqref{modco2}, and with $s>1/2$, let $u$ be a weak solution to  \eqref{nonlocaleqn}. If  
	\eqn{assilip2}
	$$  \omega_{\star}(r) \lesssim r^{\alpha} \quad \mbox{and} \quad f\in \mathcal M^{\frac{n}{2s-1-\alpha}}\,,$$
	then there exists an open set $\Omega_{u}\subset \Omega$, and a positive number $\gamma \equiv \gamma (\data)\leq n-2s$, such that
$
u\in C^{1,\alpha}_{\loc}(\Omega_{u};\er^N)$ and $\ddim(\Omega\setminus \Omega_u)\leq n-2s-\gamma$.
\end{theorem}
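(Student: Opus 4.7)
The plan is to follow the $\varepsilon$-regularity paradigm already in place for Theorems \ref{ureg1}--\ref{ureg5}, but transplanted to the gradient scale, where the relevant oscillation object is the affine excess $\tx{E}_u(\ell;x_0,\varrho)$ from \eqref{excl}. First I would define the regular set $\Omega_u$ as the set of points $x_0\in\Omega$ at which, for some radius $\varrho_{x_0}$ and some affine map $\ell$, the gradient-scale analog of \eqref{linearizzanti} holds: $\tx{E}_u(\ell;x_0,\varrho_{x_0})$ lies below a universal threshold $\epsb{b}$, and $a(\cdot)$ is $\delta$-vanishing in $B_{R\varrho_{x_0}}(x_0)$; the $(\delta,r_0)$-\textnormal{BMO} property of $a(\cdot)$ is automatic from $\omega_\star(r)\lesssim r^\alpha$ once $\varrho_{x_0}$ is small enough. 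The Marcinkiewicz hypothesis in \eqref{assilip2} yields, via layer-cake, the pointwise control
\[
\sigma^{2s-1-\alpha}\nra{f}_{L^\chi(B_\sigma(x_0))} \, \lesssim \, \nr{f}_{\mathcal M^{\frac{n}{2s-1-\alpha}}}
\]
for every $\sigma,x_0$, so that data enter the excess iteration at the sharp scale $1+\alpha$ as a bounded (not necessarily small) perturbation, in analogy with the role of $\nr{f}_{\mathcal M^{n/(2s-\beta)}}$ in Theorem \ref{ureg5}.

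The technical core is a quantitative decay estimate for the affine excess. On a ball $B_\varrho(x_0)$ where the above smallness conditions hold, I compare $u$ with the solution $v$ to the constant-coefficient homogeneous system $-\mathcal L_{a_0} v = 0$ in $B_{\varrho/2}(x_0)$ with exterior data $u$, where $a_0 := a_{B_\varrho(x_0)}((u)_{B_\varrho(x_0)},(u)_{B_\varrho(x_0)})$. Since $a_0$ is a constant elliptic matrix and the kernel is translation invariant, $v$ admits interior $C^{1,\alpha'}$ a priori bounds for every $\alpha' \in (0,2s-1)$, hence a universal affine-excess decay $\tx{E}_v(\tilde\ell;x_0,\tau\varrho) \le c\tau^{1+\alpha'}\tx{E}_v(\tilde\ell;x_0,\varrho)$ at the natural Campanato rate. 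Feeding this into a comparison estimate controlling $u-v$ by the coefficient perturbation $\omega_\star(\varrho)+\omega(\tx{E}_u(\ell;x_0,\varrho))$ and the data term $\varrho^{2s}\nra{f}_{L^\chi(B_\varrho(x_0))}$, and choosing $\tilde\ell \equiv \ell_{\tau\varrho}$ as the first-order Taylor polynomial of $v$ at $x_0$ adjusted by $\ell_\varrho$, gives a one-step decay
\[
\tx{E}_u(\ell_{\tau\varrho};x_0,\tau\varrho) \le c\tau^{1+\alpha}\tx{E}_u(\ell_\varrho;x_0,\varrho) + c\varrho^{1+\alpha}\bigl[\varrho^{-\alpha}\omega_\star(\varrho) + \nr{f}_{\mathcal M^{\frac{n}{2s-1-\alpha}}}\bigr].
\]
Fixing $\tau$ universally small so that $c\tau^{\alpha'-\alpha}\le 1/2$ and iterating along $\sigma_k = \tau^k\varrho_{x_0}$ produces $\tx{E}_u(\ell_{\sigma_k};x_0,\sigma_k) \lesssim \sigma_k^{1+\alpha}$ and a Cauchy sequence $\ell_{\sigma_k}\to \ell_{x_0}$ with controlled slope, whose limit is the first-order Taylor polynomial of $u$ at $x_0$. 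A standard Campanato-type argument, together with the openness of the smallness conditions, upgrades this pointwise bound to $u \in C^{1,\alpha}_{\loc}$ on a neighborhood of $x_0$.

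For the Hausdorff dimension bound on $\Omega \setminus \Omega_u$ I would argue exactly as in Theorems \ref{ureg1}--\ref{ureg5}: the complement is contained in the set where the affine excess fails to be small on every scale, and this set is controlled in Hausdorff dimension by $n-2s-\gamma$ for a universal $\gamma>0$ via a density/capacity estimate based on the $W^{s,2}_{\loc}$-regularity of $u$; the BMO-smallness of $a(\cdot)$ plays no singular role since \eqref{modco1}--\eqref{modco2} enforce it uniformly.

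The principal obstacle is the comparison step: the affine map $\ell$ does not decay at infinity, so $\tail(u-\ell;B_\varrho(x_0))$ entangles the far-field of $u$ with the linearization error, and $\mathcal L_{a_0}\ell$ is not identically zero. The correct remedy is to exploit the symmetry condition \eqref{bs.2} together with the antisymmetry of $(u(x)-u(y))$ to rewrite the equation for $u-\ell$ with a manageable right-hand side, and to choose $\ell$ as a suitably averaged slope so that the residual "affine-kills-off" terms are comparable to $\varrho^{2s}\nra{f}$. The constraint $\alpha < 2s-1$ is crucial here: it is the largest range in which the constant-coefficient $C^{1,\alpha'}$ decay of $v$ can beat the target $\tau^{1+\alpha}$ rate with room to absorb coefficient and data errors in the iteration.
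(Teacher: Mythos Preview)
Your strategy differs structurally from the paper's, and the difference hides a genuine gap. The paper does \emph{not} define $\Omega_u$ via smallness of the affine excess; it keeps the same $\Omega_u$ already identified in Theorem~\ref{ureg5} (zero-order excess below $\epsb{b}$, with $\beta=1/2$), then bootstraps: first Theorem~\ref{gradreg1} yields local Lipschitz regularity on $\Omega_u$ together with the a priori bound $\MMM^{\#,1}_{r_{x_0}}(x)\le H_2$ (see \eqref{Lipest}), and only with this bound in hand is the affine-excess iteration run. Under \eqref{assilip2} the comparison remainder \eqref{compestuvx2} collapses to $\mathbf{R}(x,\rr)\lesssim \rr^\alpha$ (display \eqref{compestuvx222}), the one-step decay \eqref{E1est} becomes $\mathcal E_{1+\alpha}(x,\texttt{t}\rr)\le\tfrac12\mathcal E_{1+\alpha}(x,\rr)+c$, and Campanato's characterization finishes.

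The concrete gap in your direct route is the coefficient-perturbation estimate. You claim control by $\omega_\star(\varrho)+\omega(\tx{E}_u(\ell;x_0,\varrho))$, but $a(\cdot)$ depends on $u(\ti x),u(\ti y)$, not on $u-\ell$; freezing at $a_0=a(x,x,u(x),u(x))$ one has
\[
|a(\ti x,\ti y,u(\ti x),u(\ti y))-a_0|\ \lesssim\ \omega_\star(\varrho)+\bigl(\osc_{B_\varrho(x)}u\bigr)^\alpha,
\]
and $\osc_{B_\varrho}u$ is \emph{not} dominated by $\tx{E}_u(\ell;x,\varrho)$: it carries an extra $|D\ell|\varrho$ which your smallness hypothesis leaves unbounded. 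In the paper this is exactly where the prior H\"older/Lipschitz bound enters (see \eqref{Nsharp222}--\eqref{osci}), producing $\|a-a_0\|_{L^\infty(\mathcal B_r)}\le cr^{\alpha\beta}+c\omega_\star(r)$ independently of any chosen affine map; without it, the terms $\mathcal R_1$--$\mathcal R_3$ in \eqref{combina00} cannot be absorbed and the iteration does not close. Your attempt to patch this via ``choosing $\ell$ as a suitably averaged slope'' is circular: controlling $|D\ell|$ uniformly along the iteration is precisely the Lipschitz estimate you are after.

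One factual correction on your ``principal obstacle'': for $s>1/2$ every affine map $\ell$ \emph{is} a weak solution of $-\mathcal L_{a_0}\ell=0$ in $\er^n$ (oddness of $D\ell(x-y)$ against the symmetric kernel; see the line following \eqref{solvesh}, citing \cite[Remark 3.4]{kns}). The tail of $u-\ell$ is finite by \eqref{tritri}$_2$ precisely because $s>1/2$. So neither of the issues you flag is the real obstruction; the missing ingredient is the $C^{0,\beta}\to C^{0,1}$ bootstrap that the paper performs before ever touching the affine excess.
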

As for the regular set $\Omega_u$, the criterion for membership of a point is the same for Theorems \ref{gradreg1}-\ref{gradreg3} and coincides with the one determined in Theorem \ref{ureg5}, for the choice $\beta=1/2$. More in general, the regular set $\Omega_{u}$ can be characterized by means of certain energy thresholds and scales. It is worth remarking that such thresholds essentially depend on the operator $a(\cdot)$ and the ingredient $f$, but are otherwise independent of the solution in question. 
\begin{theorem}[Regular set and universal energy thresholds] \label{singth} For each of Theorems \ref{ureg1}-\ref{ureg2} and \ref{ureg4}-\ref{gradreg2}
there exist positive numbers 
$\epsb{b}, \rhob{b}$ and $ R\geq 1$, that are independent of the solution $u$, such that
\eqn{ilsingolare}
$$
 \Omega_{u} := 
\left\{
x \in \Omega\, \colon\,  \mbox{there exists $\rr\in (0, \rhob{b})$ such that $\tx{E}_{u}(x,\rr)<\epsb{b}$, with $B_{R \rr }(x)\Subset \Omega$}\right\}.
$$
\end{theorem}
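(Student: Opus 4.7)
The key observation is that each of Theorems \ref{ureg1}-\ref{ureg2} and \ref{ureg4}-\ref{gradreg2} proceeds by verifying, at each regular point $x_{0}$, the three linearizing smallness conditions in \eqref{linearizzanti}: the smallness of the excess $\tx{E}_{u}(x_{0},\rr)$, the $\delta$-vanishing of $a(\cdot)$ in $B_{R\rr}(x_{0})$, and the smallness of $\ppsixo$ for $\sigma\le\rr$. My plan is to argue that under each theorem's hypotheses the last two conditions are automatic at every $x_{0}\in\Omega$, once $\rr$ is chosen below a universal threshold $\rhob{b}$, so that only the excess condition carries genuine information about $x_{0}$. This directly yields the characterization \eqref{ilsingolare}.

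Concretely, I would fix $R\equiv R(\data)$ and $\delta\equiv\delta(\data)$ as prescribed by the relevant theorem (they are universal), and then select $\rhob{b}\le r_{0}/R$ so that the $(\delta,r_{0})$-\textnormal{BMO} hypothesis on $a(\cdot)$ guarantees $h_{a}(x_{0},R\rr)\le\delta$ for every $x_{0}$ with $B_{R\rr}(x_{0})\Subset\Omega$ and $\rr\le\rhob{b}$. The data-side smallness is handled case by case in the same spirit: in Theorem \ref{ureg1} a further shrinkage $\rhob{b}\le\sigma_{0}$ turns \eqref{f.bmo} into the required uniform bound; in Theorems \ref{ureg2}, \ref{ureg4} and \ref{gradreg2} the hypotheses \eqref{f.vmo}, \eqref{unic} and \eqref{assilipul} are themselves locally uniform $\sigma\to 0$ statements on $\Omega$ and directly supply a universal $\rhob{b}$; in Theorems \ref{ureg5} and \ref{gradreg1} the Marcinkiewicz integrability \eqref{asbeta} or the Riesz-potential bound \eqref{assilipanc}, combined with absolute continuity as in \eqref{azero}, produces uniform smallness of $\ppsixo$ at small scales through maximal-function control as in \eqref{defimax}. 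In each case the same universal triple $\epsb{b},\rhob{b},R$ works.

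With the second and third entries of \eqref{linearizzanti} thereby secured for every $x_{0}\in\Omega$ and every $\rr\le\rhob{b}$ with $B_{R\rr}(x_{0})\Subset\Omega$, the full local linearization established in Theorem \ref{ureg3} and propagated through Theorems \ref{ureg1}-\ref{gradreg2} applies precisely at those points where the first entry of \eqref{linearizzanti} holds, giving \eqref{ilsingolare}. Openness of $\Omega_{u}$ then follows from the continuity of $x\mapsto\tx{E}_{u}(x,\rr)$ at fixed $\rr$: a strict inequality $\tx{E}_{u}(x_{0},\rr)<\epsb{b}$ persists in a neighborhood of $x_{0}$, and the containment $B_{R\rr}(x)\Subset\Omega$ is stable under small perturbations of $x$. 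The main technical obstacle I foresee is ensuring the \emph{uniformity in $x_{0}$} of the $f$-smallness in Theorems \ref{ureg5} and \ref{gradreg1}, where pointwise absolute continuity of the integral is not enough; the remedy is to pass through the fractional maximal operator $\MMM^{2s-\beta}_{\rr/2,\chi}(f)$ appearing in \eqref{holdest-anc}, which produces an $x_{0}$-independent modulus of decay for $\sigma\mapsto\ppsixo$ on all of $\Omega$.
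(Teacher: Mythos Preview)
Your plan is correct and matches the paper's approach: the paper itself gives no standalone proof of Theorem~\ref{singth} but refers to Section~\ref{parametri} (the definition of $\rhob{b}$ in \eqref{raggiob0} and of $\Omega_u$ in \eqref{singolare}) together with the case-by-case verification of \eqref{5,6} carried out inside each theorem's proof and summarized in Remark~\ref{isingolari} and Section~\ref{identfinal}. One small remark: the ``technical obstacle'' you anticipate for Theorems~\ref{ureg5} and~\ref{gradreg1} is simpler than you suggest. In Theorem~\ref{ureg5} the Marcinkiewicz bound \eqref{marci} gives $\sigma^{2s}\nra{f}_{L^\chi(B_\sigma)}\lesssim\sigma^{\beta}\nr{f}_{\mathcal M^{n/(2s-\beta)}}$ uniformly in the center, so \eqref{5,6} follows at once; in Theorem~\ref{gradreg1} the inequality \eqref{maggioras} together with the uniform potential bound \eqref{assilipanc} yields $\sigma^{2s}\nra{f}_{L^\chi(B_\sigma(x))}\lesssim\sigma\,\mathbf{I}^{f}_{2s-1,\chi}(x,1)\lesssim\sigma$, again uniform. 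No detour through the maximal operator is needed for the $\Omega_u$ characterization itself (the maximal operator in \eqref{holdest-anc} serves the local estimate, not the verification of \eqref{5,6}).
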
 
Sets of the type in \rif{ilsingolare} are always open and when $u$ is a solution as consider in Theorem \ref{singth} their complement has Hausdorff dimension strictly less than $n-2s$; see Section \ref{rs.sec} for details. A thorough discussion of the choice of the threshold quantities $\epsb{b}, \rhob{b}, R$ starts from Section \ref{parametri} and in fact the proof of Theorem \ref{singth} can be deduced by following the proofs of the preceding theorems. 
Summarizing, in the case of Theorems \ref{ureg1}-\ref{ureg4} the number $\epsb{b}$ depends only on $\data$ and $ \omega(\cdot)$, while $\rhob{b}$ additionally depends on the decay rate of a certain norm of $f$ and on the oscillations of $a(\cdot)$. Additional dependence on $\beta$ occurs in the case of Theorems \ref{ureg0} and \ref{ureg5}. After this point, the regular set $\Omega_{u}$ for Theorems \ref{gradreg1}-\ref{gradreg2} remains essentially the one determined for Theorem \ref{ureg5}. 

\subsection{Everywhere regularity. Dini continuous coefficients}\label{everywhere} When switching to simpler, linear systems of the type
\eqn{eqweaksol-linear}
$$
			\int_{\mathbb{R}^n} \int_{\mathbb{R}^n} \langle a(x,y)(u(x)-u(y)),\varphi(x)-\varphi(y) \rangle \frac{\dxy}{|x-y|^{n+2s}} = \int_{\Omega} \langle f, \varphi \rangle\dx
$$
our approach leads to everywhere interior results under optimal assumptions. Specifically, 
all the results of Theorems \ref{ureg1}-\ref{gradreg3} hold with $\Omega_{u}=\Omega$, i.e., the singular set is empty. In this respect, our results provide a full analog of classical Campanato's theory for systems \cite{cam1, cam2, giusti}. 
This relies on the basic observation that, when no $u$-dependence occurs in the coefficients $a(\cdot)$, then the smallness assumptions of the type \rif{linearizzanti}$_1$ become useless and the linearization process at the heart  of partial regularity only relies on the verification of \rif{linearizzanti}$_{2,3}$. These are in turn only linked to the so-called ``external ingredients", that is, $a(\cdot)$ and $f$, that do not give rise to any singular set. For this, see the definition of the regular set $\Omega_{u}$ in Section \ref{parametri}. In addition to this, all the local estimates that are implicit in our partial regularity proofs, and that depend on the singular set as for instance \rif{holdest}, become unconditional, everywhere estimates in the interior. Indeed, note that estimates as \rif{holdest} exhibit a  hidden dependence on the solution $u$ via $r_{x_0}$, which is in fact determined via the verification of conditions as 
\rif{linearizzanti}$_1$. Again, when no dependence on the solution $u$ is present in the coefficients the corresponding local estimates hold in every interior ball. 
An example of such an outcome is for instance the following result, which mirrors Theorems \ref{gradreg1}-\ref{gradreg3}:
\begin{theorem}[Linear systems with Dini coefficients]\label{ppaolo}
	Under assumptions \eqref{bs.1}-\eqref{bs.2}, \eqref{bs.5} and \eqref{modco2}-\eqref{assilipanc}, and with $s>1/2$, let $u$ be a weak solution to \eqref{eqweaksol-linear}. Then $u$ is locally $C^{0,1}$-regular in $\Omega$. Moreover, 
	\eqn{finale-linear}
$$  [u]_{0,1;B_{r/8}} \leq  c \, r^{-1}\nra{u}_{L^{2}(B_{r})}+ c\, r^{-1}\tail(u;B_{r})  + c\,  \nr{\mathbf{I}^{f}_{2s-1,\chi}(\cdot, 1)}_{L^\infty(B_{r/2})}
$$
holds for every ball $B_r \Subset \Omega$, with  $c\equiv c(\data,\omega_{\star}(\cdot))$. If \eqref{assilipanc} is reinforced in \eqref{assilipul}, then $Du$ is continuous in $\Omega$; finally if \eqref{assilip2} is assumed, then $u\in C^{1,\alpha}_{\loc}(\Omega;\er^N)$. 
\end{theorem}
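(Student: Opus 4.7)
My strategy is to exploit the partial regularity machinery of Theorems \ref{gradreg1}-\ref{gradreg3} and to observe that the singular set arises \emph{exclusively} from the smallness condition \eqref{linearizzanti}$_1$ on the excess $\tx{E}_u$, which in the nonlinear case is needed in order to freeze the $u$-dependence of $a(x,y,u(x),u(y))$ via \eqref{bs.4}. When $a(\cdot)$ carries no $u$-dependence this condition becomes vacuous, so by the characterization in Theorem \ref{singth} we have $\Omega_u=\Omega$ as soon as the two remaining linearization conditions, which concern only $a(\cdot)$ and $f$, are verified at some scale around every point of $\Omega$.

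\textbf{Verification of the linearization conditions.} Under \eqref{modco2}, a direct estimate for the double average in \eqref{gestire} gives
\[
h_a(x_0,r)\leq 2\sup_{x,y\in B_r(x_0)}\sup_{v,w\in\er^N}\snr{a(x,y,v,w)-a(x_0,x_0,v,w)}\leq 2\omega_\star(r),
\]
uniformly in $x_0$. Since $\omega_\star(r)\to 0$, the analog of \eqref{linearizzanti}$_2$ holds at small scales around every $x_0\in \Omega$. As for the data, \eqref{assilipanc} combined with \eqref{diadico} and the absolute continuity of the integral (as in \eqref{azero}) yields $\mathbf{I}^{f}_{2s-1,\chi}(x,\sigma)\to 0$ as $\sigma\to 0$ for each $x\in\Omega$, hence $\sigma^{2s-1}\nra{f}_{L^\chi(B_\sigma(x))}<\epsb{b}$ for small $\sigma$. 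This delivers the gradient-level analog of \eqref{linearizzanti}$_3$, and the Dini excess-decay iteration at the heart of Theorem \ref{gradreg1} can then be carried out around every $x_0\in\Omega$, giving $u\in C^{0,1}_{\loc}(\Omega;\er^N)$.

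\textbf{Local estimate \eqref{finale-linear}.} To derive the quantitative bound I would re-run the freezing/comparison/iteration underlying Theorem \ref{gradreg1} on a fixed ball $B_r\Subset \Omega$, in place of the a priori undetermined ball $B_{r_{x_0}}(x_0)$. One freezes $a(\cdot)$ at the centre, compares $u$ with the solution to the corresponding constant-coefficient linear system carrying the same data, and exploits Dini continuity together with the potential $\mathbf{I}^{f}_{2s-1,\chi}$ to obtain decay of the affine excess $\tx{E}_u(\ell;x,\sigma)$ from \eqref{excl} uniformly over $x\in B_{r/2}$ and $\sigma\leq r/2$. Converting this decay into a pointwise Lipschitz bound and estimating the initial affine excess by the zero-order quantity $\nra{u}_{L^2(B_r)}+\tail(u;B_r)$ (a trivial consequence of \eqref{exc}-\eqref{excl}) produces the right-hand side of \eqref{finale-linear}. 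In the linear case no preliminary smallness condition on $u$ is imposed, so the starting radius can be taken freely with $B_r\Subset \Omega$, and the constants track through with the claimed dependence on $\data$, $\alpha$ and $\omega_\star(\cdot)$.

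\textbf{Continuity and Schauder upgrades; main obstacle.} Reinforcing \eqref{assilipanc} to the locally uniform limit \eqref{assilipul} promotes the pointwise excess decay into a locally uniform one, yielding $Du\in C^0(\Omega;\er^{N\times n})$, in analogy with Theorem \ref{gradreg2}. Under the H\"older assumption \eqref{assilip2}, the same iteration gives geometric decay of order $\alpha$ of the affine excess on every $B_r\Subset\Omega$, so that $u\in C^{1,\alpha}_{\loc}(\Omega;\er^N)$ follows by Campanato's characterization. The main technical obstacle I anticipate is ensuring that the constants and the smallness scales in the Dini iteration can be chosen genuinely uniform over compact subsets of $\Omega$, so that \eqref{finale-linear} depends only on the advertised parameters; this is exactly where the Dini condition on $\omega_\star(\cdot)$ and the uniform bound $\sup_{x\in\Omega}\mathbf{I}^{f}_{2s-1,\chi}(x,1)<\infty$ are crucial, and where the argument that was only pointwise in the partial regularity proofs must be made uniform across $B_{r/2}$.
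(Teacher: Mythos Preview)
Your proposal is correct and follows essentially the same strategy as the paper. Both arguments recognize that in the linear case the machinery of Theorems \ref{gradreg1}--\ref{gradreg3} runs on any interior ball $B_r\Subset\Omega$ because the $u$-dependence of $a(\cdot)$ is absent, so no preliminary smallness of the excess is required and consequently $\Omega_u=\Omega$. The paper is slightly more concrete about \emph{where} the simplification enters: it points directly to \eqref{osci} in the comparison Lemma \ref{comparisonestimate}, observing that with no $u$-dependence one has $\|a-a_0\|_{L^\infty(\mathcal B_r)}\leq c\,\omega_\star(r)$ outright, so the preliminary H\"older information \eqref{holdest22} (and hence the whole identification of a reference ball $B_{r_{x_0}}(x_0)$ in Sections \ref{identfinal}--\ref{eidos}) is never invoked; your framing via the linearization conditions \eqref{linearizzanti} and Theorem \ref{singth} reaches the same conclusion at a slightly more abstract level.
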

Theorem \ref{ppaolo} appears to be new already in the scalar case. It is the optimal nonlocal counterpart of classical local results for linear and nonlinear equations involving Dini continuous coefficients \cite{dudini, hawi, kovats, kumig, KMdini, yanyanli}.  See also the recent and interesting papers on Dini continuous coefficients in the setting of fully nonlinear nonlocal equations \cite{dong1, dong2}. 
Comments on how to get results for systems of the type in \rif{eqweaksol-linear} from those for general systems of type in \rif{eqweaksol} are in Section \ref{indicazioni} below. 

\subsection{Technical approach} In order to obtain Theorems \ref{ureg1}-\ref{singth} we develop and combine a certain number of tools. We believe that some of these are of independent interest and will be useful in the future in other nonlocal settings where partial regularity will be involved. This will already occur in \cite{follow}, where different classes of systems and further cases will be treated. We briefly summarize some of these and other main points in the following.

\subsubsection{A direct Campanato's theory for nonlocal systems.} While solutions to linear nonlocal problems with constant coefficients are regular by more classical tools, the quantitative version of such results is a more delicate aspect. Specifically, we need a suitable set of a priori regularity estimates for solutions in terms of the excess functionals from Definition \ref{leccesso}. The relevant statement is in Proposition \ref{phr}. This contains a decay estimate for solutions to homogeneous systems with constant coefficients that is a natural counterpart of those proved in Campanato's classic papers \cite{cam0, cam00, cam1, cam2}. The proof is direct and short, and makes no use of lifting and extension procedures. Based on this, we can then also deliver the promised analog of Campanato's theory for linear systems of the type \rif{eqweaksol-linear}, including borderline cases.

\subsubsection{Harmonic type nonlocal approximation and blow-up} Our proof of partial H\"older continuity is essentially a blow-up one, see Section \ref{blowupsec}, but avoids any contradiction argument, keeping a direct control on the constants used. These are, in principle, quantifiable at every stage. Such an approach is in spirit close to De Giorgi's original arguments from the regularity theory of minimal surfaces \cite{deg}. The classical Harmonic type approximation lemma of De Giorgi provides a standard paradigm for proving partial regularity assertions; see also \cite{simon1, simon2}. In the local case, it basically asserts that for every $\eps>0$ there exists a universal $\delta \equiv \delta (\eps)$ such that if a map $u \in W^{1,2}(\texttt{B}_1)$ satisfies $\nr{Du}_{L^2(\texttt{B}_1)}\lesssim 1$ and 
\eqn{harmonic}
$$
\left|\int_{\texttt{B}_{1}}\langle Du, D\varphi\rangle \dx\right| \leq  
\delta[\varphi]_{0,1;\texttt{B}_{1}}
$$  
for every $\varphi\in C^{1}_{0}(\texttt{B}_{1})$, 
then there exists  $h\in W^{1,2}(\texttt{B}_{1})$ such that
$$
\nr{h-u}_{L^2(\texttt{B}_{1})} \leq \eps \,, \qquad -\Delta h=0\,, \qquad \nr{Dh}_{L^2(\texttt{B}_1)}\lesssim 1\,.
$$
Several extensions, also covering nonlinear cases, have been proposed  \cite{dumi, dsv,dust, KMSvec}. In  Lemma \ref{shar} below we introduce a fractional analog of such tools; the proof is still direct and avoids contradiction arguments, allowing for a direct control of the constants. The nonlocal version differs from the one we could expect looking at the local case \rif{harmonic}. Indeed, while the H\"older seminorm $[\varphi]_{0, s;\texttt{B}_{1}}$ would be a natural replacement of $[\varphi]_{0,1;\texttt{B}_{1}}$, actually a larger quantity is needed in the nonlocal reformulation of \rif{harmonic}, namely $[\varphi]_{0, t;\texttt{B}_{1}}$ for some $t>s$. This makes a weaker assumption and therefore a stronger formulation of the lemma. It is a fact ultimately linked to the fact that $C^{0, s}$ does not embed in $W^{s,2}$ due to the presence of a singular kernel in the Gagliardo seminorm \rif{gagliardo}, making a duality inequality as \rif{harmonic} ineffective. In other words, making a tentative naive estimation as 
\eqn{unbalance}
$$
\left|\int_{\mathbb{R}^{n}}\int_{\mathbb{R}^{n}}\langle v(x)-v(y),\varphi(x)-\varphi(y)\rangle\frac{\dxy}{\snr{x-y}^{n+2s}}\right|\le [v]_{s,2;\er^n} [\varphi]_{0,s;\er^n}
$$
for every $\varphi \in C^{0,s}(\er^n;\er^N)$ vanishing outside $\ttB_1$ to model a nonlocal analog of \rif{harmonic}, would not give a condition which is compatible with the integrability properties of the kernel $1/|x-y|^{n+2s}$. The lack of pairing occurring in \rif{unbalance} can be overcome thanks to certain self-improving properties recently discovered in \cite{KMS}, that are in fact due to the same presence of the singular kernel; the two things naturally rebalance. The $s$-harmonic approximation needs a few additional tools to be implemented. A main tool is a H\"older counterpart of the classical Lipschitz truncation lemmas \cite{dms}. Roughly speaking, the result allows to replace a $W^{t,p}$-regular function with a $t$-H\"older continuous function outside a set which can be taken small, see Proposition \ref{htrunc}. This is in turn achieved by analysing the level sets of certain fractional sharp maximal operators originally introduced by Calder\'on \& Scott in \cite{calderon}. See Section \ref{truncsec} below.

\subsubsection{Improvement of flatness} This is in Proposition \ref{cor.1} and it is the basic tool to get the subsequent partial regularity statements. It involves the excess in \rif{exc} and its flavour is classical. When $f\equiv 0$ Proposition \ref{cor.1} prescribes that if the oscillations of $a(\cdot)$ are small enough in an appropriately larger ball $B_{R\rr}(x_{0})$, $R\geq 1$, then once the nonlocal excess is small at one scale $B_{\rr}(x_{0})$ then it keeps small at every scale and starts decaying, i.e., 
$$
\tx{E}_{u}(x_{0},\rr)<  \epsb{b} \Longrightarrow \tx{E}_{u}(x_{0},\sigma) \lesssim_{\beta}\sigma^{\beta}
$$
whenever $\beta < \min\{1, 2s\}$ and $0< \sigma \leq \rr$. This is obviously the natural analog to flatness improvement displayed in
\rif{eccessino}-\rif{eccessino2} for the local case. The full version of the statement \rif{exx.5} takes into account the presence of the right-hand side $f$ via an additional smallness condition as the second inequality in \rif{exx.42}. The proof of Proposition \ref{cor.1} is based on a nonlocal linearization technique in turn relying on the application of the $s$-Harmonic Approximation Lemma  \ref{shar}. This is done in Section \ref{sec6}. These are precisely the points where the smallness conditions in \rif{linearizzanti} are fixed. A summary of all the numerical thresholds for linearization is then accordingly given in Section \ref{parametri}, where the regular set $\Omega_{u}$ is determined. 

\subsection{Some existing literature and further developments} \label{furthersec} In the setting of nonlocal problems, partial regularity results have been proved in certain special situations and using ad hoc approaches, very often relying on the so-called Caffarelli-Silvestre extension procedure \cite{caff}. This allows us to lift the problem in $\er^{n+1}$, transforming it into a local, degenerate one with controlled weight; this method fits linear operators-driven equations. For instance, in \cite{colombo1} the authors provided Caffarelli-Kohn-Nirenberg type partial regularity results for hyperdissipative variants of the Navier-Stokes system. The proof goes via $\eps$-regularity theorems and takes advantage of extension procedures. Such results extend those of \cite{katz} where the analysis uses Littlewood-Paley decompositions, again an approach linked to linear  operators. Another set of partial regularity theorems is available when turning to geometric constrained variational problems and, most notably, those involving fractional harmonic maps, starting by the fundamental work \cite{dalio1}. These rely on Fourier analysis and Littlewood-Paley decompositions; see also \cite{schi1, schi2} for important progress. In \cite{millot1, millot2, roberts} partial regularity results and singular sets dimension estimates were obtained for manifold constrained problems. Proofs again rely on extension methods, at least partially.

\vspace{3mm}

{\bf Acknowledgments.} This work is supported by the European Research Council, through the ERC StG project NEW, nr.~101220121 and by the University of Parma through the action ``Bando di Ateneo per la ricerca''. Simon Nowak is supported by the Deutsche Forschungsgemeinschaft (DFG, German Research Foundation) - SFB 1283/2 2021 - 317210226. 
The authors thank Carlo Alberto Antonini for his remarks on a preliminary draft of the paper and the  referees for the attention dedicated to the paper.

\section{Preliminaries}\label{preliminari}
\subsection{Notation}  
In this paper we denote by $c$ a general, finite constant such that $c\geq 1$, which, as usual, may change from line to line. The same will happen with constants playing the same role like $c_*,  \tilde c$ and so on. Relevant dependencies on parameters will be as usual emphasized by putting them in parentheses. By  $\texttt{x} \lesssim \texttt{y}$, with $\texttt{x},\texttt{y}$ being two non-negative real numbers, we mean that  $\texttt{x}\leq c\,  \texttt{y}$ holds for a universal constant $c$, i.e., a constant at most depending on the ambient dimension $n$. In less frequent cases, the constant will depend on a fixed set of parameters that will be clear from the context.  In case we want to emphasize such parameters, when for instance the constant $c$ depends on, to say, $\gamma, \sigma$, we shall denote  $\texttt{x} \lesssim_{\gamma, \sigma} \texttt{y}$. We shall write $\texttt{x} \approx_{\gamma, \sigma} \texttt{y}$ when both $\texttt{x} \lesssim_{\gamma, \sigma} \texttt{y}$ and $\texttt{y} \lesssim_{\gamma, \sigma} \texttt{x}$ occur. In particular, we shall write $\texttt{x} \lesssim 1$ provided there exists an absolute constant $c$, i.e., depending at most on  $n$, such that $\texttt{x} \leq c$, and $\texttt{x} \approx 1$ when $1/c \leq \texttt{x} \leq c$. A similar meaning occurs when using the notation $\texttt{x} \approx_{\gamma, \sigma}  1$. We shall always abbreviate $0_{\er^n}\equiv 0$. With $x_{0} \in \er^n$ and $r>0$, we denote 
\eqn{zerocenter}
$$
B_r(x_{0}):= \{x \in \er^n  :   |x-x_{0}|< r\}, \quad \ttB_{r} \equiv \ttB_{r}(0):= \{x \in \er^n  :   |x|< r\}\,.
$$
We shall omit denoting the center, i.e., abbreviating $B_r \equiv B_r(x_{0})$ when no ambiguity will arise; this will often be the case when various balls in the same context share the same center. When no need to specify center and radius will occur we shall denote a generic  ball of $\er^n$ by $\BBB$ and we shall denote by $x_\BBB$ its center. Moreover, with $\BBB$ being a given ball with radius $r$ and $\gamma$ being a positive number, we denote by $\gamma \BBB$ the concentric ball with radius $\gamma r$ and by $\BBB/\gamma \equiv (1/\gamma){\BBB}$. Finally, with $\BBB\equiv B_r(x_{0})\subset \er^n$ being a fixed ball, we shall denote
\eqn{ballnotation}
$$
\mathcal B_r(x_{0}) :=B_r(x_{0}) \times B_r(x_{0}) \subset \er^{2n} \,, \qquad \mathcal B(\BBB):=\BBB  \times \BBB  \,.
$$
Also in this case, when no confusion on the identity of the center of the ball shall arise, we shall abbreviate $ \mathcal B\equiv \mathcal B_r \equiv \mathcal B_{r}(x_{0})$ and $\mathcal B\equiv \mathcal B(\BBB)$, respectively. In the following we denote by $\langle \cdot, \cdot \rangle$ the standard scalar product in $\er^N$; as usual, $\en$ denotes the set of positive integer numbers, and $\en_0=\en\cup \{0\}$ the set of non-negative integers. In denoting several function spaces like $L^{p}(\Omega), W^{1,p}(\Omega)$, we shall denote the vector valued version by $L^{p}(\Omega,\er^k), W^{s,p}(\Omega,\er^k)$ in the case the maps considered take values in $\er^k$, $k\in \mathbb{N}$; when confusion will not arise, we shall abbreviate $L^{p}(\Omega;\er^k)\equiv L^{p}(\Omega)$ and the like. With $\mathcal  A \subset \er^{n}$ being a measurable subset such that  $0<|\mathcal A|<\infty$, and $w \colon \mathcal  A \to \er^{k}$, $k\geq 1$, being an integrable map, we denote by
$$(w)_{\mathcal  A}:=\frac{1}{\snr{\mathcal  A}}\int_{\mathcal  A}w\dx:= \mint_{\mathcal  A}w\dx$$ its integral average. With $\alpha \in (0,1]$
$$
[w]_{0,\alpha;\mathcal  A}:= \sup_{x,y\in \mathcal  A; x\neq y}\frac{\snr{w(x)-w(y)}}{\snr{x-y}^\alpha}
$$
is the usual $\alpha$-H\"older seminorm of $w$ in $\mathcal  A$ and, with $t\in (0,1)$ and $p\geq 1$
\eqn{gagliardo}
$$
[w]_{t,p;\mathcal A} :=\left(\int_{\mathcal A}\int_{\mathcal A}\frac{\snr{w(x)-w(y)}^{p}}{\snr{x-y}^{n+pt}}\dxy\right)^{1/p}
$$
defines the usual Gagliardo seminorm related to the fractional space $W^{t,p}(\mathcal A)$, where this time $\mathcal A$ is an open subset. 
Accordingly, we shall denote 
\eqn{shalldenote}
$$
\nra{w}_{L^p(\mathcal A)} := \left(\mint_{\mathcal A}\snr{w}^p\dx\right)^{1/p} \,, \qquad 
\snra{w}_{t,p;\mathcal A} := \left(\frac{1}{\snr{\mathcal A}} \int_{\mathcal A}\int_{\mathcal A}\frac{\snr{w(x)-w(y)}^{p}}{\snr{x-y}^{n+pt}}\dxy\right)^{1/p}\,.
$$
The Gagliardo norm of a fractional Sobolev function $w \in W^{t,p}(\mathcal A)$ is defined, as usual, by 
$\nr{w}_{W^{t,p}(\mathcal A)}:= \nr{w}_{L^{p}(\mathcal A)}+[w]_{t,p;\mathcal A}$. An inequality involving Marcinkiewicz norms defined in \rif{mardef} is
\eqn{marhol}
$$
\nra{w}_{L^{p}(\mathcal A)}\lesssim_{d}  \frac{\snr{\mathcal A}^{-1/d}}{(d-p)^{1/p}}\nr{w}_{\mathcal{M}^{d}(\mathcal A)}
$$
valid whenever $1\leq p < d$ and $\mathcal A\subset \er^n$ is a measurable subset with finite measure  (see for instance \cite[Lemma 5.1]{kumi} for a proof). We now recall a series of standard inequalities involving the quantities in \rif{shalldenote}. We shall often use the following Poincar\'e-Friedrichs inequality (see for instance \cite[(2.6)]{bls}):
\eqn{poincf}
$$
\|w\|_{L^p(\ti{\Omega})} \lesssim_{n,t,p}|\ti{\Omega}|^{t/n} 
[w]_{t,p; \er^n}
$$
valid whenever $w\in W^{t,p}(\er^n)$ is such that $w\equiv 0$ outside $\ti{\Omega}$, $0<t<1 \leq p$ and $\ti{\Omega}$ is a bounded domain. This is related to the classical fractional Sobolev inequality
\eqn{poincf2}
$$
\nr{w}_{L^{p^*_t}(\er^n)} \lesssim_{n,t,p} [w]_{t,p;\er^n} \,,\qquad p^*_t:= \frac{np}{n-pt} 
$$
that holds whenever $w$ is compactly supported \cite[Theorem 6.5]{dpv} and $pt<n$; recall that here we are denoting $2^*=2^*_s$. 
We shall also use the following classical fractional Poincar\'e inequality \cite[(4.2)]{min03}:
\eqn{poinfrac}
$$
\nra{w-(w)_{\BBB}}_{L^{p}(\BBB)} \lesssim_{n,t,p}  \snr{\BBB}^{t/n}  \snra{w}_{t,p;\BBB}\,.
$$
Similarly, again when $pt<n$, the fractional Sobolev-Poincar\'e inequality \cite[Lemma 2.4]{sn} asserts
\eqn{sobpoinfrac}
$$
\nra{w-(w)_{\BBB}}_{L^{p^*_t}(\BBB)} \lesssim_{n,t,p}  \snr{\BBB}^{t/n}  \snra{w}_{t,p;\BBB}\,.
$$
Less familiar forms of such inequalities are in the following:
\begin{lemma} Let $\BBB\subset \er^n$ be a ball and $w \in W^{t,p}(\BBB)$, $0< t < 1 \leq p$, be such that $
\snr{\BBB} \leq c_{0} \snr{\{w = 0 \}\cap \BBB}
$ 
is satisfied for some $c_{0}\geq 1$. Then 
\eqn{poin1}
$$
\nra{w}_{L^{p}(\BBB)} \lesssim_{n,t,p,c_{0}}  \snr{\BBB}^{t/n}  \snra{w}_{t,p;\BBB}\,.
$$
If $pt<n$ and $p^*_t$ is as in \eqref{poincf2}, then 
\eqn{poin2}
$$
\nra{w}_{L^{p^*_t}(\BBB)} \lesssim_{n,t,p,c_{0}}  \snr{\BBB}^{t/n}  \snra{w}_{t,p;\BBB}
$$
and therefore
\eqn{compa}
$$
\nra{w}_{L^{2}(\BBB)}\lesssim_{n,t,p,c_{0}}  \snr{\BBB}^{t/n}   \snra{w}_{t,p;\BBB}\,, \qquad p \geq \frac{2n}{n+2t}\,.
$$
\end{lemma}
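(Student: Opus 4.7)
The plan is the classical observation: when $w$ vanishes on a set $E := \{w=0\} \cap \BBB$ comparable in measure to $\BBB$, the average $(w)_{\BBB}$ is itself controlled by the $L^p$-oscillation of $w$, and then one just invokes the fractional Poincaré inequalities \eqref{poinfrac} and \eqref{sobpoinfrac} already recorded earlier in the paper.

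First I would bound the average. Since $w \equiv 0$ on $E$, we have
\[
\snr{(w)_{\BBB}}^p \snr{E} = \int_E \snr{0-(w)_{\BBB}}^p \dx \leq \int_{\BBB} \snr{w-(w)_{\BBB}}^p \dx,
\]
so, dividing by $\snr{\BBB}$ and using the hypothesis $\snr{\BBB} \leq c_{0}\snr{E}$,
\[
\snr{(w)_{\BBB}} \leq c_{0}^{1/p}\, \nra{w-(w)_{\BBB}}_{L^{p}(\BBB)}.
\]
Combined with triangle inequality and \eqref{poinfrac}, this immediately yields
\[
\nra{w}_{L^{p}(\BBB)} \leq \nra{w-(w)_{\BBB}}_{L^{p}(\BBB)} + \snr{(w)_{\BBB}} \lesssim_{n,t,p,c_0} \snr{\BBB}^{t/n} \snra{w}_{t,p;\BBB},
\]
which is \eqref{poin1}.

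For \eqref{poin2}, under the extra assumption $pt<n$, I would repeat the same splitting but use \eqref{sobpoinfrac} on the oscillation term:
\[
\nra{w}_{L^{p_t^*}(\BBB)} \leq \nra{w-(w)_{\BBB}}_{L^{p_t^*}(\BBB)} + \snr{(w)_{\BBB}} \lesssim_{n,t,p} \snr{\BBB}^{t/n}\snra{w}_{t,p;\BBB} + \snr{(w)_{\BBB}},
\]
and control the average via the bound above combined with \eqref{poinfrac}. Finally, \eqref{compa} follows from \eqref{poin2} and Jensen/Hölder on averaged norms: the condition $p \geq \frac{2n}{n+2t}$ is exactly $p_t^* \geq 2$ (in the regime $pt<n$), so $\nra{w}_{L^2(\BBB)} \leq \nra{w}_{L^{p_t^*}(\BBB)}$; in the remaining regime $pt \geq n$ one has automatically $p \geq n/t > 2$ and \eqref{compa} is a direct Jensen consequence of \eqref{poin1}.

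I do not anticipate any real obstacle: the entire argument is two lines of triangle inequality plus the vanishing-set trick for the average. The only mild bookkeeping point is handling the case distinction $pt<n$ versus $pt\geq n$ in deriving \eqref{compa} so that both parts of the statement are covered uniformly; this is settled by checking $p_t^*\geq 2$ is equivalent to $p\geq 2n/(n+2t)$ and that $pt\geq n$ with $t<1$ forces $p>2$.
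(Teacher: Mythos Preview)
Your proof is correct and follows essentially the same route as the paper's. The paper cites external references for \eqref{poin1} rather than writing out the vanishing-set/average trick you supply, and for \eqref{poin2} it scales to the unit ball and invokes the Sobolev embedding $\nra{w}_{L^{p^*_t}(\ttB_1)} \lesssim \nra{w}_{L^{p}(\ttB_1)} + \snra{w}_{t,p;\ttB_1}$ followed by \eqref{poin1}, whereas you go directly through \eqref{sobpoinfrac} on the oscillation plus the average bound; these are cosmetic rearrangements of the same two ingredients. Your handling of the case $pt\geq n$ in \eqref{compa} is a harmless extra, since in the paper \eqref{compa} is stated only under the running hypothesis $pt<n$.
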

\begin{proof} 
Estimate \rif{poin1} is \cite[(2.2)]{brascoparini} and \cite[Lemma 7]{kokupa}. For \rif{poin2}, via a standard scaling argument it is sufficient to prove it when $\BBB\equiv \ttB_1$. Then, (fractional) Sobolev embedding yields $\nra{w}_{L^{p^*_t}(\ttB_1)} \lesssim \nra{w}_{L^{p}(\ttB_1)}+   \snra{w}_{t,p;\ttB_1}$ and the assertion follows by \rif{poin1}. For \rif{compa} when $p\geq 2$ we simply estimate $\nra{w}_{L^{2}(\BBB)}\leq \nra{w}_{L^{p}(\BBB)}$ and conclude directly via \rif{poin1}. When $1<p<2$, note that $pt <n$ and the lower bound on $p$ in \rif{compa} implies $p^*_t \geq 2$. Therefore $\nra{w}_{L^{2}(\BBB)}\leq \nra{w}_{L^{p^*_t}(\BBB)}$ and this time conclude using \rif{poin2}. \end{proof}
With $w \in L^{p}(\BBB;\er^N)$, $p\geq 1$ and $\BBB \subset \er^n$ being a ball, we shall often use the following elementary property:
\eqn{minav}
$$
\nra{w-(w)_{\BBB}}_{L^{p}(\BBB)}\le 2\nra{w-z}_{L^{p}(\BBB)}\qquad \mbox{for all} \ \ z\in \er^N\,.
$$ 
In the above inequality we can drop the constant $2$ when $p=2$. 
Using this it is not difficult to prove, again for every $p\geq 1$, that
\eqn{scav}
$$
B_{\rr}(x_{1})\subseteq B_{\rrr}(x_{2}) \ \Longrightarrow \ \nra{w-(w)_{B_{\rr}(x_{1})}}_{L^{p}(B_{\rr}(x_{1}))}\lesssim_{n,p}\left(\frac{\rrr}{\rr}\right)^{n/p}\nra{w-(w)_{B_{\rrr}(x_{2})}}_{L^{p}(B_{\rrr}(x_{2}))}\,.
$$
\begin{lemma}\label{dedicata}
Let $w\in L^{1}_{2s}$. If $B_{\rr}\Subset B_{\rrr}\subset \mathbb{R}^{n}$ are concentric balls, then 
\begin{flalign}\label{scatail}
\tail(w-(w)_{B_{\rr}};B_{\rr}) &\lesssim_{n,s} \left(\frac{\rr}{\rrr}\right)^{2s}\tail(w-(w)_{B_{\rrr}};B_{\rrr})+\left(\frac{\rr}{\rrr}\right)^{2s}\nra{w-(w)_{B_{\rrr}}}_{L^1(B_{\rrr})}\nonumber \\  
&\qquad +\int_{\rr}^{\rrr}\left(\frac{\rr}{\lambda}\right)^{2s}\nra{w-(w)_{B_{\lambda}}}_{L^1(B_{\lambda})}\frac{\dlam}{\lambda}
\end{flalign}
and 
\begin{flalign}\label{scatailancora}
	\tail(w;B_{\rr}) &\lesssim_{n,s} \left(\frac{\rr}{\rrr}\right)^{2s}\tail(w;B_{\rrr}) + \left(\frac{\rr}{\rrr}\right)^{2s} \nra{w}_{L^1(B_{\rrr})} +\int_{\rr}^{\rrr}\left(\frac{\rr}{\lambda}\right)^{2s}\nra{w}_{L^1(B_{\lambda})}\frac{\dlam}{\lambda}\,,
\end{flalign}
where the $\tail$ has been defined in \eqref{lacoda}. 
Moreover, if $B_{\rr}(x_{1})\subset B_{\rrr}(x_{2})$ are two not necessarily concentric balls, then
\begin{flalign}\label{scatail.1}
\tail(w-(w)_{B_{\rr}(x_{1})};B_{\rr}(x_{1}))&\lesssim_{n,s} \left(\frac{\rr}{\rrr}\right)^{2s}\left(\frac{\rrr}{\rrr-\snr{x_{1}-x_{2}}}\right)^{n+2s}\tail(w-(w)_{B_{\rrr}(x_{2})};B_{\rrr}(x_{2}))\nonumber \\
& \qquad +\left(\frac{\rrr}{\rr}\right)^{n}\nra{w-(w)_{B_{\rrr}(x_{2})}}_{L^{1}(B_{\rrr}(x_{2}))}\,. 
\end{flalign}
\end{lemma}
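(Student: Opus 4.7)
The plan is to reduce all three estimates to the dyadic decomposition of the complement of the inner ball, combined with a telescoping control on differences of averages across different scales. The telescoping lemma I would use as a preliminary ingredient is: whenever $B_{\rr_1}\subset B_{\rr_2}$ are concentric with $\rr_1\le \rr_2$, one has
$$
|(w)_{B_{\rr_1}}-(w)_{B_{\rr_2}}| \lesssim \int_{\rr_1}^{\rr_2}\nra{w-(w)_{B_\lambda}}_{L^{1}(B_\lambda)}\,\frac{\dlam}{\lambda}\,,
$$
which follows by writing the difference as a sum over a dyadic chain $\rr_1=\lambda_0<\lambda_1<\ldots<\lambda_K\approx\rr_2$ with $\lambda_{k+1}=2\lambda_k$ and using $|(w)_{B_{\lambda_k}}-(w)_{B_{\lambda_{k+1}}}|\lesssim \nra{w-(w)_{B_{\lambda_{k+1}}}}_{L^1(B_{\lambda_{k+1}})}$.

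For \eqref{scatail}, first split $\er^n\setminus B_\rr = (\er^n\setminus B_\rrr)\cup(B_\rrr\setminus B_\rr)$. On the far part, use $|w-(w)_{B_\rr}|\le |w-(w)_{B_\rrr}|+|(w)_{B_\rrr}-(w)_{B_\rr}|$: the first piece immediately yields $(\rr/\rrr)^{2s}\tail(w-(w)_{B_\rrr};B_\rrr)$ after pulling out $\rr^{2s}\rrr^{-2s}$, while the second piece yields $(\rr/\rrr)^{2s}|(w)_{B_\rrr}-(w)_{B_\rr}|$ because $\rr^{2s}\int_{\er^n\setminus B_\rrr}|y-x_0|^{-n-2s}\dy\approx (\rr/\rrr)^{2s}$; by the preliminary estimate and the pointwise bound $(\rr/\rrr)^{2s}\le(\rr/\lambda)^{2s}$ for $\lambda\le\rrr$, this absorbs into the last term. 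For the intermediate annulus, decompose into dyadic rings $B_{2^{i+1}\rr}\setminus B_{2^{i}\rr}$ (up to $i_{*}\approx\log_{2}(\rrr/\rr)$), use $|y-x_0|\ge 2^{i}\rr$ in the denominator, and on each ring write $|w-(w)_{B_\rr}|\le |w-(w)_{B_{2^{i+1}\rr}}|+|(w)_{B_{2^{i+1}\rr}}-(w)_{B_\rr}|$. The first piece of each ring summed over $i$ converts to the integral $\int_\rr^\rrr (\rr/\lambda)^{2s}\nra{w-(w)_{B_\lambda}}_{L^1(B_\lambda)}\dlam/\lambda$ via the Riemann-sum comparison $\sum_i 2^{-2is}(\cdot)_{2^{i+1}\rr}\approx \int_\rr^\rrr(\rr/\lambda)^{2s}(\cdot)_\lambda\dlam/\lambda$; the telescoping pieces, by Fubini after inserting the preliminary bound, collapse into the same integral. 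The additional term $(\rr/\rrr)^{2s}\nra{w-(w)_{B_\rrr}}_{L^1(B_\rrr)}$ is kept for convenience (it naturally arises if one bounds the outermost ring directly by the $L^1$-average on $B_\rrr$ rather than telescoping it). Estimate \eqref{scatailancora} is obtained by the exact same scheme applied to $w$ itself, without the subtractions, and is actually simpler since no telescoping of averages is needed: on each annulus one just uses $\nra{w}_{L^1(B_{2^{i+1}\rr})}$.

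For \eqref{scatail.1}, the non-concentric case, I would split again $\er^n\setminus B_\rr(x_1)=(\er^n\setminus B_\rrr(x_2))\cup (B_\rrr(x_2)\setminus B_\rr(x_1))$ (note that the inclusion $B_\rr(x_1)\subset B_\rrr(x_2)$ makes this meaningful). On the far region, the change of center is handled by the elementary estimate $|y-x_1|\ge |y-x_2|-|x_1-x_2|\ge (1-|x_1-x_2|/\rrr)|y-x_2|$ for $|y-x_2|>\rrr$, which after raising to power $n+2s$ gives exactly the factor $(\rrr/(\rrr-|x_1-x_2|))^{n+2s}$; then the triangle inequality with $(w)_{B_\rrr(x_2)}$ plus the concentric estimate already proved produces the $\tail$-term in \eqref{scatail.1}. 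On the intermediate region, use the crude bound $|y-x_1|\ge \rr$ in the denominator, so $\rr^{2s}\int_{B_\rrr(x_2)\setminus B_\rr(x_1)}|\cdot|/|y-x_1|^{n+2s}\dy\lesssim\rr^{-n}\int_{B_\rrr(x_2)}|w-(w)_{B_\rrr(x_2)}|\dy$, which, up to reintroducing $(w)_{B_\rrr(x_2)}$ in the constant and absorbing, gives the $(\rrr/\rr)^{n}\nra{w-(w)_{B_\rrr(x_2)}}_{L^1(B_\rrr(x_2))}$ term. The main technical obstacle in the whole lemma is the careful bookkeeping in \eqref{scatail}: one has to check that every telescoping contribution coming from differences of averages $|(w)_{B_{\lambda_1}}-(w)_{B_{\lambda_2}}|$ at adjacent dyadic scales is absorbed by the weighted integral $\int_\rr^\rrr(\rr/\lambda)^{2s}\nra{\cdot}_{L^1(B_\lambda)}\dlam/\lambda$ rather than the unweighted one, which is achieved by reversing the order of summation in the nested sum $\sum_i(\rr/2^i\rr)^{2s}\sum_{j\le i}\nra{w-(w)_{B_{2^{j+1}\rr}}}_{L^1(B_{2^{j+1}\rr})}$ and exploiting the geometric decay in $i$.
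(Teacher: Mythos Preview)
Your proposal is correct and follows essentially the same route as the paper. For \eqref{scatail} the paper simply defers to \cite[Lemma 2.4]{KMS1}, whose argument is exactly the dyadic-plus-telescoping scheme you describe; for \eqref{scatailancora} the paper's proof is the same annulus decomposition you outline (they shrink inward from $\rrr$ rather than expand outward from $\rr$, but this is cosmetic). For \eqref{scatail.1} the paper's decomposition is slightly cleaner than yours: rather than splitting into far and intermediate regions and handling the constant $|(w)_{B_{\rr}(x_1)}-(w)_{B_{\rrr}(x_2)}|$ separately on each, it extracts this constant as a single integral $I_3$ over all of $\er^n\setminus B_{\rr}(x_1)$ from the outset, so that $\rr^{2s}\int_{\er^n\setminus B_{\rr}(x_1)}|y-x_1|^{-n-2s}\dy\approx_{n,s}1$ gives the bound $\lesssim(\rrr/\rr)^n\nra{w-(w)_{B_{\rrr}(x_2)}}_{L^1(B_{\rrr}(x_2))}$ in one stroke via \eqref{scav}. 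Your phrase ``plus the concentric estimate already proved'' for the far-region tail term is not needed---the change-of-center inequality $|y-x_1|\ge(1-|x_1-x_2|/\rrr)|y-x_2|$ alone delivers the first term in \eqref{scatail.1} directly.
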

\begin{proof}
Estimate \rif{scatail} can be obtained as a small variant of the proof of \cite[Lemma 2.4, (3)]{KMS1}. For \rif{scatailancora}, with $x_{0}$ indicating the center of $B_{\rrr}$, we first split as
\eqn{naivesplit}
	$$ \tail(w;B_{\rr}) = \left(\frac{\rr}{\rrr}\right)^{2s}\tail(w;B_{\rrr}) + \rr^{2s} \int_{B_\rrr \setminus B_\rr} \frac{|w(y)|}{|y-x_{0}|^{n+2s}}\dy\,. $$
We then continue splitting in annuli the last term  in the previous display. Fix $\texttt{t} \in (1/4, 1/2]$ and determine 
$k\in \en_0$ such that $\texttt{t}^{k+1}\rrr< \rr \leq  \texttt{t}^{k}\rrr$; then we have 
	\begin{align*}
		\rr^{2s} \int_{B_\rrr \setminus B_\rr} \frac{|w(y)|}{\snr{y-x_{0}}^{n+2s}} \dy &\leq \rr^{2s} \int_{B_\rrr \setminus B_{\texttt{t}^{k+1}\rrr}} \frac{|w(y)|}{\snr{y-x_{0}}^{n+2s}} \dy\\
		& =  \rr^{2s} \sum_{i=0}^k \int_{B_{\texttt{t}^i \rrr} \setminus B_{\texttt{t}^{i+1} \rrr}} \frac{|w(y)|}{\snr{y-x_{0}}^{n+2s}} \dy  \lesssim   \sum_{i=0}^k \texttt{t}^{2s(k-i)} \nra{w}_{L^1(B_{\texttt{t}^{i} \rrr})}\nonumber  \,. \nonumber
	\end{align*}
In the case $k=0$ \rif{scatailancora} follows using this last inequality and \rif{naivesplit}. Otherwise we continue to estimate as follows:
\begin{flalign}
		\rr^{2s} \int_{B_\rrr \setminus B_\rr} \frac{|w(y)|}{\snr{y-x_{0}}^{n+2s}} \dy 
		& \lesssim   \texttt{t}^{2sk} \nra{w}_{L^1(B_{\rrr})}+  \sum_{1\leq i\leq k} \int^{\texttt{t}^{i-1}\rrr}_{\texttt{t}^{i}\rrr} \texttt{t}^{2s(k-i)} \nra{w}_{L^1(B_{\lambda})}\, \frac{\dlam}{\lambda}   \nonumber\\
		& \lesssim  \left(\frac{\rr}{\rrr} \right )^{2s} \nra{w}_{L^1(B_{\rrr})} +  \sum_{1\leq i\leq k} \int^{\texttt{t}^{i-1}\rrr}_{\texttt{t}^{i}\rrr} \left(\frac{\rr}{\lambda} \right )^{2s} \nra{w}_{L^1(B_{\lambda})}\, \frac{\dlam}{\lambda}  \nonumber\\
		& \lesssim_{n,s}  \left(\frac{\rr}{\rrr} \right )^{2s} \nra{w}_{L^1(B_{\rrr})}+  \int_{\rr}^{\rrr}\left(\frac{\rr}{\lambda}\right)^{2s}\nra{w}_{L^1(B_{\lambda})}\frac{\dlam}{\lambda} \,. \nonumber
	\end{flalign}
Matching the content of the last display with the one of \rif{naivesplit} completes the proof of \rif{scatailancora}. Finally, the proof of \rif{scatail.1} follows \cite[Lemma 2.3]{bls} with minor modifications, and we report it for the sake of completeness. 
Observe that if $y\in \er^n\setminus B_{\rrr}(x_{2})$, then we have
\eqn{esco1}
$$
 \snr{y-x_{1}} \geq  \snr{y-x_2}- \snr{x_{1}-x_2}   \geq \frac{\rrr- \snr{x_{1}-x_2}}{\rrr} \snr{y-x_2}\,.
$$
We decompose  
\begin{flalign*}
\tail(w-(w)_{B_{\rr}(x_{1})};B_{\rr}(x_{1})) &\leq  \rr^{2s} \int_{\er^n\setminus B_{\rrr}(x_2)}
\frac{\snr{w-(w)_{B_{\rrr}(x_2)}}}{\snr{y-x_{1}}^{n+2s}} \dy
\\ & \quad +\rr^{2s} \int_{ B_{\rrr}(x_2)\setminus B_{\rr}(x_{1})}
\frac{\snr{w-(w)_{B_{\rrr}(x_2)}}}{\snr{y-x_{1}}^{n+2s}} \dy\\
&\quad  + \rr^{2s} \int_{\er^n\setminus B_{\rr}(x_{1})}
\frac{\snr{(w)_{B_{\rr}(x_{1})}-(w)_{B_{\rrr}(x_2)}}}{\snr{y-x_{1}}^{n+2s}} \dy\\ & =: I_1+I_2+I_3\,.
\end{flalign*}
By \rif{esco1} we have 
$$
I_1 \leq \left(\frac{\rrr}{\rrr- \snr{x_{1}-x_2}}\right)^{n+2s}\left(\frac{\rr}{\rrr}\right)^{2s}\tail(w-(w)_{B_{\rrr}(x_{2})};B_{\rrr}(x_{2}))\,.
$$
Then, using also \rif{scav}, we obtain
$$
I_2+I_3 \lesssim_{n,s} \left(\frac{\rrr}{\rr}\right)^{n}\nra{w-(w)_{B_{\rrr}(x_{2})}}_{L^{1}(B_{\rrr}(x_{2}))}
$$
and the proof of \rif{scatail.1} follows combining the content of the last three displays. 
\end{proof}
\begin{remark}[Miscellanea of consequences]{\em In the following, when manipulating the excess functional $\tx{E}_{w}$ defined in \rif{excl}, we shall often use the following obvious equivalence:
\begin{flalign}
\notag \nra{w-\ell}_{L^{2}(B_{\rr})}+\tail(w-\ell;B_{\rr}) &\leq \sqrt{2}\tx{E}_{w}(\ell;x_{0},\rr)\\
& \leq \sqrt{2}\left( \nra{w-\ell}_{L^{2}(B_{\rr})}+\tail(w-\ell;B_{\rr})\right)\,.\label{ovvia}
\end{flalign}
A similar relation obviously holds for $ \tx{E}_{w}(x_{0},\rr)$ defined in \rif{exc}. Lemma \ref{dedicata} implies 
\eqn{scataildopoff}
$$
\begin{cases}
\tx{E}_{w} (x_{0}, \texttt{t}  r)\lesssim_{n,s} \texttt{t}^{-n/2} \tx{E}_{w} (x_{0} , r)\,, \qquad  \forall \ \texttt{t} \in (0,1]\\
\tx{E}_{w} (x, \texttt{t}r)\lesssim_{n,s} \texttt{t}^{-n} \tx{E}_{w} (x_{0} , r)\,, \qquad \forall \ x \in B_{(1-\texttt{t})r}(x_0)\,,  \texttt{t} \in (0,1)
\end{cases}
$$
provided 
 $w \in L^2(B_{r}(x_{0});\er^N)\cap L^{1}_{2s}$. Inequality \rif{scataildopoff}$_2$ is a direct consequence of \rif{scav} and \rif{scatail.1}; for 
 \rif{scataildopoff}$_1$ we apply  \rif{scav} repeatedly in conjunction with \rif{scatail} in order to get 
\begin{flalign*}
\tx{E}_{w} (x_{0}, \texttt{t}  r)&\lesssim\left(\texttt{t}^{-n/2}+\texttt{t}^{2s}\right)\nra{w-(w)_{B_{r}(x_0)}}_{L^{2}(B_{r}(x_0))}+ \texttt{t}^{2s}\tail(w-(w)_{B_{r}(x_0)};B_{r}(x_0))\nonumber \\
&\quad + \int_{\texttt{t}r}^{r}\left(\frac{\texttt{t}r}{\lambda}\right)^{2s}\nra{w-(w)_{B_{\lambda}(x_0)}}_{L^{2}(B_{\lambda}(x_0))}\frac{\dlam}{\lambda}\nonumber\\ &
 \lesssim\texttt{t}^{-n/2}\tx{E}_{w} (x_{0} , r)+ \texttt{t}^{2s }r^{2s+n/2}\int_{\texttt{t}r}^{r}\lambda^{-2s-n/2}\frac{\dlam}{\lambda}\,\nra{w-(w)_{B_{r}(x_0)}}_{L^{2}(B_{r}(x_0))}\\
 &  \lesssim\texttt{t}^{-n/2}\tx{E}_{w} (x_{0} , r) \,.
\end{flalign*}
A simpler form of \rif{scatail.1} is 
\eqn{scatailggg}
$$
\tail(w;B_{\rr}(x_{1}))\lesssim_{n,s} \left(\frac{\rr}{\rrr}\right)^{2s}\left(\frac{\rrr}{\rrr-\snr{x_{1}-x_{2}}}\right)^{n+2s}\tail(w;B_{\rrr}(x_{2}))+\left(\frac{\rrr}{\rr}\right)^{n}\nra{w}_{L^{1}(B_{\rrr}(x_{2}))}
$$
that again holds provided $B_{\rr}(x_{1})\subset B_{\rrr}(x_{2})$ and whose proof can be obtained as the one of \rif{scatail.1}. We conclude pointing out an elementary fact we shall repeatedly use in the rest of the paper. Assume $\BBB\Subset \ti{\BBB} \subset \er^n$ are two balls centred at $x_{\BBB} \in \er^n$. Then 
\eqn{uset}
$$
\frac{\snr{y-x_{\BBB}}}{\snr{y-x}}\le 1+\frac{\rad(\BBB)}{\dist(\BBB,\partial \ti{\BBB})}
$$
holds whenever $y\in \er^n\setminus \ti{\BBB}$ and $x\in \bar{ \BBB}$, where $\rad(\BBB)$ denotes the radius of $\BBB$.  
}
\end{remark}
\subsection{Affine maps and the \tail}\label{affinetail} Let  $\ell(\ti{x}):= \bbt(\ti{x}-x)+\aaat$, $\bbt \in \mathbb{R}^{N \times n}$, $\aaat \in \mathbb{R}^N$, be an arbitrary affine map, with $\ti{x}, x\in \er^n$; direct computations show that, whenever $\rr>0$
\eqn{tritri}
$$
\begin{cases}
\displaystyle \nra{\ell}_{L^2(B_{\rr}(x))} \approx_{n}  \snr{\bbt} \rr + \snr{\aaat} \\[5pt]
\displaystyle 
\tail(\ell;B_{\rr}(x)) \lesssim_{n} \frac{\snr{\bbt} \rr}{2s-1} + \frac{\snr{\aaat}}{2s}  \lesssim_{n,s} \snr{\bbt} \rr + \snr{\aaat} \,, \quad \mbox{provided $s>1/2$ \ (\mbox{$s>0$ if $\bbt=0$})}\,.
\end{cases}
$$
The upper bound in \rif{tritri}$_2$ is a direct consequence of the definitions. For the reader's convenience, we spend a few words on the proof of \rif{tritri}$_1$. This follows either by a direct computation or by the following argument, that works for every $p\geq 1$. 
First note that 
$$  \big(\mint_{\texttt{B}_1} |\mathfrak{b}\ti{x} +\mathfrak{a}|^p \d\ti{x}\big)^{1/p}\approx |\mathfrak{b}|+|\mathfrak{a}|$$ for every $(\mathfrak{b},\mathfrak{a})\in \er^{N\times n}\times \er^N,$ as the first quantity defines a norm on $\er^{N\times n}\times \er^N$ (all norms are equivalent in finite dimensions). By scaling it follows 
$$
 \nra{\ell}_{L^p(B_{\rr}(x))} =\left( \mint_{\ttB_{1}} | \rr \bbt\tilde x+\aaat|^p \d\ti{x}\right)^{1/p}   \approx  \snr{\bbt} \rr + \snr{\aaat}\,.
$$
\begin{proposition}\label{affinemente}
Let $w \in L^{2}(\BBB;\mathbb{R}^N)\cap L^{1}_{2s}$ with $s>1/2$, where $\BBB\subset \er^n$ is a ball, $\rr>0$ is a fixed number and let $\{\ell_k\}$ be a sequence of affine maps such that $\ell_k(\ti{x}):=\bbt_k(\ti{x}-x_k) +\aaat_k$, and such that $\bbt_k\to \bbt $ in $\er^{N\times n}$, $\aaat_k\to \aaat $, $x_k \to x$ in $\er^n$ and $B_\rr(x_k), B_{\rr}(x) \subset \BBB$. We have 
\eqn{affini1}
$$ \lim_{k\to \infty} \, \tx{E}_{w}(\ell_k;x_k,\rr) =  \tx{E}_{w}(\ell;x,\rr)$$
where $\ell(\ti{x}):=\bbt(\ti{x}-x) +\aaat$. 
In particular, it holds that 
\eqn{affini2}
$$
\begin{cases}
\displaystyle \lim_{k\to \infty}\,   \tx{E}_{w}(x_k,\rr) =  \tx{E}_{w}(x,\rr)\quad (\mbox{holds in the full range $0<s<1$}) \\[7pt]
\displaystyle \lim_{k\to \infty} \, \tx{E}_{w}(\ell;x_k,\rr) =  \tx{E}_{w}(\ell;x,\rr) \ \ \mbox{for every affine map $\ell$.}
 \end{cases}
 $$
\end{proposition}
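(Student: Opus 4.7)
The plan is to establish (\ref{affini1}) by applying the dominated convergence theorem separately to the two pieces of $\tx{E}_{w}^{2}$, namely the averaged $L^{2}$-term and the $\tail$-term. The limits in (\ref{affini2}) will then follow by specialization: the first by taking $\bbt_k\equiv 0,\aaat_k:=(w)_{B_{\rr}(x_k)}$ (observing that the averages converge to $(w)_{B_{\rr}(x)}$ by the $L^1$-convergence that comes out of the same DCT argument), and the second by taking $\ell_k\equiv \ell$ as a constant sequence. Rewriting
\[
\nra{w-\ell_k}_{L^{2}(B_{\rr}(x_k))}^{2}=\frac{1}{|B_{\rr}|}\int_{\er^n}\chi_{B_{\rr}(x_k)}(y)|w(y)-\ell_k(y)|^{2}\dy
\]
and analogously for $\tail(w-\ell_k;B_{\rr}(x_k))$, pointwise convergence of the integrands holds for a.e.\ $y$, since $\chi_{B_{\rr}(x_k)}\to\chi_{B_{\rr}(x)}$ a.e.\ (the sphere $\partial B_{\rr}(x)$ has measure zero), $\ell_k(y)\to\ell(y)$ everywhere, and $|y-x_k|^{-(n+2s)}\to|y-x|^{-(n+2s)}$ for $y\ne x$.

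The work is in producing an integrable majorant. Fix $\eps>0$ with $B_{\rr+\eps}(x)\Subset\BBB$, possible because $B_{\rr}(x)\Subset\BBB$, and restrict to indices $k$ so large that $|x_k-x|\le\min\{\eps/2,\rr/4\}$. Then $B_{\rr}(x_k)\subset B_{\rr+\eps/2}(x)\Subset\BBB$ and, crucially for the tail, $B_{\rr/2}(x)\subset B_{\rr}(x_k)$, so that $y\notin B_{\rr}(x_k)\Rightarrow|y-x|\ge\rr/2$, whence $|y-x_k|\ge|y-x|-\rr/4\ge|y-x|/2$. Setting $M:=\sup_k(\snr{\bbt_k}+\snr{\aaat_k})+\snr{\bbt}+\snr{\aaat}<\infty$ we have $|w-\ell_k|\le|w|+M(1+|y-x|+\rr)$ on all of $\er^n$. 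Consequently, for all $k$ large, the $L^{2}$-integrand is dominated by $2\chi_{B_{\rr+\eps/2}(x)}(y)\bigl(|w(y)|^{2}+M^{2}(1+|y-x|+\rr)^{2}\bigr)$, which is integrable because $w\in L^{2}(\BBB)$ and $B_{\rr+\eps/2}(x)$ is bounded; while the $\tail$-integrand is dominated by
\[
\frac{2^{n+2s}\chi_{\er^n\setminus B_{\rr/2}(x)}(y)\bigl(|w(y)|+M(1+|y-x|+\rr)\bigr)}{|y-x|^{n+2s}}.
\]

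The integrability of this last majorant is the only delicate point, and is where the hypothesis $s>1/2$ enters: the $|w|$-contribution is finite because $w\in L^{1}_{2s}$ (outside $B_{\rr/2}(x)$, $|y-x|^{n+2s}$ is comparable to $1+|y|^{n+2s}$), and the contribution from the linear growth of $\ell_k$ is controlled by $\int_{\er^n\setminus B_{\rr/2}(x)}(1+|y-x|)|y-x|^{-n-2s}\dy$, which is finite precisely when $2s>1$. The main (and essentially only) obstacle in the argument is thus to bound $\ell_k$ uniformly at infinity against the tail kernel; once the integrable majorants above are in place, the dominated convergence theorem gives convergence of both pieces, and continuity of the square root yields (\ref{affini1}). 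The two statements in (\ref{affini2}) follow as indicated, the first by combining (\ref{affini1}) with the continuity $(w)_{B_{\rr}(x_k)}\to(w)_{B_{\rr}(x)}$ coming from $L^{1}$-convergence on $\BBB$.
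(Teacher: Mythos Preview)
Your proof is correct and takes a somewhat different route from the paper's. You apply dominated convergence directly to both the averaged $L^{2}$-term and the $\tail$-term, building a single integrable majorant for each; the paper instead dispatches the $L^{2}$-part by observing $\nra{\ell_k-\ell}_{L^{2}(B_{\rr}(x_k))}\to 0$ via \eqref{tritri}$_1$ together with absolute continuity, and then handles the $\tail$ by an explicit three-way splitting (first replacing $\ell_k$ by $\ell$, then the kernel center $x_k$ by $x$, then the domain $\er^n\setminus B_{\rr}(x_k)$ by $\er^n\setminus B_{\rr}(x)$), estimating each term by hand using \eqref{tritri}$_2$ and \eqref{scatailggg}. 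Both arguments invoke $s>1/2$ at exactly the same point --- the integrability of the linear growth of $\ell_k$ against $|y-x|^{-(n+2s)}$ at infinity --- and derive the two specializations in \eqref{affini2} in the same way. Your DCT argument is more compact and treats both pieces of $\tx{E}_w^2$ uniformly; the paper's decomposition is more hands-on and yields slightly more explicit control on the individual error contributions, at the cost of being longer.
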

\begin{proof}
The assumptions and \rif{tritri}$_1$ imply that $\nra{\ell_k-\ell}_{L^{2}(B_{\rr}(x_k))}\to 0$, so that the absolute continuity of Lebesgue integral yields that
$ \nra{w-\ell_k}_{L^{2}(B_{\rr}(x_k))}\to \nra{w-\ell}_{L^{2}(B_{\rr}(x))}$ and therefore we only need to prove
\eqn{continua}
$$
\lim_{k \to \infty}\tail(w-\ell_k;B_{\rr}(x_k))=\tail(w-\ell;B_{\rr}(x))\,.
$$
For this, in the rest of the proof we shall assume with no loss of generality that $\snr{x_k-x}< \rr/2$, and write
\begin{flalign}
\notag &\left|\tail(w-\ell_k;B_{\rr}(x_k))-\tail(w-\ell;B_{\rr}(x)) \right|\nonumber \\
\notag & \qquad \leq c \rr^{2s} \left|
\int_{\er^n\setminus B_{\rr}(x_k)} \frac{\snr{w(y)-\ell_k(y)}}{\snr{y-x_k}^{n+2s}}\dy- \int_{\er^n\setminus B_{\rr}(x_k)} \frac{\snr{w(y)-\ell(y)}}{\snr{y-x_k}^{n+2s}}\dy \right|\\
\notag & \qquad  \quad  \ +c \rr^{2s} \left|
\int_{\er^n\setminus B_{\rr}(x_k)} \frac{\snr{w(y)-\ell(y)}}{\snr{y-x_k}^{n+2s}}\dy-\int_{\er^n\setminus B_{\rr}(x_k)} \frac{\snr{w(y)-\ell(y)}}{\snr{y-x}^{n+2s}}\dy \right|\\
\notag & \qquad  \quad  \ +c \rr^{2s} \left|
\int_{\er^n\setminus B_{\rr}(x_k)} \frac{\snr{w(y)-\ell(y)}}{\snr{y-x}^{n+2s}}\dy-\int_{\er^n\setminus B_{\rr}(x)} \frac{\snr{w(y)-\ell(y)}}{\snr{y-x}^{n+2s}}\dy \right|\\
& \qquad =: \textnormal{T}_{1} +\textnormal{T}_{2}+ \textnormal{T}_{3}\,.\label{continua2}
\end{flalign}
Obviously, using triangle inequality and \rif{tritri}$_2$ we find
\eqn{tt1}
$$
\textnormal{T}_{1} \lesssim_{n,s} \tail(\ell_k-\ell;B_{\rr}(x_k)) \lesssim  \snr{\bbt_k-\bbt} \rr + \snr{\bbt}\snr{x_k-x}+\snr{\aaat_k-\aaat} \Longrightarrow \textnormal{T}_{1}\to 0\,.
$$
In order to estimate $\textnormal{T}_{2}$, note that $\snr{x_k-x}\leq \rr/2$ and triangle inequality imply
\eqn{imim}
$$ y \in \er^n\setminus B_{\rr}(x_k) \Longrightarrow \snr{y-x}\geq  \rr/2\,,$$
and, yet another application of triangle inequality gives also
$$
\frac{\snr{y-x_k}}{\snr{y-x}} \leq 1 + \frac{2\snr{x_k-x}}{\rr}  \qquad \mbox{and}\qquad \frac{\snr{y-x}}{\snr{y-x_k}} \leq 1 + \frac{\snr{x_k-x}}{\rr} 
$$
and, in any case, for $k$ large enough depending on $\rr$ but not on $y$
$$ 
\frac{\snr{y-x_k}}{\snr{y-x}} \approx 1 \,.
$$
The last two inequalities yield, again for $k\equiv k(\rr)$ large enough
\begin{flalign*}
\left|\frac{1}{\snr{y-x_k}^{n+2s}}-\frac{1}{\snr{y-x}^{n+2s}}\right| & \lesssim \frac{1}{\snr{y-x_k}^{n+2s}}
\left[\left(1 + \frac{2\snr{x_k-x}}{\rr}\right)^{n+2s}-1\right]\\
&\lesssim \frac{\snr{x_k-x}}{\rr} \frac{1}{\snr{y-x_k}^{n+2s}}\,.
\end{flalign*}
Using this together with \rif{scatailggg} (applied with $\rrr=2\rr$) and \rif{tritri}$_2$, and yet recalling that $\snr{x_k-x}\leq \rr/2$, we bound 
\begin{flalign}
\notag \textnormal{T}_{2} &\lesssim \frac{\snr{x_k-x}}{\rr}  \rr^{2s} 
\int_{\er^n\setminus B_{\rr}(x_k)} \frac{\snr{w(y)-\ell(y)}}{\snr{y-x_k}^{n+2s}}\dy\\
& \lesssim  
 \frac{\snr{x_k-x}}{\rr}   \left(\tail(w;B_{\rr}(x_k))+\tail(\ell;B_{\rr}(x_k))\right)
\notag\\
 & \lesssim 
 \frac{\snr{x_k-x}}{\rr}   \left(\tail(w;B_{2\rr}(x))+\snr{\bbt}\rr+\snr{\aaat}+ \nra{w}_{L^1(B_{2\rr}(x))}\right)
 \Longrightarrow  \textnormal{T}_{2}\to 0\,.\label{tt2}
\end{flalign}
Finally, again recalling \rif{imim}, we observe that $y \in B_{\rr}(x_k)\triangle B_{\rr}(x)$ implies that $\snr{y-x}\geq \rr/2$ and therefore, also using that $(\er^n\setminus B_{\rr}(x_k) ) \triangle (\er^n\setminus B_{\rr}(x) ) = B_{\rr}(x_k)\triangle B_{\rr}(x)$, we can estimate
\begin{flalign}
\notag \textnormal{T}_{3}&\lesssim  \rr^{2s}\int_{B_{\rr}(x_k)\triangle B_{\rr}(x)}\frac{\snr{w(y)-\ell(y)}}{\snr{y-x}^{n+2s}}\dy \\& \lesssim \rr^{-n}\int_{B_{\rr}(x_k)\triangle B_{\rr}(x)} \snr{w}\dy +\rr^{-n}\snr{B_{\rr}(x_k)\triangle B_{\rr}(x)}\left(\snr{\bbt}\rr +\snr{\aaat}\right)\Longrightarrow \textnormal{T}_{3}\to 0\,.\label{tt3}
\end{flalign}
Obviously, \rif{tt1}, \rif{tt2} and \rif{tt3} together with \rif{continua2} imply \rif{continua}. This completes the proof of \rif{affini1}. As for \rif{affini2}$_1$, this follows observing that $\tx{E}_{w}(x_k,\rr)= \tx{E}_{w}(\ell_k;x_k,\rr)$ where $\ell_k\equiv (w)_{B_{\rr}(x_k)}$ and obviously $(w)_{B_{\rr}(x_k)}\to (w)_{B_{\rr}(x)}$; note that in this case the above argument, and especially \rif{tt1}$_2$, does not require the restriction $s>1/2$ as specified in \rif{tritri}$_2$. Finally, for \rif{affini2}$_2$, with $\ell(\ti{x})=\bbt(\ti{x}-x) +\aaat$, $\bbt \in \mathbb{R}^{N \times n}$, $\aaat \in \mathbb{R}^N$, the assertion follows from \rif{affini1} writing $\ell(\ti{x})=\ell_k(\ti{x}):=\bbt(\ti{x}-x_k) +\aaat + \bbt(x_k-x)$. 
\end{proof}
\begin{remark}[Minimizing the excess]\label{esisteminimo}\em{Again with $w \in L^{2}(B_{\rr}(x);\mathbb{R}^N)\cap L^{1}_{2s}$ and $s>1/2$, we are now interested in the fact that the minimization problem
\eqn{unicaff}
$$  \tx{E}_{w}(\ell_x;x,\rr):=  \inf_{\ell \textnormal{ affine}} \tx{E}_{w}(\ell;x,\rr)$$ 
is uniquely solvable, i.e., there exists a unique affine map $\ell_x$ attaining equality in \rif{unicaff}. We shall use that, needless to say, solving \rif{unicaff} is the same as solving $  \tx{E}_{w}(\ell_x;x,\rr)^2:=  \inf_{\ell \textnormal{ affine}} \tx{E}_{w}(\ell;x,\rr)^2$. For this, take any affine map $\ell$, that we write in the form $\ell(\ti{x}):=\bbt(\ti{x}-x)+\aaat$, $\bbt\in \er^{N\times n}, \aaat \in \er^N$. We identify such maps with couples $(\bbt, \aaat)\in \er^{N\times n}\times \er^{N}$ so that minimization in \rif{unicaff} is equivalent 
to find $\ell_x(x)\in \er^{N}$ and $D\ell_x\in \er^{N\times n}$ such that 
$$(D\ell_x, \ell_x(x))\in  \argmin_{(\bbt, \aaat)\in \er^{N\times n}\times \er^{N}}\, \texttt{f}(\bbt, \aaat)\,,$$ 
where
$$\texttt{f}(\bbt, \aaat):=\texttt{f}_{\textnormal{i}}(\bbt, \aaat) + \texttt{f}_{\textnormal{e}}(\bbt, \aaat)  :=\nra{w-\ell}_{L^{2}(B_{\rr}(x))}^2+\tail(w-\ell;B_{\rr}(x))^2= \tx{E}_{w}(\ell;x,\rr)^2 $$ for every $(\bbt, \aaat)\in \er^{N\times n}\times \er^{N}$. This function is coercive in the sense that $\texttt{f}(\bbt, \aaat)\to \infty$ when $\snr{\aaat}+\snr{\bbt}\to \infty$. Indeed, use \rif{tritri}$_1$ in the form 
\eqn{bowie}
$$
\snr{\bbt} \rr + \snr{\aaat} \lesssim \nra{\ell}_{L^2(B_{\rr}(x))} \lesssim  \sqrt{\texttt{f}_{\textnormal{i}}(\bbt, \aaat)} +\nra{w}_{L^2(B_{\rr}(x))} \lesssim  \sqrt{\texttt{f} (\bbt, \aaat)}+\nra{w}_{L^2(B_{\rr}(x))}\,.
$$
Finally, it is also strictly convex as both $\texttt{f}_{\textnormal{i}}$ and $\texttt{f}_{\textnormal{e}}$ are convex and $\texttt{f}_{\textnormal{i}}$ is strictly convex. Existence and uniqueness for \rif{unicaff} now follow by standard Direct Methods in finite dimensional vector spaces (recall that every finite convex function on a finite-dimensional
space is continuous).}
\end{remark}
\begin{proposition}\label{continues} Let $w$ as in Proposition \ref{affinemente}, and, whenever $B_{\rr}(x)\subset \BBB$, denote by $\ell_{x}$ the unique affine map such that 
\eqref{unicaff} holds. 
Then the map $x \mapsto D\ell_{x}$ is continuous. 
\end{proposition}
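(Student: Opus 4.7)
The plan is to argue by a compactness-and-uniqueness argument, using Proposition \ref{affinemente} to pass continuity through the excess functional and Remark \ref{esisteminimo} to identify limits. Fix $x_k\to x$ with $B_{\rr}(x_k)\Subset \BBB$, and write $\ell_{x_k}(\ti{y})=\bbt_k(\ti{y}-x_k)+\aaat_k$ and $\ell_x(\ti{y})=\bbt(\ti{y}-x)+\aaat$. The goal is $\bbt_k\to \bbt$.

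First I would establish boundedness of $\{(\bbt_k,\aaat_k)\}$ in $\er^{N\times n}\times \er^N$. By minimality and by testing with the zero map,
\[
\tx{E}_{w}(\ell_{x_k};x_k,\rr) \leq \tx{E}_{w}(0;x_k,\rr) = \tx{E}_{w}(x_k,\rr)+|(w)_{B_{\rr}(x_k)}|\cdot (\textnormal{bounded factor}),
\]
and in any case the right-hand side is uniformly bounded in $k$ thanks to \rif{affini2}$_1$ and the assumption $w\in L^2(\BBB;\er^N)\cap L^{1}_{2s}$. Combining this with the triangle inequality
\[
\nra{\ell_{x_k}}_{L^{2}(B_{\rr}(x_k))} \leq \nra{\ell_{x_k}-w}_{L^{2}(B_{\rr}(x_k))}+\nra{w}_{L^{2}(B_{\rr}(x_k))}
\]
and the quantitative norm estimate \rif{tritri}$_1$, one gets $|\bbt_k|\rr+|\aaat_k|\lesssim_{n,N} \tx{E}_{w}(\ell_{x_k};x_k,\rr)+\nra{w}_{L^{2}(B_{\rr}(x_k))}$, so $\{(\bbt_k,\aaat_k)\}$ is bounded.

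Next I would extract any convergent subsequence (not relabeled) with $\bbt_k\to \bbt^\star$ and $\aaat_k\to \aaat^\star$, and set $\ell^\star(\ti{y}):=\bbt^\star(\ti{y}-x)+\aaat^\star$. By \rif{affini1} of Proposition \ref{affinemente},
\[
\tx{E}_{w}(\ell_{x_k};x_k,\rr) \longrightarrow \tx{E}_{w}(\ell^\star;x,\rr).
\]
On the other hand, for every fixed affine map $\ell$, minimality of $\ell_{x_k}$ and \rif{affini2}$_2$ give
\[
\tx{E}_{w}(\ell_{x_k};x_k,\rr) \leq \tx{E}_{w}(\ell;x_k,\rr) \longrightarrow \tx{E}_{w}(\ell;x,\rr).
\]
Comparing the two limits yields $\tx{E}_{w}(\ell^\star;x,\rr)\leq \tx{E}_{w}(\ell;x,\rr)$ for every affine $\ell$, so $\ell^\star$ is a minimizer of \rif{unicaff}. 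The uniqueness established in Remark \ref{esisteminimo} forces $\ell^\star=\ell_x$, hence $\bbt^\star=\bbt$ and $\aaat^\star=\aaat$.

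Finally, since every subsequence of the bounded sequence $\{(\bbt_k,\aaat_k)\}$ admits a further subsequence converging to the same limit $(\bbt,\aaat)$, the whole sequence converges, and in particular $D\ell_{x_k}=\bbt_k\to \bbt= D\ell_x$. The only delicate point is the application of Proposition \ref{affinemente} to sequences where both the centre $x_k$ and the coefficients of the affine map vary simultaneously; but this is exactly what \rif{affini1} is designed to handle, so no further technical work is required.
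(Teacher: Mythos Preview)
Your proof is correct and follows essentially the same approach as the paper: both argue by compactness and uniqueness, first bounding $(\bbt_k,\aaat_k)$ via the competitor $\ell\equiv 0$ and \rif{tritri}$_1$, then passing to the limit in the excess via \rif{affini1}, and finally identifying the limit with $\ell_x$ through minimality, \rif{affini2}, and the uniqueness in Remark \ref{esisteminimo}. The paper is slightly terser (it applies \rif{affini2} only to the single competitor $\ell_x$ rather than to every affine $\ell$, which suffices), but the logic is the same.
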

\begin{proof} Let $\{x_k\}\subset \er^n$ be a sequence such that $B_{\rr}(x_k)\subset \BBB$ and $x_k \to x$ and denote $\ell_{x_k}(\ti{x}) := \bbt_k(\ti{x}-x_k)+\aaat_k$ for suitable $\bbt_k \in \mathbb{R}^{N \times n}$, $\aaat_k \in \mathbb{R}^N$ such that 
\eqn{affini3}
$$  \tx{E}_{w}(\ell_{x_k};x_{k},\rr)=  \inf_{\ell \textnormal{ affine}} \tx{E}_{w}(\ell;x_{k},\rr)\,. $$
 By minimality it is easy to see that the sequences $\{\aaat_k\} \subset \er^N$ and $\{\bbt_k\} \subset \er^{N\times n}$ are bounded. Indeed, testing the right-hand side in \rif{affini3} with $\ell\equiv 0$ and using \rif{tritri}$_1$ as in \rif{bowie} to estimate the left-hand side we obtain
\begin{flalign*}
\snr{\bbt_k} \rr + \snr{\aaat_k}  \lesssim \nra{\ell_{x_k}}_{L^2(B_{\rr}(x_k))}
&  \leq    \tx{E}_{w}(\ell_{x_k};x_{k},\rr) +\nra{w}_{L^2(B_{\rr}(x_k))}\\& \leq  \tx{E}_{w}(0;x_k,\rr)+\nra{w}_{L^2(B_{\rr}(x_k))}
\le 2\tx{E}_{w}(0;x_k,\rr).
\end{flalign*}
By \rif{affini2}$_2$ with $\ell\equiv 0$, we have
$
\tx{E}_{w}(0;x_k,\rr)\to \tx{E}_{w}(0;x,\rr),
$
and therefore the sequences $\{\aaat_k\}$ and $\{\bbt_k\}$ are bounded. In order to prove that $\aaat_k \to \ell_x(x)$ and $\bbt_k \to D\ell_x$ it will be sufficient to prove that every converging subsequence of $\{\aaat_k\}$ and $\{\bbt_k\}$ converges to $\ell_x(x)$ and $D\ell_x$, respectively. 
Let us take a subsequence along which both converge, still denoted by $\{\aaat_k\}, \{\bbt_k\}$, such that $\aaat_k \to \aaat \in \er^N$ and $\bbt_k \to \bbt \in \er^{N\times n}$; \rif{affini1} implies $\tx{E}_{w}(\ell_{x_k};x_k,\rr)\to \tx{E}_{w}(\ell;x,\rr)$ with $\ell(\ti{x}):=\bbt(\ti{x}-x)+\aaat$. On the other hand by minimality of $\ell_{x_k}$ we have  $\tx{E}_{w}(\ell_{x_k};x_k,\rr)\leq \tx{E}_{w}(\ell_{x};x_k,\rr)$ and again by \rif{affini2} it is $\tx{E}_{w}(\ell_{x};x_k,\rr)\to \tx{E}_{w}(\ell_x;x,\rr)$. It follows $\tx{E}_{w}(\ell;x,\rr) \leq \tx{E}_{w}(\ell_x;x,\rr)$ and therefore $\ell=\ell_x$ by uniqueness in \rif{unicaff}. The proof is complete. 
\end{proof}
\subsection{A Caccioppoli type inequality} We present a standard Caccioppoli-type inequality for solutions to \eqref{nonlocaleqn}. While this inequality has appeared in various forms in the literature, it is typically formulated with different operators, often under additional assumptions on solutions, and in any case always for scalar solutions \cite{byunnon, DKP, KMS1, meng}. Since we are dealing with  the vectorial case, we are going to provide a full proof.
\begin{lemma}\label{prop:cacc}
Under assumptions \eqref{bs.1},\eqref{bs.2}, and \eqref{bs.5}, let $\BBB\equiv B_{\rrr}(x_{0})\Subset \Omega$ be a ball, and $u$ be a weak solution to \eqref{nonlocaleqn}. Then, for all $0<\sigma<1$, the Caccioppoli inequality
\begin{flalign}\label{cacc}
&\notag \sigma^{n}\snra{u}_{s,2;\sigma \BBB}^2\\
&\le \frac{c}{(1-\sigma)^{2n+2s}\rrr^{2s}}\left(\nra{u-u_0}_{L^{2}(\BBB)}^2+\tail(u-u_0;\BBB)\nra{u-u_0}_{L^{1}(\BBB)}\right)+c \rrr^{2s}\nra{f}_{L^{2_*}(\BBB)}^2
\end{flalign}
holds with $c\equiv c(n,N,s,\Lambda)$ for every $u_0 \in \er^N$. As a consequence, it holds that 
\eqn{cacce}
$$
\snra{u}_{s,2;\BBB/2} \leq  c\rrr^{-s} \tx{E}_{u}(x_{0},\rrr)+c \rrr^{s}\nra{f}_{L^{2_*}(\BBB)}\,.
$$
\end{lemma}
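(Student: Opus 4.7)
The plan is to test the weak formulation \eqref{eqweaksol} with $\varphi := \eta^2(u-u_0)$, where $\eta \in C^\infty_c(\BBB)$ satisfies $0 \le \eta \le 1$, $\eta \equiv 1$ on $\sigma\BBB$, $\operatorname{supp}\eta \subset \tau\BBB$ for $\tau := (1+\sigma)/2$, and $\|\nabla\eta\|_\infty \lesssim 1/((1-\sigma)\rrr)$. Setting $w:=u-u_0$, the kernel difference $u(x)-u(y) = w(x)-w(y)$ is unchanged, whereas
\begin{equation*}
\eta^2(x)w(x)-\eta^2(y)w(y) = \tfrac12(\eta^2(x)+\eta^2(y))(w(x)-w(y)) + \tfrac12(\eta^2(x)-\eta^2(y))(w(x)+w(y)).
\end{equation*}
Ellipticity \eqref{bs.1} applied to the first, diagonal piece yields a quadratic lower bound which, upon restriction to $\sigma\BBB \times \sigma\BBB$, controls a multiple of the Gagliardo seminorm $[u]_{s,2;\sigma\BBB}^2$.

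\textbf{Upper bounds on the remaining pieces.} The cross term is handled via $|\eta^2(x)-\eta^2(y)|\le 2\max(\eta(x),\eta(y))|\eta(x)-\eta(y)|$, $|\eta(x)-\eta(y)|\le \|\nabla\eta\|_\infty|x-y|$, the upper bound in \eqref{bs.1}, and Young's inequality calibrated so that a small multiple of the ellipticity quadratic can be absorbed. The leftover is controlled by
\begin{equation*}
c\|\nabla\eta\|_\infty^2 \int_\BBB\int_\BBB \snr{w(x)+w(y)}^2 \snr{x-y}^{2-n-2s}\,\dx\dy \;\lesssim\; \frac{\snr{\BBB}}{\rrr^{2s}(1-\sigma)^2}\nra{w}_{L^2(\BBB)}^2,
\end{equation*}
using integrability of $|x-y|^{2-n-2s}$ over $\BBB$. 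For the portion of the double integral over $(\er^n\times\er^n)\setminus\mathcal B(\BBB)$, symmetry \eqref{bs.2} together with $\eta \equiv 0$ outside $\tau\BBB$ reduce matters to a one-sided integral, bounded through the geometric comparison $|x-y| \ge (1-\tau)|y-x_0|$ for $x \in \tau\BBB$, $y \notin\BBB$, by $c(1-\sigma)^{-n-2s}\rrr^{-2s}\snr{\BBB}\nra{w}_{L^1(\BBB)}\tail(w;\BBB)$, where $\tail(w;\BBB) = \tail(u-u_0;\BBB)$.

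\textbf{Right-hand side and conclusion.} For the forcing integral, the fractional Sobolev inequality \eqref{poincf2} applied to the compactly supported map $\eta^2 w$, Hölder's inequality in the duality $L^{2^*}\leftrightarrow L^{2_*}$, and a standard fractional Leibniz-type estimate for $[\eta^2 w]_{s,2;\er^n}$ give
\begin{equation*}
\left|\int_\Omega \langle f, \eta^2 w\rangle\,\dx\right| \;\lesssim\; \nra{f}_{L^{2_*}(\BBB)}\snr{\BBB}^{1/2+s/n}\,[\eta^2 w]_{s,2;\er^n},
\end{equation*}
and a final application of Young's inequality absorbs the seminorm into the ellipticity lower bound at the cost of the $c\rrr^{2s}\nra{f}_{L^{2_*}(\BBB)}^2$ contribution. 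Collecting all the bounds, normalizing by $\snr{\BBB}$, and gathering all powers of $1-\sigma$ into the single exponent $2n$ yields \eqref{cacc}. Inequality \eqref{cacce} then follows from \eqref{cacc} with $\sigma = 1/2$ and $u_0 = (u)_\BBB$: Jensen's inequality gives $\nra{u-(u)_\BBB}_{L^1(\BBB)}\le \nra{u-(u)_\BBB}_{L^2(\BBB)}$, and the elementary bound $ab \le (a^2+b^2)/2$ converts $\tail(u-(u)_\BBB;\BBB)\nra{u-(u)_\BBB}_{L^2(\BBB)}$ into a multiple of $\tx{E}_u(x_0,\rrr)^2$ via \eqref{exc}.

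\textbf{Main obstacle.} The principal delicacy is reconciling the two distinct losses in $1-\sigma$, namely the $(1-\sigma)^{-2}$ factor coming from $\|\nabla\eta\|_\infty$ and the $(1-\sigma)^{-n-2s}$ factor coming from the geometric comparison in the tail integral, with the single exponent $2n$ claimed in \eqref{cacc}; introducing the intermediate radius $\tau\in(\sigma,1)$ is the device that allows both to be absorbed cleanly. The vector-valued setting adds a further subtlety in that ellipticity \eqref{bs.1} is only available for the diagonal pairing $\langle a\xi,\xi\rangle$, which is precisely what makes the algebraic splitting above essential and forces the use of symmetry \eqref{bs.2} to identify conjugate contributions to the off-diagonal tail integral.
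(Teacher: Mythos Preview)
Your argument is correct and follows a genuinely different route from the paper. The paper tests with $\varphi=\eta^{2}(u-(u)_{B_{\tau_{2}}})$ on a pair of intermediate radii $\sigma\rrr\le\tau_{1}<\tau_{2}\le\rrr$, and handles the forcing term via the Sobolev--Poincar\'e inequality \eqref{sobpoinfrac}; this places the full seminorm $[u]_{s,2;B_{\tau_{2}}}$ on the right-hand side, which cannot be absorbed directly and forces an appeal to the iteration Lemma~\ref{iterlem} applied to $h(\tau)=[u]_{s,2;B_{\tau}}$. You instead test with $\eta^{2}(u-u_{0})$ and control the forcing term through the Sobolev inequality \eqref{poincf2} applied to the compactly supported map $\eta^{2}w$; the Leibniz estimate then bounds $[\eta^{2}w]_{s,2;\er^{n}}^{2}$ by the weighted Gagliardo form $\int_{\BBB}\int_{\BBB}(\eta^{2}(x)+\eta^{2}(y))|w(x)-w(y)|^{2}/|x-y|^{n+2s}\dx\dy$ (since $\eta^{4}\le\eta^{2}$) plus lower-order pieces already controlled, so Young's inequality absorbs directly and no iteration is needed. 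Your approach is thus slightly more streamlined; the paper's hole-filling route is more traditional and makes the dependence on $1-\sigma$ transparent at each step. Two minor points: in the off-diagonal tail bound you should also record the $|w(x)|^{2}$ contribution, which yields an additional $(1-\sigma)^{-n-2s}\rrr^{-2s}\nra{w}_{L^{2}(\BBB)}^{2}$ term (harmlessly absorbed into the $L^{2}$ part of \eqref{cacc}); and your ``Main obstacle'' paragraph slightly overstates the role of the intermediate radius $\tau$, since in your direct scheme the two powers $(1-\sigma)^{-2}$ and $(1-\sigma)^{-n-2s}$ simply coexist and are both dominated by $(1-\sigma)^{-2n}$ for $n\ge 2$.
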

\begin{proof}
We can always assume that $u_0\equiv 0$. Indeed, define $\ti{a}(x,y,v,w):= 
a(x,y,u_0+v,u_0+w)$ for every $x, y \in \er^n$, $v,w\in \er^N$. It follows that $\ti{a}(\cdot)$ still satisfies  \eqref{bs.1}, \eqref{bs.2} and that $u-u_0$ solves \eqref{nonlocaleqn} with $a(\cdot)$ replaced by $\ti{a}(\cdot)$. In the following all the considered balls will be concentric with $\BBB$ and therefore centred at $x_0$. We introduce parameters $\sigma\in (0,1)$, and $\sigma\rrr\le \tau_{1}<\tau_{2}\le \rrr$, set $\ti{\tau}_{2}:=(3\tau_{2}+\tau_{1})/4$, $\ti{\tau}_{1}:=(\tau_{2}+\tau_{1})/2$ so that $\tau_1 < \ti{\tau}_1 <  \ti{\tau}_2< \tau_2$, and let $\eta\in C^{\infty}_0(B_{\ti{\tau}_2})$ be a cut-off function such that $\mathds{1}_{B_{\ti{\tau}_{1}}}\le \eta\le \mathds{1}_{B_{\ti{\tau}_{2}}}$ and $\nr{D\eta}_{L^{\infty}(\BBB)}\lesssim 1/(\tau_{2}-\tau_{1})$.  We test \rif{eqweaksol} by $\varphi:=\eta^{2}(u-(u)_{B_{\tau_{2}}})$, thereby obtaining, by means of \rif{bs.2}
 \begin{flalign*}
\mbox{(I)}&:= \int_{\BBB}\langle f,\varphi\rangle\dx=\int_{B_{\tau_{2}}}\int_{B_{\tau_{2}}}\langle a(x,y,u(x),u(y))(u(x)-u(y)),\varphi(x)-\varphi(y)\rangle \frac{\dxy}{\snr{x-y}^{n+2s}}\nonumber \\
&\qquad +2\int_{\mathbb{R}^{n}\setminus B_{\tau_{2}}}\int_{B_{\tau_{2}}}\langle a(x,y,u(x),u(y))(u(x)-u(y)),\varphi(x)-\varphi(y)\rangle \frac{\dxy}{\snr{x-y}^{n+2s}}\nonumber \\
&=:\mbox{(II)}+\mbox{(III)}.
 \end{flalign*}
To estimate $\mbox{(II)}$, we observe that
$$
\varphi(x)-\varphi(y)=\frac{1}{2}(\eta(x)^{2}+\eta(y)^{2})(u(x)-u(y))+\frac{1}{2}(\eta(x)^{2}-\eta(y)^{2})\left(u(x)+u(y)-2(u)_{B_{\tau_{2}}}\right)
$$
and  therefore
\begin{flalign*}
\mbox{(II)}&=\frac{1}{2}\int_{B_{\tau_{2}}}\int_{B_{\tau_{2}}}(\eta(x)^{2}+\eta(y)^{2})\langle a(x,y,u(x),u(y))(u(x)-u(y)),u(x)-u(y)\rangle \frac{\dxy}{\snr{x-y}^{n+2s}}\nonumber \\
&\quad +\frac{1}{2}\int_{B_{\tau_{2}}}\int_{B_{\tau_{2}}}(\eta(x)^{2}-\eta(y)^{2})\times \\ & \hspace{2cm}\times\langle a(x,y,u(x),u(y))(u(x)-u(y)),u(x)+u(y)-2(u)_{B_{\tau_{2}}} \rangle \frac{\dxy}{\snr{x-y}^{n+2s}}\nonumber \\
&\ge\frac{1}{2\Lambda}\int_{B_{\tau_{2}}}\int_{B_{\tau_{2}}}(\eta(x)^{2}+\eta(y)^{2})\frac{\snr{u(x)-u(y)}^{2}}{\snr{x-y}^{n+2s}}\dxy\nonumber \\
&\quad -\Lambda\int_{B_{\tau_{2}}}\int_{B_{\tau_{2}}}(\eta(x)+\eta(y))\snr{\eta(x)-\eta(y)}\snr{u(x)-u(y)}\times\\
& \quad \qquad \qquad   \times \max\left\{\snr{u(x)-(u)_{B_{\tau_{2}}}},\snr{u(y)-(u)_{B_{\tau_{2}}}}\right\}\frac{\dxy}{\snr{x-y}^{n+2s}}\nonumber \\
&\ge\frac{1}{4\Lambda}\int_{B_{\tau_{2}}}\int_{B_{\tau_{2}}}(\eta(x)^{2}+\eta(y)^{2})\frac{\snr{u(x)-u(y)}^{2}}{\snr{x-y}^{n+2s}}\dxy\nonumber \\
&\quad \quad -c\int_{B_{\tau_{2}}}\int_{B_{\tau_{2}}}\max\left\{\snr{u(x)-(u)_{B_{\tau_{2}}}}^{2},\snr{u(y)-(u)_{B_{\tau_{2}}}}^{2}\right\}\snr{\eta(x)-\eta(y)}^{2}\frac{\dxy}{\snr{x-y}^{n+2s}}\nonumber\\
&\ge \frac{1}{4\Lambda}\int_{B_{\tau_{1}}}\int_{B_{\tau_{1}}}\frac{\snr{u(x)-u(y)}^{2}}{\snr{x-y}^{n+2s}}\dxy-\frac{c\rrr^{2(1-s)}}{(\tau_{2}-\tau_{1})^{2}}\int_{B_{\tau_{2}}}\snr{u(x)-(u)_{B_{\tau_{2}}}}^{2}\dx\,,
\end{flalign*}
with $c\equiv c(n,s,\Lambda)$. Note that we argued as follows:
\begin{flalign*}
&\int_{B_{\tau_{2}}}\int_{B_{\tau_{2}}}\max\left\{\snr{u(x)-(u)_{B_{\tau_{2}}}}^{2},\snr{u(y)-(u)_{B_{\tau_{2}}}}^{2}\right\}\snr{\eta(x)-\eta(y)}^{2}\frac{\dxy}{\snr{x-y}^{n+2s}}\\ 
& \quad \leq \frac{c(n)}{(\tau_2-\tau_1)^{2}} \int_{B_{\tau_{2}}}\int_{B_{\tau_{2}}} \snr{u(x)-(u)_{B_{\tau_{2}}}}^{2}\frac{\dxy}{\snr{x-y}^{n+2(s-1)}}\\
& \quad \leq  \frac{c(n)}{(\tau_2-\tau_1)^{2}}\int_{B_{\tau_{2}}}\snr{u(x)-(u)_{B_{\tau_{2}}}}^{2}\dx\int_{\texttt{B}_{2\tau_2}}\frac{\dz}{\snr{z}^{n+2(s-1)}}\\ & \quad \leq \frac{c\tau_{2}^{2(1-s)}}{(1-s)(\tau_{2}-\tau_{1})^{2}}\int_{B_{\tau_{2}}}\snr{u(x)-(u)_{B_{\tau_{2}}}}^{2}\dx\,.
\end{flalign*}
Concerning $\mbox{(III)}$, note that, with $y \in \er^n\setminus B_{\tau_2}$ and $x \in B_{\ti{\tau}_2}$, as in \rif{esco1} we have
\eqn{esco2}
$$
 \snr{y-x} \geq  \frac{\tau_2- \snr{x-x_{0}}}{\tau_2} \snr{y-x_{0}}>
  \frac{\tau_2- \ti{\tau}_2}{\tau_2} \snr{y-x_{0}}\approx   \frac{\tau_2- \tau_1}{\tau_2} \snr{y-x_{0}}
$$
so that, estimating $\tail(u-(u)_{B_{\tau_{2}}};B_{\tau_{2}}) \lesssim \tail(u;B_{\tau_{2}})+\snr{(u)_{B_{\tau_{2}}}}$, we have
\begin{eqnarray*}
\snr{\mbox{(III)}}&\le &c\int_{\mathbb{R}^{n}\setminus B_{\tau_{2}}}\int_{B_{\ti{\tau}_{2}}}\eta(x)^{2}\max\left\{\snr{u(x)-(u)_{B_{\tau_{2}}}},\snr{u(y)-(u)_{B_{\tau_{2}}}}\right\}\times \\ && 
\hspace{20mm}\times \snr{u(x)-(u)_{B_{\tau_{2}}}}\frac{\dxy}{\snr{x-y}^{n+2s}}\nonumber \\
&\stackleq{esco2} &c\tau_{2}^{-2s}\left(\frac{\tau_{2}}{\tau_{2}-\tau_{1}}\right)^{n+2s}\tail(u-(u)_{B_{\tau_{2}}};B_{\tau_{2}}) \int_{B_{\tau_{2}}}\snr{u-(u)_{B_{\tau_{2}}}}\dx\nonumber \\
&&\quad +c\tau_{2}^{-2s}\left(\frac{\tau_{2}}{\tau_{2}-\tau_{1}}\right)^{n+2s}\int_{B_{\tau_{2}}}\snr{u-(u)_{B_{\tau_{2}}}}^{2}\dx\nonumber \\
&\stackleq{minav} &\frac{c\tau_{2}^n}{(\tau_{2}-\tau_{1})^{n+2s}}\left(\tail(u;B_{\tau_{2}}) \int_{B_{\tau_{2}}}\snr{u}\dx\nonumber +\mint_{B_{\tau_2}}\snr{u}\dx\int_{B_{\tau_{2}}}\snr{u}\dx+\int_{B_{\tau_{2}}}\snr{u}^{2}\dx\right)\nonumber \\
&\stackrel{\mbox{Jensen}}{\leq} & \frac{c\tau_{2}^n}{(\tau_{2}-\tau_{1})^{n+2s}}\left(\tail(u;B_{\tau_{2}}) \int_{B_{\tau_{2}}}\snr{u}\dx\nonumber+\int_{B_{\rrr}}\snr{u}^{2}\dx\right)\nonumber\\
&\stackleq{scatailggg} & \frac{c\rrr^n}{(\tau_{2}-\tau_{1})^{n+2s}}\left(\tail(u;B_{\rrr}) \int_{B_{\rrr}}\snr{u}\dx\nonumber+\frac{\rrr^n}{\tau_2^n}\mint_{B_{\rrr}}\snr{u}\dx\int_{B_{\tau_{2}}}\snr{u}\dx+\int_{B_{\rrr}}\snr{u}^{2}\dx\right)\nonumber\\
&\leq & \frac{c\rrr^{2n}}{(\tau_{2}-\tau_{1})^{2n+2s}}\left(\tail(u;\BBB) \int_{\BBB}\snr{u}\dx\nonumber+\int_{\BBB}\snr{u}^{2}\dx\right)
\end{eqnarray*}
for $c\equiv c(n,s,\Lambda)$. Finally, via H\"older and Young inequalities, and \rif{sobpoinfrac}, we find \begin{flalign*}
\snr{\mbox{(I)}}&\le \int_{B_{\ti{\tau}_{2}}}\eta^{2}\snr{f}\snr{u-(u)_{B_{\tau_{2}}}}\dx\nonumber \\
&\le\snr{B_{\tau_{2}}}\left(\mint_{B_{\tau_{2}}}\snr{u-(u)_{B_{\tau_{2}}}}^{2^{*}}\dx\right)^{\frac{1}{2^{*}}}\left(\mint_{B_{\tau_{2}}}\eta^{\frac{2n}{n+2s}}\snr{f}^{\frac{2n}{n+2s}}\dx\right)^{\frac{n+2s}{2n}}\nonumber \\
&\le c\tau_{2}^{s}\snr{B_{\tau_{2}}}\left(\mint_{B_{\tau_{2}}}\int_{B_{\tau_{2}}}\frac{\snr{u(x)-u(y)}^{2}}{\snr{x-y}^{n+2s}}\dxy\right)^{1/2}\left(\mint_{B_{\tau_{2}}}\snr{f}^{2_*}\dx\right)^{1/2_*}\nonumber \\
&\le\varepsilon\int_{B_{\tau_{2}}}\int_{B_{\tau_{2}}}\frac{\snr{u(x)-u(y)}^{2}}{\snr{x-y}^{n+2s}}\dxy+\frac{c\tau_{2}^{n+2s}}{\varepsilon}\left(\mint_{B_{\tau_{2}}}\snr{f}^{2_*}\dx\right)^{2/2_*}
\end{flalign*}
with $c\equiv c(n,N,s)$ and $\varepsilon\in (0,1)$ to be chosen. Merging the content of the above displays, using \eqref{minav} again, and choosing $\varepsilon\equiv \eps(n,N,s,\Lambda)$ sufficiently small we obtain
\begin{flalign*}
\int_{B_{\tau_{1}}}\int_{B_{\tau_{1}}}\frac{\snr{u(x)-u(y)}^{2}}{\snr{x-y}^{n+2s}}\dxy  &\le\frac{1}{2}\int_{B_{\tau_{2}}}\int_{B_{\tau_{2}}}\frac{\snr{u(x)-u(y)}^{2}}{\snr{x-y}^{n+2s}}\dxy+\frac{c\rrr^{2(1-s)}}{(\tau_{2}-\tau_{1})^{2}}
\int_{\BBB}\snr{u}^2\dx\nonumber \\
&\qquad +\frac{c\rrr^{2n}}{(\tau_{2}-\tau_{1})^{2n+2s}}\left(\tail(u;\BBB) \int_{\BBB}\snr{u}\dx\nonumber+\int_{\BBB}\snr{u}^{2}\dx\right)\\ &\qquad +c\rrr^{n+2s} \left(\mint_{\BBB}\snr{f}^{2_*}\dx\right)^{2/2_*}
\end{flalign*}
for $c\equiv c(n,N,s,\Lambda)$. Lemma \ref{iterlem} below applied to $h(\tau)\equiv [u]_{s,2;B_{\tau}}^2$ yields \eqref{cacc} (with $u_0\equiv 0$), and the proof is complete.
\end{proof}
\begin{lemma}\label{iterlem}
Let $h\colon [t,s]\to [0, \infty)$ be a bounded function, and let $\tx{a},\tx{b},\tx{c},  \gamma, \gamma_*$ be non-negative numbers. Assume that the inequality 
$
h(\tau_1)\le  h(\tau_2)/2+(\tau_2-\tau_1)^{-\gamma}\tx{a}+(\tau_2-\tau_1)^{-\gamma_*}\tx{b}+\tx{c}
$
holds whenever $t\le \tau_1<\tau_2\le s$. Then $
h(t)\lesssim_{\gamma, \gamma_*} (s-t)^{-\gamma}\tx{a}+ (s-t)^{-\gamma_*}\tx{b}+\tx{c}
$ holds too. 
\end{lemma}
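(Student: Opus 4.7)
The plan is to use the standard Giaquinta--Giusti hole-filling/iteration trick (see \cite{giusti, giaorange}), which has become textbook material for exactly this kind of self-improving inequality. Namely, I would construct a geometrically spaced sequence of radii $\{\tau_k\}\subset [t,s]$ that accumulates at $s$, iterate the given inequality along this sequence to absorb the $h(\tau_2)/2$ term, and then pass to the limit using the boundedness of $h$ to discard the residual.

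Concretely, fix a parameter $\theta\in (0,1)$ to be chosen momentarily and define $\tau_0:=t$ and $\tau_{k+1}:=\tau_k+(1-\theta)\theta^k(s-t)$ for $k\in\en_0$. Then $\tau_k\uparrow s$ and $\tau_{k+1}-\tau_k=(1-\theta)\theta^k(s-t)$. Applying the assumed inequality with $(\tau_1,\tau_2)=(\tau_k,\tau_{k+1})$ yields
\begin{equation*}
h(\tau_k)\le \tfrac{1}{2} h(\tau_{k+1})+\frac{\theta^{-\gamma k}\tx{a}}{((1-\theta)(s-t))^{\gamma}}+\frac{\theta^{-\gamma_*k}\tx{b}}{((1-\theta)(s-t))^{\gamma_*}}+\tx{c}.
\end{equation*}
A straightforward induction over $k$ (iterating the factor $1/2$ on each step) gives, for every $k\ge 1$,
\begin{equation*}
h(t)\le 2^{-k} h(\tau_k)+\frac{\tx{a}}{((1-\theta)(s-t))^{\gamma}}\sum_{j=0}^{k-1}\frac{\theta^{-\gamma j}}{2^{j}}+\frac{\tx{b}}{((1-\theta)(s-t))^{\gamma_*}}\sum_{j=0}^{k-1}\frac{\theta^{-\gamma_{*}j}}{2^{j}}+2\tx{c}.
\end{equation*}

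The only real choice is to select $\theta\equiv\theta(\gamma,\gamma_*)\in(0,1)$ sufficiently close to $1$ so that both series converge, i.e., so that $2\theta^{\gamma}>1$ and $2\theta^{\gamma_*}>1$ simultaneously; for instance $\theta:=2^{-1/(2\max\{\gamma,\gamma_*,1\})}$ works. With such a choice, both geometric series are bounded by a constant depending only on $\gamma,\gamma_*$. Since $h$ is assumed bounded on $[t,s]$, the term $2^{-k}h(\tau_k)$ tends to zero as $k\to\infty$, and letting $k\to\infty$ in the previous display yields
\begin{equation*}
h(t)\le c(\gamma,\gamma_*)\bigl[(s-t)^{-\gamma}\tx{a}+(s-t)^{-\gamma_*}\tx{b}+\tx{c}\bigr],
\end{equation*}
which is the claim.

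There is essentially no obstacle here: the only mildly delicate point is making sure the parameter $\theta$ is chosen once and for all so that \emph{both} series $\sum 2^{-j}\theta^{-\gamma j}$ and $\sum 2^{-j}\theta^{-\gamma_*j}$ converge, and keeping track of the fact that the boundedness of $h$ — rather than any quantitative bound for it — is exactly what is needed to kill the tail $2^{-k}h(\tau_k)$ in the limit. The statement is applied in the proof of Lemma~\ref{prop:cacc} to $h(\tau):=[u]_{s,2;B_\tau}$, which is bounded by $[u]_{s,2;\BBB}<\infty$ because $u\in W^{s,2}_{\loc}(\Omega;\er^N)$, so the boundedness hypothesis is verified for free.
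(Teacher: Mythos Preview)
Your proof is correct and is exactly the standard Giaquinta--Giusti iteration argument; the paper does not give its own proof but simply refers to \cite[Lemma 6.1]{giusti}, which is precisely the argument you have written out.
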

See \cite[Lemma 6.1]{giusti} for the proof. 
\subsection{A convergence result based on Lorentz spaces} This is a straightforward consequence of some arguments hidden in \cite[Section 4]{dmn}. For this we consider the potential 
$$
\mathbf{I}^{f}_{\gamma,\chi}(x,\sigma) := \int_{0}^{\sigma} \lambda^{\gamma}\nra{f}_{L^\chi(B_{\lambda}(x))}  \, \frac{\dlam}{\lambda}\,,$$
where $\gamma >0$ is such that $n > \gamma \chi$. Following \cite[Section 4]{dmn}, we identify $\mathbf{I}^{f}_{\gamma,\chi}(x,\sigma)=\mathbf{P}^{\chi, 1}_{\chi,\gamma}(f;x,\sigma)$ in the notation of \cite[Section 4]{dmn}. From the proof of \cite[Lemma 4.1]{dmn} it follows that 
\eqn{qualo}
$$
\mathbf{I}^{f}_{\gamma,\chi}(x,\sigma) \lesssim \int_0^{\snr{\ttB_{1}}\sigma^n} \left[\lambda^{\chi\gamma/n}(|f|^{\chi})^{**}(\lambda)\right]^{1/\chi} \, \frac{\dlam}{\lambda}
$$
holds whenever $B_{\sigma}(x)\subset \er^n$ is a ball, where 
$$(|f|^{\chi})^{**} (\lambda)=\frac1{\lambda} \int_0^{\lambda} (|f|^{\chi})^{*}(r)\d r$$ and $(|f|^{\chi})^{*}$ is the non-increasing rearrangement of $|f|^{\chi}$. Note that, as $n > \gamma \chi$, the quantity on the right-hand side of \rif{qualo} is finite provided $|f|^{\chi}$ belongs to the Lorentz space $L(n/(\chi\gamma),1/\chi)$, which is the same as requiring $f\in L(n/\gamma, 1)$; see \cite[Section 4]{dmn} for details and relevant definitions. From \rif{qualo} we conclude that 
\eqn{qualo2}
$$
\frac{n}{\chi\gamma}>1\,, \ \  f \in L(n/\gamma,1) \Longrightarrow 
\lim_{\sigma\to 0}\, \mathbf{I}^{f}_{\gamma,\chi}(x,\sigma)=0\ \  \mbox{uniformly with respect to $x$}\,.
$$
\subsection{Fractional sharp maximal operators}\label{truncsec0}
The sharp fractional maximal operator of a map $w \in L^1_{\loc}(\er^n;\er^N)$ is for every $x\in \er^n$ defined as follows:
\eqn{massimale}
$$
\textnormal{M}^{\#,t}_{\rrr}(w;x):=\sup_{0 <\rr \leq \rrr}\,  \rr^{-t} \mint_{B_{\rr}(x)} \snr{w - (w)_{B_{\rr}(x)}}\dy \,, \quad 0\leq t \leq 1\,.
$$
Note that the choice $t=0$ gives a localization of the classical Fefferman-Stein sharp maximal operator. Such an operator has been first considered by Calder\'on \& Scott in \cite{calderon}; see also \cite{des} for its properties. It can be used to prove a local H\"older estimate for $w$ in the sense of the following
\begin{lemma}\label{campmax} Let $w\in L^1_{\loc}(\er^n;\er^N)$; if $x\in \er^n$ is such that $\textnormal{M}^{\#,t}_{\rrr}(w;x)$ is finite for some $\rrr>0$ and $0< t \leq 1$, then the limit 
\eqn{preciso}
$$
w(x):=\lim_{\sigma \to 0}\, (w)_{B_{\sigma}(x)} 
$$
exists and thereby defines the precise representative of $w$ at the point $x$. Moreover, for every $\eps \in (0,1)$ the inequality
\eqn{stimalfa}
$$
|w(x)-w(y)|\lesssim_n \frac{1}{\eps^n t}\left[\textnormal{M}^{\#,t}_{\rrr}(w;x)+\textnormal{M}^{\#,t}_{\rrr}(w;y)\right]|x-y|^{t}
$$
holds whenever $x, y \in \er^n$ are such that $\ (1+\eps)\snr{x-y}\leq \rrr$ and the right-hand side is finite. In particular, if $\BBB \subset \er^n$ is a ball with radius $\rr$, then 
\eqn{stimalfa2}
$$
[w]_{0,t;\BBB} \lesssim_{n, \eps}  \frac{1}{t}\nr{\textnormal{M}^{\#,t}_{\rrr}(w;\cdot)}_{L^{\infty}(\BBB)}$$
holds provided $\rrr\geq 2(1+\eps)\rr$ \footnote{In the spirit of the lemma, i.e., considering also the fine behaviour of $w$, inequality \rif{stimalfa} deserves a comment. In the case $ \sup_{\BBB}\, \textnormal{M}^{\#,t}_{\rrr}(w;\cdot)$ is finite then \rif{stimalfa} holds for every choice of $x,y$. In the case the right-hand side of \rif{stimalfa} is finite, then \rif{stimalfa} holds a.e., and then $w$ admits a H\"older continuous representative in $\BBB$.}. 
\end{lemma}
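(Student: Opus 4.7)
The plan is a standard Campanato-type telescoping argument adapted to the sharp fractional maximal functional, together with a ball-intersection trick to compare averages at two distinct centers.

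First I would prove the existence of the precise representative at any point $x\in\er^n$ where $\mathrm{M}^{\#,t}_{\rrr}(w;x)$ is finite. Fixing $\rho\in(0,\rrr]$ and setting $r_k:=2^{-k}\rho$, for consecutive averages one has
\[
 |(w)_{B_{r_{k+1}}(x)}-(w)_{B_{r_{k}}(x)}| \leq \mint_{B_{r_{k+1}}(x)}|w-(w)_{B_{r_{k}}(x)}|\dy \leq 2^{n}\mint_{B_{r_{k}}(x)}|w-(w)_{B_{r_{k}}(x)}|\dy \leq 2^{n}r_{k}^{t}\,\mathrm{M}^{\#,t}_{\rrr}(w;x).
\]
Since $\sum_{k\geq 0} 2^{-kt}=1/(1-2^{-t})\leq c(n)/t$ for $t\in(0,1]$, the sequence $\{(w)_{B_{r_{k}}(x)}\}$ is Cauchy, the limit in \rif{preciso} is well defined, and telescoping yields the one-sided bound
\[
 |w(x)-(w)_{B_{\rho}(x)}| \leq \frac{c(n)}{t}\,\rho^{t}\,\mathrm{M}^{\#,t}_{\rrr}(w;x)\qquad\text{for every }\rho\in(0,\rrr].
\]

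Next I would derive the two-point estimate \rif{stimalfa} by the triangle inequality $|w(x)-w(y)|\leq |w(x)-(w)_{B_{\rho}(x)}|+|(w)_{B_{\rho}(x)}-(w)_{B_{\rho}(y)}|+|(w)_{B_{\rho}(y)}-w(y)|$ with the choice $\rho:=(1+\eps)|x-y|\leq \rrr$. The first and third terms are controlled by the previous step. For the cross term, the intersection $B^{*}:=B_{\rho}(x)\cap B_{\rho}(y)$ contains a ball of radius $\rho-|x-y|=\rho\eps/(1+\eps)$, so that $|B^{*}|\gtrsim (\eps/(1+\eps))^{n}|B_{\rho}|\gtrsim \eps^{n}|B_{\rho}|$. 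Hence
\[
 |(w)_{B_{\rho}(x)}-(w)_{B_{\rho}(y)}| \leq \frac{1}{|B^{*}|}\int_{B^{*}}\bigl(|w-(w)_{B_{\rho}(x)}|+|w-(w)_{B_{\rho}(y)}|\bigr)\dy \lesssim \frac{\rho^{t}}{\eps^{n}}\bigl[\mathrm{M}^{\#,t}_{\rrr}(w;x)+\mathrm{M}^{\#,t}_{\rrr}(w;y)\bigr].
\]
Combining, and absorbing $(1+\eps)^{t}\leq 2$ into the constant, yields \rif{stimalfa} with the advertised dependence $1/(\eps^{n}t)$ on the parameters.

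Finally, \rif{stimalfa2} follows by observing that any two points $x,y\in\BBB$ satisfy $|x-y|\leq 2\rr$, so that under the assumption $\rrr\geq 2(1+\eps)\rr$ the admissibility condition $(1+\eps)|x-y|\leq \rrr$ of \rif{stimalfa} is met; taking the supremum over $x,y\in\BBB$ on both sides of \rif{stimalfa} and bounding the maximal functionals by their $L^{\infty}(\BBB)$-norm gives the Hölder seminorm estimate. The only delicate bookkeeping point is the quantitative control of the geometric series $\sum_{k\geq 0}2^{-kt}$ by $c(n)/t$, which produces the blow-up as $t\to 0^{+}$ and is the one place where some care is required; the remaining steps are routine.
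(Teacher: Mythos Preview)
Your proof is correct and follows precisely the standard Campanato telescoping argument that the paper defers to (it cites \cite[Proposition 1]{kumig} rather than writing out a proof). The dyadic telescope for the one-sided bound, the ball-intersection trick with $\rho=(1+\eps)|x-y|$ for the cross term, and the passage to \eqref{stimalfa2} are exactly the ingredients intended; the only cosmetic point is that the constant $\sum_{k\ge 0}2^{-kt}=1/(1-2^{-t})\le 2/t$ is absolute rather than $n$-dependent, and you might add one line noting that the full limit over $\sigma\to 0$ (not just along dyadics) follows by comparing an arbitrary radius to the nearest dyadic one.
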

A proof can be obtained making minor modifications in the one of \cite[Proposition 1]{kumig}. Lemma \ref{campmax} in fact implies the usual Campanato-Meyers integral characterization of H\"older continuity and its proof indeed uses similar arguments. See for instance the proof of Proposition \ref{cdg}. We remark that a more detailed look at the proof of Lemma \ref{campmax} leads to the improved inequality
$$
|w(x)-w(y)|\lesssim_n \frac{1}{ t}\left[(1+\eps^{-n})\textnormal{M}^{\#,t}_{(1+\eps)\snr{x-y}}(w;x)+\textnormal{M}^{\#,t}_{\eps\snr{x-y}}(w;y)\right]|x-y|^{t}
$$
which is valid whenever $x, y \in \er^n$. 

\section{Linear systems with constant coefficients}\label{linearsec}
The goal of this section is to prove a priori estimates for solutions to nonlocal systems with constant coefficients, and in particular to prove suitable quantitative smoothness results matching  partial regularity techniques.
We consider weak solutions to systems of the type
$-\mathcal{L}_{a_{0}}h=g$, 
where $a_{0}\in \mathbb{R}^{N\times N}$ is a constant matrix such that
\eqn{bs.1bis}
$$
\Lambda^{-1}\snr{\xi}^{2} \leq \langle a_{0}\xi,\xi\rangle\qquad \mbox{and}\qquad \snr{a_{0}}\le \Lambda\,,
$$
hold for all $\xi\in \er^N$. We start by looking at Dirichlet problems of the type
\eqn{basicsolve}
$$
\begin{cases} 
\ -\mathcal{L}_{a_{0}}h=g\quad &\mbox{in} \ \ \Omega\\
\ h=v\quad &\mbox{in} \ \ \mathbb{R}^{n}\setminus \Omega\,,
\end{cases}
$$ 
where $\Omega \subset \er^n$ is as usual a bounded domain. We here refer to the setting of  \cite[Section 3.1]{kokupa} and \cite[Proposition 2.12]{bls}, whose proofs easily adapt to the vectorial case considered in \rif{basicsolve}. For this it is convenient to introduce a suitable function space setting. With $\Omega \Subset \ti{\Omega}$ being two open and bounded sets and $v\in L^{1}_{2s}$, $0<t<1\leq p$, we define the nonlocal Dirichlet class 
\eqn{asinfor}
$$
\mathbb{X}^{t,p}_{v}(\Omega ,\ti{\Omega}) := \left\{w\in W^{t,p}(\ti{\Omega};\er^N) \cap L^1_{2s}\, \colon \, \mbox{$w\equiv v$ \, a.e. in $\mathbb{R}^{n}\setminus \Omega$}\right\}\,.
$$
Obviously, $\mathbb{X}^{t,p}_{v}(\Omega ,\ti{\Omega})$ is non-empty when $v \in W^{t,p}(\ti{\Omega}; \er^N)\cap L^1_{2s}$. We also denote $\mathbb{X}^{t,p}_{v}(\Omega, \ti{\Omega})\equiv \mathbb{X}^{t,p}_{0}(\Omega, \ti{\Omega})$ when $v\equiv 0$; note that $h \in \mathbb{X}^{t,p}_{v}(\Omega, \ti{\Omega})$ iff $h-v\in \mathbb{X}^{t,p}_{0}(\Omega, \ti{\Omega})$ when  $v \in W^{t,p}(\ti{\Omega}; \er^N)$. 
\begin{lemma}\label{esiste} If $v\in W^{s,2}(\ti{\Omega};\er^N)\cap L^1_{2s}$ and $g\in L^{q}(\Omega;\er^N)$ for some $q \geq 2_*$, there exists a unique solution $h\in \mathbb{X}^{s,2}_{v}(\Omega, \ti{\Omega})$ to \eqref{basicsolve} in the sense that
\eqn{eqweaksol22}
$$
			\int_{\mathbb{R}^n} \int_{\mathbb{R}^n} \langle a_{0}(h(x)-h(y)),\varphi(x)-\varphi(y) \rangle \frac{\dxy}{|x-y|^{n+2s}} = \int_{\Omega} \langle g, \varphi \rangle\dx
$$
holds whenever $\varphi \in \mathbb{X}^{s,2}_{0}(\Omega, \ti{\Omega})$. 
\end{lemma}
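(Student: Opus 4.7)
\smallskip
\noindent\textbf{Proof plan.} The natural approach is direct methods, i.e., Lax--Milgram applied to the bilinear form
\[
B(w,\varphi):=\int_{\er^n}\!\!\int_{\er^n} \langle a_{0}(w(x)-w(y)),\varphi(x)-\varphi(y)\rangle\,\frac{\dx\dy}{\snr{x-y}^{n+2s}},
\]
after reducing to homogeneous Dirichlet data. First I would set $u:=h-v$ and reformulate \eqref{eqweaksol22} as: find $u\in \mathbb{X}^{s,2}_{0}(\Omega,\ti{\Omega})$ such that
\[
B(u,\varphi)=\int_{\Omega}\langle g,\varphi\rangle\dx - B(v,\varphi)=:L(\varphi)\qquad \mbox{for every}\ \varphi\in \mathbb{X}^{s,2}_{0}(\Omega,\ti{\Omega}).
\]
The space $\mathbb{X}^{s,2}_{0}(\Omega,\ti{\Omega})$ is a closed linear subspace of $W^{s,2}(\er^n;\er^N)$, and by the Poincar\'e--Friedrichs inequality \eqref{poincf} the Gagliardo seminorm $[\,\cdot\,]_{s,2;\er^n}$ is an equivalent Hilbert norm on it.

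\smallskip
\noindent
Coercivity and continuity of $B$ on this space are immediate from \eqref{bs.1bis}: the lower bound yields $B(u,u)\ge \Lambda^{-1}[u]_{s,2;\er^n}^{2}$, while the upper bound on $\snr{a_{0}}$ and Cauchy--Schwarz give $\snr{B(w,\varphi)}\le \Lambda [w]_{s,2;\er^n}[\varphi]_{s,2;\er^n}$. For the functional $L$, the source term is continuous thanks to H\"older's inequality and the fractional Sobolev embedding \eqref{poincf2} (recall $q\ge 2_{*}$ and $2_{*}=(2^{*})'$):
\[
\Bigl|\int_{\Omega}\langle g,\varphi\rangle\dx\Bigr|\le \nr{g}_{L^{2_{*}}(\Omega)}\nr{\varphi}_{L^{2^{*}}(\er^n)}\lesssim \nr{g}_{L^{2_{*}}(\Omega)}[\varphi]_{s,2;\er^n}.
\]

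\smallskip
\noindent
The delicate piece is the boundary-datum term $B(v,\varphi)$, since $v$ is only in $W^{s,2}(\ti{\Omega})\cap L^{1}_{2s}$, not globally in $W^{s,2}(\er^n)$. I would split the double integral as
\[
B(v,\varphi) = \int_{\ti{\Omega}}\!\!\int_{\ti{\Omega}} + \,2\int_{\Omega}\!\!\int_{\er^n\setminus\ti{\Omega}},
\]
noting that the contribution from $(\er^n\setminus\ti{\Omega})\times(\er^n\setminus\ti{\Omega})$ vanishes since $\varphi\equiv 0$ there, and that in the mixed integral $\varphi(y)=0$ when $y\in\er^n\setminus\ti{\Omega}$. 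The first piece is bounded by $\Lambda[v]_{s,2;\ti{\Omega}}[\varphi]_{s,2;\ti{\Omega}}$ via Cauchy--Schwarz. For the mixed piece the key observation is that, since $\Omega\Subset\ti{\Omega}$, the distance $d_{0}:=\dist(\Omega,\er^n\setminus\ti{\Omega})$ is strictly positive; therefore, for $x\in\Omega$,
\[
\int_{\er^n\setminus\ti{\Omega}}\frac{\snr{v(y)}}{\snr{x-y}^{n+2s}}\dy \lesssim_{d_{0},\ti{\Omega}} \nr{v}_{L^{1}_{2s}}
\]
is uniformly bounded, so H\"older and \eqref{poincf} yield $|\!\int_{\Omega}\int_{\er^n\setminus\ti{\Omega}}\cdots|\lesssim (\nr{v}_{L^{1}(\ti{\Omega})}+\nr{v}_{L^{1}_{2s}})[\varphi]_{s,2;\er^n}$, with constants depending on $n,s,\Lambda,\Omega,\ti{\Omega}$.

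\smallskip
\noindent
With $B$ coercive and continuous and $L$ a bounded linear functional on the Hilbert space $\mathbb{X}^{s,2}_{0}(\Omega,\ti{\Omega})$, Lax--Milgram produces a unique $u$, hence a unique $h=u+v\in \mathbb{X}^{s,2}_{v}(\Omega,\ti{\Omega})$. Uniqueness follows independently by testing the equation for the difference of two solutions with itself and using ellipticity \eqref{bs.1bis} together with \eqref{poincf}. The main obstacle, as sketched above, is exactly the continuity of $\varphi\mapsto B(v,\varphi)$ for $v$ living only in $W^{s,2}(\ti{\Omega})\cap L^{1}_{2s}$; the cushion provided by $\supp\varphi\subset\Omega\Subset\ti{\Omega}$ is precisely what allows the $L^{1}_{2s}$-tail control of $v$ to replace a missing global $W^{s,2}$-bound.
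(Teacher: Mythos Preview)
Your proposal is correct. The paper itself does not prove this lemma but simply defers to \cite[Proposition 2.12]{bls} and \cite{kokupa}; your Lax--Milgram argument, with the reduction to homogeneous data and the splitting of $B(v,\varphi)$ into the local $\ti{\Omega}\times\ti{\Omega}$ piece controlled by $[v]_{s,2;\ti{\Omega}}$ and the tail piece controlled by the $L^{1}_{2s}$-norm of $v$ via the gap $\Omega\Subset\ti{\Omega}$, is exactly the standard approach those references take.
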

See \cite[Proposition 2.12]{bls} (and \cite{kokupa}) for related proofs. In the present vector-valued linear setting, the proof adapts straightforwardly or more directly from the Lax-Milgram theorem.  
\begin{remark}\label{remarkino}\emph{Consider open subsets $\Omega \Subset \ti{\Omega} $ as in \rif{asinfor} and a map $w\in W^{t,p}(\ti{\Omega};\er^N)$ with $p\geq 1$ and $t\in (0,1)$ such that $w\equiv 0$ a.e.  in $\ti{\Omega}\setminus \Omega$. Then, letting $w\equiv 0$ a.e. in $\er^n\setminus \Omega$  we obtain an extension of $w$ to $\er^n$ such that $w \in W^{t,p}(\mathbb{R}^{n};\er^N)$ with 
\eqn{ht.10}
$$
[w]_{t,p;\mathbb{R}^{n}}^{p}\le [w]_{t,p;\ti{\Omega}}^{p}+\frac{c\nr{w}_{L^{p}(\Omega)}^{p}}{\dist(\Omega,\er^n\setminus \ti{\Omega})^{tp}}\,,
$$
where $c\equiv c(n,N,t,p)$; see for instance \cite[Lemma 2.11]{bls} or \cite[Lemma 5.1]{dpv}.
In particular, this applies to maps $w\in \mathbb{X}^{s,2}_{0}(\Omega ,\ti{\Omega})$. 
It follows that in \rif{eqweaksol22} it is equivalent to require that the equality holds whenever $\varphi \in W^{s,2}(\er^n;\er^N)$ is such that $\varphi \equiv 0$ outside $\Omega$ and that  every solution to \rif{basicsolve} is a (local) solution to $-\mathcal{L}_{a_{0}}h=g$ in the sense of Definition \ref{def:weaksol}. Moreover, $w \in W^{t,p}(\er^n;\er^N) \Longrightarrow \mathbb{X}^{t,p}_{w}(\Omega, \ti{\Omega}) \subset W^{t,p}(\er^n;\er^N).$ 
}
\end{remark} 
\begin{lemma}\label{lemma3.4}
Let $v \in W^{s,2}_{\loc}(\Omega;\mathbb{R}^N) \cap L^{1}_{2s}$ be a weak solution to $-\mathcal{L}_{a_{0}}v=g$ in the sense of Definition \ref{def:weaksol}, with $g \in L^{2_*}(\Omega;\er^N)$. Take concentric balls $B\Subset \tilde{B} \Subset \Omega$ and define $h \in \mathbb{X}_{v}^{s,2}(B,\tilde{B})$ as the solution to
\eqn{solveswhat}
$$
\begin{cases}
\ -\mathcal{L}_{a_{0}}h=0\quad &\mbox{in} \ \ B\\
\ h=v\quad &\mbox{in} \ \ \mathbb{R}^{n}\setminus B
\end{cases}
$$
in the sense of \eqref{eqweaksol22}. 
Then, whenever $d>2_{*}$ it holds that 
\eqn{marri}
$$
\begin{cases}
[v-h]_{s,2;\er^n} \leq c \snr{B}^{1/2_{*}-1/d}\nr{g}_{\mathcal M^d(B)}\\[6pt]
\nra{v-h}_{L^2(B)} \leq c\snr{B}^{2s/n-1/d}\nr{g}_{\mathcal M^d(B)}
\end{cases}
$$
where $c\equiv c (n,N, \Lambda, s,d)$. 
The quantity $\nr{g}_{\mathcal M^d(B)}$ is defined in \eqref{mardef}. 
\end{lemma}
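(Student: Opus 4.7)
The plan is to use the difference $v-h$ as a test function in the two weak formulations, exploit ellipticity to get a lower bound on the Dirichlet form of $v-h$, and close the estimate via fractional Sobolev embedding and the Marcinkiewicz--Hölder inequality \eqref{marhol}. The structure is the classical ``energy comparison'' argument adapted to the fractional setting.

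First I would justify that $v-h$ is an admissible test function. By construction $h = v$ a.e.\ on $\er^n \setminus B_r$, so $v-h \in \mathbb{X}^{s,2}_{0}(B_r, B_{r_*})$, and by Remark \ref{remarkino} this yields $v-h \in W^{s,2}(\er^n;\er^N)$ with compact support in $B_r \Subset \Omega$. Hence $\varphi = v-h$ can be used both in the weak formulation \eqref{eqweaksol} of $-\mathcal{L}_{a_0}v = g$ (as a $W^{s,2}(\er^n)$ map compactly supported in $\Omega$) and in the weak formulation \eqref{eqweaksol22} for $h$ (as an element of $\mathbb{X}^{s,2}_{0}(B_r,B_{r_*})$). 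Subtracting the two identities,
\begin{equation*}
\int_{\mathbb{R}^n}\int_{\mathbb{R}^n}\bigl\langle a_0\bigl((v-h)(x)-(v-h)(y)\bigr),(v-h)(x)-(v-h)(y)\bigr\rangle\frac{\dx\dy}{|x-y|^{n+2s}} = \int_{B_r}\langle g, v-h\rangle\dx.
\end{equation*}

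By the ellipticity assumption \eqref{bs.1bis} the left-hand side is bounded below by $\Lambda^{-1}[v-h]_{s,2;\er^n}^2$. For the right-hand side I apply Hölder with the conjugate pair $(2_*,2^*)$, then \eqref{marhol} with $p = 2_*$, $d>2_*$, which gives $\nr{g}_{L^{2_*}(B_r)} \lesssim_{d} |B_r|^{1/2_* - 1/d}\nr{g}_{\mathcal M^d(B_r)}$; and finally the fractional Sobolev inequality \eqref{poincf2} applied to $v-h$, which is compactly supported in $B_r$, yielding $\nr{v-h}_{L^{2^*}(\er^n)} \lesssim [v-h]_{s,2;\er^n}$. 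Combining,
\begin{equation*}
\Lambda^{-1}[v-h]_{s,2;\er^n}^2 \lesssim |B_r|^{1/2_*-1/d}\nr{g}_{\mathcal M^d(B_r)}\,[v-h]_{s,2;\er^n},
\end{equation*}
and dividing through yields \eqref{marri}$_1$.

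The second estimate \eqref{marri}$_2$ follows by the Poincaré--Friedrichs inequality \eqref{poincf} applied to $v-h$, again exploiting that this map vanishes outside $B_r$: $\nr{v-h}_{L^2(B_r)} \lesssim |B_r|^{s/n}[v-h]_{s,2;\er^n}$. Converting to the normalized average $\nra{\cdot}_{L^2(B_r)}=|B_r|^{-1/2}\nr{\cdot}_{L^2(B_r)}$ and using the identity $1/2_* - 1/2 = s/n$ produces the exponent $2s/n - 1/d$ as claimed. There is no real obstacle here; the only points to keep track of are the admissibility of the test function (handled via Remark \ref{remarkino}) and the bookkeeping of the exponents when passing from $\nr{\cdot}_{L^2}$ to $\nra{\cdot}_{L^2}$.
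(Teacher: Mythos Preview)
Your proof is correct and follows essentially the same approach as the paper's: test with $v-h$, use ellipticity and fractional Sobolev embedding to get \eqref{marri}$_1$, then apply \eqref{poincf} for \eqref{marri}$_2$. The paper outsources the energy comparison step to \cite[Lemma 3.4]{bls} and then applies \eqref{marhol}, which is exactly what you have spelled out directly.
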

\begin{proof} Solvability of \rif{solveswhat} follows by Lemma \ref{esiste}. Note that $v-h \in \mathbb{X}^{s,2}_{0}(B, \tilde{B})$ and therefore Remark \ref{remarkino} implies $v-h \in W^{s,2}(\er^n;\er^N)$. As for the estimates, the derivation is the same proof as \cite{bls} using the ellipticity of $a_0$. Indeed, for \rif{marri}$_1$ we follow \cite[Lemma 3.4]{bls} taking $p=2$ and $q=2_*$ (see the fourth display of \cite[Page 807]{bls}). This yields, together with \rif{shalldenote} 
$$[v-h]_{s,2;\er^n}\lesssim \snr{B}^{1/2_*}\nra{g}_{L^{2_{*}}(B)}\lesssim \snr{B}^{1/2_{*}-1/d}\nr{g}_{\mathcal M^d(B)}\,,$$ that is \rif{marri}$_1$. For \rif{marri}$_2$ we use \rif{poincf} with $w=v-h$ and $p=2$ and then \rif{marri}$_1$.
\end{proof}
We now consider weak solutions to homogeneous systems with constant coefficients of the type 
\eqn{3.1.0}
$$
-\mathcal{L}_{a_{0}}h=0\qquad \mbox{in} \ \ \BBB\,,
$$
where $a_{0}\in \mathbb{R}^{N\times N}$ satisfies \rif{bs.1bis} and $\BBB\subset \er^n$ is a ball. We give a direct and self-contained proof of a Campanato type estimate that does not rely in any way on lifting operators or other regularity theorems.  As such, it might be useful in situations where such tools cannot be employed, such as nonlinear problems. The analysis goes through via certain difference quotients methods in fractional spaces, and extends that made for instance in \cite{cozzi}. For this, we shall use finite difference operators $\tau_{\xi}^{k}$, with $\xi \in \er^n$ and $k \in \mathbb N_0$, following \cite[(1.384)]{Triebel3}. For $w \colon \mathcal A \to \er^N$ and $\mathcal A \subset \er^n$ being a non-empty set, we define the operators
\eqn{enfasi}
$$
\tau_{\xi}^{k}w \equiv \tau_{\xi, \mathcal A}^{k}w\,, \quad k \in \en_0
$$
as follows. We denote 
$$\mathcal A_{k,\xi}:= \{ x \in \mathcal A \colon x+i\xi \in \mathcal A \textnormal{ for every } i=1,\ldots,k\}\,.$$ For every $k\geq 1$,  we define,
inductively, the operators 
$$
\begin{cases}
\tau_{\xi}^{k+1}w(x):= \tau_{\xi}(\tau_{\xi}^{k} w)(x) \quad \mbox{for every $x\in \mathcal A_{k+1,\xi}$}, \\[3pt]
\tau_{\xi}w(x)\equiv \tau_{\xi}^1w(x):= w(x+\xi) -w(x)\quad \mbox{for every $x\in \mathcal A_{1,\xi}$},\\[3pt]
\tau_{\xi}^0w(x):=w(x)\quad \mbox{for every $x\in  \mathcal A=:\mathcal A_{0,\xi}$}   \,.
\end{cases}
$$
Instead, we let $\tau_{\xi}^{k}w(x)=0$ if $x \not \in \mathcal A_{k,\xi}$ and $k\geq 1$. Note that, as emphasized in \rif{enfasi}, such a definition depends on the set $\mathcal A$ and we shall simply abbreviate the notation as in \rif{enfasi} when this will be clear from the context. 
\begin{proposition}[Nonlocal Campanato I]\label{phr}
Let $h\in W^{s,2}(\BBB;\er^N)\cap L^{1}_{2s}$ be a weak solution to \eqref{3.1.0}. Then $h\in C^{\infty}_{\loc}(\BBB;\er^N)$. Moreover
\eqn{eq:higherreg0}
$$
		\snr{\BBB}^{k/n}\nr{D^k(h-h_0)}_{L^\infty(\BBB/2)} \leq c \left( \nra{h-h_{0}}_{L^{2}(\BBB)}+\tail(h-h_{0};\BBB)\right)
$$
	holds whenever $k\geq 0$ is an integer and $h_{0}\in \er^N$, where $c\equiv c (n,N,s,\Lambda,k)$.
\end{proposition}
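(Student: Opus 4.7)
Without loss of generality assume $h_{0}=0$, since constants solve \eqref{3.1.0} and both sides of \eqref{eq:higherreg0} are unaffected by subtracting $h_{0}$. The strategy is an iterated finite-difference/Caccioppoli bootstrap exploiting translation invariance of the constant-coefficient operator $-\mathcal{L}_{a_{0}}$, which is not available for the nonlocal system \eqref{nonlocaleqn} with general $a(\cdot)$.

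First, the change of variables $(x,y)\mapsto(x+\xi,y+\xi)$ in \eqref{eqweaksol22} shows that $x\mapsto h(x+\xi)$ satisfies the same homogeneous system wherever defined; hence, whenever $|\xi|$ is smaller than the radius of $\BBB$, the finite difference $\tau_{\xi}h$ is itself a weak solution of $-\mathcal{L}_{a_{0}}w=0$ on every concentric ball $\BBB'$ of radius at most $\mathrm{rad}(\BBB)-|\xi|$. Iteratively, every $\tau_{\xi}^{k}h$ solves the same homogeneous system on progressively smaller concentric balls.

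Apply Lemma \ref{prop:cacc} (with $f\equiv 0$, $u_{0}\equiv 0$) to $\tau_{\xi}h$ between concentric balls $\BBB''\Subset\BBB'\Subset\BBB$ to control $\snra{\tau_{\xi}h}_{s,2;\BBB''}$ by $\nra{\tau_{\xi}h}_{L^{2}(\BBB')}$, $\tail(\tau_{\xi}h;\BBB')$ and $\nra{\tau_{\xi}h}_{L^{1}(\BBB')}$. Using the classical difference-quotient characterization of fractional Sobolev regularity, the quantity $\sup_{\xi}\,|\xi|^{-2\sigma}\nra{\tau_{\xi}w}_{L^{2}}^{2}$ controls the $W^{\sigma,2}$-seminorm of $w$ up to an arbitrarily small loss $\eps$ (which can in turn be absorbed thanks to the self-improving properties of the Gagliardo seminorm discussed earlier in the introduction). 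Thus each Caccioppoli step boosts the fractional regularity of $h$ by a definite positive amount. Telescoping concentric balls $\BBB/2\Subset\cdots\Subset\BBB_{j+1}\Subset\BBB_{j}\Subset\cdots\Subset\BBB$ and iterating with higher-order differences $\tau_{\xi}^{k}h$ produces $h\in W^{k,2}_{\loc}(\BBB;\er^{N})$ for every $k\in\mathbb{N}$, and hence $h\in C^{\infty}_{\loc}(\BBB;\er^{N})$ by standard Sobolev embedding. Careful quantitative tracking of constants across the finite telescoping chain and a final application of Sobolev embedding (for which we lose the factor $\snr{\BBB}^{k/n}$ in \eqref{eq:higherreg0} by a scaling argument reducing to $\BBB=\ttB_1$) then yields the explicit estimate \eqref{eq:higherreg0}.

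The main obstacle is the quantitative control of the tail contributions that accumulate at each Caccioppoli step: the $\tail$ of $\tau_{\xi}^{k}h$ relative to an interior ball does not directly reduce to $\tail(h;\BBB)$, but mixes in $L^{1}$ contributions from annular regions inside $\BBB$. These are reabsorbed via \eqref{scatailancora} of Lemma \ref{dedicata}, combined with the $L^{2}$ bounds the iteration itself produces on slightly larger interior balls. A careful choice of the shrinkage rate of the telescoping chain is required so that constants remain controlled and only $\nra{h}_{L^{2}(\BBB)}+\tail(h;\BBB)$ survives on the right-hand side, with the final dependence on $k$ coming through the number of bootstrap steps.
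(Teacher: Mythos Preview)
Your outline has a genuine gap in the tail control, and it is precisely the point where the paper's proof does something you do not. When you apply Caccioppoli to $\tau_\xi^{k}h$ on an interior ball $\BBB'\Subset\BBB$, the right-hand side contains $\tail(\tau_\xi^{k}h;\BBB')\,\nra{\tau_\xi^{k}h}_{L^{1}(\BBB')}$. You say the tail ``mixes in $L^{1}$ contributions from annular regions inside $\BBB$'' and propose to reabsorb these via \eqref{scatailancora}. But the real obstruction is the portion of the tail integral over $\er^{n}\setminus\BBB$: there $h$ is merely in $L^{1}_{2s}$, you have no regularity at all, and the crude bound $|\tau_\xi^{k}h|\le 2^{k}\max_{i}|h(\cdot+i\xi)|$ gives only $\tail(\tau_\xi^{k}h;\BBB')\lesssim_{k}\tail(h;\BBB)=O(1)$ with no decay in $|\xi|$. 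Tracking the recursion, if $\sup_{\xi}|\xi|^{-\alpha_k}\nr{\tau_\xi^{k}h}_{L^{2}}\le C_k$, the Caccioppoli step yields $[\tau_\xi^{k}h]_{s,2}^{2}\lesssim |\xi|^{2\alpha_k}+O(1)\cdot|\xi|^{\alpha_k}\lesssim|\xi|^{\alpha_k}$, whence $\alpha_{k+1}=s+\alpha_k/2$. This recursion converges to $2s$ and never exceeds it: your bootstrap stalls below $C^{2s}$ and cannot reach $C^\infty$.

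The paper's remedy is a localization that you are missing: multiply $h$ by a cutoff $\eta\in C_{0}^{\infty}$ to get $w=\eta h$ with \emph{compact support}, and compute that $-\mathcal{L}_{a_{0}}w=g$ on a slightly smaller ball, where $g$ is an explicit, $C^{\infty}$ function whose every derivative is bounded by $\nr{h}_{L^{2}(\BBB)}+\tail(h;\BBB)$. Now the iterated differences $\tau_\xi^{m}w$ are compactly supported, so their tails are controlled by their $L^{2}$ norms, which \emph{do} carry the full factor $|\xi|^{ms}$; the right-hand side $\tau_\xi^{m}g$ is harmless since $g$ is smooth. This gives the clean recursion $\alpha_{k+1}=\alpha_k+s$, i.e.\ a gain of $s$ at every step, and the induction \eqref{eq:highsobest} goes through. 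The proof is then closed by the Besov embedding of Proposition~\ref{prop:embedding}. The cutoff trick converting the uncontrollable exterior tail into a smooth source term is the missing idea in your proposal.
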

\begin{proof} By replacing $h$ with $h-h_0$, which is still a solution, we can assume that $h_{0}=0$. Moreover, upon scaling, we can assume that $\BBB\equiv \ttB_1$ (see also the proof of Proposition \ref{psp} below, Step 1). We prove by induction that for every $k \in \en_0$ 
	\eqn{eq:highsobest}
	$$
	\sup_{0<\snr{\xi}\leq 
\frac{1-\sigma}{8^{k}10}}|\xi|^{-sk}\nr{\tau_{\xi}^{k} h}_{L^2(\ttB_{(1+\sigma)/2})} \le c \nr{h}_{L^{2}(\ttB_{1})} +c\, \tail(h;\ttB_{1}) 
	$$
holds for every $\sigma \in [0,1)$, with $c\equiv c(n,N,s,\Lambda,k,\sigma)$, where, as in \rif{enfasi}, here and in the following we are abbreviating $\tau_{\xi}^{k} \equiv \tau_{\xi, \er^n}^{k}$. To proceed, note that \rif{eq:highsobest} is trivial for $k=0$. Next, assume that \eqref{eq:highsobest} holds for all $k\leq m$ for some $m \in \mathbb{N}_0$ and let us prove it holds for $k=m+1$. Therefore from now on we fix $\sigma \in [0, 1)$ and $\xi \in \er^n$ such that 
\eqn{piccolino}
$$0<\snr{\xi}\leq   \frac{1-\sigma}{8^{m+1}10}$$ and prove that 
\eqn{passetto}
$$ |\xi|^{-s(m+1)}\nr{\tau_{\xi}^{m+1} h}_{L^2(\ttB_{(1+\sigma)/2})} \le c \nr{h}_{L^{2}(\ttB_{1})} +c\, \tail(h;\ttB_{1}) \,.
	$$ 
We take $\eta \in C_{0}^\infty(\ttB_{(6+\sigma)/7})$ such that 
\eqn{moremore0}
$$\mathds{1}_{\ttB_{(5+\sigma)/6}}\leq \eta\leq \mathds{1}_{\ttB_{(6+\sigma)/7}}\quad \mbox{and}\quad \snr{D^{k}\eta}\lesssim \frac1{(1-\sigma)^k}\, \ \mbox{ for every $k \leq m+1$}\,.$$ Then $w:=\eta h \in W^{s,2}(\er^n;\mathbb{R}^N) \cap L^1_{2s}$ solves 
\eqn{solves}
$$-\mathcal{L}_{a_{0}} w = g\quad \mbox{in}\ \ \ttB_{(4+\sigma)/5}\,,$$ in the sense of Definition \ref{def:weaksol},  where
\eqn{solves2}
$$ g(x):=\mathds{1}_{\ttB_{(4+\sigma)/5}}(x)2\int_{\mathbb{R}^n \setminus \ttB_{(5+\sigma)/6}}(1-\eta(y))a_{0}h(y)\frac{\dy}{\snr{x-y}^{n+2s}}.$$
For completeness we briefly recall the argument. Whenever $\varphi \in W^{s,2}(\er^n;\er^N)$ has compact support in $\ttB_{(4+\sigma)/5}$, it is 
\begin{flalign*}
\hspace{-5mm} \langle - \mathcal{L}_{a_0} w, \varphi \rangle &=\int_{\mathbb{R}^n} \int_{\mathbb{R}^n} \langle a_{0}(\eta(x)h(x)-\eta(y)h(y)),\varphi(x)-\varphi(y) \rangle \frac{\dyx}{|x-y|^{n+2s}}  \\
&  = \int_{\ttB_{(5+\sigma)/6}} \int_{\ttB_{(5+\sigma)/6}} \langle a_{0}(h(x)-h(y)),\varphi(x)-\varphi(y) \rangle \frac{\dyx}{|x-y|^{n+2s}}\\
& \quad 
+2\int_{\ttB_{(5+\sigma)/6}} \int_{\er^n\setminus \ttB_{(5+\sigma)/6}} \langle a_{0}(h(x)-\eta(y)h(y)),\varphi(x)-\varphi(y) \rangle \frac{\dyx}{|x-y|^{n+2s}}\\
& = \int_{\ttB_{(5+\sigma)/6}} \int_{\ttB_{(5+\sigma)/6}} \langle a_{0}(h(x)-h(y)),\varphi(x)-\varphi(y) \rangle \frac{\dyx}{|x-y|^{n+2s}}\\
& \quad  
+2\int_{\ttB_{(5+\sigma)/6}} \int_{\er^n\setminus \ttB_{(5+\sigma)/6}} \langle a_{0}(h(x)-h(y)),\varphi(x)-\varphi(y) \rangle \frac{\dyx}{|x-y|^{n+2s}}\\
& \quad +2\int_{\ttB_{(5+\sigma)/6}} \int_{\er^n\setminus \ttB_{(5+\sigma)/6}} \langle a_{0}(1-\eta(y))h(y),\varphi(x)-\varphi(y) \rangle \frac{\dyx}{|x-y|^{n+2s}}\\
&  = \langle - \mathcal{L}_{a_0} h, \varphi \rangle +2\int_{\ttB_{(5+\sigma)/6}} \int_{\er^n\setminus \ttB_{(5+\sigma)/6}} \langle a_{0}(1-\eta(y))h(y),\varphi(x)\rangle \frac{\dyx}{|x-y|^{n+2s}}\\
&   =\int_{\ttB_{(4+\sigma)/5}} \langle g,\varphi\rangle\dx
\end{flalign*}
and \rif{solves} is proved. 
From \rif{solves2} it follows that $g \in L^{\infty}(\er^n;\er^N)\cap C^\infty(\ttB_{(4+\sigma)/5};\mathbb{R}^N)$ and that, for any multiindex $\tx{b}$ 
\begin{eqnarray}\label{dg}
\notag \sup_{x \in \ttB_{(4+\sigma)/5}} |D^{\tx{b}} g(x)| & \leq  &c \sup_{x \in \ttB_{(4+\sigma)/5}} \int_{\mathbb{R}^n \setminus \ttB_{(5+\sigma)/6}} \frac{|a_{0}||h(y)|}{|x-y|^{n+2s+|\tx{b}|}}\dy \\ 
\notag & \leq  & \frac{c}{(1-\sigma)^{|\tx{b}|}} \sup_{x \in \ttB_{(4+\sigma)/5}} \int_{\mathbb{R}^n \setminus \ttB_{(5+\sigma)/6}} \frac{|h(y)|}{|x-y|^{n+2s}}\dy \\
\notag
& \stackleq{uset} & \frac{c}{(1-\sigma)^{n+2s+|\tx{b}|}}\, \tail(h;\ttB_{1/2})\\
& \stackleq{scatailggg}  & c\,  \nr{h}_{L^{2}(\ttB_1)}+c\, \tail(h;\ttB_1) 
\end{eqnarray}
for $c\equiv c(n,s,\Lambda,\sigma,\tx{b})$. As a simple consequence 
of \rif{piccolino} and of \rif{solves} we have  
\eqn{solvesagain}
$$
-\mathcal{L}_{a_{0}}  (\snr{\xi}^{-ms}\tau_{\xi}^m w)   =\snr{\xi}^{-ms}\tau_{\xi}^{m} g \qquad \mbox{in $\ttB_{(3+\sigma)/4}$}\,.
$$
Indeed we can first prove by induction that $-\mathcal{L}_{a_{0}}  (\tau_{\xi}^k w)   =\tau_{\xi}^{k} g$ holds in $\ttB_{(4+\sigma)/5-k\snr{\xi}}$ whenever $(4+\sigma)/5-k\snr{\xi}>0$.  Then we observe that \rif{piccolino} implies $\ttB_{(3+\sigma)/4}\Subset \ttB_{(4+\sigma)/5-m\snr{\xi}}\subset \ttB_{(4+\sigma)/5-k\snr{\xi}}$ for every $k \leq m$.
By \rif{solvesagain} we can apply \rif{cacc}  to $u\equiv \snr{\xi}^{-ms}\tau_{\xi}^m w$, and using Young and H\"older inequalities this gives 
\begin{flalign}
\notag |\xi|^{-sm}\nr{\tau_{\xi}^{m} w}_{W^{s,2}(\ttB_{(2+\sigma)/3})} & \leq 
c|\xi|^{-sm}\nr{\tau_{\xi}^{m} w}_{L^2(\ttB_{(3+\sigma)/4})}+c\, \tail\left(|\xi|^{-sm}\tau_\xi^{m} w;\ttB_{(3+\sigma)/4}\right)\noindent \\
& \quad + c|\xi|^{-sm}\nr{\tau_{\xi}^{m} g}_{L^2(\ttB_{(3+\sigma)/4})}\label{caccibeck}
\end{flalign}
with $c\equiv c(n,N,s,\Lambda, \sigma)$. We now estimate the right-hand side in \rif{caccibeck} and for this we shall use the Leibniz rule for finite differences, i.e., 
\eqn{differenzona}
$$
\tau_{\xi}^{m} w(\cdot)= \sum_{0\leq k\leq m}\binom{m}{k} \tau^{m-k}_{\xi}[\eta(\cdot +k\xi)]\tau^{k}_{\xi}h(\cdot)\,.
$$
See for instance \cite[Section 30, (10)]{jordan}. 
Moreover, 
\eqn{moremore}
$$
\snr{\tau^{m-k}_{\xi}\eta(\cdot +k\xi)} \lesssim (1-\sigma)^{k-m}\snr{\xi}^{m-k}
$$ holds by repeatedly using Mean Value Theorem and \rif{moremore0}. Now take $\sigma_* \in (0,1)$ such that 
$(7+\sigma)/8 =(1+\sigma_*)/2$ and observe that \rif{piccolino} implies $\snr{\xi} \leq  (1-\sigma_*)/(8^m 10)= (1-\sigma)/(8^m 40)$ so that, keeping also \rif{eq:highsobest} in mind, the induction assumption \rif{eq:highsobest} yields 
$$
 |\xi|^{-sk}\nr{\tau_{\xi}^{k} h}_{L^{2}(\ttB_{(7+\sigma)/8})} = 
  |\xi|^{-sk}\nr{\tau_{\xi}^{k} h}_{L^{2}(\ttB_{(1+\sigma_*)/2})} \leq c\nr{h}_{L^{2}(\ttB_{1})} +c\, \tail(h;\ttB_{1})
$$
for every $k \leq m$. 
This and \rif{differenzona}-\rif{moremore} allow us to conclude with 
\begin{flalign}
 \notag  |\xi|^{-sm}\nr{\tau_{\xi}^{m} w}_{L^{2}(\ttB_{(7+\sigma)/8})} & \leq c  |\xi|^{-sm}
  \sum_{0\leq k\leq m}\binom{m}{k} \nr{\tau^{m-k}_{\xi}[\eta(\cdot +k\xi)]\tau^{k}_{\xi}h}_{L^{2}(\ttB_{(7+\sigma)/8})} \\
  & \leq c  \sum_{0\leq k\leq m} \snr{\xi}^{(1-s)(m-k)} |\xi|^{-sk}\nr{\tau^{k}_{\xi}h}_{L^{2}(\ttB_{(7+\sigma)/8})}\notag \\
  &\leq c\nr{h}_{L^{2}(  \ttB_{1})} +c\, \tail(h;\ttB_{1})\label{stanno}\,.
\end{flalign}
Again by \rif{piccolino}, and by \rif{differenzona}$_1$ we have that $\tau_{\xi}^{m} w$ is supported in $\ttB_{(7+\sigma)/8}$ and therefore we find
 \begin{flalign*}
 \tail\left(|\xi|^{-sm}\tau_{\xi}^{m} w;\ttB_{(3+\sigma)/4}\right) & \leq  c|\xi|^{-sm}\nr{\tau_{\xi}^{m} w}_{L^{2}(\er^n)} \\
 &=c|\xi|^{-sm}\nr{\tau_{\xi}^{m} w}_{L^{2}(\ttB_{(7+\sigma)/8})}  \stackleq{stanno} c\nr{h}_{L^{2}(\ttB_{1})} +c\, \tail(h;\ttB_{1})\,.
 \end{flalign*}
Using \rif{piccolino}, Mean Value Theorem and finally \rif{dg}, we find
$$
 |\xi|^{-sm}\nr{\tau_{\xi}^{m} g}_{L^2(\ttB_{(3+\sigma)/4})}   \leq c 
 \max_{|\tx{b}|=  m}\,  \nr{D^\tx{b} g}_{L^{\infty}(\ttB_{(4+\sigma)/5})}\leq  c\,  \nr{h}_{L^{2}(\ttB_1)}+c\, \tail(h;\ttB_1) \,.
$$
 Using the content of the last three displays in \rif{caccibeck} we conclude with
 $$
 |\xi|^{-sm}\nr{\tau_{\xi}^{m} w}_{W^{s,2}(\ttB_{(2+\sigma)/3})} \leq c\,  \nr{h}_{L^{2}(\ttB_1)}+c\, \tail(h;\ttB_1)\,.
 $$
Applying Proposition \ref{prop:embedding0} below to $v\equiv |\xi|^{-sm}\tau_{\xi}^{m} w$ yields
$$
|\xi|^{-s(m+1)}\nr{\tau_{\xi}^{m+1} w}_{L^2(\ttB_{(1+\sigma)/2})} \le c  |\xi|^{-sm}\nr{\tau_{\xi}^{m} w}_{W^{s,2}(\ttB_{(2+\sigma)/3})}\,,
$$
with $c\equiv c(n,N,s,\Lambda, \sigma, m)$. By \rif{piccolino} and \rif{moremore0} we have that $\eta(\cdot +k\xi)\equiv 1$ on $\ttB_{(1+\sigma)/2}$ for every $k\leq m+1$ so that \rif{differenzona}, applied with $m+1$ instead of $m$, implies $\tau_{\xi}^{m+1} w \equiv \tau_{\xi}^{m+1} h$ on $\ttB_{(1+\sigma)/2}$ and therefore from the previous display we finally  deduce \rif{passetto}. This completes the proof of the induction step and therefore the validity of \rif{eq:highsobest} for every $k \in \en$. In turn, a trivial consequence of \rif{eq:highsobest} is that 
\begin{flalign}
\notag \sup_{0<\snr{\xi}\leq 
\frac{1-\sigma}{8^{l}10}}|\xi|^{-\beta}\nr{\tau_{\xi, \ttB_{1/2}}^{l} h}_{L^2(\ttB_{1/2})} &\leq 	\sup_{0<\snr{\xi}\leq 
\frac{1-\sigma}{8^{l}10}}|\xi|^{-\beta}\nr{\tau_{\xi}^{l} h}_{L^2(\ttB_{(1+\sigma)/2})}\notag \\ & \le c \nr{h}_{L^{2}(\ttB_{1})} +c\, \tail(h;\ttB_{1}) \label{inlb}
\end{flalign}
holds with $c\equiv c(n,N,s,\Lambda,l,\sigma)$, whenever $l \in \en$ and $0 \leq \beta \leq sl$. The assertion now follows using Proposition \ref{prop:embedding} below for a suitable choice of $l, \beta$ in \rif{inlb}. 
\end{proof}
\begin{proposition} \label{prop:embedding0} 
 Let $\rr < \rrr$ and $v \in W^{s,2}(\ttB_{\rrr})$. Then 
 $$
 |\xi|^{-s}\nr{ v(\cdot +\xi)-v(\cdot)}_{L^2(\ttB_{\rr})} \leq c \nr{v}_{W^{s,2}(\ttB_{\rrr})}
 $$
 holds for every $\xi\in \er^n $ such that $\snr{\xi}< (\rrr-\rr)/2$, where $c\equiv c (n,s,\rrr-\rr, \rrr)$\,.
\end{proposition}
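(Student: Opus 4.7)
My plan is to prove the embedding by a classical averaging argument in physical space, reducing $\nr{\tau_\xi v}_{L^2(\ttB_\rr)}^2$ to a double integral that is manifestly controlled by $|\xi|^{2s}$ times the Gagliardo seminorm on $\ttB_\rrr$. The starting point is the pointwise inequality
$$
|v(x+\xi)-v(x)|^2 \leq 2|v(x+\xi)-v(z)|^2 + 2|v(z)-v(x)|^2,
$$
valid for every intermediate point $z$, and the idea is to integrate the right-hand side over a suitable small ball of $z$'s that is roughly equidistant from $x$ and $x+\xi$.

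Concretely, for each $x \in \ttB_\rr$ I would introduce the auxiliary ball $B(x) := B_{|\xi|/2}(x + \xi/2)$. The hypothesis $|\xi| < (\rrr-\rr)/2$ ensures that $B(x) \subset \ttB_\rrr$ and that $x+\xi \in \ttB_\rrr$; moreover, any $z \in B(x)$ satisfies $|z-x| \leq |\xi|$ and $|z-(x+\xi)| \leq |\xi|$ by the triangle inequality. Averaging the above pointwise inequality in $z \in B(x)$ and then using $|z-x|^{n+2s},\,|z-(x+\xi)|^{n+2s}\leq |\xi|^{n+2s}$ to multiply and divide in each summand, together with $|B(x)|\approx_{n} |\xi|^n$, I would obtain
$$
|v(x+\xi)-v(x)|^2 \leq c|\xi|^{2s}\int_{B(x)}\left(\frac{|v(x+\xi)-v(z)|^2}{|z-(x+\xi)|^{n+2s}} + \frac{|v(z)-v(x)|^2}{|z-x|^{n+2s}}\right)\dz
$$
for some $c \equiv c(n,s)$.

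The next step is to integrate over $x \in \ttB_\rr$. For the second term I would just drop the restriction $z \in B(x)$ and enlarge to $z \in \ttB_\rrr$, since $B(x) \subset \ttB_\rrr$. For the first term I would change variables $w := x+\xi$, note $\{w : x \in \ttB_\rr\} \subset \ttB_\rrr$, and again enlarge the inner integral to $\ttB_\rrr$. An application of Fubini then yields
$$
\nr{\tau_\xi v}_{L^2(\ttB_\rr)}^2 \leq c|\xi|^{2s}[v]_{s,2;\ttB_\rrr}^2 \leq c|\xi|^{2s}\nr{v}_{W^{s,2}(\ttB_\rrr)}^2,
$$
and the claim follows on taking square roots. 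There is no genuine obstacle in this argument: the only point requiring care is the geometric bookkeeping that keeps $B(x)$ and $x+\xi$ inside $\ttB_\rrr$, which is exactly what the assumption $|\xi| < (\rrr-\rr)/2$ is designed to guarantee, with a resulting constant that depends only on $n$ and $s$ (a fortiori on $n, s, \rrr-\rr$, as stated).
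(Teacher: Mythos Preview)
Your argument is correct and complete; the geometric bookkeeping checks out, and the averaging-then-Fubini scheme yields the claimed bound with a constant depending only on $n$ and $s$. The paper does not give its own proof but simply cites \cite[Proposition 2.6, (2.11)]{bl}; your approach is precisely the standard physical-space proof of this embedding and is essentially what one finds in that reference.
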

\begin{proof} This is \cite[Proposition 2.6, (2.11)]{bl}. 
\end{proof}
We now pass to establish Proposition \ref{prop:embedding} below, which has already been used in the proof of Proposition \ref{phr}. To proceed, we need to briefly discuss a few function spaces. The H\"older-Zygmund space $\Lambda^{t}(\er^n;\er^N)$, $t>0$, is defined as the space of those maps $w\colon \er^n\to \er^N$ such that 
$$
\nr{w}_{\Lambda^t(\er^n)}:= 
\nr{w}_{L^\infty(\er^n)}+\sup_{0<\snr{\xi}\leq 
		1}|\xi|^{-t}\nr{\tau_{\xi}^{l} w}_{L^\infty(\er^n)} < \infty\,,
$$
where $l$ is any integer such that $l>t$ (the resulting space is invariant with the choice of $l$). See for instance \cite{steinbook} and \cite[Section 1.2.1 (iv)]{Triebel3}. This space coincides with the Besov space $B^{t}_{\infty,\infty}(\er^n;\er^N)$ \cite[(1.10)]{Triebel3}. When $t\not\in \en$, it also coincides with the classical H\"older space $C^{[t], \{t\}}(\er^n)$ and a quantity equivalent to $\nr{w}_{\Lambda^t(\er^n)}$ is given by the usual 
$$\sum_{0\leq k\leq [t]} \nr{D^{k}w}_{L^\infty(\er^n)} +[D^{[t]} w]_{0, \{t\};\er^n}\,.$$ See for instance \cite[Section 1.5.2]{Triebel3} and \cite{steinbook}.  
\begin{proposition} \label{prop:embedding}
	Let $p \in (1,\infty)$, $\beta>0$, $\xi_0 >0$ and fix an integer $l > \beta$. Moreover, suppose that $v \in L^p(\Omega)$ for a Lipschitz regular, bounded domain $\Omega \subset \mathbb{R}^n$. If 
	\begin{equation} \label{eq:rel}
	\beta-n/p > k
	\end{equation}
	for some $k \in \en_0$, then
	\begin{equation} \label{eq:emb}
	\nr{D^k v}_{L^\infty(\Omega)} \leq c \nr{v}_{L^p(\Omega)}+c\sup_{0<\snr{\xi}\leq 
		\xi_0}|\xi|^{-\beta}\nr{\tau_{\xi, \Omega}^{l} v}_{L^p(\Omega)} 
	\end{equation}
	where 
	$c$ depends only on $n,\beta,l,p,k,\xi_0,\Omega$.
\end{proposition}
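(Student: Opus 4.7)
The plan is to recognize that the right-hand side of \eqref{eq:emb} is essentially the norm in the Besov space $B^{\beta}_{p,\infty}(\Omega)$ defined via $l$-th order finite differences, for which the restriction $l > \beta$ is exactly the condition needed for such a characterization (cf.\ Triebel, Section 1.11 and (1.384) of \cite{Triebel3}); the estimate \eqref{eq:emb} is then an instance of the classical embedding $B^{\beta}_{p,\infty} \hookrightarrow C^{k}$, valid precisely under the sharp condition $\beta - n/p > k$.

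First I would reduce to $\mathbb{R}^{n}$ via extension. Since $\Omega$ is bounded and Lipschitz, Rychkov's universal extension operator produces $\tilde v \in L^{p}(\mathbb{R}^{n})$, with support in a bounded enlargement of $\Omega$, such that
\begin{equation*}
\|\tilde v\|_{L^{p}(\mathbb{R}^{n})} + \sup_{0<|\xi| \leq \xi_{0}} |\xi|^{-\beta}\|\tau_{\xi}^{l}\tilde v\|_{L^{p}(\mathbb{R}^{n})} \leq c\, \mathcal{E},
\end{equation*}
where $\mathcal{E}$ denotes the right-hand side of \eqref{eq:emb} and $c$ depends on $\Omega$ and the parameters. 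Replacing $v$ by $\tilde v$ one reduces to the global statement on $\mathbb{R}^{n}$.

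Second, I would run a dyadic mollification scheme. Fix a smooth non-negative mollifier $\rho$ with $\int \rho = 1$ supported in $\ttB_{1}$, set $\rho_{\epsilon}(x) := \epsilon^{-n}\rho(x/\epsilon)$ and $\tilde v_{j} := \tilde v \ast \rho_{2^{-j}}$ for $j \in \mathbb{N}_{0}$ with $2^{-j}\lesssim \xi_{0}$. Standard properties (Young plus $\|D^{k}\rho_{\epsilon}\|_{L^{p'}} \lesssim \epsilon^{-k - n/p}$) give the low-frequency bound $\|D^{k}\tilde v_{j_{0}}\|_{L^{\infty}(\mathbb{R}^{n})} \lesssim \|\tilde v\|_{L^{p}(\mathbb{R}^{n})}$. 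The decisive estimate is
\begin{equation*}
\|D^{k}(\tilde v_{j+1} - \tilde v_{j})\|_{L^{\infty}(\mathbb{R}^{n})} \lesssim 2^{j(k + n/p - \beta)} \sup_{|\xi| \leq \xi_{0}} |\xi|^{-\beta}\|\tau_{\xi}^{l}\tilde v\|_{L^{p}(\mathbb{R}^{n})},
\end{equation*}
obtained by writing $\rho_{2^{-j-1}} - \rho_{2^{-j}}$ as an $l$-th order linear combination of translates of a smooth auxiliary kernel at scale $2^{-j}$, so that its convolution with $\tilde v$ reduces to an integral of $l$-th order differences $\tau_{\xi}^{l}\tilde v$ with $|\xi| \lesssim 2^{-j}$; then Young's inequality and $D^{k}$ acting on the kernel contribute the factor $2^{j(k+n/p)}$, while the finite-difference factor yields the gain $2^{-j\beta}$. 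Because $\beta - n/p - k > 0$, the telescoping series $\tilde v = \tilde v_{j_{0}} + \sum_{j\geq j_{0}}(\tilde v_{j+1} - \tilde v_{j})$ converges uniformly in $C^{k}$ on bounded sets, producing a $C^{k}$ representative of $\tilde v$ and, after summing the geometric series, yielding \eqref{eq:emb} upon restriction to $\Omega$.

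The main obstacle is the rewriting of the mollifier difference as an $l$-th order finite-difference object: this is where the assumption $l > \beta$ (rather than the weaker $l > k$ that one might naively expect) genuinely intervenes, since $l$-th order differences are needed to capture the full Besov smoothness of order $\beta$ and thus to produce the decay $2^{-j\beta}$ needed to beat the loss $2^{j(k+n/p)}$ coming from the $k$ derivatives and the $L^{p}$-to-$L^{\infty}$ passage.
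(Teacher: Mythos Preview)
Your high-level strategy matches the paper's: recognize the right-hand side of \eqref{eq:emb} as (equivalent to) the $B^{\beta}_{p,\infty}(\Omega)$ norm via $l$-th differences, pass to $\mathbb{R}^n$ by extension, and invoke the embedding $B^{\beta}_{p,\infty}\hookrightarrow C^k$ for $\beta-n/p>k$. The paper does this entirely by citation: Dispa and Triebel for the finite-difference characterization of $B^{\beta}_{p,\infty}(\Omega)$ (including the observation that the sup over $|\xi|\le 1$ can be replaced by $|\xi|\le\xi_0$), a standard extension to $B^{\beta}_{p,\infty}(\mathbb{R}^n)$, then the chain $B^{\beta}_{p,\infty}(\mathbb{R}^n)\hookrightarrow B^{\beta-n/p}_{\infty,\infty}(\mathbb{R}^n)=\Lambda^{\beta-n/p}(\mathbb{R}^n)\hookrightarrow C^k$, the last identification using that, without loss of generality, $\beta-n/p\notin\mathbb{N}$.

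Where your proposal departs from the paper is in attempting a self-contained proof of the embedding via mollification, and here there is a genuine gap. With a \emph{non-negative} mollifier $\rho$, the kernel $K_j:=\rho_{2^{-j-1}}-\rho_{2^{-j}}$ has at most two vanishing moments (the zeroth by cancellation, the first if $\rho$ is even; the second moment $\int|y|^2K_j\,dy$ is strictly negative). A representation $K_j*\tilde v(x)=\int\Phi(\xi)\,\tau_\xi^{l}\tilde v(x)\,d\xi$ with $\Phi\in L^1$ forces $K_j$ to annihilate all polynomials of degree $<l$, i.e.\ to have $l$ vanishing moments, which fails for $l\ge 3$. Consequently the claimed decay $\|D^k(\tilde v_{j+1}-\tilde v_j)\|_{L^\infty}\lesssim 2^{j(k+n/p-\beta)}$ is not obtainable from your setup once $\beta>2$; in the paper's application one needs $\beta$ arbitrarily large (it takes $\beta=sl$ and lets $l\to\infty$), so this range matters. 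The fix is standard but must be made explicit: either drop non-negativity and choose $\rho$ with $\int x^\alpha\rho\,dx=0$ for $1\le|\alpha|\le l-1$ (so that $K_j$ has $l$ vanishing moments and the finite-difference representation goes through), or replace the naive mollification by a de la Vall\'ee Poussin / Steklov-type approximation built precisely so that the error is an average of $l$-th differences, or simply cite the Besov literature as the paper does.
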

\begin{proof}
	Note that, in order to prove \rif{eq:emb}, in view of the strict inequality in \rif{eq:rel} and upon suitably decreasing the value of $\beta$, we can assume without loss of generality that $\beta-n/p$ is not an integer. We use results from \cite{des2, dispa, cornelia}, also reported in \cite[Theorem 1.118]{Triebel3}; see also \cite[Theorem 4.4.2.1]{TriebelI} and the discussion in  \cite[Remark 1.119]{Triebel3}. A norm in the Nikolskii/Besov space $B^\beta_{p,\infty}(\Omega)$ is given by
	\eqn{aran1}
$$  \nr{v}_{L^p(\Omega)}+ \sup_{0<\texttt{t}\leq 1} \,\texttt{t}^{-\beta} \sup_{0<\snr{\xi}\leq \texttt{t}} \, 
		\nr{\tau_{\xi, \Omega}^{l} v}_{L^p(\Omega)}$$		
which is equal to
\eqn{aran2}
$$\nr{v}_{L^p(\Omega)}+\sup_{0<\snr{\xi}\leq 
		1}|\xi|^{-\beta}\nr{\tau_{\xi,\Omega}^{l} v}_{L^p(\Omega)}\,.\footnote{The quantity in \rif{aran1} is obviously larger than the one in \rif{aran2}. For the opposite inequality, fix $\texttt{t}, \eps\in (0,1)$ and find $\xi_\texttt{t}$ such that $0<\snr{\xi_\texttt{t}}\leq \texttt{t}$ and 
$$
 \sup_{\snr{\xi}\leq \texttt{t}}  \, \nr{\tau_{\xi,\Omega}^{l} v}_{L^p(\Omega)}  <  \nr{\tau_{\xi_\texttt{t}}^{l} v}_{L^p(\Omega_{l,\xi_\texttt{t}})} + \eps \texttt{t}^{\beta} \leq  \texttt{t}^\beta\sup_{0<\snr{\xi}\leq 
		1}|\xi|^{-\beta}\nr{\tau_{\xi,\Omega}^{l} v}_{L^p(\Omega)} +  \eps\texttt{t}^{\beta}\,.
$$
Then divide by $\texttt{t}^\beta$, take the sup with respect to $\texttt{t}$ and let $\eps \to 0$.}$$
In turn, this last quantity is easily seen to be equivalent to the one in parentheses in \rif{eq:emb} via a constant that also depends on $\snr{\xi_0}$ but is independent of $v$.  	
Via standard extension operators the space $B^\beta_{p,\infty}(\Omega)$ embeds continuously in $B^\beta_{p,\infty}(\mathbb{R}^n)$; see for instance \cite[Theorem 6.1]{des2}, \cite[Theorem 2.9]{cornelia}, \cite{dispa} and \cite[Theorem 1.105]{Triebel3}. In turn, by \cite[Theorem 2.5]{Triebel4} we have the continuous embedding $B^\beta_{p,\infty}(\mathbb{R}^n) \hookrightarrow B^{\beta-n/p}_{\infty,\infty}(\mathbb{R}^n)$, and ultimately, the embedding $B^\beta_{p,\infty}(\Omega) \hookrightarrow B^{\beta-n/p}_{\infty,\infty}(\er^n)$; moreover, $B^{\beta-n/p}_{\infty,\infty}(\er^n)$ coincides with the H\"older-Zygmund space $\Lambda^{\beta-n/p}(\er^n)$ by the discussion made immediately before Proposition \ref{prop:embedding}. In turn, using the fact that $\beta-n/p$ is not an integer, the same discussion implies that this last space coincides with $C^{[\beta-n/p],\{\beta-n/p\}}(\er^n)$, that obviously embeds into $C^k(\Omega)$ in view of \eqref{eq:rel}. Summarizing, we have the continuous embeddings $B^\beta_{p,\infty}(\Omega) \hookrightarrow B^{\beta-n/p}_{\infty,\infty}(\er^n) = \Lambda^{\beta-n/p}(\er^n) \hookrightarrow C^k(\Omega),$
	that imply \eqref{eq:emb} in view of the equivalence of norms discussed at the beginning of the proof. 
\end{proof}
Proposition \ref{phr} can be coupled with perturbation arguments to deduce quantitative oscillation estimates for weak solutions to nonlocal systems with constant coefficients and a sufficiently integrable forcing term. The result is in Proposition \ref{cdg} below. We remark that the integrability assumption \rif{integrino} below on the right-hand side $g$ of \rif{dg.1} is sharp with respect to the result obtained in \cite{bls}. 
\begin{proposition}[Nonlocal Campanato II]\label{cdg}
Let $v\in W^{s,2}(\BBB;\er^N)\cap L^{1}_{2s}$ be a weak solution to
\eqn{dg.1}
$$
-\mathcal{L}_{a_{0}}v=g\qquad \mbox{in} \ \ \BBB\equiv B_{\rrr}(x_\BBB)\,,\ \ \rrr\leq 1\,,
$$
where
\eqn{integrino}
$$g\in \mathcal M^{\frac{n}{2s-\beta}}(\BBB;\er^N)\,, \quad 0< \beta < \min\left\{2s,1\right\} \,.$$
Then $v\in C^{0,\beta}_{\loc}(\BBB;\er^N)$, and for every $\gamma \in (0,1)$ there exists a constant $c\equiv c (n,N,s,\Lambda,\beta, \gamma)$ such that 
\eqn{dg.2}
$$
\osc_{\rr \BBB}v\le c\left(\frac{\rr}{\rrr}\right)^{\beta} \tx{E}_{v}(x_{\BBB},\rrr)+c\nr{g}_{\mathcal M^{\frac{n}{2s-\beta}}(\BBB)}\rr^{\beta}
$$
holds for any $\rr \leq \gamma \rrr$ together with
\eqn{dg.2bis}
$$
[v]_{0,\beta;\gamma \BBB} \leq c  \rrr^{-\beta} \tx{E}_{v}(x_{\BBB},\rrr)+c\nr{g}_{\mathcal M^{\frac{n}{2s-\beta}}(\BBB)}\,.
$$
Moreover 
\eqn{dg.77}
$$
 \tx{E}_{v}(x_{\BBB},\rr)\le c\left(\frac{\rr}{\rrr}\right)^{\beta} \tx{E}_{v}(x_{\BBB},\rrr)+c\nr{g}_{\mathcal M^{\frac{n}{2s-\beta}}(\BBB)} \rr^{\beta} 
$$
holds whenever $0 < \rr \leq \rrr$, with $c\equiv c (n,N,s,\Lambda,\beta)$. The excess quantity $\tx{E}_{v}(x_{\BBB},\cdot)$ is defined in \eqref{exc}. 
\end{proposition}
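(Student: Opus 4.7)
My plan is to execute the classical Campanato-type perturbation argument at the level of the nonlocal excess \eqref{exc}. First, for a fixed base radius $r\in(0,\rrr/2]$, I would introduce the harmonic-type comparison map $h\in\mathbb{X}^{s,2}_{v}(B_r(x_{\BBB}),\BBB)$ solving $-\mathcal{L}_{a_{0}}h=0$ in $B_r(x_{\BBB})$ with $h\equiv v$ outside; existence and uniqueness follow from Lemma \ref{esiste}. The integrability exponent $d:=n/(2s-\beta)$ satisfies $d>2_{*}$ in the considered range, so Lemma \ref{lemma3.4} applies, and the identity $\snr{B_r}^{2s/n-1/d}\approx r^{\beta}$ delivers the fundamental comparison bound $\nra{v-h}_{L^{2}(B_r(x_{\BBB}))}\leq c\,r^{\beta}\nr{g}_{\mathcal M^{d}(\BBB)}$. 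Since $v-h$ vanishes outside $B_r(x_{\BBB})$, triangle inequality combined with \eqref{minav} and the tail decomposition transfers the excess: $\tx{E}_{h}(x_{\BBB},r)\leq c\tx{E}_{v}(x_{\BBB},r)+c\,r^{\beta}\nr{g}_{\mathcal M^{d}(\BBB)}$.

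Next, I would apply Proposition \ref{phr} to $h$ on $B_r(x_{\BBB})$ with $h_0=(h)_{B_r(x_{\BBB})}$; the cases $k=0,1$ give the pointwise bounds $\nr{h-h_0}_{L^{\infty}(B_{r/2}(x_{\BBB}))}+r\nr{Dh}_{L^{\infty}(B_{r/2}(x_{\BBB}))}\lesssim\tx{E}_{h}(x_{\BBB},r)$. For $\sigma\in(0,r/2]$, the $L^2$ part of $\tx{E}_{h}(x_{\BBB},\sigma)$ is then controlled by $c(\sigma/r)\tx{E}_{h}(x_{\BBB},r)$ via the Lipschitz bound, while the tail part I would estimate through \eqref{scatail}, splitting the resulting integral $\int_{\sigma}^{r}(\sigma/\lambda)^{2s}\nra{h-(h)_{B_\lambda}}_{L^{1}(B_\lambda)}\,d\lambda/\lambda$ at $\lambda\approx r/2$ and plugging in the pointwise bound on $[\sigma,r/2]$; this yields decay of order $(\sigma/r)^{\min\{1,2s\}}$ (a borderline logarithm at $2s=1$ being absorbed by $\beta<1$), giving
$$\tx{E}_{h}(x_{\BBB},\sigma)\leq c(\sigma/r)^{\min\{1,2s\}}\tx{E}_{h}(x_{\BBB},r).$$
Returning to $v$ through triangle inequality, and bounding the error terms $\nra{v-h}_{L^{2}(B_{\tau r})}\leq c\tau^{-n/2}\nra{v-h}_{L^{2}(B_{r})}$ and $\tail(v-h;B_{\tau r})\leq c\tau^{-n}\nra{v-h}_{L^{1}(B_{r})}$ (both valid since $v-h$ is supported in $\overline{B_r(x_{\BBB})}$), the outcome at scale $\sigma=\tau r$ reads
$$\tx{E}_{v}(x_{\BBB},\tau r)\leq C_{1}\tau^{\min\{1,2s\}}\tx{E}_{v}(x_{\BBB},r)+C(\tau)\,r^{\beta}\nr{g}_{\mathcal M^{d}(\BBB)}.$$
Since $\min\{1,2s\}>\beta$, I would fix $\tau\equiv\tau(\data,\beta)$ so small that $C_{1}\tau^{\min\{1,2s\}-\beta}\leq 1$ and iterate geometrically; together with \eqref{scataildopoff}$_{1}$ to cover $\sigma\in(\rrr/2,\rrr]$, this produces \eqref{dg.77}.

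Finally, since the whole construction is local and the dependence on the center enters only through the initial excess, I would re-run it at arbitrary $x\in\gamma\BBB$ with base ball of radius $(1-\gamma)\rrr/2$, using \eqref{scataildopoff}$_{2}$ to dominate $\tx{E}_{v}(x,(1-\gamma)\rrr/2)$ by a $\gamma$-dependent multiple of $\tx{E}_{v}(x_{\BBB},\rrr)$. This delivers, for every $x\in\gamma\BBB$ and all sufficiently small $\sigma$,
$$\sigma^{-\beta}\nra{v-(v)_{B_{\sigma}(x)}}_{L^{1}(B_{\sigma}(x))}\leq c\rrr^{-\beta}\tx{E}_{v}(x_{\BBB},\rrr)+c\nr{g}_{\mathcal M^{d}(\BBB)},$$
so that the sharp fractional maximal operator $\textnormal{M}^{\#,\beta}_{\ti\rrr}(v;\cdot)$ is uniformly bounded on $\gamma\BBB$ for some $\ti\rrr\equiv\ti\rrr(\gamma)$. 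Lemma \ref{campmax} then yields \eqref{dg.2bis}, while \eqref{dg.2} follows at once from $\osc_{B_{\rr}(x_{\BBB})}v\leq 2\rr^{\beta}[v]_{0,\beta;\gamma\BBB}$. The hardest point will be the tail analysis in the second paragraph: producing decay strictly faster than $\sigma^{\beta}$ for $\tx{E}_{h}$ requires the delicate interplay between Lemma \ref{dedicata} and the pointwise $C^{1}$-bound from Proposition \ref{phr}, and it is precisely the margin $\min\{1,2s\}-\beta>0$ that absorbs the $\tau^{-n/2}$ and $\tau^{-n}$ blow-ups of the comparison errors and makes the iteration close.
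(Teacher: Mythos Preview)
Your proposal is correct and follows essentially the same route as the paper: comparison with the $a_0$-harmonic map $h$ via Lemma~\ref{lemma3.4} at exponent $d=n/(2s-\beta)$, smoothness of $h$ from Proposition~\ref{phr}, tail control through Lemma~\ref{dedicata}, choice of a fixed ratio $\tau$ exploiting the gap $\min\{1,2s\}-\beta>0$ (with the $\log$ correction at $2s=1$ absorbed exactly as you say), iteration, and finally transfer to off-center points $x\in\gamma\BBB$ via \eqref{scataildopoff}$_2$ followed by Lemma~\ref{campmax}. The only cosmetic difference is that the paper does not isolate the intermediate statement $\tx{E}_h(x_{\BBB},\sigma)\lesssim(\sigma/r)^{\min\{1,2s\}}\tx{E}_h(x_{\BBB},r)$ but instead estimates $\nra{v-(v)_{B_{\texttt{t}r}}}_{L^2}$ and $\tail(v-(v)_{B_{\texttt{t}r}};B_{\texttt{t}r})$ directly, interleaving the $h$- and $(v-h)$-contributions in a single chain; the resulting one-step inequality and the subsequent iteration are the same as yours.
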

\begin{proof} In the following we denote $\beta_m:=\min\{2s,1\}$. Let $x_0 \in \BBB$ and take $r$ such that $0<r<\rrr-\snr{x_0-x_\BBB}$ that implies $B_{r}(x_{0})\Subset \BBB$.   In the following, unless otherwise specified, all the balls but $\BBB$ will be centred at $x_0$ while $\BBB$ will always abbreviate $B_{\rrr}(x_\BBB)$. Consider $r_{*}$ such that $B_{r}\equiv B_{r}(x_{0})\Subset B_{r_{*}}(x_{0})\equiv B_{r_{*}}\subset   \BBB$ and by Lemma \ref{esiste} define $h\in\mathbb{X}_{v}^{s,2}(B_{r},B_{r_{*}})$ as the solution to
$$
\begin{cases}
\ -\mathcal{L}_{a_{0}}h=0\quad &\mbox{in} \ \ B_{r}\\
\ h=v\quad &\mbox{in} \ \ \mathbb{R}^{n}\setminus B_{r}
\end{cases}
$$
in the sense of \rif{eqweaksol22}. Lemma \ref{lemma3.4} applies with $d=n/(2s-\beta)$, giving
\eqn{dg.3}
$$
\nra{v-h}_{L^{2}(B_{r})}\le cr^{\beta}\nr{g}_{\mathcal M^{\frac{n}{2s-\beta}}(B_r)}
$$
with $c \equiv c (n,N,s,\Lambda,\beta)$. Thanks to this and Jensen's inequality, we obtain
\begin{flalign}
\notag \tail(h-(h)_{B_{r}};B_{r}) & = \tail(v-(h)_{B_{r}};B_{r})\\*
 & \leq \tail(v-(v)_{B_{r}};B_{r}) + c\nra{v-h}_{L^{2}(B_{r})}\notag \\*
& \leq  \tail(v-(v)_{B_{r}};B_{r}) + cr^{\beta}\nr{g}_{\mathcal M^{\frac{n}{2s-\beta}}(B_r)}\,. \label{dg.3bis}
\end{flalign}
We fix $\texttt{t}\in (0,2^{-4})$; recalling Proposition \ref{phr} and using \rif{minav} repeatedly we have
\begin{eqnarray}
\nra{v-(v)_{B_{\texttt{t} r}}}_{L^{2}(B_{\texttt{t} r})}&\le&c\nra{h-(h)_{B_{\texttt{t} r}}}_{L^{2}(B_{\texttt{t} r})}+c\nra{v-h}_{L^{2}(B_{\texttt{t} r})}\nonumber \\
&\stackrel{\eqref{dg.3}}{\le}&c\texttt{t} r\nr{Dh}_{L^{\infty}(B_{r/2})}+c\texttt{t}^{-n/2}r^{\beta}\nr{g}_{\mathcal M^{\frac{n}{2s-\beta}}(B_r)}\nonumber \\
&\stackrel{\eqref{eq:higherreg0}}{\le}&
c\texttt{t}\nra{h-(h)_{B_{r}}}_{L^{2}(B_{r})}+c\texttt{t}\tail(h-(h)_{B_{r}};B_{r}) \nonumber \\*
&&+c\texttt{t}^{-n/2}r^{\beta}\nr{g}_{\mathcal M^{\frac{n}{2s-\beta}}(B_r)}\nonumber \\*
&\stackrel{\eqref{dg.3},\eqref{dg.3bis}}{\le}&c\texttt{t} \left(\nra{v-(v)_{B_{r}}}_{L^{2}(B_{r})}
+\tail(v-(v)_{B_{r}};B_{r})\right)\nonumber \\ && +c\texttt{t}^{-n/2}r^{\beta}\nr{g}_{\mathcal M^{\frac{n}{2s-\beta}}(B_r)},\label{disdis}
\end{eqnarray}
for $c\equiv c(n,N,s,\Lambda,\beta)$. Furthermore, using \rif{minav} and estimating similarly to \rif{disdis} but this time on $B_{\lambda}$, $\lambda \leq r/4$
\begin{eqnarray*}
\tail(v-(v)_{B_{\texttt{t} r}};B_{\texttt{t} r})&\stackrel{\eqref{scatail}}{\le}&c\texttt{t}^{2s}\tail(v-(v)_{B_{r}};B_{r})+c\texttt{t}^{2s}\nra{v-(v)_{B_{r}}}_{L^{2}(B_{r})}\nonumber \\
&&+c\int_{\texttt{t} r}^{r}\left(\frac{\texttt{t} r}{\lambda}\right)^{2s}\nra{v-(v)_{B_{\lambda}}}_{L^{2}(B_{\lambda})}\frac{\dlam}{\lambda}\nonumber \\
&\leq &c\texttt{t}^{2s}\tail(v-(v)_{B_{r}};B_{r})+c\texttt{t}^{2s}\nra{v-(v)_{B_{r}}}_{L^{2}(B_{r})}\nonumber \\
&&+c\int_{\texttt{t} r}^{r/4}\left(\frac{\texttt{t} r}{\lambda}\right)^{2s}\nra{v-(v)_{B_{\lambda}}}_{L^{2}(B_{\lambda})}\frac{\dlam}{\lambda}\nonumber \\
&\le&c\texttt{t}^{2s}\tail(v-(v)_{B_{r}};B_{r})+c\texttt{t}^{2s}\nra{v-(v)_{B_{r}}}_{L^{2}(B_{r})}\nonumber \\
&&+c\texttt{t}^{2s}\nra{v-h}_{L^{2}(B_{r})}\int_{\texttt{t} r}^{r}\left(\frac{ r}{\lambda}\right)^{2s+n/2}\frac{\dlam}{\lambda}\nonumber \\
&&+c\texttt{t}^{2s}r\nr{Dh}_{L^{\infty}(B_{r/2})}\int_{\texttt{t} r}^{r}\left(\frac{ r}{\lambda}\right)^{2s-1}\frac{\dlam}{\lambda}\nonumber \\
&\stackrel{\eqref{dg.3}}{\le}&c\texttt{t}^{2s}\tail(v-(v)_{B_{r}};B_{r})+c\texttt{t}^{2s}\nra{v-(v)_{B_{r}}}_{L^{2}(B_{r})}\nonumber \\
&&+c\texttt{t}^{-n/2}r^{\beta}\nr{g}_{\mathcal M^{\frac{n}{2s-\beta}}(B_r)}+c\texttt{t}^{\beta_m}\log (1/\texttt{t}) r\nr{Dh}_{L^{\infty}(B_{r/2})}\nonumber
 \\ 
&\stackrel{\eqref{eq:higherreg0}}{\le}&c\texttt{t}^{2s}\tail(v-(v)_{B_{r}};B_{r})+c\texttt{t}^{2s}\nra{v-(v)_{B_{r}}}_{L^{2}(B_{r})}\nonumber \\
&&+c\texttt{t}^{-n/2}r^{\beta}\nr{g}_{\mathcal M^{\frac{n}{2s-\beta}}(B_r)}+c\texttt{t}^{\beta_m}\log (1/\texttt{t}) \nra{h-(h)_{B_{r}}}_{L^{2}(B_{r})}\nonumber \\
&&+c\texttt{t}^{\beta_m}\log (1/\texttt{t})\tail(h-(h)_{B_{r}};B_{r})\nonumber \\
&\stackrel{\eqref{dg.3}, \eqref{dg.3bis}}{\le}&c\texttt{t}^{\beta_m}\log (1/\texttt{t})\left(\nra{v-(v)_{B_{r}}}_{L^{2}(B_{r})}+  \tail(v-(v)_{B_{r}};B_{r})\right)\nonumber \\
&&+c\texttt{t}^{-n/2}r^{\beta}\nr{g}_{\mathcal M^{\frac{n}{2s-\beta}}(B_r)}
\end{eqnarray*}
with $c\equiv c(n,N,s,\Lambda,\beta)$. \footnote{Note that in the above chain of inequalities, using \eqref{minav} we have estimated
\begin{flalign*}
\int_{\texttt{t} r}^{r}\left(\frac{\texttt{t} r}{\lambda}\right)^{2s}\nra{v-(v)_{B_{\lambda}}}_{L^{2}(B_{\lambda})}\frac{\dlam}{\lambda} & = \int_{\texttt{t} r}^{r/4}[\ldots]\frac{\dlam}{\lambda}+ \int_{r/4}^{r}[\ldots]\frac{\dlam}{\lambda}
\\  &\lesssim \int_{\texttt{t} r}^{r/4}[\ldots]\frac{\dlam}{\lambda}+\texttt{t}^{2s}\nra{v-(v)_{B_{r}}}_{L^{2}(B_{r})}. 
\end{flalign*}
Moreover, we have estimated 
\[
\texttt{t}^{2s}r\int_{\texttt{t}r}^{r}\left(\frac{r}{\lambda}\right)^{2s-1}\,\frac{d\lambda}{\lambda}
= \begin{cases}
\texttt{t}^{2s}\frac{(1-\texttt{t}^{1-2s})}{1-2s} r , & s\not=\frac12\\ \\
\texttt{t}\left(\log\frac1{\tt t} \right)r, & s=\frac12
\end{cases}
\quad \le  \texttt{t} ^{\min\{2s,1\}}\left(\log\frac1{\texttt{t}}\right)r. 
\]
This also shows that in the proof of \rif{dg.2}-\rif{dg.77} there is no blow-up of the constants when $s$ moves around $1/2$. 
} Recalling \rif{ovvia}, 
and merging the displays from \rif{disdis} on, we arrive at
$$
 \tx{E}_{v}(x_{0},\texttt{t} r)\le c_*\texttt{t}^{\beta_m}\log (1/\texttt{t})\tx{E}_{v}(x_{0},r)+c_*\texttt{t}^{-n/2}\nr{g}_{\mathcal M^{\frac{n}{2s-\beta}}(\BBB)}r^{\beta}
$$ 
with 
$c_* \equiv c_* (n,N,s,\Lambda,\beta)$.  We now consider 
$\beta_1:=(\beta + \beta_m)/2$, so that $\beta<\beta_1 < \beta_m$, and then we determine $\texttt{t}<1/2^4$ such that
\eqn{thistt}
$$
c_*\texttt{t}^{\beta_m-\beta_1}\log (1/\texttt{t})< 1/2 \ \Longrightarrow \ \texttt{t}\equiv \texttt{t}(n,N,s,\Lambda,\beta),
$$
so that we gain 
$$
 \tx{E}_{v}(x_{0},\texttt{t} r)\le\texttt{t}^{\beta_1}  \tx{E}_{v}(x_{0},r)+c_{1}\nr{g}_{\mathcal M^{\frac{n}{2s-\beta}}(\BBB)}r^{\beta}
$$
where $c_1\equiv c_1(n,N,s,\Lambda,\beta)$. Iterating the previous inequality, by induction 
$$
 \tx{E}_{v}(x_{0},\texttt{t}^{k+1} r) \leq \texttt{t}^{(k+1)\beta_1}\tx{E}_{v}(x_{0},r)+c_{1}\nr{g}_{\mathcal M^{\frac{n}{2s-\beta}}(\BBB)}\texttt{t}^{k\beta}r^{\beta} \sum_{0\leq j\leq k} \texttt{t}^{j(\beta_1-\beta)}
$$
is seen to hold for every $k \in \en_0$, that readily yields 
\eqn{dg.6}
$$
 \tx{E}_{v}(x_{0},\texttt{t}^{k+1} r) \leq c \texttt{t}^{(k+1)\beta} \left(  \tx{E}_{v}(x_{0},r)+\nr{g}_{\mathcal M^{\frac{n}{2s-\beta}}(\BBB)} r^{\beta}\right)
$$
where $c\equiv c(n,N,s,\Lambda,\beta)$ (for this keep in mind the dependence of $\texttt{t}$ in \rif{thistt}). 
 This implies that 
\eqn{dg.7}
$$
 \tx{E}_{v}(x_{0},\rr)\le c\left(\frac{\rr}{r}\right)^{\beta} \left(  \tx{E}_{v}(x_{0},r)+\nr{g}_{\mathcal M^{\frac{n}{2s-\beta}}(\BBB)} r^{\beta}\right)$$
holds whenever $0<\rr\le r< \rrr-\snr{x_0-x_{\BBB}}$, again with $c\equiv c(n,N,s,\Lambda,\beta)$. The derivation of \rif{dg.7} from \rif{dg.6} follows via a variation of a standard interpolation argument that we briefly recall for completeness. 
By \rif{scataildopoff}$_1$
\eqn{scataildopo2}
$$
 \tx{E}_{v}(x_{0},\sigma_1  r )\lesssim_{n,s} \left(\frac{\sigma_2}{\sigma_1}\right)^{n/2}  \tx{E}_{v}(x_{0},\sigma_2 r  )
$$
holds whenever $0 < \sigma_1 < \sigma_2 <1$, from which \rif{dg.7} follows immediately when $\texttt{t}r \leq \rr\leq r$, recalling that $\texttt{t}\equiv \texttt{t}(n,N,s,\Lambda,\beta)$ as in \rif{thistt}. To proceed with the proof of \rif{dg.7} for the case $\rr < \texttt{t}r$, we identify an integer $k\geq 0$ such that $\texttt{t}^{k+2}r \leq \varrho < \texttt{t}^{k+1}r$ so that, using again \rif{scataildopo2}, we find 
$$ \tx{E}_{v}(x_{0},\varrho) \leq c\texttt{t}^{-n/2}  \tx{E}_{v}(x_{0},\texttt{t}^{k+1} r) \leq c \texttt{t}^{(k+1)\beta} \left(  \tx{E}_{v}(x_{0},r)+\nr{g}_{\mathcal M^{\frac{n}{2s-\beta}}(\BBB)} r^{\beta}\right)\,,$$ 
from which \rif{dg.7} follows using that $\rr/r\approx_{\texttt{t}} \texttt{t}^{k+1}$ and the dependence of $\texttt{t}$ described in \rif{thistt}. Taking $x_0=x_\BBB$ in \rif{dg.7} and letting $r \to \rrr$ yields \rif{dg.77}. We now pass to the proof of \rif{dg.2}-\rif{dg.2bis}. We apply the above arguments to any $x_0\in \gamma \BBB$ with $r= (1-\gamma)\rrr$. Applying \rif{dg.7} together with \rif{scataildopoff}$_2$ (with $\texttt{t}\equiv 1-\gamma$, $x_0\equiv x_{\BBB}$) we deduce that
$$
 \tx{E}_{v}(x_{0},\rr)\le \frac{c}{(1-\gamma)^{n+\beta}}\left(\frac{\rr}{\rrr}\right)^{\beta}   \tx{E}_{v}(x_{\BBB},\rrr)+c\nr{g}_{\mathcal M^{\frac{n}{2s-\beta}}(\BBB)} \rr^{\beta}
$$
holds whenever $x_{0}\in B_{\gamma \rrr}(x_{\BBB})$ and $\rr \leq  (1-\gamma)\rrr$, that in turn implies 
$$
\sup_{0<\rr\leq  (1-\gamma)\rrr}\sup_{x_{0}\in B_{\gamma \rrr}(x_{\BBB})}\rr^{-\beta}\nra{v-(v)_{B_{\rr}(x_0)}}_{L^{2}(B_{\rr}(x_0))}\le \frac{c\rrr^{-\beta}}{(1-\gamma)^{n+\beta}} \tx{E}_{v}(x_{\BBB},\rrr)+c\nr{g}_{\mathcal M^{\frac{n}{2s-\beta}}(\BBB)}\,.
$$
Recalling the definition in \rif{massimale}, the content of the last display and H\"older inequality give
$$
\nr{\textnormal{M}^{\#,\beta}_{(1-\gamma)\rrr}(v;\cdot)}_{L^\infty(\gamma \BBB)}\le \frac{c\rrr^{-\beta}}{(1-\gamma)^{n+\beta}} \tx{E}_{v}(x_{\BBB},\rrr)+c\nr{g}_{\mathcal M^{\frac{n}{2s-\beta}}(\BBB)}\,.
$$
Lemma \ref{campmax} now implies that 
$$
|v(x)-v(y)| \le c\left[ \frac{\rrr^{-\beta}}{(1-\gamma)^{n+\beta}} \tx{E}_{v}(x_{\BBB},\rrr)+\nr{g}_{\mathcal M^{\frac{n}{2s-\beta}}(\BBB)}\right]\snr{x-y}^{\beta}
$$
holds whenever $x,y\in \gamma\BBB$ are such that  $\snr{x-y}\leq (1-\gamma)\rrr/2$. Using a standard chain argument, from this we conclude that 
$$
|v(x)-v(y)| \le c\left[\frac{\rrr^{-\beta}}{(1-\gamma)^{n+1}} \tx{E}_{v}(x_{\BBB},\rrr)+ \frac{1}{(1-\gamma)^{1-\beta}}\nr{g}_{\mathcal M^{\frac{n}{2s-\beta}}(\BBB)}\right]\snr{x-y}^{\beta}
$$
this time holds for every choice of $x,y\in \gamma\BBB$, where $c\equiv c(n,N,s,\Lambda,\beta)$. This last inequality is easily seen to imply \rif{dg.2}-\rif{dg.2bis}. 
\end{proof}
\section{Higher differentiability and integrability}\label{higherhigher}
Here we revisit some methods from \cite{KMS} in order to obtain various higher differentiability and integrability results, that will be useful both here and in future work \cite{follow}. 

\subsection{Boundary differentiability and integrability}\label{albordo} 
We start by deriving global higher differentiability results for solutions $w\in W^{s,2}(\er^n;\er^N)$ to nonlocal Dirichlet problems of the type
\begin{flalign}\label{sh.30}
\begin{cases}
\ -\mathcal{L}_{\ti{a}}w=-\mathcal{L}_{-\ti{a}}v+g\quad &\mbox{in} \ \ \Omega\\
\ w=0\quad &\mbox{in} \ \ \mathbb{R}^{n}\setminus \Omega
\end{cases}
\end{flalign}
where $\ti{a}\colon \mathbb{R}^n \times \mathbb{R}^n\to \mathbb{R}^{N\times N}$ is a measurable  matrix field satisfying  \eqref{bs.1}-\eqref{bs.2} (obviously recasted for the case there is no dependence on $(v,w)$) and $\Omega$ is a Lipschitz-regular, bounded domain. Keeping in mind the content of Remark \ref{remarkino}, we have 
\begin{definition}\label{defidir}
With $v\in W^{s,2}  (\er^n;\er^N) $ and $g\in L^{2_*}(\Omega;\er^N)$, a map $w\in W^{s,2}(\er^n;\er^N)$, $w \equiv 0$ in $\er^n\setminus \Omega$, is a weak solution to  
\eqref{sh.30} provided 
		\begin{flalign} 
		\notag
			\noindent &\int_{\mathbb{R}^n} \int_{\mathbb{R}^n} \langle \ti{a}(x,y)(w(x)-w(y)),\varphi(x)-\varphi(y) \rangle \frac{\dxy}{|x-y|^{n+2s}} \\
			& \quad =-\int_{\mathbb{R}^n} \int_{\mathbb{R}^n} \langle \ti{a}(x,y)(v(x)-v(y)),\varphi(x)-\varphi(y) \rangle \frac{\dxy}{|x-y|^{n+2s}}  + \int_{\Omega} \langle g, \varphi \rangle\dx \label{debolebordo}
		\end{flalign}
		holds for every $\varphi \in W^{s,2}(\er^n;\er^N)$ such that $\varphi \equiv 0$ in $\er^n\setminus \Omega$.
\end{definition}
\begin{theorem}\label{maggiore} Let $w\in W^{s,2}(\er^n;\er^N)$ be a weak solution to \eqref{sh.30} in the sense of Definition \ref{defidir} and assume that $v\in W^{s+\delta_{0},2} (\er^n;\er^N) $ and $g\in L^{\infty}(\er^n;\er^N),$ where $0 < s<s +\delta_{0}< 1$. Then there exist numbers $t$ and $p$, with $s<t<1$ and $p>2$, both depending on $n, N,s, \Lambda, \delta_{0}$ and $[\Omega]_{0,1}$, such that 
\eqn{maggiore2}
$$
[w]_{t,p;B} \leq c[w]_{s,2;\er^n}+c[v]_{s+\delta_0,2;\er^n}+ c \nr{g}_{L^{\infty}(2  B)} 
$$
holds for every ball $B \subset \er^n$, where $c \equiv c (n, N,s, \Lambda, \delta_{0}, [\Omega]_{0,1}, \snr{B})\geq 1$.  The dependence on $\snr{B}$ in the last constant can be replaced by a dependence on $c_{0}\geq 1$, where $1/c_{0} \leq \snr{B} \leq c_{0}$. 
\end{theorem}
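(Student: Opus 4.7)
My plan is to adapt the self-improving fractional differentiability/integrability machinery of Kuusi--Mingione--Sire (the paper cited as [KMS] in the excerpt) from the interior setting to the global Dirichlet setting of \eqref{sh.30}. The key point is that $w$ is defined on all of $\er^n$ and vanishes identically outside $\Omega$, so that boundary regularity effectively comes for free on balls straddling $\partial\Omega$ via Poincaré-type inequalities of the form \eqref{poin1}--\eqref{poin2}, the Lipschitz regularity of $\Omega$ providing the density hypothesis $\snr{B}\leq c_0 \snr{\{w=0\}\cap B}$.

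The first step is a global Caccioppoli-type estimate on an arbitrary ball $B_{\rr}(x_0)\subset \er^n$ (not necessarily contained in $\Omega$). Testing \eqref{debolebordo} with $\varphi=\eta^2(w-(w)_{B_\rr})$ for a standard cutoff $\eta$ concentrated on $B_\rr(x_0)$, repeating the splitting/Young-inequality scheme from the proof of Lemma \ref{prop:cacc} but now with the additional right-hand side $\mathcal{L}_{\ti a}v$, yields
\[
\snra{w}_{s,2;B_{\rr/2}}^2 \lesssim \rr^{-2s}\left(\nra{w-(w)_{B_\rr}}_{L^2(B_\rr)}^2+\tail(w-(w)_{B_\rr};B_\rr)^2\right)+ \snra{v}_{s,2;B_\rr}^2+\tx{(tail of $v$)} +\rr^{2s}\nr{g}_{L^\infty(B_\rr)}^2,
\]
with the $v$-terms arising from the nonlocal coupling on the right of \eqref{debolebordo}.

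The second, main step is to derive the self-improvement itself. Following the approach of [KMS], one shows that the inequality above, combined with fractional Sobolev--Poincaré inequalities \eqref{sobpoinfrac} and the density lemma \eqref{poin1}--\eqref{poin2} for balls $B$ with $w\equiv 0$ on a large portion (ensured by $w\equiv 0$ on $\er^n\setminus \Omega$ and the Lipschitz regularity of $\partial\Omega$), gives a \emph{fractional reverse Hölder inequality}: there exist $\bbt>0$ and a fractional gradient--type quantity $G_w(x,\rr):=\rr^{-s}\nra{w-(w)_{B_\rr(x)}}_{L^2(B_\rr(x))}+\rr^{-s}\tail(w-(w)_{B_\rr(x)};B_\rr(x))$ such that $G_w$ satisfies a reverse Hölder inequality with a suitable right-hand side incorporating $G_v$ and $\nr{g}_{L^\infty}$. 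The critical observation, due to [KMS], is that the singular kernel $|x-y|^{-n-2s}$ provides an intrinsic gain: the reverse Hölder exponent on the right can be taken strictly below $2$, yielding self-improvement. Gehring's lemma then upgrades this to a higher exponent $p>2$, while a dyadic summation of the resulting Gehring estimates over scales converts the integrability gain into a differentiability gain $s\mapsto t>s$.

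The third step is to absorb the $v$-contributions using the extra regularity $v\in W^{s+\delta_0,2}(\er^n)$: since the Gehring gain $p-2>0$ depends only on $n,N,s,\Lambda$, and the differentiability gain $t-s$ can be chosen strictly less than $\delta_0$, the terms involving $[v]_{s,2}$ and its tail appearing on the right of the reverse Hölder inequality are controlled by $[v]_{s+\delta_0,2;\er^n}$, and similarly $g\in L^\infty$ easily absorbs. The dependence on $\snr{B}$ in the final constant comes from the dyadic summation, and can be replaced by $c_0$ when $\snr{B}\approx_{c_0} 1$.

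The main obstacle is carrying out the self-improvement argument globally in the presence of the inhomogeneous right-hand side and the Dirichlet condition simultaneously: one must verify that the covering/stopping-time argument of Gehring type works uniformly across balls both interior to $\Omega$ and intersecting $\partial\Omega$, which requires the full strength of the Lipschitz regularity of $\Omega$ (entering through the constant $[\Omega]_{0,1}$) to guarantee a uniform lower bound on $|\{w=0\}\cap B|/|B|$ for balls centered near the boundary, and hence applicability of \eqref{poin1}.
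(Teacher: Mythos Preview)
Your proposal is essentially the paper's own approach: derive a Caccioppoli/reverse-H\"older inequality valid on every ball of $\er^n$ (interior, exterior, and straddling $\partial\Omega$), then feed this into the self-improving Gehring-type machinery of \cite{KMS} on the doubling metric-measure space $(\er^{2n},|\cdot|, \mu_\eps)$ with $\d\mu_\eps(x,y)=|x-y|^{-n+2\eps}\dx\dy$, finally absorbing the $v$-terms via the embedding $W^{s+\delta_0,2}\hookrightarrow W^{\gamma(1+\delta_1),\ti p(1+\delta_1)}$. One correction: on a ball $B$ with $3B/2$ meeting $\er^n\setminus\Omega$, your test function $\eta^2(w-(w)_{B_\rr})$ is \emph{not admissible}, since it does not vanish outside $\Omega$ (the constant $(w)_{B_\rr}$ need not be zero); the paper tests instead with $\psi w$ and uses exactly the zero-extension Poincar\'e inequalities \eqref{poin1}--\eqref{compa} you cite to replace $\nra{w}_{L^2}$ by the Gagliardo seminorm, which is where the Lipschitz constant $[\Omega]_{0,1}$ enters.
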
 
The idea for proving Theorem \ref{maggiore} is essentially to show that the arguments developed in \cite{KMS} can be applied here once a suitable Caccioppoli type inequality is shown to hold. 
In the rest of this section we employ the notation and the conventions used in \cite{KMS} (adapted to the vectorial case). For this we shall assume without loss of generality that $\delta_{0}< s/40$. We further denote $\gamma:= s+\delta_{0}/2$. As in \cite[Section 3A]{KMS}, it is possible to find $0< \delta_1 <s/(4n)$ and $\ti{p}$ with $2n/(n+\delta_{0})<\ti{p}<2$
such that
\eqn{relazione}
$$
s+\delta_{0} -\frac{n}{2}= \gamma(1+\delta_1) - \frac{n}{\ti{p}(1+\delta_1)}\,, \qquad \gamma(1+\delta_1) < s+\delta_{0}
$$
so that the classical fractional embedding theorem \cite[Theorem 14.22]{leoni} gives
\eqn{immersione}
$$
[v]_{\gamma(1+\delta_1), \ti{p}(1+\delta_1);\er^n}
 \le c
[v]_{s+\delta_{0}, 2;\er^n}\,,
$$
where the constant $c$ depends only on $n,N,s,\delta_0,\delta_1, \ti{p}$. 
With $B\subset \er^n$ being a ball, as usual we denote $\mathcal B =  B\times B$ and, for $x, y \in \er^n, x\neq y$ and $0< \eps<1$, 
\eqn{meaning}
$$
\begin{cases}
\displaystyle U_{\eps}(x,y) := \frac{\snr{w(x)-w(y)}}{\snr{x-y}^{s+\eps}}\,, \qquad G_{\eps}(x,y) := \frac{\snr{v(x)-v(y)}}{\snr{x-y}^{\gamma+2\eps/\ti{p}}}\\[12pt]
\displaystyle \textnormal{d}\mu_{\eps}(x,y):= \frac{\textnormal{d}\mathcal L^{2n}(x,y)}{\snr{x-y}^{n-2\eps}}\,, \qquad F(x,y):= \snr{g(x)}\,,
\end{cases}
$$
where the exponent $\ti{p}$ appears in \eqref{relazione}. Such definitions parallel those in \cite[(4-5)]{KMS}. 
\begin{lemma}\label{selfie} In the setting of Theorem \ref{maggiore}, and with the notation in \eqref{meaning}, we have that 
\begin{flalign}
\nonumber 
\left(\mint_{\mathcal B} U_{\eps}^2\d\mu_{\eps} \right)^{1/2}&\leq   \frac{c}{\sigma \eps^{1/q-1/2}} \left( \mint_{2\mathcal B} U_{\eps}^q\d\mu_{\eps}\right)^{1/q} \\ \notag & \quad  + \frac{\sigma}{ \eps^{1/q-1/2}} 
\sum_{k\geq 1} 2^{-k(s-\eps)}\left(\mint_{2^k\mathcal B} U_{\eps}^q \d \mu_{\eps}\right)^{1/q}\\ 
 & \quad   +  \frac{c[\mu(\mathcal B)]^{\eta}}{\eps^{1/2_*-1/2}}\left(\mint_{2\mathcal B} F^{2_*}\d\mu_{\eps}\right)^{1/2_*}\notag \\
 &\quad +\frac{c[\mu(\mathcal B)]^{\theta}}{\eps^{1/\ti{p}-1/2}}\sum_{k\geq 1} 2^{-(2s-\gamma-2\eps/\ti{p})k}\left(\mint_{2^k\mathcal  B} G_{\eps}^{\ti{p}}\d\mu_{\eps} \right)^{1/\ti{p}}\label{reverse}
\end{flalign}
 holds whenever $B\subset \er^n $ is a ball, $2^k\mathcal B=2^kB\times 2^kB$ for $k\geq 0$, $0<\sigma <1$, where
 \eqn{ilq} 
 $$
 \begin{cases}
 \displaystyle  0 < \eps < \min\{s/4, 1-s\}\,, \qquad 
 2_*= \frac{2n}{n+2s}< q := \frac{2n+4\eps}{n+2s +2\eps}< 2 \\[13pt]
  \displaystyle 
 \eta:= \frac{s-\eps}{n+2\eps}\,, \quad \theta:=\frac{\gamma-s +\eps(2/\ti{p}-1)}{n+2\eps}=
 \frac{\delta_{0}/2 +\eps(2/\ti{p}-1)}{n+2\eps}\,.
 \end{cases}
 $$
The constant $c$ in \eqref{reverse} depends only on $n,N,s, \Lambda, \delta_0, [\partial \Omega]_{0,1}$. Moreover, the right-hand side of \eqref{reverse} is always finite. 
\end{lemma}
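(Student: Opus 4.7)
The plan is to adapt the self-improving Caccioppoli scheme of \cite{KMS} to the non-homogeneous Dirichlet setting \eqref{sh.30}, in three stages: a weighted Caccioppoli inequality carrying the $\eps$-shift, a weighted fractional Poincar\'e--Sobolev step, and a Gehring-type self-improvement that pushes the exponent on the right-hand side below $2$.

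First I would test the weak formulation \eqref{debolebordo} with $\varphi=\eta^{2}w$ for a standard cut-off $\eta\in C_{0}^{\infty}(2B)$ with $\eta\equiv 1$ on $B$ (this is admissible because $w\equiv 0$ outside $\Omega$ when $B$ sits near $\partial\Omega$, and in the interior case one may subtract a constant as in Lemma \ref{prop:cacc}). Splitting $\varphi(x)-\varphi(y)$ into the symmetric and antisymmetric parts and invoking \eqref{bs.1}-\eqref{bs.2}, the leading term gives a lower bound on $\int_{\mathcal{B}}|w(x)-w(y)|^{2}|x-y|^{-n-2s}\dx\dy$, while the mixed terms are absorbed through Young's inequality and the Lipschitz bound on $\eta$. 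The long-range interactions are recast as a dyadic sum over the annuli $2^{k}\mathcal{B}\setminus 2^{k-1}\mathcal{B}$, producing the factors $2^{-k(s-\eps)}$ in \eqref{reverse}. The term coming from $\mathcal{L}_{\ti a}v$ on the right of \eqref{debolebordo} is handled by Cauchy--Schwarz after writing $|v(x)-v(y)|=G_\eps(x,y)|x-y|^{\gamma+2\eps/\tilde p}$, and the data term involving $g$ is estimated by the fractional Sobolev embedding in the form $\|\eta w\|_{L^{2^{*}}(2B)}\lesssim [\eta w]_{s,2;\mathbb{R}^{n}}$ dualized against $F\in L^{2_{*}}$.

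Second, I would reweight every kernel $|x-y|^{-n-2s}$ appearing in the Caccioppoli inequality via the identity
\[
\frac{1}{|x-y|^{n+2s}}=\frac{1}{|x-y|^{2s+2\eps}}\cdot\frac{1}{|x-y|^{n-2\eps}},
\]
so that the second factor becomes the density of $\d\mu_\eps$ and the first one combines with $|w(x)-w(y)|^{2}$ to produce $U_\eps^{2}$. The $\eps$-dependent normalization constants of the form $\eps^{1/q-1/2}$, $\eps^{1/2_{*}-1/2}$, $\eps^{1/\tilde p-1/2}$ in \eqref{reverse} are generated at this step, since integrals like $\int_{0}^{r}\lambda^{2\eps-1}\dlam \approx r^{2\eps}/\eps$ appear whenever one switches between the $L^{p}$-norm with respect to $\d\mu_\eps$ and the ordinary Lebesgue $L^{p}$-norm on $\mathcal{B}$; the exponents $\eta$ and $\theta$ in \eqref{ilq} are exactly what is needed to convert $|B|^{\cdots}$-factors into powers of $\mu(\mathcal{B})$.

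Third, to trade the natural $L^{2}$-estimate for the reverse-type $L^{q}$-estimate with $q<2$ on the right, I would apply the weighted fractional Sobolev--Poincar\'e inequality established in \cite[Section 2]{KMS} with respect to the measure $\d\mu_\eps$; the exponent $q=(2n+4\eps)/(n+2\alpha+2\eps)$ in \eqref{ilq} is precisely the one for which the embedding $W^{s+\eps,q}(\mathcal{B},\d\mu_\eps)\hookrightarrow L^{2}(\mathcal{B},\d\mu_\eps)$ holds with a stable constant as $\eps\to 0$. Combining this with the weighted Caccioppoli inequality of the previous step, rearranging the two local terms via Young's inequality with parameter $\sigma$, and absorbing the local energy yields \eqref{reverse}. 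Finiteness of the right-hand side is a separate check: $w,v\in W^{s+\delta_{0}/2,\tilde p}_{\textnormal{loc}}$ by \eqref{immersione} and a standard embedding argument, the series in $k$ converge because $s-\eps>0$ and $2s-\gamma-2\eps/\tilde p>0$ for $\eps<s/4$, and $g\in L^{\infty}\subset L^{2_{*}}_{\textnormal{loc}}$.

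The main obstacle I foresee is step two: keeping track of all $\eps$-dependent constants with the correct $\eps$-powers while simultaneously splitting the long-range interactions into the dyadic tail sums, so that the final inequality is truly self-improving (stable as $\eps\to 0$) and not merely qualitative. In particular, the $G_\eps$-sum multiplied by $[\mu(\mathcal{B})]^{\theta}$ must be produced without losing the strict positivity of $2s-\gamma-2\eps/\tilde p$; this forces the specific choice of $\gamma=s+\delta_{0}/2$ and the smallness of $\delta_{0}$ and $\eps$ fixed at the start of Section \ref{albordo}.
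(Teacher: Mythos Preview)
Your proposal is correct and follows essentially the same route as the paper. Both adapt the self-improving Caccioppoli scheme of \cite[Proposition 4.4]{KMS} to the Dirichlet setting \eqref{sh.30}; the paper organises the argument around three cases according to whether $3B/2$ lies in $\Omega$, in $\er^n\setminus\Omega$, or straddles $\partial\Omega$, and in the last case invokes the zero-boundary Poincar\'e inequality of Lemma \ref{fracci} (your ``weighted fractional Sobolev--Poincar\'e'' step) to replace the mean subtraction that is unavailable there---exactly the mechanism you sketch when you note that $w\equiv 0$ outside $\Omega$ makes $\varphi=\eta^{2}w$ admissible near the boundary.
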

Note that in \rif{reverse} we are obviously denoting
$$
\mint_{2^k\mathcal B} W\d\mu_{\eps} = \frac{1}{\mu_{\eps}(2^k\mathcal B)} \int_{2^k\mathcal B} W\d\mu_{\eps} 
$$
as soon as $W$ is integrable with respect to the measure $\mu_{\eps}$. As in \cite[(4.1)]{KMS}, for every ball $B \subset \er^n$, it is 
\eqn{addiaddi}
$$
\mu_\eps(B\times B) = \frac{c(n)}{\eps} |B|^{1+2\eps/n} \,.
$$
Prior to the proof of Lemma \ref{selfie}, we report the following one, which is an analog of \cite[Lemma 4.3]{KMS}. It can be proved using \rif{compa} instead of \cite[(4.11)]{KMS}.
\begin{lemma}\label{fracci}  Let $B_0\subset \er^n$ be a ball and $w \in W^{s,2}(B_0)$ be such that 
\eqn{poinc0}
$$
\snr{B_0} \leq c_{0} \snr{\{w= 0 \}\cap B_0}
$$ 
is satisfied. If  $\eps$ and $q$ are as in \eqref{ilq}, 
then 
\eqn{perfect}
$$
\mint_{B_0} |w|^2 \dx \leq \frac{c|B_0|^{2(s +\eps)/n}}{\eps^{2/q}}  \left(\mint_{B_0 \times B_0} U_{\eps}^q\d\mu_{\eps} \right)^{2/q} 
$$
holds for a constant $c$ depending only on $n,N,s,s-\eps$ and $c_{0}$. 
\end{lemma}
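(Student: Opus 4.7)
\medskip

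\noindent\textbf{Proof plan.} The strategy is to invoke the zero-trace Poincaré–Sobolev inequality \eqref{compa} with a carefully chosen fractional differentiability exponent $t$, then translate the resulting Gagliardo seminorm into the $\mu_{\eps}$-averaged form that appears on the right-hand side of \eqref{perfect}. The central observation is that the exponents given in \eqref{ilq} are precisely calibrated so that $t$ and $q$ sit at the borderline of the fractional Sobolev embedding into $L^2$.

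First, set
$$
t := s+\eps -\frac{2\eps}{q}\,.
$$
With this choice one has the algebraic identity
$$
\frac{|w(x)-w(y)|^q}{|x-y|^{n+qt}} = U_{\eps}(x,y)^q \, \frac{1}{|x-y|^{n-2\eps}} \,,
$$
so that the double integral defining $\snra{w}_{t,q;B_0}^q$ coincides with the (unaveraged) integral of $U_{\eps}^q$ against $\d\mu_{\eps}$ over $B_0\times B_0$, divided by $|B_0|$. A routine check starting from the expression of $q$ in \eqref{ilq} (with $\alpha=s$) yields $q = \frac{2n}{n+2t}$; in particular $q\geq \frac{2n}{n+2t}$, so the assumption $p\geq 2n/(n+2t)$ needed in \eqref{compa} is satisfied with $p=q$.

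Next, I apply \eqref{compa} with this $t$ and $p=q$, noting that the hypothesis $|B_0|\leq c_0|\{w=0\}\cap B_0|$ is exactly the one required. This gives
$$
\nra{w}_{L^2(B_0)} \leq c\,|B_0|^{t/n}\,\snra{w}_{t,q;B_0}\,,
$$
with $c$ depending only on $n,N,s,s-\eps$ and $c_0$. The remaining step is to rewrite $\snra{w}_{t,q;B_0}$ in terms of the $\mu_{\eps}$-average appearing in \eqref{perfect}. For this I compute, by the change of variables $z=y-x$ and integrating in polar coordinates,
$$
\mu_{\eps}(B_0\times B_0) \,=\, \int_{B_0\times B_0}\frac{\dx\dy}{|x-y|^{n-2\eps}} \,\approx\, \frac{|B_0|^{1+2\eps/n}}{\eps}\,,
$$
the factor $1/\eps$ arising from $\int_0^{2r}\rho^{2\eps-1}\d\rho = (2r)^{2\eps}/(2\eps)$. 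Hence
$$
\snra{w}_{t,q;B_0}^{\,q} \,=\, \frac{\mu_{\eps}(B_0\times B_0)}{|B_0|}\,\mint_{B_0\times B_0} U_{\eps}^q\,\d\mu_{\eps} \,\approx\, \frac{|B_0|^{2\eps/n}}{\eps}\,\mint_{B_0\times B_0} U_{\eps}^q\,\d\mu_{\eps}\,.
$$

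Combining the two displays, and using $2t/n + 4\eps/(nq) = 2(s+\eps)/n$ (which is just the definition of $t$), I obtain
$$
\nra{w}_{L^2(B_0)}^{\,2} \,\leq\, \frac{c\,|B_0|^{2(s+\eps)/n}}{\eps^{2/q}}\,\left(\mint_{B_0\times B_0} U_{\eps}^q\,\d\mu_{\eps}\right)^{2/q}\,,
$$
which is \eqref{perfect}. The only genuinely delicate point is the sharp tracking of the $\eps$-dependence: the $\eps^{-1}$ blow-up of $\mu_{\eps}(B_0\times B_0)$ as $\eps\to 0$ is exactly what produces the $\eps^{-2/q}$ factor on the right-hand side, and any looser bound on $\mu_{\eps}$ would ruin the self-improving iteration in \eqref{reverse}. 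The rest of the argument is bookkeeping of powers of $|B_0|$.
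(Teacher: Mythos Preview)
Your proof is correct and follows exactly the approach the paper indicates: the paper's ``proof'' is the single sentence that the result ``can be proved using \eqref{compa} instead of \cite[(4.11)]{KMS}'', and you have carried this out verbatim --- choosing $t=s+\eps-2\eps/q$ so that the Gagliardo $(t,q)$-seminorm becomes the $U_\eps^q\,\d\mu_\eps$ integral, applying \eqref{compa} at the borderline exponent $q=2n/(n+2t)$, and recovering the $\eps^{-2/q}$ factor from $\mu_\eps(B_0\times B_0)\approx |B_0|^{1+2\eps/n}/\eps$.
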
 
\begin{proof}[Proof of Lemma \ref{selfie}] The proof is a close adaptation of \cite[Proposition 4.4]{KMS}. Let us denote $B\equiv B(x_{0}, \rrr)$; we distinguish three different situations. The first is when $3B/2\subset \Omega$. In this case we can test \rif{debolebordo} by $\varphi= \psi^2 (w -(w)_{2B})$ where $\psi \in C^{\infty}_0(3B/2)$ (compactly supported in $3B/2$), $\psi \equiv 1$ on $B$ and $\snr{D\psi}\lesssim 1/\rrr$ and \rif{reverse} follows exactly as in \cite[Proposition 4.4]{KMS}.  The second case is when $3B/2\subset \er^n \setminus \Omega$ and here there is actually nothing to prove as the left-hand side of \rif{reverse} vanishes. The remaining case is when $3B/2$ touches both $\Omega$ and its complement. It follows that condition \rif{poinc0} is satisfied by $2^kB\equiv B_0$ for every integer $k\geq 1$, with $c_{0}\equiv c_{0}(n, [\partial \Omega]_{0,1})$ being independent of $k$ so that  \rif{perfect} implies
 \eqn{perfect2}
 $$ 
\left(\mint_{2^kB} |w|^2 \dx\right)^{1/2} \leq \frac{c(2^k\rrr)^{s +\eps}}{\eps^{1/q}}  \left(\mint_{2^k\mathcal B} U_{\eps}^q\d\mu_{\eps} \right)^{1/q} 
$$
whenever $k \geq 1$ with $c\equiv c (n,N,s, [\partial \Omega]_{0,1})$. 
  Testing \rif{debolebordo} by $\varphi= \psi^2 w$, we proceed as in \cite[Theorem 3.2]{KMS}, and using the ellipticity of $\tilde a(\cdot)$ in  \eqref{bs.1}-\eqref{bs.2} to estimate as in \cite[first display at page 69]{KMS}, we find 
 \begin{flalign}
   \notag \textnormal{I} &:= \int_{2B} \mint_{2B}  \frac{|\psi(x)w(x)-\psi(y)w(y)|^2}{|x-y|^{n+2s}}  \dxy\\
  \notag &  \leq \frac{c}{\rrr^{2s} } \mint_{2B} \snr{w}^2 \dx +c  \int_{\er^n \setminus 2B} \frac{\snr{w(y)}}{|y-x_{0}|^{n+2s}}\dy 
\mint_{2B} \snr{w}\dx +c \rrr^{2s}\left(\mint_{2B} |g|^{2_*}\dx\right)^{2/2_*}\\ \notag 
  & \qquad + c\rrr^{\delta_{0}}\left[\sum_{k\geq 1} 2^{-(2s-\gamma) k} \left(\int_{2^{k}B}\mint_{2^{k}B} \frac{ |v(x)-v(y)|^{\ti{p}}}{|x-y|^{n+ \ti{p}\gamma}}\dxy\right)^{1/\ti{p}}\right]^{2} \notag \\
  &   =: \textnormal{II} + \textnormal{III} +\textnormal{IV}+ \textnormal{V}\label{perfect22}, \end{flalign}
that holds for a constant $c \equiv c (n,N,\Lambda, s, [\partial \Omega]_{0,1})$. In order to estimate the terms I,...,V we proceed similarly to \cite[pages 79-80]{KMS} and therefore using also \rif{perfect2} with $k=1$, we find 
$$
\frac{\rrr^{2\eps}}{\eps}
\mint_{\mathcal B} U_{\eps}^2\d\mu_{\eps} \leq  c \textnormal{I}  \,, \qquad \textnormal{II}  \leq   \frac{c\rrr^{2\eps}}{\eps^{2/q}}  \left(\mint_{2\mathcal B} U_{\eps}^q \d\mu_{\eps} \right)^{2/q}\,.
$$
Coming to \textnormal{III}, via annuli decomposition, we have
\begin{align*}
 & \int_{\er^n \setminus 2B} \frac{\snr{w}}{|y-x_{0}|^{n+2s}}\dy  
 \leq  c\sum_{k\geq 2} (2^{k}\rrr)^{-2s} \mint_{2^{k} B }\snr{w}\dy\\ &\qquad  \leq c\sum_{k\geq 2} (2^{k}\rrr)^{-2s} \left(\mint_{2^{k} B }\snr{w}^2\dy\right)^{1/2} \leq \frac{c\rrr^{-s+\eps}}{\eps^{1/q}}\sum_{k\geq 2} 2^{-k(s-\eps)}\left(\mint_{2^k \mathcal B} U_{\eps}^q \d\mu_{\eps} \right)^{1/q} 
\end{align*}
where \rif{perfect2} has been again used in the last line. 
Yet another application of \rif{perfect2}, this time with $k=1$, and using Young's inequality, leads us to 
\begin{flalign*}
\textnormal{III} &\leq  \frac{c \rrr^{2\eps}}{\eps^{2/q}}  \left(\mint_{2\mathcal B} U_{\eps}^q \d\mu_{\eps} \right)^{1/q}\sum_{k\geq 2} 2^{-k(s-\eps)}
 \left(\mint_{2^k\mathcal B} U_{\eps}^q \d\mu_{\eps}\right)^{1/q} \\
&\leq   \frac{c \rrr^{2\eps}}{ \sigma^2 \eps^{2/q}}  \left(\mint_{2\mathcal B} U_{\eps}^q \d\mu_{\eps} \right)^{2/q}
 + \frac{\sigma^2 \rrr^{2\eps}}{  \eps^{2/q}} \left[\sum_{k\geq 1} 2^{-k(s-\eps)}\left(\mint_{2^k\mathcal B} U_{\eps}^q \d\mu_{\eps} 
 \right)^{1/q}\right]^2
\end{flalign*}
for every $\sigma \in (0,1)$. 
Again arguing as in \cite[pages 80-81]{KMS} we have 
$$
\textnormal{IV} \leq  \frac{c\rrr^{2s}}{\eps^{2/2_*}}\left(\mint_{2\mathcal B} F^{2_*}\d\mu_{\eps}\right)^{2/2_*}
$$
and 
$$
\textnormal{V}  \leq \frac{c \rrr^{2(\gamma-s+2\eps/\ti{p})}}{\eps^{2/\ti{p}}}\left[\sum_{k\geq 1}2^{-(2s-\gamma-2\eps/\ti{p})k}\left(\mint_{2^k\mathcal  B} G_{\eps}^{\ti{p}}\d\mu_{\eps} \right)^{1/\ti{p}}\right]^2\,.
$$
Connecting the estimates found for $\textnormal{I}, \ldots, \textnormal{V}$ leads to \rif{reverse}, recalling that $$\mu_{\eps}(2^k\mathcal B)\approx \frac{(2^k\rrr)^{n+2\eps}}{\eps}$$ by \rif{addiaddi}. The finiteness of the terms in the r.h.s. of \rif{reverse} can be deduced as in \cite[Section 4C]{KMS}; see also the estimation of the terms T$_1$ and T$_3$ in the proof of Theorem \ref{maggiore} below. \end{proof}
\begin{proof}[Proof of Theorem \ref{maggiore}] Following the Gehring type approach introduced in \cite{KMS}, Theorem \ref{maggiore} is now just a consequence of the reverse type inequality on diagonal balls \rif{reverse}; no information on the fact that $w$ solves an equation is needed other than \rif{reverse}.  
 Indeed, using Lemma \ref{selfie} we can now apply the methods from the proof of \cite[Theorem 6.1]{KMS} and determine constants 
 $\eps \in (0,\min\{s/4, 1-s\})$ and $\delta\in (0,1)$, $c\geq 1$, 
all depending on $n, N,s, \Lambda, \delta_{0}, [\partial \Omega]_{0,1}$, such that  
\eqn{iltttt}
$$ t := s+\frac{\eps\delta }{2+\delta} <1$$
and 
 \begin{flalign*}
\nonumber 
\left(\mint_{\mathcal B} U_{\eps}^{2+\delta} \d\mu_{\eps} \right)^{1/(2+\delta)}&\leq  c
\sum_{k\geq 1} 2^{-k(s-\eps)}\left(\mint_{2^k\mathcal B} U_{\eps}^2\d \mu_{\eps}\right)^{1/2} \\  &  \quad   + c \rrr^{\delta_{0}/2+\eps(2/\ti{p}-1)}\left(\mint_{2\mathcal  B} G_{\eps}^{\ti{p}(1+\delta_1)} \d\mu_{\eps} \right)^{\frac{1}{\ti{p}(1+\delta_1)}} \nonumber \\
&\notag  \quad  + \rrr^{\delta_{0}/2+\eps(2/\ti{p}-1)}\sum_{k\geq 1} 2^{-(s-\delta_{0}/2-2\eps/\ti{p})k}\left(\mint_{2^k\mathcal  B} G_{\eps}^{\ti{p}}\d\mu_{\eps} \right)^{1/\ti{p}}\\ & \qquad + c\rrr^{s-\eps}\nr{g}_{L^{\infty}(2B)} \\
&=:\textnormal{T}_{1}+\textnormal{T}_{2}+\textnormal{T}_{3}+  c\rrr^{s-\eps}\nr{g}_{L^{\infty}(2B)} 
\end{flalign*}
holds whenever $B\subset \er^n$ is a ball with radius $\rrr$ and, as usual, $\mathcal B = B\times B$. Now observe that 
$$
 \textnormal{T}_{1} \leq c \rrr^{-n/2-\eps} [w]_{s,2;\er^n}\sum_{k\geq 1} 2^{-k(s+n/2)}\leq c \rrr^{-n/2-\eps}  [w]_{s,2;\er^n}
$$
To estimate the remaining terms, observe that, since $\eps < s/4$
\begin{flalign*}
& \left(\mint_{2^k\mathcal  B} G_{\eps}^{\ti{p}(1+\delta_1)}\d\mu_{\eps} \right)^{\frac{1}{\ti{p}(1+\delta_1)}}\\
 & \quad \leq  c (2^k\rrr)^{-\frac{n+2\eps}{\ti{p}(1+\delta_1)}}  \left(\int_{2^kB} \int_{2^kB} \frac{\snr{v(x)-v(y)}^{\ti{p}(1+\delta_1)}}{\snr{x-y}^{\gamma\ti{p}(1+\delta_1)+2\eps\delta_1}}\frac{\dxy}{\snr{x-y}^n} \right)^{\frac 1{\ti{p}(1+\delta_1)}}\\
& \quad  \leq  c (2^k\rrr)^{-\frac{n}{\ti{p}(1+\delta_1)}-2\eps/\ti{p}+\gamma\delta_1}
[v]_{\gamma(1+\delta_1),\ti{p}(1+\delta_1);2^kB} \\
 &\ \  \stackleq{immersione}  c (2^k\rrr)^{-\frac{n}{\ti{p}(1+\delta_1)}-2\eps/\ti{p}+\gamma\delta_1}[v]_{s+\delta_{0}, 2;\er^n}
\end{flalign*}
holds for every $k \in \en_0$. 
It therefore follows, also using H\"older's inequality, that
\begin{flalign*}
 \textnormal{T}_{2}  + \textnormal{T}_{3}  & \leq  c \rrr^{\delta_{0}/2+\eps(2/\ti{p}-1)} \sum_{k\geq 1} 2^{-(s-\delta_{0}/2-2\eps/\ti{p})k}\left(\mint_{2^k\mathcal  B} G_{\eps}^{\ti{p}(1+\delta_1)}\d\mu_{\eps} \right)^{\frac{1}{\ti{p}(1+\delta_1)}}\\
& \leq c  \rrr^{\delta_{0}/2+\gamma\delta_1-\frac{n}{\ti{p}(1+\delta_1)}-\eps} [v]_{s+\delta_{0}, 2;\er^n} \sum_{k\geq 1} 2^{-\left(s-\delta_{0}/2+\frac{n}{\ti{p}(1+\delta_1)}-\gamma\delta_1\right)k}\\
& \leq c  \rrr^{\delta_{0}/2+\gamma\delta_1-\frac{n}{\ti{p}(1+\delta_1)}-\eps} [v]_{s+\delta_{0}, 2;\er^n}
\end{flalign*}
holds for suitable constant $c$ depending only on  on $n,N, s, \Lambda, \delta_{0}, \eps, [\Omega]_{0, 1}$. Observe that the last series converges due to the restriction we have taken on the size of both $\delta_0$ and $\delta_1$. Recalling that \rif{meaning}$_1$ and \rif{addiaddi} imply
$$
\mint_{\mathcal B} U_{\eps}^{2+\delta}\d\mu_{\eps}= \frac{c\eps}{\rrr^{n+2\eps}}\int_{\mathcal B} U_{\eps}^{2+\delta}\d\mu_{\eps}=\frac{c\eps}{\rrr^{n+2\eps}} [w]_{t, 2+\delta;B}^{2+\delta}, \qquad c\equiv c(n), 
$$
merging the content of the last five displays yields \rif{maggiore2} with $p=2+\delta$  and $t$ as in \rif{iltttt}, and with the prescribed dependence of the constants. 
\end{proof}
\subsection{Higher Sobolev differentiability}
Here we state a localized form of the main result from \cite{KMS}, that is
\begin{theorem}\label{lamaggiore} 
Under assumptions \eqref{bs.1}-\eqref{bs.2}, let $w\in W^{s,2}_{\loc}(B;\er^N)\cap L^{1}_{2s}$ be a solution to  $$ -\mathcal{L}_{a}w= g\in L^{2_*+\delta_1}(B;\er^N)\,, \quad \mbox{in the ball $B\equiv B_{\rrr}\subset \er^n$},\ \rrr\leq 1,\ \delta_1 >0\,,$$  in the sense of Definition \ref{def:weaksol}. There exist $\delta_0 \in (0, 1-s)$, $t \in (s,1)$ and $p>2$, all depending on $n,N,s,\Lambda, \delta_1$, 
such that $w\in W^{s+\delta_0,2}(B/2;\er^N)\cap W^{t,p}(B/2;\er^N)$. Moreover, 
\eqn{stimapp}
$$ \rrr^{s+\delta_{0}} \snra{w}_{s+\delta_{0}, 2;B/2} + \rrr^{t} \snra{w}_{t, p;B/2}\leq c 
\rrr^{s} \snra{w}_{s, 2;B}+c\tail(w;2B)+c\rrr^{2s}\nra{g}_{L^{2_*+\delta_1}(B)}
$$
holds with $c \equiv c (n,N,s,\Lambda, \delta_1)$.
\end{theorem}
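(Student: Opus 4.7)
The proof of Theorem \ref{lamaggiore} follows by specializing the Kuusi--Mingione--Sire fractional self-improvement machinery \cite{KMS}, much of which has been prepared in Section \ref{albordo}, to the interior setting. The guiding idea is that both higher differentiability and higher integrability are extracted simultaneously from a single Gehring-type iteration applied to a reverse H\"older inequality on the diagonal measure
$$ \d\mu_\eps(x,y) = \frac{\dx \dy}{|x-y|^{n-2\eps}}\,, \qquad 0<\eps < s/4\,. $$

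My first step is to freeze the coefficients: setting $\ti{a}(x,y) := a(x,y,w(x),w(y))$, the map $w$ solves the linear equation $-\mathcal{L}_{\ti{a}} w = g$ in $B$ with $\ti{a}(\cdot)$ satisfying the $(x,y)$-only analogues of \eqref{bs.1}--\eqref{bs.2}. Exactly as in the first (interior) case of the proof of Lemma \ref{selfie}, I test the equation against $\psi(w-(w)_{3B_0/2})$ for a cutoff $\psi$ adapted to $B_0$, where $B_0$ is any diagonal ball with $2B_0 \subset B$. The Caccioppoli-type computation in Lemma \ref{prop:cacc} (now performed on $B_0$ rather than $B$) together with Lemma \ref{fracci} applied to the dilations $2^k B_0$ produces, with $U_\eps(x,y)=|w(x)-w(y)|/|x-y|^{s+\eps}$, $F(x,y)=|g(x)|$, and $q\in(1,2)$ as in \eqref{ilq}, an interior reverse H\"older inequality of the form
\begin{align*}
\left(\mint_{\mathcal B_0} U_\eps^2 \d\mu_\eps\right)^{1/2} &\le \frac{c}{\sigma\eps^{1/q-1/2}}\left(\mint_{2\mathcal B_0} U_\eps^q \d\mu_\eps\right)^{1/q} + \frac{\sigma}{\eps^{1/q-1/2}}\sum_{k\ge 1}2^{-k(s-\eps)}\left(\mint_{2^k\mathcal B_0} U_\eps^q \d\mu_\eps\right)^{1/q} \\
&\quad + \frac{c\,[\mu_\eps(\mathcal B_0)]^{\eta}}{\eps^{1/2_*-1/2}}\left(\mint_{2\mathcal B_0} F^{2_*} \d\mu_\eps\right)^{1/2_*}\,.
\end{align*}
Diagonal balls $\mathcal B_0$ that leave $B$ along the iteration will be absorbed by the $\tail(w;2B)$ and $\nra{w}_{s,2;B}$ contributions via \eqref{scatail}--\eqref{scatailancora}, exactly as in the third (boundary-touching) case of Lemma \ref{selfie}, so that the series in the right-hand side is finite.

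The second step is the self-improvement. The reverse inequality above has precisely the structure required by the fractional Gehring lemma of \cite[Section 6]{KMS}: the absorption parameter $\sigma$, the small exponent $\eps$, and the subcritical exponent $q<2$ on the right all appear with the correct weights. Applying that lemma, I obtain $\delta>0$ such that $U_\eps \in L^{2+\delta}_\loc(B\times B, \d\mu_\eps)$ with the corresponding quantitative bound. Setting $t := s + \eps\delta/(2+\delta) \in (s,1)$ and $p:=2+\delta>2$ as in \eqref{iltttt}, the $L^{2+\delta}(\d\mu_\eps)$-estimate for $U_\eps$ translates directly into the $W^{t,p}$-estimate in \eqref{stimapp}; the $W^{s+\delta_0,2}$-estimate then follows from the elementary embedding $W^{t,p}\hookrightarrow W^{s+\delta_0,2}$ on bounded sets for any $\delta_0 < (t-s) - n(1/2-1/p)$, which is positive for suitable choice of $\delta_1$-dependent parameters. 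The forcing term contribution is controlled because, since $g\in L^{2_*+\delta_1}$, the quantity $F^{2_*}$ is integrable to a power $>1$ with respect to $d\mu_\eps$, providing the room needed in the Gehring argument; this is where the hypothesis $\delta_1>0$ enters in determining $\delta_0, t$ and $p$.

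The main obstacle is the first step, namely the derivation of the interior reverse H\"older inequality with the correct balance of the absorbing parameter $\sigma$, the small parameter $\eps$, and the tail sums. In the boundary setting of Section \ref{albordo} one can exploit the zero-trace condition through Lemma \ref{fracci}, whereas here the sub-balls $B_0$ sit in the interior and any large-scale dilate $2^k B_0$ eventually exits $B$. The tail sums that appear must therefore be split into an interior part (where $U_\eps$ is the correct object to bound) and an exterior part (where $w$ must be replaced by a control in terms of $\tail(w;2B)$ via \eqref{scatail}--\eqref{scatailancora}); keeping the dependence on $\eps$ explicit so that the Gehring lemma of \cite{KMS} applies verbatim is the delicate bookkeeping step. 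Once this is achieved, the remainder of the argument is an essentially mechanical application of the machinery from \cite[Sections 4--6]{KMS}, and the quantitative estimate \eqref{stimapp} is obtained by tracing constants through the iteration.
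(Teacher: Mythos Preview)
Your strategy is essentially the one the paper itself points to: rather than giving a self-contained proof, the paper explains that Theorem \ref{lamaggiore} is the localized, vectorial version of the Kuusi--Mingione--Sire self-improvement result \cite{KMS}, obtained by combining the interior Caccioppoli inequality of Lemma \ref{prop:cacc} with the fractional Gehring lemma, and handling the exit of dyadic balls from $B$ via the tail-localization arguments in \cite{byunnon}, \cite{meng}, \cite{sn}. Your sketch reproduces exactly this route, and your identification of the ``main obstacle'' (controlling the large-scale annuli once $2^kB_0$ leaves $B$, while keeping the $\eps$-dependence needed for the Gehring step) is precisely the point where the paper invokes those references.

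One correction in your second step: the condition $\delta_0 < (t-s) - n(1/2-1/p)$ that you impose for the embedding $W^{t,p}\hookrightarrow W^{s+\delta_0,2}$ is the $\er^n$-scaling condition and is actually \emph{negative} here, since $t-s=\eps\delta/(2+\delta)$ while $n(1/2-1/p)=n\delta/(2(2+\delta))$ and $\eps<s/4<n/2$. On the bounded ball $B/2$ the correct and much milder requirement is simply $s+\delta_0<t$: a direct H\"older inequality on the Gagliardo integrand with exponents $(p/2,\,p/(p-2))$ gives
$$[w]_{s+\delta_0,2;B/2}^2 \le [w]_{t,p;B/2}^2\left(\int_{B/2}\int_{B/2}|x-y|^{-n+\frac{2p(t-s-\delta_0)}{p-2}}\dx\dy\right)^{1-2/p},$$
and the last factor is finite as soon as $t-s-\delta_0>0$. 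So any $\delta_0\in\bigl(0,\eps\delta/(2+\delta)\bigr)$ does the job, and the dependence on $\delta_1$ enters only through $\eps,\delta$ as you describe.
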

The main differences with the original result in \cite{KMS} are essentially three, none being really crucial. The first, already encountered in Section \ref{albordo}, is that we are dealing with vector valued solutions. This type of results are anyway invariant when passing to the vectorial case as they are only based on the use nonlocal Caccioppoli inequalities of the type in \rif{prop:cacc}, for which assumptions \rif{bs.1}-\rif{bs.2} are sufficient. The second is that in \cite{KMS} solutions in the entire $\er^n$ are considered. The third being that solutions are here considered in the class  $w\in W^{s,2}_{\loc}(B;\er^N)\cap L^{1}_{2s}$ while in \cite{KMS} they were supposed to belong to $W^{s,2}(\er^n;\er^N)$. 

The version stated in Theorem \ref{lamaggiore} can be inferred for instance from \cite{byunnon}, that actually covers a much more complex situation (nonuniformly elliptic, nonlocal operators). The result in Theorem \ref{lamaggiore} can be inferred from the proof of \cite[Theorem 1.1]{byunnon} once we take $p=q=2$ and $b(\cdot)\equiv 0$ in \cite{byunnon}. Estimate \rif{stimapp} then follows from \cite[(1.5)]{byunnon} with the same choice of the parameters. Specifically, the starting point in \cite{byunnon} is in fact \cite[Lemma 3.1]{byunnon}, that in the current setting is precisely \rif{perfect22}. Although boundedness is assumed in the statement of \cite[Theorem 1.1]{byunnon}, in the special case 
$p=q=2$ and 
$b(\cdot)\equiv 0$, as pointed out in \cite[Remark 1]{byunnon}, the proof shows that such an assumption is not actually needed. Such main modifications actually occur in \cite[Lemma 3.4]{byunnon} after which the proof proceeds similarly to \cite{KMS} with the use of the $\tail$ to localize terms done for instance in \cite{meng}. Note that related localization arguments leading to statement similar to that of Theorem \ref{lamaggiore} are also present in \cite[Theorem 4.3]{sn}. At this stage, the modified version of \cite[Theorem 1.1]{byunnon} provides us with the estimate 
$$
\rrr^{t} \snra{w}_{t, p;B/2}\leq c 
\rrr^{s} \snra{w}_{s, 2;B}+c\tail(w;2B)+c\rrr^{2s}\nra{g}_{L^{2_*+\delta_1}(B)}
$$
for some $t>s$ and $p>2$ as in the statement Theorem \ref{lamaggiore}. The full conclusion of \rif{stimapp} now follows using H\"older's inequality in the form $ \rrr^{s+\delta_{0}} \snra{w}_{s+\delta_{0}, 2;B/2} \lesssim \rrr^{t} \snra{w}_{t, p;B/2}$, which holds as long as $p>2$ and $s+\delta_0< t$.

\section{\texorpdfstring{$s$-Harmonic approximation}{s-Harmonic approximation}}
In this section we consider constant coefficients systems of the type 
\eqn{costante}
$$
-\mathcal{L}_{a_{0}}w=g
$$
under assumptions \rif{bs.1bis}. Then we show that if a map is weakly close to be a solution to \rif{costante}, then it is actually quantitatively close in norm to a real solution to \rif{costante}. This is in Lemma \ref{shar} below and the proof involves a few preliminary tools developed in Section \ref{truncsec} below. 
\subsection{H\"older truncation}\label{truncsec}
Here we prove a H\"older truncation lemma, that can be seen as a nonlocal counterpart of Lipschitz truncation type lemmas very often used in the literature. See for instance \cite{dms,dsv} and references therein. A key point is the use of maximal operators described in \rif{truncsec0} and in particular of estimate \rif{stimalfa}. We first need a Hardy-Littlewood type result. 
\begin{lemma}\label{l.2}
Let $t\in (0,1)$, $1\leq p<\infty$, $\rrr>0$ and $w\in W^{t,p}(\mathbb{R}^{n};\er^N)$. Then 
\eqn{ht.16}
$$
\snr{\{x\in \mathbb{R}^{n}\colon \textnormal{M}^{\#,t}_{\rrr}(w;x)>\lambda\}}\lesssim_{n,t,p} \frac{[w]_{t,p;\mathbb{R}^{n}}^{p}}{\lambda^{p}}
$$
holds for every $\lambda >0$. 
\end{lemma}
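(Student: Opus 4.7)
The plan is to prove the weak-type bound \eqref{ht.16} by a Vitali covering argument, reducing pointwise information on $\textnormal{M}^{\#,t}_{\rrr}(w;\cdot)$ to bounds on the Gagliardo seminorm via fractional Poincar\'e.

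First, set $E_{\lambda}:=\{x\in \er^n\colon \textnormal{M}^{\#,t}_{\rrr}(w;x)>\lambda\}$. For every $x\in E_{\lambda}$, the definition \eqref{massimale} furnishes a radius $\rr_{x}\in (0,\rrr]$ such that
$$
\rr_{x}^{-t} \mint_{B_{\rr_{x}}(x)} \snr{w-(w)_{B_{\rr_{x}}(x)}}\dy > \lambda\,.
$$
Jensen's inequality and the fractional Poincar\'e inequality \eqref{poinfrac} then yield
$$
\lambda^{p} < \rr_{x}^{-pt}\mint_{B_{\rr_{x}}(x)} \snr{w-(w)_{B_{\rr_{x}}(x)}}^{p}\dy \lesssim_{n,t,p} \frac{1}{\snr{B_{\rr_{x}}(x)}}[w]_{t,p;B_{\rr_{x}}(x)}^{p}\,,
$$
so that
$$
\lambda^{p}\snr{B_{\rr_{x}}(x)} \lesssim_{n,t,p} [w]_{t,p;B_{\rr_{x}}(x)}^{p}\,.
$$

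Next, since $\rr_{x}\leq \rrr$ for every $x\in E_{\lambda}$, the family $\{B_{\rr_{x}}(x)\}_{x\in E_{\lambda}}$ has uniformly bounded radii and the classical Vitali covering lemma produces a countable, pairwise disjoint subfamily $\{B_{\rr_{x_{j}}}(x_{j})\}_{j}$ with
$$
E_{\lambda} \subseteq \bigcup_{j} B_{5\rr_{x_{j}}}(x_{j})\,.
$$
Combining with the previous display,
$$
\lambda^{p}\snr{E_{\lambda}} \leq \lambda^{p}\sum_{j}\snr{B_{5\rr_{x_{j}}}(x_{j})}=5^{n}\lambda^{p}\sum_{j}\snr{B_{\rr_{x_{j}}}(x_{j})}\lesssim_{n,t,p} \sum_{j}[w]_{t,p;B_{\rr_{x_{j}}}(x_{j})}^{p}\,.
$$

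Finally, because the balls $B_{\rr_{x_{j}}}(x_{j})$ are disjoint in $\er^{n}$, their products $B_{\rr_{x_{j}}}(x_{j})\times B_{\rr_{x_{j}}}(x_{j})$ are disjoint subsets of $\er^{2n}$, and therefore
$$
\sum_{j}[w]_{t,p;B_{\rr_{x_{j}}}(x_{j})}^{p}=\sum_{j}\int_{B_{\rr_{x_{j}}}(x_{j})}\int_{B_{\rr_{x_{j}}}(x_{j})}\frac{\snr{w(y)-w(z)}^{p}}{\snr{y-z}^{n+pt}}\dy\dz \leq [w]_{t,p;\er^{n}}^{p}\,.
$$
This yields \eqref{ht.16}.

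There is no real obstacle: the argument is a direct transplant of the classical Hardy--Littlewood weak-type bound, with the fractional Poincar\'e inequality replacing the usual Poincar\'e one to convert the $L^{1}$-averaged oscillation into a localized Gagliardo seminorm. The only mildly delicate point is ensuring that the Vitali selection works with the truncation radius constraint $\rr_{x}\leq \rrr$, which is automatic since the radii are uniformly bounded.
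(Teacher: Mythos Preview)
Your proof is correct and follows essentially the same strategy as the paper: a covering argument combined with the fractional Poincar\'e inequality \eqref{poinfrac} to pass from localized oscillations to the Gagliardo seminorm, followed by summing over disjoint balls. The only difference is that you use the Vitali $5r$-covering lemma (exploiting the uniform bound $\rr_x\le\rrr$), whereas the paper uses Besicovitch on a bounded set $\ttB_\kappa$ and then lets $\kappa\to\infty$; your route is slightly more direct but otherwise equivalent.
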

\begin{proof}
We shall prove that 
\eqn{ht.16M}
$$
\snr{\{x\in \ttB_{\kappa}\colon \textnormal{M}^{\#,t}_{\rrr}(w;x)>\lambda\}}\leq \frac{c[w]_{t,p;\mathbb{R}^{n}}^{p}}{\lambda^{p}}
$$
holds whenever $\kappa>0$, for a constant $c\equiv c (n,t,p)$ being independent of $\kappa$. At this point \rif{ht.16} will follow letting $\kappa\to \infty$ in \rif{ht.16M}. Set $A_{t, \kappa}(w;\lambda):=\{x\in \ttB_{\kappa}\colon \textnormal{M}^{\#,t}_{\rrr}(w;x)>\lambda\}$. Jensen's inequality  implies
\begin{flalign}\label{ht.1}
A_{t,\kappa}(w;\lambda)\subset \left\{x\in \ttB_{\kappa}\colon \sup_{0<\rr\leq \rrr}\rr^{-t p}\mint_{B_{\rr}(x)}\snr{w(y)-(w)_{B_{\rr}(x)}}^{p}\dy>\lambda^{p}\right\}=:A_{t,\kappa,p}(w;\lambda)\,.
\end{flalign}
For every $x\in A_{t,\kappa,p}(w;\lambda)$, there exists $\rr_{x}\in (0,\rrr]$ such that
\begin{flalign}\label{ht.2}
\frac{1}{\lambda^{p}\rr_{x}^{pt}}\int_{B_{\rr_{x}}(x)}\snr{w(y)-(w)_{B_{\rr_{x}}(x)}}^{p}\dy>\snr{B_{\rr_{x}}(x)}.
\end{flalign}
Applying Besicovitch covering lemma to the family $\{B_{\rr_x}(x)\}$, we extract a finite number $\mathfrak{n}$, depending only on $n$, of countable families of mutually disjoint 
 balls $\{B_{i, j}\}_{j\geq 1}$, $i \in \{1, \ldots, \mathfrak n\}$, such that 
 \eqn{pippi}
 $$
  A_{t,\kappa,p}(w;\lambda)\subset \bigcup_{i\leq \mathfrak{n}}\bigcup_{j\geq 1}\overline {B_{i, j}}\,.
 $$
We then conclude with \rif{ht.16M} as follows:
\begin{eqnarray*}
\snr{A_{t,\kappa}(w;\lambda)}&\stackrel{\eqref{ht.1}}{\le}&\snr{ A_{t,\kappa,p}(w;\lambda)}\\ &\stackrel{\eqref{ht.2},\eqref{pippi}}{\leq} &\frac{c}{\lambda^{p}}\sum_{i\leq \mathfrak n}\sum_{j\geq 1}\snr{B_{i, j}}^{-pt/n}\int_{B_{i, j}}\snr{w(x)-(w)_{B_{i,j}}}^{p}\dx\nonumber \\
&\stackleq{poinfrac}&\frac{c}{\lambda^{p}}\sum_{i\leq \mathfrak n}\sum_{j\geq 1}[w]_{t,p;B_{i,j}}^{p}\\ &\leq & \frac{\ti{c}}{\lambda^{p}}\sum_{i\leq \mathfrak n}[w]_{t,p;\er^n}^p\\ &=& \frac{\mathfrak{n}\ti{c}[w]_{t,p;\mathbb{R}^{n}}^{p}}{\lambda^{p}}\\\ &=&\frac{c[w]_{t,p;\mathbb{R}^{n}}^{p}}{\lambda^{p}}
\end{eqnarray*}
with $c\equiv c(n,t,p)$. Note that in the previous display we have used the following general property: 
$$
\sum_{j\geq 1}\,  [w]_{t,p;\mathcal A_j}^p \leq   [w]_{t,p; \cup_j \mathcal A_j}^p
$$
holds whenever $\{\mathcal A_i\}$ is a countable family of mutually disjoint subsets.
 \end{proof}
\begin{proposition}\label{htrunc}
Let $\Omega\subset \mathbb{R}^{n}$ be an open, bounded Lipschitz domain, $\ti{\Omega}$ be an open, bounded set such that $\Omega \Subset \ti{\Omega}$, and let $w\in \mathbb{X}_{0}^{t,p}(\Omega,\ti{\Omega})$ with $p\geq 1$ and $t\in (0,1)$. For every $\lambda>0$ there exists $w_{\lambda}\in C^{0,t}(\er^n;\er^N)$ such that 
\eqn{ht.18}
$$
\begin{cases}
\, \left\{x\in \mathbb{R}^{n}\colon w_{\lambda}(x)\not =w(x)\right\} \subset \Omega\\[5pt]
\, \displaystyle [w_{\lambda}]_{0,t;\er^n}\le c\lambda\\ 
 \displaystyle
\, \snr{\{x\in \mathbb{R}^{n}\colon w_{\lambda}(x)\not =w(x)\}}\le \frac{c}{\lambda^{p}}\left([w]_{t,p;\ti{\Omega}}^{p}+\frac{\nr{w}_{L^{p}(\Omega)}^{p}}{\dist(\Omega,\er^n\setminus \ti{\Omega})^{pt}}\right)\\[9pt]
\,w_{\lambda} \in \mathbb{X}_{0}^{\ti{t},p}(\Omega,\ti{\Omega})\,, \quad  \mbox{for every positive  $\ti{t} <t$}
\end{cases}
$$
where $c\equiv c(n,N,t,p,\Omega)$.
\end{proposition}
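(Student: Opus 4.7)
The plan is to perform a Calder\'on--Scott type H\"older truncation based on the sharp fractional maximal operator $\textnormal{M}^{\#,t}_{\rrr}$ from \eqref{massimale}. First, using Remark \ref{remarkino} I view $w$ as an element of $W^{t,p}(\er^n;\er^N)$ via zero extension, with the global Gagliardo seminorm controlled by \eqref{ht.10}. I fix a scale $\rrr>0$ of order $\diam(\ti{\Omega})$ and define the bad set
$$ E_\lambda:=\{x\in\er^n\colon \textnormal{M}^{\#,t}_{\rrr}(w;x)>\lambda\}. $$
The measure bound in \eqref{ht.18}$_{3}$ follows at once from Lemma \ref{l.2} combined with \eqref{ht.10}. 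Moreover, for any $x$ with $\dist(x,\bar{\Omega})>\rrr$, every ball $B_\rr(x)$ with $\rr\le\rrr$ lies in $\{w\equiv 0\}$, so $\textnormal{M}^{\#,t}_{\rrr}(w;x)=0$ and $x\notin E_\lambda$; hence $E_\lambda$ is concentrated in a bounded neighborhood of $\bar{\Omega}$.

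By Lemma \ref{campmax}, the precise representative of $w$ exists on $\er^n\setminus E_\lambda$ and satisfies $|w(x)-w(y)|\le c\lambda|x-y|^t$ for $x,y$ in this good set, up to a restriction $|x-y|\le\rrr/2$ which is harmless in view of the boundedness of $E_\lambda$. The candidate $w_\lambda$ will be produced as the McShane--Whitney H\"older extension from the closed admissible set
$$ F_\lambda:=(\er^n\setminus\Omega)\cup(\bar{\Omega}\setminus E_\lambda), $$
on which one prescribes the trace $f\equiv 0$ on $\er^n\setminus\bar{\Omega}$ and $f\equiv w$ (precise representative) on $\bar{\Omega}\setminus E_\lambda$.

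The \emph{main obstacle} is the uniform $t$-H\"older control of $f$ on $F_\lambda$ with constant $\lesssim\lambda$. The estimate within $\bar{\Omega}\setminus E_\lambda$ is Lemma \ref{campmax}, and $f$ trivially vanishes on $\er^n\setminus\bar{\Omega}$. The critical interface case is $x\in\bar{\Omega}\setminus E_\lambda$, $y\in\er^n\setminus\bar{\Omega}$, where I need $|w(x)|\le c\lambda|x-y|^t$. Setting $\rho:=2|x-y|$, the Lipschitz regularity of $\partial\Omega$ yields $c_0\ge 1$, depending only on $[\Omega]_{0,1}$, such that the set $A:=B_\rho(x)\cap(\er^n\setminus\bar{\Omega})$ on which $w\equiv 0$ satisfies $|A|\ge c_0^{-1}|B_\rho(x)|$. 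Since $(w)_A=0$, a direct estimate gives
$$ |(w)_{B_\rho(x)}|=|(w)_{B_\rho(x)}-(w)_A|\le\frac{|B_\rho(x)|}{|A|}\mint_{B_\rho(x)}|w-(w)_{B_\rho(x)}|\dy\le c_0\lambda\rho^t, $$
where I used $x\notin E_\lambda$ and $\rho\le\rrr$. A standard dyadic telescoping along $\rho_k:=2^{-k}\rho$, itself controlled uniformly by $\textnormal{M}^{\#,t}_{\rrr}(w;x)\le\lambda$, delivers $|w(x)-(w)_{B_\rho(x)}|\le c\lambda\rho^t$, and combining the two bounds yields $|w(x)|\le c\lambda|x-y|^t$ as required.

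With $f$ now $t$-H\"older on the closed set $F_\lambda$ with constant $\lesssim\lambda$, the classical McShane extension, applied componentwise in the vectorial case, produces $w_\lambda\in C^{0,t}(\er^n;\er^N)$ with $w_\lambda|_{F_\lambda}=f$ and $[w_\lambda]_{0,t;\er^n}\le c\lambda$. Since $w_\lambda=0=w$ on $\er^n\setminus\bar{\Omega}$, the inclusion $\{w_\lambda\neq w\}\subset E_\lambda\cap\bar{\Omega}\subset\bar{\Omega}$ holds, giving \eqref{ht.18}$_{1,2,3}$. For the final assertion \eqref{ht.18}$_4$, the function $w_\lambda$ is $t$-H\"older and has support in the bounded set $\bar{\Omega}$, hence belongs to $L^\infty(\er^n;\er^N)\cap C^{0,t}(\er^n;\er^N)$; a standard interpolation places it in $W^{\ti{t},p}(\ti{\Omega};\er^N)$ for every $\ti{t}<t$, and since $w_\lambda\equiv 0$ on $\ti{\Omega}\setminus\Omega$, one concludes $w_\lambda\in\mathbb{X}_0^{\ti{t},p}(\Omega,\ti{\Omega})$.
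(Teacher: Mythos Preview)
Your proposal is correct and follows essentially the same route as the paper: the good/bad set decomposition via the sharp fractional maximal operator $\textnormal{M}^{\#,t}_{\rrr}$, the weak-type bound from Lemma~\ref{l.2} together with \eqref{ht.10}, the Lipschitz-boundary density estimate to control $|w(x)|$ on the interface, and finally a McShane-type $C^{0,t}$ extension. The only cosmetic difference is that the paper first derives the uniform bound $|w(x)|\le c\lambda\rrr^t$ on $H_\lambda\cap\bar{\Omega}$ via $\sigma=2\dist(x,\er^n\setminus\Omega)$ and then splits the interface case according to whether $|x-y|<\rrr/4$, whereas you go directly through $\rho=2|x-y|$; your remark that the restriction $|x-y|\le\rrr/2$ is ``harmless'' needs the same patch (reduce to the uniform $L^\infty$ bound when $|x-y|$ is large), but this is exactly what the paper does.
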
 
\begin{proof} By Remark \ref{remarkino} it follows that $w \in W^{t,p}(\mathbb{R}^{n};\er^N)$ and \rif{ht.10} holds, so that, in particular, $w\equiv 0$ a.e. in $\er^n\setminus \Omega$. Fix $\rrr>0$ and consider $x \in \er^n$ such that $\textnormal{M}^{\#,t}_{\rrr}(w;x)$ is finite. Recalling the meaning of \rif{preciso} there exists the precise representative $w(x)$. Setting $\sigma_{i}:=\sigma/2^{i}$, $i \in \en_0$, with $\sigma\in (0, \rrr]$, we control, for $k\geq 1$
\begin{flalign*}
\snr{(w)_{B_{\sigma_{k}}(x)}-(w)_{B_{\sigma}(x)}}&\le \sum_{i=0}^{k-1}\snr{(w)_{B_{\sigma_{i+1}}(x)}-(w)_{B_{\sigma_{i}}(x)}}\nonumber \\
& \leq 2^n \sum_{i=0}^{k-1}\mint_{B_{\sigma_{i}}(x)}\snr{w-(w)_{B_{\sigma_i}(x)}}\dy\nonumber \\
& = 2^n \sigma^{t}\sum_{i=0}^{k-1}2^{-t i}\sigma_{i}^{-t}\mint_{B_{\sigma_{i}}(x)}\snr{w-(w)_{B_{\sigma_{i}}(x)}}\dy\nonumber \\
&\leq 2^n  \sum_{i=0}^{\infty}2^{-t i} \, \sigma^{t}\textnormal{M}^{\#,t}_{\rrr}(w;x)\\ & \approx_{n,t}   \sigma^{t}\textnormal{M}^{\#,t}_{\rrr}(w;x)\,.
\end{flalign*}
Letting $k\to \infty$ in the above display yields
\eqn{ht.6}
$$
\snr{w(x)-(w)_{B_{\sigma}(x)}}\lesssim_{n, t} \sigma^{t}\textnormal{M}^{\#,t}_{\rrr}(w;x) \, , \qquad \mbox{for every $  \sigma \in (0,\rrr)$}\,.
$$
From now on we shall follow and modify some arguments from \cite{dms}. We take  $\rrr:=8\diam (\Omega)$ and define
$$
H_{\lambda}:= \left\{x\in \mathbb{R}^{n}\colon \textnormal{M}^{\#,t}_{\rrr}(w;x)\leq \lambda\right\}
$$
so that we give meaning to $w(x)$ as the precise representative of $w$ at $x$ whenever $x\in H_{\lambda}$.  
 Now, let $x\in H_{\lambda}\cap \Omega$ and set $\sigma:=2\dist(x,\mathbb{R}^{n}\setminus \Omega)< \rrr$. Keeping in mind that $\Omega$ is bounded and $\partial\Omega$ is Lipschitz regular, then
\eqn{ht.17}
$$
\frac{\snr{B_{\sigma}(x)}}{\snr{B_{\sigma}(x)\cap (\mathbb{R}^{n}\setminus \Omega)}}\le c_{\Omega},
$$
where $c_{\Omega}$ depends on $\Omega$, we have (recall that $w\equiv 0$ in $\mathbb{R}^{n}\setminus \Omega$)
\begin{eqnarray} 
\notag \mint_{B_{\sigma}(x)}\snr{w(y)-(w)_{B_{\sigma}(x)}}\dy&\ge &\frac{1}{\snr{B_{\sigma}(x)}}\int_{(\mathbb{R}^{n}\setminus \Omega)\cap B_{\sigma}(x)}\snr{w(y)-(w)_{B_{\sigma}(x)}}\dy\nonumber \\
\notag &= &\frac{\snr{(\mathbb{R}^{n}\setminus \Omega)\cap B_{\sigma}(x)}}{\snr{B_{\sigma}(x)}}\snr{(w)_{B_{\sigma}(x)}}\\
&\stackrel{\rif{ht.17}}{\ge}  &\frac{1}{c_{\Omega}}\snr{(w)_{B_{\sigma}(x)}}\,.\label{mediona0}
\end{eqnarray}
It follows that
$
\snr{(w)_{B_{\sigma}(x)}}\leq c\sigma^{t}\textnormal{M}^{\#,t}_{\rrr}(w;x)
$
for $c\equiv c(n,\Omega)$, and, from \rif{ht.6} and triangle inequality, that
\eqn{mediona}
$$
\snr{w(x)}\leq c\sigma^{t}\textnormal{M}^{\#,t}_{\rrr}(w;x)\le c\sigma^{t}\lambda\leq  c\dist(x,\mathbb{R}^{n}\setminus \Omega)^t  \lambda \qquad \mbox{for all} \ x\in H_{\lambda}\cap \Omega
$$
where $c\equiv c(n,t,\Omega)$. When $x \in H_\lambda\cap \partial \Omega$  the argument in \rif{mediona0} works for any $\sigma \in (0,\rrr)$, thereby yielding $\snr{w(x)}\le c\sigma^{t}\lambda$ for every $\sigma>0$, so that we conclude with $w(x)=0$. This allows to pointwise (re)define without ambiguities $w(x):=0$ whenever $x\in \er^n\setminus \Omega$ (note that if $x \in \er^n\setminus \bar{\Omega}$, it automatically follows that the precise representative of $w$ at $x$ is $0$ independently of the way we redefine $w$ up to  a negligible set). We have therefore pointwise defined $w$ in $H_{\lambda}\cup (\mathbb{R}^{n}\setminus \Omega)$. We now want to prove that 
\eqn{ht.5}
$$
x_{1},x_{2}\in H_{\lambda}  \ \Longrightarrow \  \snr{w(x_{1})-w(x_{2})} \le c\lambda\snr{x_{1}-x_{2}}^{t}
$$
holds with $c\equiv c(n,t,\Omega)$. To proceed, we assume that $x_1\neq x_2$ and we distinguish three different cases. The first case is when $x_{1},x_{2}\in H_{\lambda}\setminus \Omega$; this is trivial as $w\equiv 0$ in $\mathbb{R}^{n}\setminus \Omega$. The second case is when $x_{1},x_{2}\in H_{\lambda}\cap \Omega$, so that $\snr{x_1-x_2}< \rrr/4=2\diam (\Omega)$. In this situation \rif{stimalfa} gives 
$$
\snr{w(x_{1})-w(x_{2})}\le c\left(\textnormal{M}^{\#,t}_{\rrr}(w;x_{1})+\textnormal{M}^{\#,t}_{\rrr}(w;x_{2})\right)\snr{x_{1}-x_{2}}^{t}\le c\lambda\snr{x_{1}-x_{2}}^{t}
$$
and \eqref{ht.5} follows. It remains to examine the third case, i.e., when either  $x_{1}\in H_{\lambda}\cap \Omega$, $x_{2}\in H_{\lambda}\setminus \Omega$ or $x_{2}\in H_{\lambda}\cap \Omega$, $x_{1}\in H_{\lambda}\setminus \Omega$ happen. By symmetry we confine ourselves to $x_{1}\in H_{\lambda}\cap \Omega$, $x_{2}\in H_{\lambda}\setminus \Omega$. We then have, using that $w(x_2)=0$ and \rif{mediona}, 
$$
\snr{w(x_{1})-w(x_{2})}=\snr{w(x_{1})}\le c\dist(x_1,\mathbb{R}^{n}\setminus \Omega)^t  \lambda \le c\lambda\snr{x_{1}-x_{2}}^{t}.
$$
This completes the proof of \rif{ht.5}. Observing that the argument of the third case works whenever $x_{1}\in H_{\lambda}\cap \Omega$, $x_2 \in \er^n\setminus \Omega$, we have in fact proved that 
$$
\snr{w(x_{1})-w(x_{2})}\le c\lambda\snr{x_{1}-x_{2}}^{t}\qquad \mbox{for all} \ \ x_{1},x_{2}\in \Sigma_{\lambda}:=H_{\lambda}\cup (\mathbb{R}^{n}\setminus \Omega),
$$
and  $c\equiv c(n,t,p,\Omega)$. Also recalling \rif{mediona}, by  \cite[Theorem 15.1]{emmanuele} (applied to each component of $w$) we find a globally $t$-H\"older continuous map $w_{\lambda}\colon \er^n\to \er^N$ such that 
$$
\begin{cases}
\, w_{\lambda}\equiv w \ \ \mbox{in} \ \ \Sigma_{\lambda}\ \Longrightarrow \ \left\{x\colon w_{\lambda}(x)\not =w(x)\right\}\subset  \Omega \\[3pt]
\, \nr{w_{\lambda}}_{L^{\infty}(\er^n)}\leq c \rrr^t \lambda \\[3pt] \, [w_{\lambda}]_{0,t;\er^n} \leq c\lambda
\end{cases}
$$
for a new constant $c\equiv c(n,N,t,\Omega)$, so that \rif{ht.18}$_{1,2}$ follow. As for \rif{ht.18}$_{3}$, note that
$$\{x\in \mathbb{R}^{n}\colon w_{\lambda}(x)\not =w(x)\}\subset \Omega\cap \left\{x\in \mathbb{R}^{n}\colon \textnormal{M}^{\#,t}_{\rrr}(w;x)>\lambda\right\}$$
and therefore 
$$
 \left|\left\{x\in \mathbb{R}^{n}\colon w_{\lambda}(x)\not =w(x) \right\}\right|\leq  \left|\left\{x\in \mathbb{R}^{n}\colon \textnormal{M}^{\#,t}_{\rrr}(w;x)>\lambda\right\}\right|\stackrel{\eqref{ht.16}}{\le}\frac{c[w]_{t,p;\mathbb{R}^{n}}^{p}}{\lambda^{p}}
$$
with $c\equiv c(n,N,t,p,\Omega)$, so that \rif{ht.18}$_3$ follows using \rif{ht.10} to estimate the right-hand side in the above display. Obviously, for every positive  $\ti{t}<t$, we have $w_{\lambda}\in W^{\ti{t},p}(\ti{\Omega};\er^N)$ and $w_{\lambda}\equiv 0$ outside $\Omega$ and therefore $w_{\lambda}\in \mathbb{X}^{\ti{t},p}_{0}(\Omega,\ti{\Omega})$. 
\end{proof}
\subsection{$s$-Harmonic Approximation Lemma} This is
\begin{lemma}\label{shar}
Let $a_{0}\in \mathbb{R}^{N\times N}$ be such that \eqref{bs.1bis} holds and let $\delta_0$ be a  number such that $0 <s <s+\delta_{0}<1$; let $B\Subset \ti{B}\Subset \mathbb{R}^{n}$ be concentric balls with radii belonging to  $(1/32,1)$ and such that $\dist(B,\partial \ti{B}):=\textnormal{\texttt{d}}>0$. Let $v\in W^{s,2}(\er^n;\er^N)$, $g\in L^{\infty}(\mathbb{R}^{n};\er^N)$ be such that 
\eqn{sh.0}
$$
\nr{v}_{W^{s,2}(\mathbb{R}^{n})}+\nr{v}_{W^{s+\delta_{0},2}(\er^n)}+ \tail(v;\ti{B}) + \nr{g}_{L^{\infty}(\mathbb{R}^{n})}\le c_{0}
$$
for some $c_{0}>0$. There exist numbers $t \in (s,1)$, $p>2$, both depending only on $n,N,s,\Lambda, \delta_{0}$, and a constant $\cchh\equiv \cchh(n,N,s,\Lambda,c_{0},\delta_{0}, \textnormal{\texttt{d}})\geq 1$ such that, if for $\eps\in (0,1)$
\eqn{sh.1}
$$
\left|\int_{\mathbb{R}^{n}}\int_{\mathbb{R}^{n}}\langle a_{0}(v(x)-v(y)),\varphi(x)-\varphi(y)\rangle\frac{\dxy}{\snr{x-y}^{n+2s}}-\int_{B}\langle g,\varphi\rangle\dx\right|\le\left(\frac{\eps}{\cchh}\right)^{\frac{2p}{p-2}} [\varphi]_{0,t;\er^n}
$$
holds for all $\varphi\in C^{0,t}(\er^n;\er^N)$ vanishing outside $B$, and if $h\in\mathbb{X}^{s,2}_{v}(B,\ti{B})$ solves
\eqn{sh.2}
$$
\begin{cases}
\ -\mathcal{L}_{a_{0}}h=g\quad &\mbox{in} \ \ B\\
\ h=v\quad &\mbox{in} \ \ \mathbb{R}^{n}\setminus B\\
\end{cases}
$$
in the sense of \eqref{eqweaksol22}, then 
\eqn{sh.5}
$$
\nr{h}_{W^{s,2}(\ti{B})}
+\tail(h-(h)_{B};B)\le c\equiv c(n,N,s,\Lambda, c_{0})
$$
holds together with
\eqn{sh.5.1}
$$
\begin{cases}
[h-v]_{s,2;\ti{B}}<\varepsilon\\[6pt]
 \nr{h-v}_{L^2(B)}\leq \frac{c(n,s)}{\sqrt{\textnormal{\texttt{d}}}}\eps\,.
 \end{cases}
$$
\end{lemma}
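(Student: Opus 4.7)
My plan is to establish existence of $h$ together with the bound \eqref{sh.5} by standard energy methods, and then to derive \eqref{sh.5.1} via a test-function argument for $w := h - v$ whose admissible test class is widened by a H\"older truncation. Existence of $h$ follows from Lemma \ref{esiste} since $g \in L^{\infty} \hookrightarrow L^{2_*}(B)$. Testing \eqref{eqweaksol22} against $w \in \mathbb{X}^{s,2}_{0}(B,\tilde B)$ and invoking \eqref{bs.1bis}, Cauchy-Schwarz and \eqref{poincf} yields $[w]_{s,2;\er^n} \leq c(n,N,s,\Lambda,c_{0})$; combined with \eqref{sh.0} this gives the $W^{s,2}$-part of \eqref{sh.5}, while the tail bound follows from \eqref{scatailancora}, the identity $h \equiv v$ in $\er^n \setminus B$, and the $L^2$ control just obtained.

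Subtracting the weak formulation \eqref{eqweaksol22} of \eqref{sh.2} (tested against $\varphi$) from \eqref{sh.1}, which is legitimate because any $\varphi \in C^{0,t}(\tilde B;\er^N)$ compactly supported in $B$ lies in $W^{s,2}(\er^n;\er^N)$, produces the almost-equation
\[
\left|\iint \langle a_{0}(w(x)-w(y)),\varphi(x)-\varphi(y)\rangle\,\frac{\dx\dy}{|x-y|^{n+2s}}\right| \leq \left(\frac{\eps}{\cchh}\right)^{\!\frac{2p}{p-2}}[\varphi]_{0,t;\er^n}.
\]
Since $w$ vanishes outside $B$ and solves a Dirichlet problem of the form \eqref{sh.30} (with $\tilde a \equiv a_{0}$ and source $\mathcal{L}_{a_{0}}v + g \in L^{\infty}$), Theorem \ref{maggiore} supplies exponents $t \in (s,1)$ and $p > 2$ depending only on $n,N,s,\Lambda,\delta_{0}$, together with a uniform bound $[w]_{t,p;\er^n} \leq c(n,N,s,\Lambda,c_{0},\delta_{0},\texttt{d})$. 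Proposition \ref{htrunc} applied with these parameters then furnishes, for each $\lambda > 0$, a H\"older truncation $w_{\lambda} \in C^{0,t}(\er^n;\er^N)$ supported in $\bar B$, satisfying $[w_{\lambda}]_{0,t;\er^n} \leq c\lambda$ and $|E_{\lambda}| \leq c\lambda^{-p}$, where $E_{\lambda} := \{w \neq w_{\lambda}\}$. Inserting $w_{\lambda}$ as test function in the displayed almost-equation (after an infinitesimal scale reduction to bring the support strictly inside $B$) yields
\[
\left|\iint \langle a_{0}(w(x)-w(y)),w_{\lambda}(x)-w_{\lambda}(y)\rangle\,\frac{\dx\dy}{|x-y|^{n+2s}}\right| \leq c\left(\frac{\eps}{\cchh}\right)^{\!\frac{2p}{p-2}}\lambda.
\]

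From \eqref{bs.1bis} one has $\Lambda^{-1}[w]_{s,2;\er^n}^{2} \leq \iint \langle a_{0}(w(x)-w(y)),w(x)-w(y)\rangle|x-y|^{-n-2s}\dx\dy$; splitting $w = w_{\lambda} + (w - w_{\lambda})$ on the right-hand side, the $w_{\lambda}$-contribution is controlled by the previous display while the remainder is bounded by $\Lambda [w]_{s,2;\er^n}[w-w_{\lambda}]_{s,2;\er^n}$ via Cauchy-Schwarz. The central estimate, and the step that pins down the specific exponent $2p/(p-2)$ in \eqref{sh.1}, is the interpolation-type bound
\[
[w-w_{\lambda}]_{s,2;\er^n} \leq c\,|E_{\lambda}|^{(p-2)/(4p)} \leq c\,\lambda^{-(p-2)/4},
\]
obtained by exploiting that $w-w_{\lambda}$ is supported in $E_{\lambda}$ and lies in $W^{t,p}(\er^n;\er^N)$ with $t>s$, $p>2$: a H\"older inequality with exponents $(p/2,\,p/(p-2))$ applied to the Gagliardo integrand, combined with the fractional Sobolev embedding \eqref{poincf2} and the measure bound $|E_{\lambda}| \leq c\lambda^{-p}$, produces the required decay. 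Young's inequality then absorbs $\Lambda[w]_{s,2;\er^n}[w-w_{\lambda}]_{s,2;\er^n}$ into the left-hand side modulo a term of size $c\lambda^{-(p-2)/2}$, so that $[w]_{s,2;\er^n}^{2} \leq c(\eps/\cchh)^{2p/(p-2)}\lambda + c\lambda^{-(p-2)/2}$. The choice $\lambda \sim (\cchh/\eps)^{4/(p-2)}$ equilibrates both terms at size $c(\eps/\cchh)^{2}$, and taking $\cchh$ large enough (depending on $n,N,s,\Lambda,c_{0},\delta_{0},\texttt{d}$) to absorb all accumulated constants yields $[w]_{s,2;\er^n}^{2} < \eps^{2}$, which is \eqref{sh.5.1}. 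The hardest step is precisely the above interpolation estimate: one must combine the higher integrability of Theorem \ref{maggiore} with the smallness of $|E_{\lambda}|$ in exactly the way that matches the quantitative closeness encoded in the exponent $2p/(p-2)$ of \eqref{sh.1}.
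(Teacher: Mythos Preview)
Your overall strategy coincides with the paper's: energy estimate for \eqref{sh.5}, then the equation \eqref{sh.30} for $w=h-v$, higher regularity via Theorem \ref{maggiore} to fix $t,p$, H\"older truncation $w_\lambda$ from Proposition \ref{htrunc}, test with $w_\lambda$, and optimise in $\lambda$. The arithmetic at the end (balancing $(\eps/\cchh)^{2p/(p-2)}\lambda$ against $\lambda^{-(p-2)/2}$ via $\lambda=(\cchh/\eps)^{4/(p-2)}$) is exactly the paper's.

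There is, however, a genuine gap in your justification of the key bound $[w-w_\lambda]_{s,2;\er^n}\le c\lambda^{-(p-2)/4}$. You write that $w-w_\lambda$ ``lies in $W^{t,p}(\er^n;\er^N)$'' and then run a H\"older inequality on the Gagliardo integrand. This claim is false in general: $w_\lambda$ is only $C^{0,t}$, and $C^{0,t}$ does \emph{not} embed into $W^{t,p}$ (the Gagliardo kernel $|x-y|^{-n}$ is non-integrable). This is precisely the obstruction the paper flags in the introduction when explaining why \eqref{sh.1} must carry the H\"older seminorm $[\varphi]_{0,t}$ with $t>s$ rather than $t=s$. Consequently you cannot apply H\"older with the $W^{t,p}$-integrand of $w-w_\lambda$ as one factor; the reference to \eqref{poincf2} does not repair this.

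The paper avoids this trap by never writing $[w-w_\lambda]_{t,p}$. Instead it splits the \emph{domain} of integration according to the coincidence set $\Sigma=\{w=w_\lambda\}$: on $\Sigma\times\Sigma$ the pairing $\langle a_0(w-w),w-w\rangle$ equals $\langle a_0(w-w),w_\lambda-w_\lambda\rangle$ and is controlled by \eqref{sh.1}; the off-$\Sigma$ pieces are estimated using only $[w]_{t,p;\ti B}$ (H\"older with exponents $p,\,p/(p-1)$ or $p/2,\,p/(p-2)$ on integrals over $E_\lambda\times\ti B$) together with $[w_\lambda]_{0,t}\le c\lambda$ and $|E_\lambda|\le c\lambda^{-p}$; see the estimates of (III), (IV) and of the final display before \eqref{sh.9}. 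Your Cauchy--Schwarz route can be made to work, but you must bound $[w-w_\lambda]_{s,2}$ by treating the $w$- and $w_\lambda$-contributions to the numerator separately: $|w(x)-w(y)|^2$ via $[w]_{t,p;\ti B}$ and the small-measure H\"older estimate, and $|w_\lambda(x)-w_\lambda(y)|^2\le c\lambda^2|x-y|^{2t}$ via direct integration (the kernel $|x-y|^{2t-n-2s}$ is locally integrable because $t>s$). Both pieces yield $c\lambda^{2-p}$, hence $[w-w_\lambda]_{s,2}\le c\lambda^{-(p-2)/2}$, which is in fact stronger than your stated exponent and closes as you indicate.
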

\begin{proof} In the rest of the proof we assume that \rif{sh.1} is verified with a certain constant $\cchh\geq 1$; the value of $\cchh$ will be then determined at the very end of the proof, as a function of the parameters $n,N,s,\Lambda,c_{0},\delta_{0},\textnormal{\texttt{d}}$. 
Observe that $v\in W^{s,2}(\er^n;\er^N)$ implies $v \in L^1_{2s}$ and therefore Lemma \ref{esiste} ensures the existence of a unique weak solution $h\in \mathbb{X}^{s,2}_{v}(B,\ti{B})$ to \rif{sh.2} in the sense of \eqref{eqweaksol22}. By Remark \ref{remarkino} it follows that  $h \in W^{s,2}(\er^n;\er^N)$. Set $w:=h-v\in \mathbb{X}^{s,2}_{0}(B,\ti{B})\cap W^{s,2}(\er^n;\er^N)$, and note that 
\begin{flalign}\label{sh.3}
\begin{cases}
\ -\mathcal{L}_{a_{0}}w=-\mathcal{L}_{-a_{0}}v+g\quad &\mbox{in} \ \ B\\
\ w=0\quad &\mbox{in} \ \ \mathbb{R}^{n}\setminus B
\end{cases}
\end{flalign}
in the sense of Definition \ref{defidir} (keep in mind Remark \ref{remarkino}). We next apply  Theorem \ref{maggiore} to \rif{sh.3}, that yields $t \in (s,1)$, $p >2$, both depending only on $n, N,s, \Lambda, \delta_{0}$, such that \rif{maggiore2} holds. This determines the numbers $t,p$ from the statement of Lemma \ref{shar}. Note that $w$ is an admissible test function in \eqref{sh.3} so that using Young inequality and \eqref{poincf} gives 
\eqn{sh.4}
$$
[w]_{s,2;\er^n}\le c [v]_{s,2;\mathbb{R}^{n}}+c\nr{g}_{L^{\infty}(2B)} \ \stackrel{\eqref{sh.0}}{\Longrightarrow} \ \nr{h}_{W^{s,2}(\mathbb{R}^{n})}+\nr{w}_{W^{s,2}(\mathbb{R}^{n})}\le c\,,
$$
with $c\equiv c(n,N,\Lambda,c_{0})$. For the $\tail$ term  we have
\begin{eqnarray*}
\tail(h-(h)_{B};B)&\stackrel{\eqref{sh.2}}{=}&\tail(v-(h)_{B};B)\le c\,  \tail(v-(v)_{B};B)+c\nr{v-h}_{L^{2}(B)}\nonumber \\
&\stackrel{\eqref{poincf}, \eqref{scatailggg}}{\leq} &c\, \tail(v;\ti{B})+c\nr{v}_{L^{2}(\ti{B})}+c[w]_{s,2; \er^n}\\
& \stackrel{\eqref{sh.0},\eqref{sh.4}}{\le}& c
\end{eqnarray*}
with $c\equiv c(n,N,s,\Lambda,c_{0})$.  This completes the proof of \rif{sh.5}.  This, together with \rif{maggiore2}, \rif {sh.0} and \eqref{sh.4}, gives 
\eqn{sh.10pre}
$$
[w]_{t,p;\ti{B}} \leq c (n,N,s,\Lambda, c_0, \delta_{0})\,.
$$
Next, observe that
$$
\nr{w}_{L^p(\ti{B})} \leq c  \nr{w-(w)_{\tilde {B}}}_{L^p(\ti{B})}+c \nr{w}_{L^1(\ti{B})} \stackrel{\eqref{poinfrac},\eqref{sh.4}}{\leq}
c[w]_{t,p;\ti{B}} +  c \stackleq{sh.10pre} c 
$$
and we therefore conclude with
\eqn{sh.10}
$$
\nr{w}_{W^{t,p}(\ti{B})} \leq c\,,
$$
again with $c \equiv c (n,N,s,\Lambda, c_0, \delta_{0})$. To proceed, we fix $\lambda \geq 1$ and apply Proposition \ref{htrunc} to $w$, thereby obtaining 
$w_{\lambda} \in \mathbb{X}^{s,2}_{0}(B,\ti{B})\cap C^{0,t}(\er^n;\er^N)$ satisfying 
\rif {ht.18} with the exponents $p>2$, $t>s$ we just found, and $\Omega\equiv B$, $\ti{\Omega} \equiv \ti{B}$, and such that
\eqn{livelli}
$$
\begin{cases}
\,\snr{\mathbb{R}^{n}\setminus \Sigma} \leq c\lambda^{-p}\,, \quad \mathbb{R}^{n}\setminus \Sigma\subset B\\[3pt]
\,\mbox{$w_{\lambda}\equiv w\equiv 0$ outside $B$}\,, \quad  \nr{w_{\lambda}}_{L^\infty(\er^n)} +[w_{\lambda}]_{0,t;\er^n} \le c\lambda\\[3pt]
\, \Sigma:=\left\{x\in \mathbb{R}^{n}\colon w_{\lambda}(x)=w(x)\right\}
\end{cases}
$$
for $c \equiv c (n,N,s,\Lambda, c_0, \delta_{0},\textnormal{\texttt{d}})$, and we have used also \rif{sh.10}. Note that $w_{\lambda} \in \mathbb{X}^{s,2}_{0}(B,\ti{B})$ follows easily by the fact that $w_{\lambda} \in \mathbb{X}^{\tilde t,p}_{0}(B,\ti{B})$ for every $\tilde t < t$ by \rif{ht.18}$_4$ and the fact that $w_{\lambda}$ vanishes outside $B$. Testing \rif{sh.3} by $w_{\lambda}$ yields
\begin{flalign*}
&\int_{\mathbb{R}^{n}}\int_{\mathbb{R}^{n}}\langle a_{0}(w(x)-w(y)),w_{\lambda}(x)-w_{\lambda}(y)\rangle\frac{\dxy}{\snr{x-y}^{n+2s}}\nonumber \\
&\qquad  =-\int_{\mathbb{R}^{n}}\int_{\mathbb{R}^{n}}\langle a_{0}(v(x)-v(y)),w_{\lambda}(x)-w_{\lambda}(y)\rangle\frac{\dxy}{\snr{x-y}^{n+2s}}+\int_{B}\langle g,w_{\lambda}\rangle\dx,
\end{flalign*}
so that 
we can rewrite
\begin{flalign*}
\mbox{(I)}&:=\int_{\Sigma}\int_{\Sigma}\langle a_{0}(w(x)-w(y)),w(x)-w(y)\rangle\frac{\dxy}{\snr{x-y}^{n+2s}}\nonumber \\
&=\int_{\Sigma}\int_{\Sigma}\langle a_{0}(w(x)-w(y)),w_{\lambda}(x)-w_{\lambda}(y)\rangle\frac{\dxy}{\snr{x-y}^{n+2s}}\nonumber \\
&=-\left(\int_{\mathbb{R}^{n}}\int_{\mathbb{R}^{n}}\langle a_{0}(v(x)-v(y)),w_{\lambda}(x)-w_{\lambda}(y)\rangle\frac{\dxy}{\snr{x-y}^{n+2s}}-\int_{B}\langle g,w_{\lambda}\rangle\dx\right)\nonumber \\
&\qquad -2\int_{\Sigma}\int_{\mathbb{R}^{n}\setminus \Sigma}\langle a_{0}(w(x)-w(y)),w_{\lambda}(x)-w_{\lambda}(y)\rangle\frac{\dxy}{\snr{x-y}^{n+2s}}\nonumber \\
&\qquad -\int_{\mathbb{R}^{n}\setminus\Sigma}\int_{\mathbb{R}^{n}\setminus \Sigma}\langle a_{0}(w(x)-w(y)),w_{\lambda}(x)-w_{\lambda}(y)\rangle\frac{\dxy}{\snr{x-y}^{n+2s}}\nonumber \\&=: \mbox{(II)}+\mbox{(III)}+\mbox{(IV)}\,.
\end{flalign*}
We then estimate 
$$
\snr{\mbox{(II)}}\stackrel{\eqref{sh.1}}{\le}\left(\frac{\eps}{\cchh}\right)^{\frac{2p}{p-2}} [w_{\lambda}]_{0,t;\er^n}\stackleq{livelli} c\left(\frac{\eps}{\cchh}\right)^{\frac{2p}{p-2}}\lambda 
$$
for $c \equiv c (n,N,s,\Lambda, c_0, \delta_{0},\textnormal{\texttt{d}})$. In order to treat the remaining terms, using also H\"older's inequality, we note that
\begin{eqnarray}
\notag \int_{ \ti{B}}\int_{\mathbb{R}^{n}\setminus \Sigma}\frac{\snr{w(x)-w(y)}}{\snr{x-y}^{n+2s-t}}\dxy
 & \leq & c [w]_{t,p;\ti{B}}  \left(\int_{\mathbb{R}^{n}\setminus \Sigma}\int_{\ti{B}}\snr{x-y}^{-n+\frac{2p(t-s)}{p-1}}\dyx\right)^{1-1/p}\\ \notag 
 & \leq & c [w]_{t,p;\ti{B}}  \left(\int_{\mathbb{R}^{n}\setminus \Sigma}\int_{\ttB_{2}}\snr{z}^{-n+\frac{2p(t-s)}{p-1}}\dzx\right)^{1-1/p}\\ \notag 
  & = & c [w]_{t,p;\ti{B}} \snr{\mathbb{R}^{n}\setminus \Sigma}^{1-1/p} \left(\int_{\ttB_{2}}\snr{z}^{-n+\frac{2p(t-s)}{p-1}}\dz\right)^{1-1/p}\\
  & = & \frac{c}{(t-s)^{1-1/p}} [w]_{t,p;\ti{B}} \snr{\mathbb{R}^{n}\setminus \Sigma}^{1-1/p}\notag \\
  & \stackleq{sh.10} & c \snr{\mathbb{R}^{n}\setminus \Sigma}^{1-1/p} \notag \\ &\stackleq{livelli}  &c\lambda^{1-p}
  \label{elemy}
\end{eqnarray}
with $c \equiv c (n,N,s,\Lambda, c_0, \delta_{0},\textnormal{\texttt{d}})$.
Then, with $x_B$ denoting the centre of both $B$ and $\ti{B}$, and recalling that $\mathbb{R}^{n}\setminus \Sigma\subset B$, we have
\begin{eqnarray*}
\snr{\mbox{(III)}}&\stackrel{\eqref{bs.1bis}}{\le}&c \int_{\Sigma\cap \ti{B}}\int_{\mathbb{R}^{n}\setminus \Sigma}\snr{w(x)-w(y)}\snr{w_{\lambda}(x)-w_{\lambda}(y)}\frac{\dxy}{\snr{x-y}^{n+2s}}\nonumber \\
&&\quad +c\int_{\Sigma\setminus \ti{B}}\int_{\mathbb{R}^{n}\setminus \Sigma} \snr{w(x)-w(y)}\snr{w_{\lambda}(x)-w_{\lambda}(y)}\frac{\dxy}{\snr{x-y}^{n+2s}}\nonumber \\
&\stackrel{\eqref{uset}}{\le}&c[w_{\lambda}]_{0,t;\er^n}\int_{\Sigma\cap \ti{B}}\int_{\mathbb{R}^{n}\setminus \Sigma}\frac{\snr{w(x)-w(y)}}{\snr{x-y}^{n+2s-t}}\dxy\nonumber \\
&&\quad +c\nr{w_\lambda}_{L^\infty(\er^n)}\int_{\mathbb{R}^{n}\setminus \Sigma}\snr{w}\dx  \int_{\er^n\setminus \ti{B}}\frac{\dy}{\snr{y-x_{B}}^{n+2s}} \nonumber \\
&\stackrel{\eqref{livelli}, \eqref{elemy}}{\le}&c  \lambda^{2-p} +c\nr{w}_{L^{2}(B)}\lambda\snr{\mathbb{R}^{n}\setminus \Sigma}^{1/2}\nonumber\\
&\stackrel{\eqref{sh.10}, \eqref{livelli}}{\leq}& c\lambda^{2-p}+c \lambda^{1-p/2}\\ & \leq  & c \lambda^{1-p/2}
\end{eqnarray*}
where  it is $c \equiv c (n,N,s,\Lambda, c_0, \delta_{0},\textnormal{\texttt{d}})$. 
Similarly,
\begin{eqnarray*}
\snr{\mbox{(IV)}}&\stackrel{\eqref{bs.1bis}}{\le}&c\int_{\mathbb{R}^{n}\setminus\Sigma}\int_{\mathbb{R}^{n}\setminus \Sigma}\snr{w(x)-w(y)}\snr{w_{\lambda}(x)-w_{\lambda}(y)}\frac{\dxy}{\snr{x-y}^{n+2s}}\\
& \stackrel{\eqref{livelli}}{\le} &c\lambda\int_{\ti{B}}\int_{\mathbb{R}^{n}\setminus \Sigma}\frac{\snr{w(x)-w(y)}}{\snr{x-y}^{n+2s-t}}\dxy\nonumber \\&\stackrel{\eqref{elemy}}{\le}  &c   \lambda^{2-p}\\ & \leq & c \lambda^{1-p/2}
\end{eqnarray*}
with $c \equiv c (n,N,s,\Lambda, c_0, \delta_{0},\textnormal{\texttt{d}})$. Merging the content of the previous displays, we obtain
\eqn{sh.9}
$$
\mbox{(I)} \leq c \left(\frac{\eps}{\cchh}\right)^{\frac{2p}{p-2}}\lambda+ c\lambda^{1-p/2}
$$
again with $c \equiv c (n,N,s,\Lambda, c_0, \delta_{0},\textnormal{\texttt{d}})$. 
Estimating as for \rif{elemy}, we have
\begin{eqnarray*}
[w]_{s,2;\ti{B}}^2
&=&\int_{\ti{B}\cap \Sigma}\int_{\ti{B}\cap \Sigma}\frac{\snr{w(x)-w(y)}^{2}}{\snr{x-y}^{n+2s}}\dxy+2\int_{\ti{B}\setminus \Sigma}\int_{\ti{B}\cap \Sigma}\frac{\snr{w(x)-w(y)}^{2}}{\snr{x-y}^{n+2s}}\dxy\nonumber \\
&&+\int_{\ti{B}\setminus \Sigma}\int_{\ti{B}\setminus \Sigma}\frac{\snr{w(x)-w(y)}^{2}}{\snr{x-y}^{n+2s}}\dxy\nonumber \\
&\leq &c\mbox{(I)} +3\int_{\ti{B}}\int_{\er^n\setminus \Sigma}\frac{\snr{w(x)-w(y)}^{2}}{\snr{x-y}^{n+2s}}\dxy\nonumber \\
&\leq &c\mbox{(I)} +3[w]_{t,p;\ti{B}}^{2}\left(\int_{\er^n\setminus \Sigma}\int_{\ti{B}}\snr{x-y}^{-n+\frac{2p(t-s)}{p-2}}\dxy\right)^{1-2/p}\nonumber \\
&\stackrel{\eqref{sh.10}}{\le}&c\mbox{(I)} +c\snr{\mathbb{R}^{n}\setminus \Sigma}^{1-2/p}\left(\int_{\ttB_{2}}\snr{z}^{-n+\frac{2p(t-s)}{p-2}}\dz\right)^{1-2/p}\\
&\stackleq{livelli} & c\mbox{(I)} +c \lambda^{2-p}\\ &\leq & c \mbox{(I)} +c \lambda^{1-p/2}\end{eqnarray*}
for $c\equiv c(n,N,s,\Lambda,c_{0},\delta_{0},\textnormal{\texttt{d}})$. Using \rif{sh.9} in the last inequality we conclude with 
$$
[h-v]_{s,2;\ti{B}}^2=[w]_{s,2;\ti{B}}^2 \leq c_*\left(\frac{\eps}{\cchh}\right)^{\frac{2p}{p-2}}\lambda+ c_*\lambda^{1-p/2}
$$
for $c_*\equiv c_*(n,N,s,\Lambda,c_{0},\delta_{0},\textnormal{\texttt{d}})\geq 1$. Taking $\lambda := (\eps/\cchh)^{4/(2-p)}$ and $\cchh := 2\sqrt{c_*}$ finally yields inequality in \rif{sh.5.1}$_1$, while \rif{sh.5.1}$_2$ follows from the \rif{sh.5.1}$_1$ and \cite[(2.2)]{brascoparini}. 
\end{proof}
\begin{remark}\label{semplice}{\em By a standard scaling argument in \rif{sh.1} we can always assume that $[\varphi]_{0,t;\er^n}\leq 1$.}
\end{remark}

 \section{Linearization and excess decay}\label{sec6}
For the rest of Section \ref{sec6} we fix an arbitrary ball $B_{\rr}(x_{0})\Subset \Omega$. Unless otherwise stated, all the other balls will be centred at $x_{0}$ but those denoted by $\ttB_{\sigma}$ that, according to the notation in \rif{zerocenter}, are instead centred at the origin. We shall analyze the occurrence of two different situations. The first is the {\em small potential regime}, corresponding to
\eqn{exx}
$$
\tx{E}_{u}(x_{0},\rr)<\varepsilon_{*}\qquad \mbox{and}\qquad \pprhoo<\varepsilon^{*}\tx{E}_{u}(x_{0},\rr)\,,
$$
the second being the {\em large potential regime}, i.e., when
\eqn{lp}
$$
\pprhoo\ge\varepsilon^{*}\tx{E}_{u}(x_{0},\rr)
$$
holds. 
The positive constants $\varepsilon_{*},\varepsilon^{*}\in (0,1)$ will be determined in a few lines with a suitable dependence on a certain choice of relevant parameters. Next to the fixed ball $B_{\rr}(x_0)$ we shall consider the enlarged, concentric ball $B_{R\rr}(x_{0})$, where $R\geq 1$ is an absolute constant whose exact value will be determined in due course of the proof, essentially as a function of $\data$. We will assume $B_{R\rr}(x_{0})\Subset \Omega$ and that  $a(\cdot)$ is $\delta$-vanishing in $B_{R\rr}(x_{0})$, in the sense of Definition \ref{deltadef}, again for some $\delta >0$ to be fixed in  the course of the proof as a function of $\data$. 
\subsection{Excess decay in the two regimes}\label{sec.sp} We start with 
\begin{proposition}[Excess decay in the small potential regime]\label{psp}
Under assumptions \eqref{bs.1}-\eqref{bs.5}, let $u$ be a weak solution to \eqref{nonlocaleqn} and $\gamma_{0}$ such that
\eqn{gammazero}
$$
0 < \gamma_{0} < \min\{2s,1\}\,.
$$
There exist threshold parameters
\eqn{sceltona}
$$
\begin{cases}
\,\varepsilon_{*}\equiv \varepsilon_{*}(\data,\omega(\cdot),\gamma_{0})\in (0,1)\\[4pt]\, \varepsilon^{*}\equiv \varepsilon^{*}(\data,\gamma_{0})\in (0,1)\\[4pt]
\,R \equiv R(\data,\gamma_{0}) \geq 1\\[4pt]
\, 
\delta \equiv \delta(\data,\gamma_{0})\in (0,1)\\[4pt]\, 
\tau\equiv \tau(\data,\gamma_{0})\in (0,1/64)
\end{cases}
$$ such that if \eqref{exx} holds in $B_{\rr}(x_{0}) \Subset \Omega$ and $a(\cdot)$ is $\delta$-vanishing in $B_{R\rr}(x_{0})\Subset \Omega$, then
\eqn{exx.2}
$$\tx{E}_{u}(x_{0},\tau\rr)\leq  \frac{\tau^{\gamma_{0}}}{2}\tx{E}_{u}(x_{0},\rr)\,.$$
\end{proposition}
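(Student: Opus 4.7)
The plan is to realize Proposition \ref{psp} through a two-step linearization-and-decay scheme: first approximate $u$ by a solution to a homogeneous nonlocal system with constant coefficients via the $s$-harmonic approximation (Lemma \ref{shar}), then feed the Campanato-type decay of Proposition \ref{cdg} into a triangle inequality to recover excess decay on $u$ itself. Throughout I would work in a rescaled setup. After translating and scaling, I would assume $x_0 = 0$, $\rr = 1$, so that $B_{\rr}(x_0)$ becomes $\ttB_1$ and $B_{R\rr}(x_0)$ becomes $\ttB_R$; set $u_0 := (u)_{\ttB_1}$, $\mathbb{E} := \tx{E}_u(0,1)$, and rescale the solution by $\tilde u := \mathbb{E}^{-1}(u - u_0)$. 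Then $\nra{\tilde u}_{L^2(\ttB_1)}+\tail(\tilde u;\ttB_1) \lesssim 1$, and the assumption \eqref{exx}$_2$ becomes $\nra{\mathbb{E}^{-1}f}_{L^\chi(\ttB_1)}<\eps^*$; moreover $\tilde u$ weakly solves a nonlocal system of the same structure.

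The next step is to freeze the coefficient by introducing the constant tensor $a_0 := a_{\ttB_1}(u_0,u_0)$ (the $(x,y)$-average of $a(\cdot,\cdot,u_0,u_0)$ over $\ttB_1\times\ttB_1$). Rewriting the equation for $\tilde u$ I obtain $-\mathcal{L}_{a_0}\tilde u = \mathcal{L}_{a-a_0}\tilde u + \mathbb{E}^{-1}f$ in $\ttB_1$, so that for every $\varphi\in C^{0,t}_0(\ttB_1;\er^N)$
\[
\left|\int_{\er^n}\!\!\int_{\er^n}\langle a_0(\tilde u(x)-\tilde u(y)),\varphi(x)-\varphi(y)\rangle \frac{\dx\dy}{|x-y|^{n+2s}} - \mathbb{E}^{-1}\!\int_{\ttB_1}\langle f,\varphi\rangle\dx\right| \leq \textnormal{Err}[\varphi].
\]
Here $\textnormal{Err}[\varphi]$ is the integral over $\er^n\times \er^n$ of $|a-a_0|\,|\tilde u(x)-\tilde u(y)|\,|\varphi(x)-\varphi(y)|$ against $|x-y|^{-n-2s}$. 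I would split this into the diagonal block $\ttB_R\times\ttB_R$ and its complement. On $\ttB_R\times\ttB_R$ I bound $|a-a_0|$ by the triangle inequality:  the $(v,w)$-oscillation piece is controlled by $\Lambda\omega(|u(x)-u_0|+|u(y)-u_0|)$ (and by concavity of $\omega$ together with Chebyshev, this is small in measure when $\mathbb{E}$ is small, since $u$ is close to $u_0$ on a large set of $\ttB_1$ and the $L^\infty$ bound $|a|\le \Lambda$ handles the complementary small set); the $(x,y)$-oscillation piece is controlled by the $\delta$-BMO hypothesis through $h_a(0,R)\leq \delta$. Combining with the higher integrability of $\tilde u$ provided by Theorem \ref{lamaggiore}, H\"older's inequality, and $[\varphi]_{0,t;\er^n}\leq 1$ (Remark \ref{semplice}), this piece is bounded by $c(\omega(c\mathbb{E}^{1-\sigma}) + \delta^\sigma)[\varphi]_{0,t;\er^n}$ for some small $\sigma > 0$. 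On the complement, the decay $|x-y|^{-n-2s}$ together with compact support of $\varphi$, the $L^\infty$ bound on $a-a_0$, and $\tail(\tilde u; \ttB_1) \lesssim 1$ force the contribution to be at most $cR^{-2s}[\varphi]_{0,t;\er^n}$; the final term $\mathbb{E}^{-1}\int f\varphi$ is bounded via H\"older and \eqref{sobpoinfrac} by $c\eps^*[\varphi]_{0,t;\er^n}$. Hence, after choosing $\eps_*,\delta,1/R,\eps^*$ all smaller than a threshold depending on $(\data,\omega(\cdot),\gamma_0)$, I can arrange the total error to be at most $(\eps/\cchh)^{2p/(p-2)}$ for any desired $\eps$.

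Now I would apply Lemma \ref{shar} with $v = \tilde u$ and $g = 0$ (absorbing $\mathbb{E}^{-1}f$ into the error), obtaining $h\in\mathbb{X}^{s,2}_{\tilde u}(\ttB_1,\ttB_2)$ solving $-\mathcal{L}_{a_0}h=0$ in $\ttB_1$ with $\nr{h}_{W^{s,2}(\ttB_2)}+\tail(h-(h)_{\ttB_1};\ttB_1)\lesssim 1$ and $[\tilde u-h]_{s,2;\ttB_2}<\eps$. Pick $\gamma_0<\beta<\min\{2s,1\}$. Proposition \ref{cdg} applied to $h$ yields $\tx{E}_h(0,\tau)\leq c_*\tau^\beta\,\tx{E}_h(0,1)\leq \tilde c_*\tau^\beta$ for every $\tau\in(0,1/8)$, with $\tilde c_* \equiv \tilde c_*(\data,\gamma_0)$. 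Triangle inequality, Jensen, the Poincar\'e inequality \eqref{poincf}, and Lemma \ref{dedicata} (to handle the tail of $\tilde u - h$ at scale $\tau$ in terms of its $L^2$-norm on $\ttB_2$) then give
\[
\tx{E}_{\tilde u}(0,\tau) \leq \tx{E}_h(0,\tau) + c\tau^{-n/2-2s}[\tilde u-h]_{s,2;\ttB_2} \leq \tilde c_*\tau^\beta + c\tau^{-n/2-2s}\eps.
\]
Undoing the normalization, $\tx{E}_u(x_0,\tau\rr) \leq (\tilde c_*\tau^\beta + c\tau^{-n/2-2s}\eps)\,\tx{E}_u(x_0,\rr)$. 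I would first fix $\tau\equiv\tau(\data,\gamma_0)\in(0,1/64)$ so that $\tilde c_*\tau^\beta \leq \tau^{\gamma_0}/4$, which is possible because $\beta>\gamma_0$; then fix $\eps$ so that $c\tau^{-n/2-2s}\eps\leq \tau^{\gamma_0}/4$; finally fix $\eps_*,\eps^*,\delta,R$ so that the error estimate above fits under the $(\eps/\cchh)^{2p/(p-2)}$ threshold of Lemma \ref{shar}, yielding \eqref{exx.2}.

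The main obstacle, I expect, is producing a genuinely small bound on the linearization error $\textnormal{Err}[\varphi]$ in spite of the non-Lipschitz nature of $\omega$ and the nonlocal long-range interactions: one needs to simultaneously exploit the smallness of $|u-u_0|$ in $L^2$ (via the excess), the $\delta$-BMO oscillation of $a$ on the enlarged ball $B_{R\rr}(x_0)$, the higher integrability furnished by Theorem \ref{lamaggiore} (to trade the $L^2$ information on $\tilde u$ for an $L^{q}$-norm compatible with Hölder duality against $C^{0,t}$), and the tail decay on $\er^n\setminus B_{R\rr}(x_0)$ by choosing $R$ large. The delicate part is matching these three smallness mechanisms so that one arrives at the threshold prescribed by Lemma \ref{shar}, and this is precisely what dictates the order in which the universal parameters in \eqref{sceltona} are fixed.
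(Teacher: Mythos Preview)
Your overall plan—blow up, freeze coefficients, apply the $s$-harmonic approximation lemma, then transfer the constant-coefficient decay back to $u$—matches the paper's architecture. The genuine gap is that you apply Lemma~\ref{shar} directly to the rescaled solution $\tilde u$: hypothesis \eqref{sh.0} demands $v\in W^{s,2}(\er^n)\cap W^{s+\delta_0,2}(\er^n)$ \emph{globally}, whereas your $\tilde u$ lies only in $W^{s,2}_{\loc}(\Omega_\rr)\cap L^1_{2s}$, and Theorem~\ref{lamaggiore} yields higher differentiability only on an interior ball. This cannot be waived, since the proof of Lemma~\ref{shar} rests on Theorem~\ref{maggiore}, which needs the global $W^{s+\delta_0,2}$ bound on $v$. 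The paper repairs this with a localization step (Lemma~\ref{cflem}): it multiplies $u_\rr$ by a cutoff $\eta\in C^\infty_0(\ttB_{1/2})$ so that $\tilde u_\rr:=\eta u_\rr\in W^{s+\delta_0,2}(\er^n)$ with controlled norm, at the price that $\tilde u_\rr$ solves a modified equation $-\mathcal{L}_{a_\rr}\tilde u_\rr=g_\rr+f_\rr+\lambda_\rr\tilde u_\rr$ in $\ttB_{1/8}$, with new bounded terms $g_\rr,\lambda_\rr$ coming from the cutoff. Lemma~\ref{shar} is then applied with $v=\tilde u_\rr$ and $g=g_\rr$, so the comparison map $h$ solves $-\mathcal{L}_{a_0}h=g_\rr$, not the homogeneous system; its H\"older decay is read off from Proposition~\ref{cdg}.

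The localization is also what rescues your error estimate in the off-diagonal region. In your sketch the $(v,w)$-oscillation term $\omega(|u(y)-u_0|)$ for $y\in\er^n\setminus\ttB_1$ is uncontrolled, and the resulting bound leads to a quantity of the type $\int_{\er^n\setminus\ttB_1}|\tilde u(y)|^2|y|^{-n-2s}\dy$, which need not be finite since $\tilde u$ is only in $L^1_{2s}$. After localization $\tilde u_\rr$ vanishes outside $\ttB_{1/2}$, so the analogous integrals collapse to $\ttB_{1/2}\setminus\ttB_{1/8}$ and are controlled by $\|\tilde u_\rr\|_{L^2(\ttB_1)}\lesssim 1$; see the treatment of terms $\mbox{(II)}_1$--$\mbox{(II)}_4$ in Step~3 of the paper. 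Two smaller points: the paper freezes $a_0:=a_{\rr,\ttB_R}(0,0)$ (averaged over the \emph{enlarged} ball $\ttB_R$) rather than $\ttB_1$, so that the $\delta$-vanishing hypothesis directly controls $|a_\rr(x,y,0,0)-a_0|$ on the full annulus $\ttB_R\setminus\ttB_{1/8}$; and in the final comparison (Step~5) the paper does not use the crude triangle inequality $\tx{E}_{\tilde u}(0,\tau)\le\tx{E}_h(0,\tau)+c\tau^{-?}\eps$ but instead decomposes the tail via \eqref{scatail} and controls each intermediate scale through the $C^{0,\beta_0}$ estimate for $h$, obtaining the sharper form \eqref{scende00}.
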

\begin{proof}
This goes in five steps. Under assumption \eqref{exx}, necessarily $\tx{E}_{u}(x_{0},\rr)>0$.
For completeness, note that if $\tx{E}_{u}(x_{0},\rr)=0$, then
\eqref{exx.2} trivially follows from \rif{scataildopoff}$_1$. We shall assume \rif{sceltona} with the values of the parameters determined in the course of the proof with the dependence on the constants specified in \rif{sceltona}. 
\subsubsection{Step 1: Blow-up} \label{blowupsec}                                                                                                                                           
For $x \in \er^n $  we define the blow-up maps
\begin{flalign}\label{bs.4.0}
\left\{
\begin{array}{c}
\displaystyle
u_{\rr}(x):=\frac{u(x_{0}+\rr x)-(u)_{B_{\rr}(x_{0})}}{\tx{E}_{u}(x_{0},\rr)},\quad \quad  f_{\rr}(x):=\frac{\rr^{2s}f(x_{0}+\rr x)}{\tx{E}_{u}(x_{0},\rr)}\\[15pt]\displaystyle
a_{\rr}(x,y,v,w):=a\left(x_{0}+\rr x,x_{0}+\rr y, (u)_{B_{\rr}(x_{0})}+\tx{E}_{u}(x_{0},\rr)v,(u)_{B_{\rr}(x_{0})}+\tx{E}_{u}(x_{0},\rr)w\right),
\end{array}
\right.
\end{flalign} 
and observe that $a_{\rr}(\cdot)$ still satisfies \eqref{bs.1}-\eqref{bs.2}, i.e., ellipticity and symmetry. Also note that 
\eqn{mediette}
$$(u(x_{0}+\rr \ \cdot \ ))_{\ttB_{\sigma} }= (u)_{B_{\sigma \rr}(x_{0})}\,, \qquad 
\tail(u(x_{0}+\rr \ \cdot \ )-\xi;\ttB_{\sigma}  )=\tail(u-\xi;\ttB_{\sigma\rr}(x_0))$$ 
for every $\xi \in \er^N$ and $\sigma>0$. Moreover, assuming that $a(\cdot)$ is $\delta$-vanishing in $B_{R\rr}(x_{0})\Subset \Omega$ in the sense of Definition \ref{deltadef}, and letting 
\eqn{avw}
$$
a_{\rr, \ttB_{\sigma}}(v,w):=\mint_{\ttB_{\sigma}}\mint_{\ttB_{\sigma}}a_{\rr}(x,y,v, w)\dxy $$
for $v,w \in \er^N$, we note that it follows that 
$$a_{\rr, \ttB_{\sigma}}(v,w)=a_{B_{\sigma\rr}(x_{0})}\left((u)_{B_{\rr}(x_{0})}+\tx{E}_{u}(x_{0},\rr)v,(u)_{B_{\rr}(x_{0})}+\tx{E}_{u}(x_{0},\rr)w\right)$$
and
\eqn{smally}
$$  \mint_{\ttB_{\sigma}}\mint_{\ttB_{\sigma}}\snr{a_{\rr}(x,y,v,w)-a_{\rr, \ttB_{\sigma}}(v,w)}\dxy\leq \delta \,, \qquad \mbox{for every $\sigma \leq R$}\,.  $$
Moreover, \eqref{bs.4} holds with 
\eqn{defiom}
$$\omega_{\tx{E}}(t):= \omega (\tx{E}_{u}(x_{0},\rr) \tx{t})\,, \qquad \tx{t} \geq 0$$ 
replacing $\omega(\cdot)$, i.e., 
\eqn{bs.4bis}
$$
\snr{a_{\rr}(x,y,v_{1},w_{1})-a_{\rr}(x,y,v_{2},w_{2})}\le \Lambda \omega_{\tx{E}}(\snr{v_{1}-v_{2}}+\snr{w_{1}-w_{2}})
$$
holds whenever  $v_1, v_2, w_1, w_2 \in \er^N$ and $x, y \in \er^n$. 
Letting $\Omega_{\rr}:=\{x\in \mathbb{R}^{n}\colon x_{0}+\rr x\in \Omega\}= (\Omega-x_0)/\rr$, a simple scaling argument shows that $u_{\rr}\in W^{s,2}_{\loc}(\Omega_{\rr};\er^N)\cap L^{1}_{2s}$ is a weak solution to 
\eqn{riscalata}
$$-\mathcal{L}_{a_{\rr}}u_{\rr}=f_{\rr}\, , \qquad \mbox{in $\Omega_{\rr}$}\,,$$ and that $\ttB_{1} \subset \ttB_{R}\Subset \Omega_{\rr}$. Moreover, we have
\eqn{sc.0}
$$
\begin{cases}
\displaystyle
(u_{\rr})_{\ttB_{1} }=0, \qquad  \nra{u_{\rr}}_{L^{2}(\ttB_{1} )}^2+\tail(u_{\rr};\ttB_{1} )^2= 1,\qquad  \nra{f_{\rr}}_{L^{\chi}(\ttB_{1} )}\le \varepsilon^{*}\,,\\[10pt]
[u_{\rr}]_{s,2;\ttB_{4/5} }\le c\equiv c(\data) , \qquad  [u_{\rr}]_{s+\delta_0,2;\ttB_{3/4} } \le c\equiv c(\data)\,,
\end{cases}
$$
where $\delta_0\in (0, 1-s)$ depends only on $\data$. 
Indeed, \rif{sc.0}$_1$ follows directly from \rif{bs.4.0}-\rif{mediette}, and recalling \rif{exx} as far as the terms involving $f$ is concerned. In order to get the first inequality in \rif{sc.0}$_2$ it is sufficient to apply Lemma \ref{prop:cacc} to $u_{\rr}$ and use the information in \rif{sc.0}$_1$. Finally, once the first inequality in \rif{sc.0}$_2$ is achieved, the second one follows by Theorem \ref{lamaggiore} and a standard covering argument. 
\subsubsection{Step 2: Localization} This is in the following:
\begin{lemma}\label{cflem}
Let 
\eqn{cutty}
$$
\begin{cases}
\ti{u}_{\rr}:=\eta u_{\rr}   \\[3pt]
\mbox{$\eta \in C^{\infty}(\er^n)$, \  $\mathds{1}_{\ttB_{1/4} }\le \eta \le \mathds{1}_{\ttB_{1/2} }$ \ and \ $\nr{D\eta }_{L^{\infty}(\ttB_{1/2})}\lesssim 1$}\,.
\end{cases}
$$
Then 
\begin{itemize} 
\item
$\ti{u}_{\rr}\in W^{s,2}(\er^n; \er^N)\cap W^{s+\delta_0,2}(\er^n; \er^N)$ with 
\eqn{cf.2}
$$
\nr{\ti{u}_{\rr}}_{W^{s,2}(\mathbb{R}^{n})}\le c\equiv c(\data), \qquad \nr{\ti{u}_{\rr}}_{W^{s+\delta_0,2}(\mathbb{R}^{n})}\le c\equiv c(\data)
$$
where $\delta_0\equiv \delta_0(\data)\in (0, 1-s)$. 
\item There exist $g_{\rr}\in L^{\infty}(\mathbb{R}^{n};\er^N)$, $\lambda_{\rr}\in L^{\infty}(\mathbb{R}^{n};\mathbb{R}^{N\times N})$\footnote{The maps $g_{\rr}, \lambda_{\rr}$ are defined in \eqref{ragnar} below.} such that $\ti{u}_{\rr}$ solves
$$
-\mathcal{L}_{a_{\rr}}\ti{u}_{\rr}=g_{\rr}+f_{\rr}+\lambda_{\rr}\ti{u}_{\rr}\qquad \mbox{in} \ \ \ttB_{1/8}
$$
in the sense of Definition \ref{def:weaksol}, i.e., 
\begin{flalign}
&\int_{\mathbb{R}^n} \int_{\mathbb{R}^n} \langle a_{\rr}(x,y,\ti{u}_{\rr}(x),\ti{u}_{\rr}(y))(\ti{u}_{\rr}(x)-\ti{u}_{\rr}(y)),\varphi(x)-\varphi(y) \rangle \frac{\dxy}{|x-y|^{n+2s}}\notag \\
& \qquad  = \int_{\ttB_{1/8}} \langle g_{\rr}+f_{\rr}+\lambda_{\rr}\ti{u}_{\rr}, \varphi \rangle\dx
\label{cutoffata}
\end{flalign}
holds for every $\varphi \in W^{s,2}(\er^n;\er^N)$ with compact support in $\ttB_{1/8}$. Moreover, 
\eqn{cf.1}
$$
\nr{\lambda_{\rr}}_{L^{\infty}(\mathbb{R}^{n})}\le c\omega_{\tx{E}}(1)\qquad \mbox{and}\qquad \nr{g_{\rr}}_{L^{\infty}(\mathbb{R}^{n})}\le c
$$
hold with $c\equiv c(\data)$.
\end{itemize}
\end{lemma}
\begin{proof} For \rif{cf.2} it is sufficient to check the second inequality. Using \rif{sc.0}-\rif{cutty}, and recalling \rif{uset}, we have 
\begin{flalign*} 
[\ti{u}_{\rr}]_{s+\delta_0,2;\mathbb{R}^{n}}^2&= [\ti{u}_{\rr}]_{s+\delta_0,2;\ttB_{3/4}}^2+ 2 \int_{\mathbb{R}^{n}\setminus \ttB_{3/4}}\int_{\ttB_{3/4}}\frac{\snr{\ti{u}_{\rr}(x)-\ti{u}_{\rr}(y)}^{2}}{\snr{x-y}^{n+2(s+\delta_0)}}\dxy \\
&\lesssim [\ti{u}_{\rr}]_{s+\delta_0,2;\ttB_{3/4}}^2+ \int_{\mathbb{R}^{n}\setminus \ttB_{3/4}}\int_{\ttB_{1/2}}\frac{\snr{\eta (x)u_{\rr}(x)}^{2}}{\snr{y}^{n+2(s+\delta_0)}}\dxy \nonumber \\
&\lesssim [\ti{u}_{\rr}]_{s+\delta_0,2;\ttB_{3/4}}^2+\nr{u_{\rr}}_{L^2(\ttB_1)}^2 \nonumber
\\
&\lesssim    [u_{\rr}]_{s+\delta_0,2;\ttB_{3/4}}^2 + \int_{\ttB_{1}}\int_{\ttB_{1}}\frac{\snr{u_{\rr}(x)}^{2}\snr{\eta (x)-\eta (y)}^{2}}{\snr{x-y}^{n+2(s+\delta_0)}}\dxy+1\nonumber \\
& \lesssim  \int_{\ttB_{1}}\int_{\ttB_{1}}\frac{\snr{u_{\rr}(x)}^{2}}{\snr{x-y}^{n-2(1-s-\delta_0)}}\dxy +1\\
& \lesssim \nr{u_{\rr}}_{L^2(\ttB_1)}^2+1 \lesssim_{\data} 1
\end{flalign*}
from which  $\ti{u}_{\rr}\in W^{s+\delta_0,2}(\er^n; \er^N)$ follows recalling \rif{sc.0}. To proceed, in the following we abbreviate 
\eqn{shorten}
$$
\aa(x, y):= a_{\rr}(x,y,u_{\rr}(x),u_{\rr}(y)) \,, \qquad \aat(x, y):= a_{\rr}(x,y,\ti{u}_{\rr}(x),\ti{u}_{\rr}(y))
$$
whenever $x, y\in \er^n$. 
Note that \rif{bs.2} implies $\aa(x, y)=\aa(y,x)$ and $\aat(x, y)=\aat(y,x)$. Taking this into account, by \rif{riscalata}, we write for any $\varphi \in W^{s,2}(\er^n;\er^N)$ with compact support in $\ttB_{1/8}$
\begin{flalign*}
&\hspace{-4mm} \int_{\mathbb{R}^{n}}\int_{\mathbb{R}^{n}}\langle \aat(x, y)(\ti{u}_{\rr}(x)-\ti{u}_{\rr}(y)),\varphi(x)-\varphi(y)\rangle\frac{\dxy}{\snr{x-y}^{n+2s}}\nonumber \\
&=\int_{\ttB_{1/2}}\int_{\ttB_{1/2}}\langle \aat(x, y)(\ti{u}_{\rr}(x)-\ti{u}_{\rr}(y)),\varphi(x)-\varphi(y)\rangle\frac{\dxy}{\snr{x-y}^{n+2s}}\nonumber \\
&\quad +2\int_{\mathbb{R}^{n}\setminus \ttB_{1/2}}\int_{\ttB_{1/2}}\langle \aat(x, y)(\ti{u}_{\rr}(x)-\ti{u}_{\rr}(y)),\varphi(x)-\varphi(y)\rangle\frac{\dxy}{\snr{x-y}^{n+2s}}\nonumber \\
&=\int_{\ttB_{1/2}}\int_{\ttB_{1/2}}\langle \aa(x, y)(\ti{u}_{\rr}(x)-\ti{u}_{\rr}(y)),\varphi(x)-\varphi(y)\rangle\frac{\dxy}{\snr{x-y}^{n+2s}}\nonumber \\
&\quad +\int_{\ttB_{1/2}}\int_{\ttB_{1/2}}\langle(\aat(x, y)- \aa(x, y))(\ti{u}_{\rr}(x)-\ti{u}_{\rr}(y)),\varphi(x)-\varphi(y)\rangle\frac{\dxy}{\snr{x-y}^{n+2s}}\nonumber \\
&\quad +2\int_{\mathbb{R}^{n}\setminus \ttB_{1/2}}\int_{\ttB_{1/2}}\langle \aa(x, y)(\ti{u}_{\rr}(x)-\ti{u}_{\rr}(y)),\varphi(x)\rangle\frac{\dxy}{\snr{x-y}^{n+2s}}\nonumber \\
&\quad +2\int_{\mathbb{R}^{n}\setminus \ttB_{1/2}}\int_{\ttB_{1/2}}\langle (\aat(x, y)-\aa(x, y))(\ti{u}_{\rr}(x)-\ti{u}_{\rr}(y)),\varphi(x)\rangle\frac{\dxy}{\snr{x-y}^{n+2s}}\nonumber \\
&=\int_{\ttB_{1/2}}\int_{\ttB_{1/2}}\langle \aa(x, y)(u_{\rr}(x)-u_{\rr}(y)),\varphi(x)-\varphi(y)\rangle\frac{\dxy}{\snr{x-y}^{n+2s}}\nonumber \\
&\quad +2\int_{\mathbb{R}^{n}\setminus \ttB_{1/2}}\int_{\ttB_{1/2}}\langle \aa(x, y)(u_{\rr}(x)-u_{\rr}(y)),\varphi(x)\rangle\frac{\dxy}{\snr{x-y}^{n+2s}}\nonumber \\
&\quad +\int_{\ttB_{1/2}}\int_{\ttB_{1/2}}\langle \aa(x, y)((\ti{u}_{\rr}(x)-u_{\rr}(x))-(\ti{u}_{\rr}(y)-u_{\rr}(y))),\varphi(x)-\varphi(y)\rangle\frac{\dxy}{\snr{x-y}^{n+2s}}\nonumber \\
&\quad +\int_{\ttB_{1/2}}\int_{\ttB_{1/2}}\langle(\aat(x, y)- \aa(x, y))(\ti{u}_{\rr}(x)-\ti{u}_{\rr}(y)),\varphi(x)-\varphi(y)\rangle\frac{\dxy}{\snr{x-y}^{n+2s}}\nonumber \\
&\quad +2\int_{\mathbb{R}^{n}\setminus \ttB_{1/2}}\int_{\ttB_{1/2}}\langle \aa(x, y)((\ti{u}_{\rr}(x)-u_{\rr}(x))-(\ti{u}_{\rr}(y)-u_{\rr}(y)),\varphi(x)\rangle\frac{\dxy}{\snr{x-y}^{n+2s}}\nonumber \\
&\quad +2\int_{\mathbb{R}^{n}\setminus \ttB_{1/2}}\int_{\ttB_{1/2}}\eta(x)\langle(\aat(x, y)- \aa(x, y))u_{\rr}(x),\varphi(x)\rangle\frac{\dxy}{\snr{x-y}^{n+2s}}\nonumber \\
&=:\int_{  \ttB_{1/8}}\langle f_{\rr},\varphi\rangle\dx+\mbox{(II)}+\mbox{(III)}+\mbox{(IV)}+\mbox{(V)}
\end{flalign*}
and we have used \eqref{riscalata} to recover the first term in the last line. In the following line we shall use that $\ti{u}_{\rr}=u_{\rr}$ in $\ttB_{1/4}$, $\ti{u}_{\rr}=0$ outside $\ttB_{1/2}$ and that $(1-\eta(y)) \varphi(y)=0$ for every $y\in \er^n$. Indeed, we obtain 
$$
\mbox{(II)}=2\int_{\ttB_{1/2}\setminus \ttB_{1/4}}\int_{\ttB_{1/8}}(1-\eta(y))\langle a_{\rr}(x,y,u_{\rr}(x),u_{\rr}(y))u_{\rr}(y),\varphi(x)\rangle\frac{\dxy}{\snr{x-y}^{n+2s}}=\int_{  \ttB_{1/8}}\langle g_{1;\rr},\varphi\rangle \dx
$$
where
$$
g_{1;\rr}(x):= \mathds{1}_{\ttB_{1/8}}(x)2\int_{\ttB_{1/2}\setminus \ttB_{1/4}}(1-\eta(y))\aa(x, y)u_{\rr}(y)\frac{\dy}{\snr{x-y}^{n+2s}}\,.
$$
For (III) write
\begin{flalign*}
\mbox{(III)}&=\int_{\ttB_{1/2}}\int_{\ttB_{1/2}}\langle(\aat(x, y)- a_{\rr}(x,y,\ti{u}_{\rr}(x),u_{\rr}(y)))(\ti{u}_{\rr}(x)-\ti{u}_{\rr}(y)),\varphi(x)-\varphi(y)\rangle\frac{\dxy}{\snr{x-y}^{n+2s}}\\
& \quad  +\int_{\ttB_{1/2}}\int_{\ttB_{1/2}}\langle(a_{\rr}(x,y,\ti{u}_{\rr}(x),u_{\rr}(y))- \aa(x, y))(\ti{u}_{\rr}(x)-\ti{u}_{\rr}(y)),\varphi(x)-\varphi(y)\rangle\frac{\dxy}{\snr{x-y}^{n+2s}}\\
& =: \mbox{(III)}_1+\mbox{(III)}_2\,.
\end{flalign*}
Noting that $\ti{u}_{\rr}\equiv u_{\rr}$ on $\ttB_{1/8}$ and that therefore
$$
\begin{cases}
\langle  ( \aat(x, y)- a_{\rr}(x,y,\ti{u}_{\rr}(x),u_{\rr}(y))) \xi,\varphi(y)\rangle =0\\[2 pt]
\langle(a_{\rr}(x,y,\ti{u}_{\rr}(x),u_{\rr}(y))- \aa(x, y)) \xi,\varphi(x)\rangle =0
\end{cases}
$$
holds for every $\xi \in \er^N$, $x, y \in \er^n$, we first find that $ \mbox{(III)}_1=\mbox{(III)}_2$ (for this exchange $x$ and $y$ and use the symmetry of 
$a_\rr(\cdot)$) and then that 
\begin{flalign*}
\mbox{(III)}&=2\int_{\ttB_{1/2}\setminus \ttB_{1/4}}\int_{\ttB_{1/8}}\langle (a_{\rr}(x,y,u_{\rr}(x),\ti{u}_{\rr}(y))-\aa(x, y))u_{\rr}(x),\varphi(x)\rangle\frac{\dxy}{\snr{x-y}^{n+2s}}\nonumber \\
&\quad -2\int_{\ttB_{1/2}\setminus \ttB_{1/4}}\int_{\ttB_{1/8}}\eta(y)\langle( a_{\rr}(x,y,u_{\rr}(x),\ti{u}_{\rr}(y))-\aa(x, y))u_{\rr}(y),\varphi(x)\rangle\frac{\dxy}{\snr{x-y}^{n+2s}}\nonumber \\
&=:\int_{\ttB_{1/8}}\langle \lambda_{1;\rr}u_{\rr}-g_{2;\rr},\varphi\rangle\dx
\end{flalign*}
holds with
$$
\begin{cases}
\displaystyle
\lambda_{1;\rr}(x):= \mathds{1}_{\ttB_{1/8}}(x)2\int_{\ttB_{1/2}\setminus \ttB_{1/4}}(a_{\rr}(x,y,u_{\rr}(x),\ti{u}_{\rr}(y))-\aa(x, y))\frac{\dy}{\snr{x-y}^{n+2s}}\\[12pt]
\displaystyle
g_{2;\rr}(x) :=\mathds{1}_{\ttB_{1/8}}(x) 2\int_{\ttB_{1/2}\setminus \ttB_{1/4}}\eta(y)(a_{\rr}(x,y,u_{\rr}(x),\ti{u}_{\rr}(y))-\aa(x, y))u_{\rr}(y)\frac{\dy}{\snr{x-y}^{n+2s}}
\end{cases}
$$
for every $x\in \er^n$. 
Concerning the mixed terms, we have, recalling that $\ti{u}_{\rr}\equiv 0$ outside $\ttB_{1/2}$ and that $\ti{u}_{\rr}\equiv u_{\rr}$ in $\ttB_{1/8}$, we find 
$$
\mbox{(IV)}=2\int_{\mathbb{R}^{n}\setminus \ttB_{1/2}}\int_{\ttB_{1/8}}\langle a_{\rr}(x,y,u_{\rr}(x),u_{\rr}(y))u_{\rr}(y),\varphi(x)\rangle\frac{\dxy}{\snr{x-y}^{n+2s}}\nonumber =\int_{\ttB_{1/8}}\langle g_{3;\rr},\varphi\rangle\dx
$$
and
\begin{flalign*}
\mbox{(V)}&=2\int_{\mathbb{R}^{n}\setminus \ttB_{1/2}}\int_{\ttB_{1/8}}\langle(a_{\rr}(x,y,u_{\rr}(x),0)-a_{\rr}(x,y,u_{\rr}(x),u_{\rr}(y)))u_{\rr}(x),\varphi(x)\rangle\frac{\dxy}{\snr{x-y}^{n+2s}}\\
& =\int_{\ttB_{1/8}}\langle \lambda_{2;\rr}u_{\rr},\varphi\rangle\dx,
\end{flalign*}
where
$$
\begin{cases}
\displaystyle g_{3;\rr}(x):=\mathds{1}_{\ttB_{1/8}}(x) 2\int_{\mathbb{R}^{n}\setminus \ttB_{1/2}}\aa(x, y)u_{\rr}(y)\frac{\dy}{\snr{x-y}^{n+2s}}\\[13pt]
\displaystyle
 \lambda_{2;\rr}(x):=\mathds{1}_{\ttB_{1/8}}(x) 2\int_{\mathbb{R}^{n}\setminus \ttB_{1/2}}(a_{\rr}(x,y,u_{\rr}(x),0)-a_{\rr}(x,y,u_{\rr}(x),u_{\rr}(y)))\frac{\dy}{\snr{x-y}^{n+2s}}
 \end{cases}
 $$
 for every $x\in \er^n$.  Setting 
 \eqn{ragnar}
$$g_{\rr}:=g_{1;\rr}-g_{2;\rr}+g_{3;\rr}, \qquad \lambda_{\rr}:=\lambda_{1;\rr}+\lambda_{2;\rr}$$ yields \rif{cutoffata}, again recalling that $u_{\rr}\equiv \ti{u}_{\rr}$ on $\ttB_{1/8}$. It remains to prove \rif{cf.1}. We use Jensen inequality (recall that $\omega(\cdot)$ is concave and therefore also $\omega_{\tx{E}}(\cdot)$ is) as follows:
\begin{eqnarray*}
\snr{\lambda_{\rr}(x)}&\le&\snr{\lambda_{1;\rr}(x)}+\snr{\lambda_{2;\rr}(x)}\nonumber \\
&\stackleq{bs.4bis}&c\mathds{1}_{\ttB_{1/8}}(x)\int_{\ttB_{1/2}\setminus \ttB_{1/4}}\omega_{\tx{E}}\left((1-\eta(y))\snr{u_{\rr}(y)}\right)\frac{\dy}{\snr{x-y}^{n+2s}}\\
&&\quad +c\mathds{1}_{\ttB_{1/8}}(x)\int_{\mathbb{R}^{n}\setminus \ttB_{1/2}}\omega_{\tx{E}}\left(\snr{u_{\rr}(y)}\right)\frac{\dy}{\snr{x-y}^{n+2s}}\nonumber \\
&\stackleq{uset}&c\int_{\mathbb{R}^{n}\setminus \ttB_{1/4}}\omega_{\tx{E}}\left(\snr{u_{\rr}(y)}\right)\frac{\dy}{\snr{y}^{n+2s}}\nonumber \\
& \stackrel{\mbox{\scriptsize Jensen}}{\leq} & c \omega_{\tx{E}}\left(\tail(u_{\rr};\ttB_{1/4})\right)\\
&\stackrel{\eqref{scatailggg}}{\le}&c\omega_{\tx{E}}\left(\nra{u_{\rr}}_{L^{2}(\ttB_{1})}\right)+c\omega_{\tx{E}}\left(\tail(u_{\rr};\ttB_{1})\right)\\
&\stackrel{\eqref{sc.0}}{\le}&c\omega_{\tx{E}}\left(1\right)\leq c(\data)\,,
\end{eqnarray*}
 from which the first inequality in \rif{cf.1} follows; we also used that $\omega(\cdot)\leq 1$. Of course we used the fact that $\omega(\cdot)$ is subadditive and non-decreasing. Similarly, also using \rif{uset}, we have
\begin{flalign*}
\snr{g_{\rr}(x)}&\le\snr{g_{1;\rr}(x)}+\snr{g_{2;\rr}(x)}+\snr{g_{3;\rr}(x)}\\ &\leq  c \mathds{1}_{\ttB_{1/8}}(x)\int_{\er^n\setminus \ttB_{1/4}}\frac{\snr{u_{\rr}(y)}}{\snr{x-y}^{n+2s}}\dy \leq  c \int_{\er^n\setminus \ttB_{1/4}}\frac{\snr{u_{\rr}(y)}}{\snr{y}^{n+2s}}\dy\\ & \leq  c\tail(u_{\rr};\ttB_{1/4})\leq c\nra{u_{\rr}}_{L^{2}(\ttB_{1})}+c\, \tail(u_{\rr};\ttB_{1}) \leq c(\data)
\end{flalign*}
and the proof is complete. 
\end{proof} 

\subsubsection{Step 3: Approximate $s$-harmonicity} As in Remark \ref{remarkino}, in the following we test \rif{cutoffata} 
by any $\varphi \in \mathbb{X}^{s,2}_{0}(\ttB_{1/16} ,\ttB_{1/8})$. In view of \rif{cf.2}-\rif{cutoffata} we aim at applying Lemma \ref{shar} to $v\equiv \ti{u}_{\rr}$  with the choices 
$
B\equiv \ttB_{1/16}$, $ \ti{B}\equiv \ttB_{1/8}$ and $g\equiv g_{\rr}$ and $a_{0}\equiv a_{\rr, \ttB_{R}}(0,0)$, where
\eqn{a0}
$$
 a_{\rr, \ttB_{R}}(0,0)\stackrel{\rif{avw}}{:=}\mint_{\ttB_{R}}\mint_{\ttB_{R}}a_{\rr}(x,y, 0,0)\dxy \,.
$$
Note that, a direct consequence of \eqref{bs.1}-\eqref{bs.2} and \rif{bs.4.0}$_2$ is that
$$
\Lambda^{-1}\snr{\xi}^{2} \leq \langle a_{\rr, \ttB_{R}}(0,0)\xi,\xi\rangle\qquad \mbox{and}\qquad \snr{ a_{\rr, \ttB_{R}}(0,0)}\le \Lambda\,,
$$
hold for all $\xi\in \er^N$. We need to verify \eqref{sh.0}-\eqref{sh.1}. For the upper bound in \eqref{sh.0} we have
\eqn{limitazioni}
$$
\nr{\ti{u}_{\rr}}_{W^{s,2}(\mathbb{R}^{n})}+\nr{\ti{u}_{\rr}}_{W^{s+\delta_{0},2}(\er^n)}+ \tail(\ti{u}_{\rr};\texttt {B}_{1/8}) + \nr{g_{\rr}}_{L^{\infty}(\mathbb{R}^{n})}\le c\equiv c (\data)\,.
$$
This easily follows by \eqref{cf.2} and \eqref{cf.1} (recall that  $\tail(\ti{u}_{\rr};\ttB_{1/8} )\lesssim 1$ as $\ti{u}_{\rr}$ is supported in $\ttB_{1/2}$). Once $\delta_{0}$ has been determined as a function of $\data$ as in \rif{cf.2}, following Lemma \ref{shar} we accordingly determine $t, p$ such that 
\eqn{tep}
$$
s< t \equiv t(\data)< 1\, \quad \mbox{and} \quad p\equiv p(\data)>2\,.
$$
As for \eqref{sh.1}, note that when verifying \rif{sh.1}, as in Remark \ref{semplice} it is sufficient to consider the case $[\varphi]_{0,t;\er^n}\leq 1$. This implies $\nr{\varphi}_{L^\infty(\ttB_1)}\leq 1$.
With $a_{0}\in \er^{N\times N}$ being defined in \eqref{a0}, and recalling the notation in  \rif{shorten}, given any $\varphi \in \mathbb{X}^{s,2}_{0}(\ttB_{1/16},\ttB_{1/8})\cap C^{0,t}(\er^n;\er^N)$, we control
\begin{flalign}
\notag &\left|\int_{\mathbb{R}^{n}}\int_{\mathbb{R}^{n}}\langle a_{\rr, \ttB_{R}}(0,0)(\ti{u}_{\rr}(x)-\ti{u}_{\rr}(y)),\varphi(x)-\varphi(y)\rangle\frac{\dxy}{\snr{x-y}^{n+2s}}-\int_{\ttB_{1/16}}\langle g_{\rr},\varphi\rangle\dx\right|\nonumber \\
\notag &  \qquad \stackrel{\eqref{cutoffata}}{\le}\int_{\ttB_{1/8}}\int_{\ttB_{1/8}}\snr{a_{\rr, \ttB_{R}}(0,0)-\aat(x, y)}\snr{\ti{u}_{\rr}(x)-\ti{u}_{\rr}(y)}\snr{\varphi(x)-\varphi(y)}\frac{\dxy}{\snr{x-y}^{n+2s}}\nonumber \\
\notag &  \qquad \qquad \quad +2\int_{\mathbb{R}^{n}\setminus \ttB_{1/8}}\int_{\ttB_{1/8}}\snr{a_{\rr, \ttB_{R}}(0,0)-\aat(x, y)}\snr{\ti{u}_{\rr}(x)-\ti{u}_{\rr}(y)}\snr{\varphi(x)-\varphi(y)}\frac{\dxy}{\snr{x-y}^{n+2s}}\nonumber \\
&  \qquad \quad\qquad  +\int_{\ttB_{1/16}}(\snr{f_{\rr}}+\lambda_{\rr}\snr{u_{\rr}})\snr{\varphi}\dx=:\mbox{(I)}+\mbox{(II)}+\mbox{(III)}\,.\label{merge0}
\end{flalign}
Note that, whenever $\texttt{p}\geq 1$
\begin{flalign*}
\mint_{\ttB_{1}}\omega_{\tx{E}}\left(\snr{\ti{u}_{\rr}}\right)^{\texttt{p}}\dx & \leq 
\mint_{\ttB_{1}}\omega_{\tx{E}}\left(\snr{\ti{u}_{\rr}}\right)\dx  \\& \leq 
\omega_{\tx{E}}\left(\nra{u_{\rr}}_{L^{1}(\ttB_{1})}\right) \leq \omega_{\tx{E}}\left(\nra{u_{\rr}}_{L^{2}(\ttB_{1})}\right) \stackrel{\rif{sc.0}}{\leq}  \omega_{\tx{E}}(1)  \stackrel{\rif{defiom}}{=}\omega (\tx{E}_{u}(x_{0},\rr)) 
\end{flalign*}
where we have also used, in turn, that $\omega(\cdot)$ does not exceed $1$, its concavity and Jensen's inequality. Recalling \rif{exx}, we conclude with 
\eqn{exx2}
$$
\nra{\omega_{\tx{E}}(\snr{\ti{u}_{\rr}})}_{L^{\texttt{p}}(\ttB_{1})}\leq \omega(\varepsilon_{*})^{1/\texttt{p}}\, \qquad \forall \ \texttt{p}\geq 1\,.
$$
Using H\"older inequality with conjugate exponents 
$$\left(\frac{2(n+s-t)}{t-s},2,\frac{2(n+s-t)}{n-2(t-s)}\right)$$ 
and that $[\varphi]_{0,t;\er^n}\leq 1$, we find
\begin{eqnarray}
\snr{\mbox{(I)}}&\le&\int_{\ttB_{1/8}}\int_{\ttB_{1/8}}\snr{a_{\rr, \ttB_{R}}(0,0)-a_{\rr}(x,y,0, 0)}\frac{\snr{\ti{u}_{\rr}(x)-\ti{u}_{\rr}(y)}}{\snr{x-y}^{n/2+s}}\frac{\dxy}{\snr{x-y}^{n/2+s-t}}\nonumber \\
&&\notag+\int_{\ttB_{1/8}}\int_{\ttB_{1/8}}
\snr{\aat(x, y)-a_{\rr}(x,y,0, 0)}\frac{\snr{\ti{u}_{\rr}(x)-\ti{u}_{\rr}(y)}}{\snr{x-y}^{n/2+s}}\frac{\dxy}{\snr{x-y}^{n/2+s-t}}\nonumber \\
&\notag\leq &c[\ti{u}_{\rr}]_{s,2;\ttB_{1/8}}\left(\int_{\ttB_{1/8}}\int_{\ttB_{1/8}}\snr{a_{\rr, \ttB_{R}}(0,0)-a_{\rr}(x,y,0, 0)}^{\frac{2(n+s-t)}{t-s}}\dxy\right)^{\frac{t-s}{2(n+s-t)}}\nonumber \\
&&\notag\qquad \qquad  \times\left(\int_{\ttB_{1/8}}\int_{\ttB_{1/8}}\frac{\dxy}{\snr{x-y}^{n+s-t}}\right)^{\frac{n-2(t-s)}{2(n+s-t)}}\nonumber \\
&\notag&+c[\ti{u}_{\rr}]_{s,2;\ttB_{1/8}}\left(\int_{\ttB_{1/8}}\int_{\ttB_{1/8}}[\omega_{\tx{E}}(\snr{\ti{u}_{\rr}(x)})+\omega_{\tx{E}}(\snr{\ti{u}_{\rr}(y)})]^{\frac{2(n+s-t)}{t-s}}\dxy\right)^{\frac{t-s}{2(n+s-t)}}\nonumber \\
&&\notag\qquad \qquad  \times\left(\int_{\ttB_{1/8}}\int_{\ttB_{1/8}}\frac{\dxy}{\snr{x-y}^{n+s-t}}\right)^{\frac{n-2(t-s)}{2(n+s-t)}}\nonumber \\
&\notag\stackrel{\eqref{cf.2}}{\le}&cR^{\frac{n(t-s)}{n+s-t}}\left(\mint_{\ttB_{R}}\mint_{\ttB_{R}}\snr{a_{\rr, \ttB_{R}}(0,0)-a_{\rr}(x,y,0, 0)}\dxy\right)^{\frac{t-s}{2(n+s-t)}}\nonumber\\
&& \qquad \quad +c\nra{\omega_{\tx{E}}(\snr{\ti{u}_{\rr}})}_{L^{\frac{2(n+s-t)}{t-s}}(\ttB_{1})}\nonumber \\
&  \stackrel{\eqref{smally},\eqref{exx2}}{\le}&cR^{\frac{n(t-s)}{n+s-t}}\delta^{\frac{t-s}{2(n+s-t)}}+c\omega(\varepsilon_{*})^{\frac{t-s}{2(n+s-t)}},\label{merge1}
\end{eqnarray}
with $c\equiv c(\data)$. Recalling that $\varphi\equiv 0$ outside $\ttB_{1/16}$, that allows us to use \rif{uset}, and that $\ti{u}_{\rr}$ is supported in $\ttB_{1/2}$, we have,  by means of \eqref{bs.4bis}
\begin{flalign*}
\snr{\mbox{(II)}}&\le c\int_{\mathbb{R}^{n}\setminus \ttB_{1/8}}\int_{\ttB_{1/16}}\snr{\aat(x, y)-a_{\rr}(x,y,0, 0)}\left(\snr{\ti{u}_{\rr}(x)}+\snr{\ti{u}_{\rr}(y)}\right)\frac{\dxy}{\snr{y}^{n+2s}}\nonumber \\
&\quad  +c\int_{\mathbb{R}^{n}\setminus \ttB_{1/8}}\int_{ \ttB_{1/16}}\snr{a_{\rr, \ttB_{R}}(0,0)-a_{\rr}(x,y,0, 0)}\left(\snr{\ti{u}_{\rr}(x)}+\snr{\ti{u}_{\rr}(y)}\right)\frac{\dxy}{\snr{y}^{n+2s}}\nonumber \\
&\leq c\int_{\mathbb{R}^{n}\setminus \ttB_{1/8}}\int_{ \ttB_{1/16}}\omega_{\tx{E}}(\snr{\ti{u}_{\rr}(x)})\snr{\ti{u}_{\rr}(x)}\frac{\dxy}{\snr{y}^{n+2s}}
\\ & \quad +c\int_{\mathbb{R}^{n}\setminus \ttB_{1/8}}\int_{ \ttB_{1/16}}\omega_{\tx{E}}(\snr{\ti{u}_{\rr}(y)})\snr{\ti{u}_{\rr}(x)}\frac{\dxy}{\snr{y}^{n+2s}}
\nonumber \\
&\quad  +c\int_{\mathbb{R}^{n}\setminus \ttB_{1/8}}\int_{ \ttB_{1/16}}\omega_{\tx{E}}(\snr{\ti{u}_{\rr}(x)})\snr{\ti{u}_{\rr}(y)}\frac{\dxy}{\snr{y}^{n+2s}}\\ & \quad  +c
\int_{\mathbb{R}^{n}\setminus \ttB_{1/8}}\int_{ \ttB_{1/16}}\omega_{\tx{E}}(\snr{\ti{u}_{\rr}(y)})\snr{\ti{u}_{\rr}(y)}\frac{\dxy}{\snr{y}^{n+2s}}
\nonumber \\
&\quad  +c\int_{\mathbb{R}^{n}\setminus \ttB_{1/8}}\int_{ \ttB_{1/16}}\snr{a_{\rr, \ttB_{R}}(0,0)-a_{\rr}(x,y,0, 0)}\snr{\ti{u}_{\rr}(y)}\frac{\dxy}{\snr{y}^{n+2s}}\nonumber \\
&\quad  +c\int_{\mathbb{R}^{n}\setminus \ttB_{1/8}}\int_{ \ttB_{1/16}}\snr{a_{\rr, \ttB_{R}}(0,0)-a_{\rr}(x,y,0, 0)}\snr{\ti{u}_{\rr}(x)}\frac{\dxy}{\snr{y}^{n+2s}}\\& =:  \sum_{1\leq k\leq 6}  \mbox{(II)}_{k}\,.
\end{flalign*}
By means of \rif{sc.0} and \rif{exx2} we have
$$
\mbox{(II)}_{1} \leq c  \int_{\mathbb{R}^{n}\setminus \texttt  B_{1/8}}\frac{\dy}{\snr{y}^{n+2s}} \nra{\omega_{\tx{E}}(\snr{\ti{u}_{\rr}})}_{L^2(\ttB_{1})}\nra{\ti{u}_{\rr}}_{L^2(\ttB_{1})}
\leq c \sqrt{\omega\left(\varepsilon_{*}\right)}\,.
$$
Recalling that $\ti{u}_{\rr}$ is supported in $\ttB_{1/2}$ we have, similarly
\begin{flalign*}
\mbox{(II)}_{2}&\leq c  \int_{\ttB_{1/2}\setminus \ttB_{1/8}}\omega_{\tx{E}}(\snr{\ti{u}_{\rr}(y)})\frac{\dy}{\snr{y}^{n+2s}}\nra{\ti{u}_{\rr}}_{L^1(\ttB_{1})}\\
&
\leq c  \nra{\omega_{\tx{E}}(\snr{\ti{u}_{\rr}})}_{L^2(\ttB_{1})}\nra{\ti{u}_{\rr}}_{L^2(\ttB_{1})}\\ &
\leq c\sqrt{\omega\left(\varepsilon_{*}\right)}\,.
\end{flalign*}
We continue with 
\begin{flalign*}
\mbox{(II)}_{3}+\mbox{(II)}_{4}
&\leq c  \int_{\ttB_{1/2}\setminus \ttB_{1/8}}\snr{\ti{u}_{\rr}(y)}\frac{\dy}{\snr{y}^{n+2s}}\nra{\omega_{\tx{E}}(\snr{\ti{u}_\rr})}_{L^1(\ttB_{1})}\\ & \quad +c \int_{\ttB_{1/2}\setminus \ttB_{1/8}}\omega_{\tx{E}}(\snr{\ti{u}_{\rr}(y)})\snr{\ti{u}_{\rr}(y)}\frac{\dy}{\snr{y}^{n+2s}}\\
&\leq c  \nra{\omega_{\tx{E}}(\snr{\ti{u}_{\rr}})}_{L^2(\ttB_{1})}\nra{\ti{u}_{\rr}}_{L^2(\ttB_{1})}\\& 
\leq c\sqrt{\omega\left(\varepsilon_{*}\right)}\,.
\end{flalign*}
Using \rif{smally}, \rif{sc.0} we find
\begin{flalign*}
\mbox{(II)}_{5} 
& \leq c\nr{\ti{u}_{\rr}}_{L^{2}(\ttB_{1/2})}\left(\int_{\ttB_{1/2}\setminus \ttB_{1/8}}\int_{ \ttB_{1/16}}\snr{a_{\rr, \ttB_{R}}(0,0)-a_{\rr}(x,y,0, 0)}^{2}\dxy\right)^{1/2}\\
& \leq c R^n \left(\mint_{\ttB_{R}}\mint_{ \ttB_{R}}\snr{a_{\rr, \ttB_{R}}(0,0)-a_{\rr}(x,y,0, 0)}\dxy\right)^{1/2}\leq c R^n\sqrt{\delta}\,.
\end{flalign*}
For $\mbox{(II)}_{6}$ we split 
$$
\mbox{(II)}_{6} = c \int_{\mathbb{R}^{n}\setminus \ttB_{R}}\int_{\ttB_{1/16}}[\dots]\, \frac{\dxy}{\snr{y}^{n+2s}} +  \int_{\ttB_{R}\setminus \ttB_{1/8}}\int_{\ttB_{1/16}}[\dots]\, \frac{\dxy}{\snr{y}^{n+2s}} =: \mbox{(II)}_{7} +\mbox{(II)}_{8}\,. 
$$
In turn, we have 
$$
\mbox{(II)}_{7} \leq c R^{-2s} \nra{\ti{u}_{\rr}}_{L^1(\ttB_1)} \leq c R^{-2s} \nra{\ti{u}_{\rr}}_{L^2(\ttB_1)}\leq c R^{-2s}
$$
and, by H\"older inequality and \rif{sc.0}
$$
\mbox{(II)}_{8} \leq c  R^{n}  \left(\mint_{\ttB_{R}}\mint_{ \ttB_{R}}\snr{a_{\rr, \ttB_{R}}(0,0)-a_{\rr}(x,y,0, 0)}^{2}\dxy\right)^{1/2} \nra{\ti{u}_{\rr}}_{L^2(\ttB_1)}\leq c R^{n}\sqrt{\delta}\,.
$$
All in all we have that 
\eqn{merge2}
$$
\snr{\mbox{(II)}} \leq c  \sqrt{\omega\left(\varepsilon_{*}\right)} + cR^n\sqrt{\delta} + cR^{-2s}
$$
holds with $c\equiv c (\data)$. Finally, we have 
\eqn{merge3}
$$
\snr{\mbox{(III)}}\stackrel{\eqref{cf.1}}{\le}c\nra{f_{\rr}}_{L^{\chi}(\ttB_{1})}+c\omega_{\tx{E}}\left(1\right)\nra{\ti{u}_{\rr}}_{L^2(\ttB_1)}\stackrel{\eqref{exx},\eqref{sc.0}}{\le}c\left(\varepsilon^{*}+\omega(\varepsilon_{*})\right)
$$
again with $c\equiv c (\data)$. Merging the content of displays \rif{merge0} and \rif{merge1}-\rif{merge3}, we conclude with
\begin{flalign}
&\notag \left|\int_{\mathbb{R}^{n}}\int_{\mathbb{R}^{n}}\langle a_{\rr, \ttB_{R}}(0,0)(\ti{u}_{\rr}(x)-\ti{u}_{\rr}(y)),\varphi(x)-\varphi(y)\rangle\frac{\dxy}{\snr{x-y}^{n+2s}}-\int_{ \ttB_{1/16}}\langle g_{\rr},\varphi\rangle\dx\right|\nonumber \\
&\qquad  \quad \qquad  \qquad \le c_*\left(R^{-2s}+R^{\frac{n(t-s)}{n+s-t}}\delta^{\frac{t-s}{2(n+s-t)}}+R^{n}\sqrt{\delta}+\varepsilon^{*}+\omega(\varepsilon_{*})^{\frac{t-s}{2(n+s-t)}}\right)\label{byby}
\end{flalign}
where $c_*\equiv c_*(\data)\geq 1$ and $t\equiv t(\data) \in (s,1)$. This holds whenever $\varphi \in \mathbb{X}^{s,2}_{0}(\ttB_{1/16},\ttB_{1/8})\cap C^{0,t}(\er^n;\er^N)$ with $[\varphi]_{0,t;\er^n}\leq 1$, where $t$ has been determined in \rif{tep}.

 \subsubsection{Step 4: Smallness conditions and $s$-harmonicity}\label{step4} Thanks to \rif{limitazioni} and \rif{byby} we are now in position to apply Lemma \ref{shar}. Note that the inequality in \rif{limitazioni} and the dependence of $\delta_0$ in \rif{cf.2} allow to determine the constant $\cchh$ appearing in Lemma \ref{shar} as a function of $\data$. We now fix $\eps_0 \in (0,1)$, which for the moment is left as a free parameter and that we will determine towards the end of the proof. Next, we start by choosing the first four parameters in \rif{sceltona} in order to verify
\eqn{ver} 
$$
\left|\int_{\mathbb{R}^{n}}\int_{\mathbb{R}^{n}}\langle a_{\rr, \ttB_{R}}(0,0)(\ti{u}_{\rr}(x)-\ti{u}_{\rr}(y)),\varphi(x)-\varphi(y)\rangle\frac{\dxy}{\snr{x-y}^{n+2s}}-\int_{ \ttB_{1/16}}\langle g_{\rr},\varphi\rangle\dx\right|\leq\left(\frac{\eps_0}{\cchh}\right)^{\frac{2p}{p-2}}
$$
whenever $\varphi$ has been chosen as above, i.e., $\varphi \in \mathbb{X}^{s,2}_{0}(\ttB_{1/16},\ttB_{1/8})\cap C^{0,t}(\er^n;\er^N)$ with  $[\varphi]_{0,t;\er^n}\leq 1$. The number $p\equiv p(\data)>2$ appears in \rif{tep}. Note that here we shall choose $\varepsilon_{*}, \varepsilon^{*}, R$ and $\delta$ still keeping a dependence on the constants that involves $\eps_0$. In Step 5 we shall trade the dependence on $\eps_0$ with a dependence on the number $\gamma_{0}$ fixed in \rif{gammazero} to finally get all the parameters in \rif{sceltona} with the prescribed dependence on the constants; this will be done choosing $\varepsilon_{0}\equiv \varepsilon_{0}(\data, \gamma_{0})$ as in \rif{sceltafinale} below. We fix $R$ large enough to get 
\eqn{dep.1.1}
$$c_*R^{-2s}< \frac 13 \left(\frac{\eps_0}{\cchh}\right)^{\frac{2p}{p-2}} \ \Longrightarrow \ R\equiv R(\data,\varepsilon_{0}),$$ 
where $c_*\equiv c_*(\data)$ appears in \rif{byby}, and then we fix $\delta,\varepsilon^{*},\varepsilon_{*}$ small enough to obtain
\eqn{dep.1}
$$
\begin{cases}
\displaystyle
\ c_*R^{\frac{n(t-s)}{n+s-t}}\delta^{\frac{t-s}{2(n+s-t)}}+c_*R^{n}\sqrt{\delta}<\frac{1}{3}\left(\frac{\eps_0}{\cchh}\right)^{\frac{2p}{p-2}} \ \Longrightarrow \ \delta\equiv \delta(\data,\varepsilon_{0})\\[16pt]\displaystyle
\ c_{*}\varepsilon^{*}+c_{*}\omega(\varepsilon_{*})^{\frac{t-s}{2(n+s-t)}}<\frac 13 \left(\frac{\eps_0}{\cchh}\right)^{\frac{2p}{p-2}} \ \Longrightarrow \ \varepsilon_{*}\equiv \varepsilon_{*}(\data,\omega(\cdot),\varepsilon_{0}), \ \ \varepsilon^{*}\equiv \varepsilon^{*}(\data,\varepsilon_{0}).
\end{cases}
$$
Inserting this in \rif{byby} yields \rif{ver}. Then we find $h\in \mathbb{X}^{s,2}_{\ti{u}_{\rr}}(\ttB_{1/16},\ttB_{1/8})$ solving 
$$
\begin{cases}
\ -\mathcal{L}_{a_{\rr, \ttB_{R}}(0,0)}h=g_{\rr}\quad &\mbox{in} \ \ \ttB_{1/16}\\
\ h=\ti{u}_{\rr}\quad &\mbox{in} \ \ \mathbb{R}^{n}\setminus \ttB_{1/16}
\end{cases}
$$
in the sense of \eqref{eqweaksol22}. 
Solvability is ensured by Lemma \ref{esiste}. Lemma \ref{shar}, \rif{sh.5}-\rif{sh.5.1} yields
\eqn{confronti}
$$
\begin{cases}
\, \nr{h}_{W^{s,2}(\ttB_{1/8})}+\tail(h-(h)_{\ttB_{1/16}};\ttB_{1/16})\le c\equiv c (\data) \\[5pt]
\, [\ti{u}_{\rr}-h]_{s,2;\ttB_{1/8}}<\varepsilon_0\,.
\end{cases}
$$

\subsubsection{Step 5: One scale global excess decay}\label{step3} Let $\tau\in (0,1/64)$ to be fixed later, and $\beta_{0}\in (s,\min\{2s,1\})$. We shall use following inequality connecting H\"older and Gagliardo seminorms, and whose proof is obvious:
\eqn{holga}
$$
\snra{h}_{s,2;  \ttB_{\lambda}}\lesssim \frac{\lambda^{\beta_0-s}}{(\beta_0-s)^{1/2}} [h]_{0,\beta_0; \ttB_{\lambda}} \,, \quad \mbox{for every  $\lambda>0$}\,.
$$
Recalling that $\ti{u}_{\rr}=u_{\rr}$ in $\ttB_{1/4} $, \rif{mediette}, and repeatedly using \rif{poinfrac} and \eqref{holga}, we have
\begin{eqnarray}
\frac{\tx{E}_{u}(x_{0},\tau\rr)}{\tx{E}_{u}(x_{0},\rr)}&=& \sqrt{\nra{u_{\rr}-(u_{\rr})_{\ttB_{\tau} }}_{L^{2}(\ttB_{\tau} )}^2+\tail(u_{\rr}-(u_{\rr})_{\ttB_{\tau} };\ttB_{\tau} )^2 }\nonumber \\
&\leq & \nra{u_{\rr}-(u_{\rr})_{\ttB_{\tau} }}_{L^{2}(\ttB_{\tau} )}+\tail(u_{\rr}-(u_{\rr})_{\ttB_{\tau} };\ttB_{\tau} ) \nonumber \\
&\stackrel{\eqref{scatail}}{\le}&c\tau^{s}\snra{u_{\rr}}_{s,2;\ttB_{\tau} }\nonumber +c\tau^{2s}\tail(u_{\rr}-(u_{\rr})_{\ttB_{1} };\ttB_{1} )+c\tau^{2s}\nra{u_{\rr}-(u_{\rr})_{\ttB_{1} }}_{L^{2}(\ttB_{1})}\nonumber \\
&&\quad +c\int_{\tau}^{1}\left(\frac{\tau}{\lambda}\right)^{2s}\nra{u_{\rr}-(u_{\rr})_{\ttB_{\lambda} }}_{L^{2}(\ttB_{\lambda} )}\frac{\dlam}{\lambda}\nonumber \\
&\leq &c\tau^{s}\snra{\ti{u}_{\rr}}_{s,2;\ttB_{\tau} }+c\tau^{2s}\tail(u_{\rr};\ttB_{1} )+c\tau^{2s}\nra{u_{\rr}}_{L^{2}(\ttB_{1})}\nonumber \\
&&\quad +c\int_{\tau}^{1/32}\left(\frac{\tau}{\lambda}\right)^{2s}\nra{\ti{u}_{\rr}-(\ti{u}_{\rr})_{\ttB_{\lambda} }}_{L^{2}(\ttB_{\lambda} )}\frac{\dlam}{\lambda}\nonumber \\
&\leq &c\tau^{s}\snra{\ti{u}_{\rr}-h}_{s,2;\ttB_{\tau} }+c\tau^{s}\snra{h}_{s,2;\ttB_{\tau} }\nonumber +c\tau^{2s}\tail(u_{\rr};\ttB_{1} )+c\tau^{2s}\nra{u_{\rr}}_{L^{2}(\ttB_{1})}\nonumber \\
&&\quad +c\int_{\tau}^{1/32}\left(\frac{\tau}{\lambda}\right)^{2s}\lambda^s\snra{\ti{u}_{\rr}}_{s,2;\ttB_{\lambda}}\frac{\dlam}{\lambda}\nonumber \\
&\stackrel{\eqref{sc.0}}{\le}&c\tau^{s-n/2}[\ti{u}_{\rr}-h]_{s,2;\ttB_{1/8}}+c\tau^{\beta_{0}}[h]_{0,\beta_{0};\ttB_{1/32}}+c\tau^{2s}\nonumber \\
&&\quad +c\int_{\tau}^{1/32}\left(\frac{\tau}{\lambda}\right)^{2s}\lambda^{s}\snra{\ti{u}_{\rr}-h}_{s,2;\ttB_{\lambda}}\frac{\dlam}{\lambda}+c\int_{\tau}^{1/32}\left(\frac{\tau}{\lambda}\right)^{2s}\lambda^{s}\snra{h}_{s,2;\ttB_{\lambda} }\frac{\dlam}{\lambda}\nonumber \\
&\stackrel{\eqref{confronti}}{\le}&c\tau^{s-n/2}\varepsilon_{0}+c\tau^{\beta_{0}}[h]_{0,\beta_{0};\ttB_{1/32} }+c\tau^{2s}\nonumber \\
&&\quad +c\tau^{2s}\varepsilon_{0} \int_{\tau}^{1/32}\lambda^{-s-n/2}\frac{\dlam}{\lambda}  +c\tau^{2s}[h]_{0,\beta_{0};\ttB_{1/32}} \int_{\tau}^{1/32}\lambda^{\beta_{0}-2s}\frac{\dlam}{\lambda} \nonumber \\
& \leq &c\tau^{s-n/2}\varepsilon_{0}+ c \tau^{\beta_{0}}[h]_{0,\beta_{0};\ttB_{1/32} }+c\tau^{2s}\nonumber \\
&\stackrel{\eqref{dg.2bis}}{\le}&c\tau^{\beta_{0}}\left(\nra{h-(h)_{\ttB_{1/16}}}_{L^{2}(\ttB_{1/16})}+\tail(h-(h)_{\ttB_{1/16}};\ttB_{1/16})+\nr{g_{\rr}}_{L^{\infty}(\ttB_{1/16})}\right)\nonumber \\
&& \quad +c\tau^{s-n/2}\varepsilon_{0}+c\tau^{2s}\label{piccige}
\end{eqnarray}
with $c\equiv c(\data, 2s- \beta_0, \beta_0-s)$\footnote{In order to pass from the third to the fourth line, here we have estimated 
\begin{flalign*}
\int_{\tau}^{1}\left(\frac{\tau}{\lambda}\right)^{2s}\nra{u_{\rr}-(u_{\rr})_{\ttB_{\lambda} }}_{L^{1}(\ttB_{\lambda} )}\frac{\dlam}{\lambda} =
\int_{\tau}^{1/32}[\ldots]\frac{\dlam}{\lambda}+\int_{1/32}^{1}[\ldots]\frac{\dlam}{\lambda} \lesssim \int_{\tau}^{1/32}[\ldots]\frac{\dlam}{\lambda}+\tau^{2s}\nra{u_{\rr}}_{L^{2}(\ttB_{1})}
\end{flalign*}
and then used that $\ti{u}_{\rr}=u_{\rr}$ in $\ttB_{1/4} $. 
}. A further application of \eqref{cf.1} and \eqref{confronti} in \rif{piccige} implies that
\eqn{scende00}
$$
\tx{E}_{u}(x_{0},\tau\rr) \leq \ti{c} \left(\tau^{\beta_{0}}+ \tau^{s-n/2}\varepsilon_{0}\right)\tx{E}_{u}(x_{0},\rr)
$$
holds whenever $\tau\in (0,1/64)$, $\beta_{0}\in (s,\min\{2s,1\})$. Note that the lower bound $\beta_0 >s$ is here required in order to apply \rif{holga} since the constant in \rif{scende00} blows-up when $\beta_0\to s$. On the other hand, note that if \rif{scende00} holds for a certain value of $\beta_0$, then it continues to hold for all the smaller values. We can therefore conclude that \rif{scende00} holds for the full range $\beta_{0}\in (0,\min\{2s,1\})$ and the relative constant becomes independent of the specific quantity $\snr{\beta_0-s}$ \footnote{Specifically, take $\beta_1:=(s+\min\{2s,1\})/2$ and derive \rif{scende00} with $\beta_0\equiv \beta_1$, so that dependence of the constant $c$ on $\beta_0$ becomes immaterial. Then estimate \rif{scende00} continues to hold for  $\beta_0<\beta_1$ no matter $|\beta_0-s|$ is small.}. We are now ready to finish the proof. With $\gamma_{0}$ as in  \rif{gammazero},  we take $\beta_0 \in (\gamma_0, \min\{2s,1\})$ and we write \rif{scende00} in the form 
\eqn{scende}
$$
\tx{E}_{u}(x_{0},\tau\rr) \leq \ti{c} \left(\tau^{\beta_{0}-\gamma_0}+ \tau^{s-n/2-\gamma_0}\varepsilon_{0}\right)\tau^{\gamma_0}\tx{E}_{u}(x_{0},\rr)\,.
$$ We select $\tau\in (0,1/64)$ such that
\eqn{dep.2}
$$
\ti{c}\tau^{\beta_{0}-\gamma_{0}}<\frac{1}{4} \ \Longrightarrow \tau\equiv \tau(\data,\gamma_{0}),
$$
and then reduce the size of $\varepsilon_{0}$ in such a way that
\eqn{sceltafinale}
$$
\ti{c}\tau^{s-n/2-\gamma_{0}}\varepsilon_{0}<\frac{1}{4} \ \Longrightarrow \ \varepsilon_{0}\equiv \varepsilon_{0}(\data,\gamma_{0})\,.
$$
These choices finally determine $R,\delta,\varepsilon^{*}\equiv R, \delta,\varepsilon^{*}(\data,\gamma_{0})$, and $\varepsilon_{*}\equiv \varepsilon_{*}(\data,\omega(\cdot),\gamma_{0})$, cf. \eqref{dep.1.1}-\eqref{dep.1} - observe that we incorporated any dependency on $\beta_{0}$ of the above quantities into a dependency on $s$, see Proposition \ref{cdg}. Inserting \rif{dep.2}-\rif{sceltafinale} in \rif{scende} leads to \eqref{exx.2}. 
\end{proof}
In the large potential regime \eqref{lp} the bound for the excess at the $\tau$-scale is automatic, as the proof of the next proposition shows.
\begin{proposition}[Excess decay in the large potential regime]\label{psp.1}
In the setting of Proposition \ref{psp}, and in particular with $\eps^*, \tau \equiv \eps^*, \tau(\data, \gamma_{0})$ determined in \eqref{sceltona}, but assuming \eqref{lp} instead of \eqref{exx}, it holds that
\eqn{exx.3}
$$
\tx{E}_{u}(x_{0},\tau\rr)\leq  c_{*}\pprhoo\,, \qquad c_{*}\equiv c_{*}(\data,\gamma_{0})\geq 1\,.
$$
\end{proposition}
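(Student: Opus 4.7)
\medskip

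\noindent\textbf{Proof plan for Proposition \ref{psp.1}.} Unlike the small potential case, where a delicate linearization via $s$-harmonic approximation was required, here the hypothesis \eqref{lp} already provides pointwise control of the excess by the potential quantity $\pprhoo$, so no comparison map is needed. The strategy is simply to pass from the scale $\rr$ to the scale $\tau\rr$ by a crude decay bound for the excess, and then to absorb the resulting factor $\tau^{-n/2}$ into the final constant using that $\tau$ has already been fixed as a function of $\data$ and $\gamma_0$ in \eqref{dep.2}.

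The first step is to invoke the general scaling estimate \eqref{scataildopoff}$_1$ with $\texttt{t}\equiv \tau$, which yields
\begin{equation*}
\tx{E}_{u}(x_{0},\tau\rr)\leq c(n,s)\,\tau^{-n/2}\,\tx{E}_{u}(x_{0},\rr).
\end{equation*}
This inequality holds unconditionally for any $u\in L^2(B_{\rr}(x_0);\er^N)\cap L^1_{2s}$ and requires no information on the system solved by $u$.

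The second step is to use the large potential assumption \eqref{lp}, which directly gives
\begin{equation*}
\tx{E}_{u}(x_{0},\rr)\leq \frac{1}{\varepsilon^{*}}\,\pprhoo.
\end{equation*}
Combining the two displays above yields
\begin{equation*}
\tx{E}_{u}(x_{0},\tau\rr)\leq \frac{c(n,s)}{\varepsilon^{*}\tau^{n/2}}\pprhoo.
\end{equation*}

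The third and final step is to identify the final constant. Since both $\tau$ and $\varepsilon^{*}$ were already fixed in \eqref{sceltona} via the choices \eqref{dep.1}--\eqref{sceltafinale} exclusively in terms of $\data$ and $\gamma_0$, the quantity $c_{*}:= c(n,s)/(\varepsilon^{*}\tau^{n/2})$ depends only on $\data$ and $\gamma_0$, and \eqref{exx.3} follows. There is no genuine obstacle here: the whole point of distinguishing the two regimes in \eqref{exx} versus \eqref{lp} is precisely that the large potential case is trivial once the parameters from the small potential case have been locked in.
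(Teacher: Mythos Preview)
Your proof is correct and follows exactly the same approach as the paper: apply the crude scaling estimate \eqref{scataildopoff}$_1$ to pass from scale $\rr$ to $\tau\rr$, then use \eqref{lp} to control $\tx{E}_{u}(x_{0},\rr)$ by $\pprhoo/\varepsilon^{*}$, and absorb $\tau^{-n/2}$ and $(\varepsilon^{*})^{-1}$ into the constant $c_*$ since both $\tau$ and $\varepsilon^*$ depend only on $\data,\gamma_0$. The paper compresses this into a single chain of inequalities, but the argument is identical.
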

\begin{proof}
We control
$$
\tx{E}_{u}(x_{0},\tau\rr)\stackrel{\eqref{scataildopoff}_1}{\leq} \frac{c}{\tau^{n/2}}\tx{E}_{u}(x_{0},\rr)  \stackleq{lp} \frac{c}{\varepsilon^{*} \tau^{n/2}}\pprhoo
$$
for $c\equiv c(\data,\gamma_{0})$. Setting $c_{*}=:2c/(\varepsilon^{*}\tau^{n/2})$ we obtain \eqref{exx.3}.  
\end{proof}
Combining Propositions \ref{psp} and \ref{psp.1} immediately yields
\begin{proposition}[Excess decay, one scale]\label{psp.2}
In the setting of Proposition \ref{psp} and \ref{psp.1}, if $\tx{E}_{u}(x_{0},\rr)<\varepsilon_{*}$ with $\eps_{*}$ being defined in \eqref{sceltona}, then 
\eqn{exx.333}
$$
\tx{E}_{u}(x_{0},\tau\rr)\leq  \frac{\tau^{\gamma_{0}}}{2}\tx{E}_{u}(x_{0},\rr)+c_{*}\pprhoo
$$
holds with $\tau \equiv \tau (\data, \gamma_0)$ and $c_{*}\equiv c_{*}(\data, \gamma_0)$ that are as in \eqref{sceltona} and \eqref{exx.3}, respectively. 
\end{proposition}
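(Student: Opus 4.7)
The plan is to reduce the statement to a simple case split between the small and large potential regimes introduced at the start of Section \ref{sec.sp}. Since we are already assuming $\tx{E}_{u}(x_{0},\rr)<\varepsilon_{*}$, which is precisely condition \eqref{exx}$_1$, the dichotomy is entirely governed by the size of the potential term $\pprhoo$ relative to $\varepsilon^{*}\tx{E}_{u}(x_{0},\rr)$, with $\varepsilon^{*}$ as determined in \eqref{sceltona}. All other parameters ($\tau, R, \delta$) are already fixed by Proposition \ref{psp}, and the same values are inherited by Proposition \ref{psp.1}, so there is no need to retune them.

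First I would handle the small potential regime \eqref{exx}. Here $\pprhoo<\varepsilon^{*}\tx{E}_{u}(x_{0},\rr)$, so both conditions in \eqref{exx} are in force, and Proposition \ref{psp} applies directly, giving $\tx{E}_{u}(x_{0},\tau\rr)\leq \tau^{\gamma_{0}}\tx{E}_{u}(x_{0},\rr)/2$. Since $c_{*}\pprho\geq 0$, one then simply adds this quantity to the right-hand side to recover \eqref{exx.333}. Next I would treat the complementary large potential regime \eqref{lp}. In this case Proposition \ref{psp.1} yields $\tx{E}_{u}(x_{0},\tau\rr)<c_{*}\pprhoo$ with $c_{*}\equiv c_{*}(\data,\gamma_{0})$, and once again \eqref{exx.333} follows by adding the nonnegative quantity $\tau^{\gamma_{0}}\tx{E}_{u}(x_{0},\rr)/2$ on the right-hand side.

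There is no genuine obstacle at this step: the heavy lifting (the linearization via $s$-harmonic approximation, the localization lemma, and the one-scale decay estimate \eqref{scende00}) has already been absorbed into Proposition \ref{psp}, while Proposition \ref{psp.1} is essentially a trivial rearrangement of \eqref{scataildopoff}$_{1}$ combined with \eqref{lp}. The only bookkeeping point worth emphasizing is that the constants $\tau$ and $c_{*}$ appearing in the two auxiliary propositions coincide with those in the statement of Proposition \ref{psp.2}, so that the final dependencies are precisely $\tau\equiv \tau(\data,\gamma_{0})$ and $c_{*}\equiv c_{*}(\data,\gamma_{0})$, as claimed.
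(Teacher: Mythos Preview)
Your proposal is correct and matches the paper's approach exactly: the paper simply states that ``Combining Propositions \ref{psp} and \ref{psp.1} immediately yields'' the result, and your case split on the small versus large potential regime is precisely the intended argument.
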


\section{The basic excess decay estimate and flatness improvement}\label{g.ex} In this section we give a first general consequence of Propositions \ref{psp} and \ref{psp.1}; we keep the notation used in Section \ref{sec6}. 
\begin{proposition}\label{cor.1}
Under assumptions \eqref{bs.1}-\eqref{bs.5}, let $u$ be a weak solution to \eqref{nonlocaleqn}, $B_{\rr}(x_{0})\Subset \Omega$ be a ball and $\gamma_{0}$ as in \eqref{gammazero}. There exist $R \equiv R(\data,\gamma_{0}) \geq 1$, $\delta \equiv \delta(\data,\gamma_{0})\in (0,1)$, as determined in \eqref{sceltona}, and a new smallness threshold $\epsb{b}\equiv \epsb{b}(\data,\omega(\cdot), \gamma_{0})\in (0,1)$,  such that if 
$a(\cdot)$ is $\delta$-vanishing in $B_{R\rr}(x_{0})\Subset \Omega$, and if 
\eqn{exx.42}
$$
\tx{E}_{u}(x_{0},\rr)<  \epsb{b}
\quad \mbox{and}\quad \sup_{t\le \rr}\, \pptto< \epsb{b}
$$
hold, then  the excess decay estimate
\eqn{exx.5}
$$
\tx{E}_{u}(x_{0},\sigma) \leq  c_{1}\left(\frac{\sigma}{r}\right)^{\gamma_{0}}\tx{E}_{u}(x_{0},r)+c_{1}\sup_{t\le r}\, \pptto
$$
is satisfied for all $0<\sigma\le  r\leq  \rr$, where $c_{1}\equiv c_{1}(\data,\gamma_{0})$. Moreover, 
\eqn{exx.72}
$$
\sup_{\sigma\le \rr}\tx{E}_{u}(x_{0},\sigma)  \leq c_{2}\epsb{b}
$$
holds with $c_{2}\equiv c_{2}(\data,\gamma_{0})$. Finally, 
\eqn{vmo.c}
$$\lim_{\sigma \to 0} \, \ppsixo =0 \Longrightarrow \lim_{\sigma\to 0}\tx{E}_{u}(x_{0},\sigma)=0\,.$$
The constants $ \epsb{b}$ and $c_1$ satisfy 
\eqn{leduecos}
$$
 \epsb{b} < \frac{\eps_{*}}{4c_*}< \frac{\eps_{*}}{4}\qquad \mbox{and} \qquad c_1 > 4 c_{*}\,,
$$
where $\eps_{*}\in (0,1)$ and $c_*\geq 1$ have been determined in \eqref{sceltona} and \eqref{exx.3}, respectively. 
\end{proposition}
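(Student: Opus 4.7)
The strategy is to iterate the one-scale decay from Proposition \ref{psp.2} along a geometric sequence of scales $\sigma_{k}:=\tau^{k}r$, for $r\le \rr$, where $\tau\equiv\tau(\data,\gamma_{0})\in(0,1/64)$ is fixed by \eqref{sceltona}. In order to apply Proposition \ref{psp.2} at every step, the smallness condition $\tx{E}_{u}(x_{0},\sigma_{k})<\varepsilon_{*}$ must persist along the whole chain; this forces the choice of $\epsb{b}$ as a suitably small fraction of $\varepsilon_{*}/c_{*}$, consistently with \eqref{leduecos}.

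Setting $M_{r}:=\sup_{t\le r}\pptto$, I would prove by induction on $k\in\en$ the discrete bound
\begin{equation*}
\tx{E}_{u}(x_{0},\sigma_{k})\le (\tau^{\gamma_{0}}/2)^{k}\tx{E}_{u}(x_{0},r)+2c_{*}M_{r}\,,
\end{equation*}
where $c_{*}\equiv c_{*}(\data,\gamma_{0})$ is the constant from Proposition \ref{psp.2}. Assuming this at step $k$ together with $\tx{E}_{u}(x_{0},\sigma_{k})<\varepsilon_{*}$, Proposition \ref{psp.2} applied on $B_{\sigma_{k}}(x_{0})$, together with the trivial bound $\sigma_{k}^{2s}\nra{f}_{L^{\chi}(B_{\sigma_{k}}(x_{0}))}\le M_{r}$ and the summation $(\tau^{\gamma_{0}}/2)(2c_{*}M_{r})+c_{*}M_{r}\le 2c_{*}M_{r}$, yields the next-step estimate. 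The smallness $\tx{E}_{u}(x_{0},\sigma_{k+1})<\varepsilon_{*}$ needed to iterate then follows from \eqref{exx.42} and \eqref{leduecos}$_{1}$, since $\tx{E}_{u}(x_{0},\sigma_{k+1})\le (1+2c_{*})\epsb{b}<\varepsilon_{*}$. Specializing to $r=\rr$ delivers \eqref{exx.72} with $c_{2}:=1+2c_{*}$.

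To pass from discrete to continuous scales, I would interpolate through the monotonicity estimate \eqref{scataildopoff}$_{1}$: for the unique $k$ with $\sigma_{k+1}<\sigma\le\sigma_{k}$ one has $\tx{E}_{u}(x_{0},\sigma)\lesssim_{\tau}\tx{E}_{u}(x_{0},\sigma_{k})$, and combining with $(\tau^{\gamma_{0}}/2)^{k}\le\tau^{-\gamma_{0}}(\sigma/r)^{\gamma_{0}}$ yields \eqref{exx.5} for an explicit $c_{1}\equiv c_{1}(\data,\gamma_{0})$, which can be enlarged if necessary to meet the lower bound $c_{1}>4c_{*}$ in \eqref{leduecos}$_{2}$. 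Finally, \eqref{vmo.c} is an immediate consequence of \eqref{exx.5}: given any $\varepsilon>0$, first fix $r\le\rr$ small enough that $c_{1}M_{r}<\varepsilon/2$, exploiting the assumed uniform vanishing of $\ppsixo$ as $\sigma\to 0$, and then choose $\sigma$ so small that $c_{1}(\sigma/r)^{\gamma_{0}}\tx{E}_{u}(x_{0},r)<\varepsilon/2$.

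The main obstacle is the careful calibration of $\epsb{b}$ against $\varepsilon_{*}$ and $c_{*}$ in \eqref{leduecos} so that the induction never exits the small-potential regime where Proposition \ref{psp.2} is applicable; once this calibration is made, the rest is essentially geometric bookkeeping, the analytic content (nonlocal linearization via $s$-harmonic approximation and the harmonic-type excess decay of Section \ref{linearsec}) being already encoded in Proposition \ref{psp.2}.
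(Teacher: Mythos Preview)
Your approach is essentially the same as the paper's: iterate Proposition~\ref{psp.2} along the geometric sequence $\tau^k r$, propagate the smallness $\tx{E}_u(x_0,\sigma_k)<\varepsilon_*$ by calibrating $\epsb{b}$, interpolate via \eqref{scataildopoff}$_1$, and deduce \eqref{vmo.c} from \eqref{exx.5} by the $r\to0$--then--$\sigma\to0$ argument.

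There is one point you elide that the paper makes explicit. Your discrete bound and the smallness verification $\tx{E}_u(x_0,\sigma_{k+1})\le(1+2c_*)\epsb{b}$ use $\tx{E}_u(x_0,r)\le\epsb{b}$, but \eqref{exx.42} only grants this for $r=\rr$. To run your iteration at a \emph{general} $r\le\rr$ you must first bootstrap: apply the argument with $r=\rr$ to obtain \eqref{exx.72}, conclude $\tx{E}_u(x_0,r)\le c_2\epsb{b}<\varepsilon_*$ for every $r\le\rr$, and only then rerun the iteration from $r$. This forces $\epsb{b}$ to be chosen somewhat smaller than \eqref{leduecos} alone dictates (roughly $\epsb{b}\lesssim\varepsilon_*/(c_2 c_*)$ rather than $\varepsilon_*/(4c_*)$); the paper's choice $\epsb{b}=\varepsilon_*/(4c_3(c_*+1))$ reflects exactly this two-pass structure in its Step~2. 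Once you insert this bootstrap explicitly, your argument is complete and matches the paper's.
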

\begin{proof} {\em Step 1.} Let us fix $\gamma_{0}$ such that $0< \gamma_{0} < \min\{2s,1\}$ as in Proposition \ref{psp}. Here we first assume that 
\eqn{exx.4}
$$
\tx{E}_{u}(x_{0},\rr)<\ti{\eps}\qquad \mbox{and}\qquad \sup_{t\le \rr}\, \pptto<\frac{\ti{\eps}}{2c_{*}}
$$
for some positive number $\ti{\eps}\leq \varepsilon_{*}$, where $\varepsilon_{*}\equiv \varepsilon_{*}(\data,\omega(\cdot),\gamma_{0})\in (0,1)$ is the same smallness threshold considered in $\eqref{exx}$, and $c_{*}\equiv c_{*}(\data,\gamma_{0})$ is the constant appearing in \eqref{exx.3}. Under such assumptions we prove that \rif{exx.5} holds for $r=\rr$ and that 
\eqn{exx.7}
$$
\sup_{\sigma\le \rr}\tx{E}_{u}(x_{0},\sigma)< c_{3}\ti{\eps}\,,\qquad c_{3}\equiv c_{3}(\data, \gamma_{0})\,.
$$
As $\tx{E}_{u}(x_{0},\rr)<\eps_{*}$ holds by \rif{exx.4}$_1$, we can apply Proposition \ref{psp.2}, that yields \rif{exx.333}. 
Then \eqref{exx.4} implies $\tx{E}_{u}(x_{0},\tau\rr)<\ti{\eps}$, so in view of $\eqref{exx.4}_{2}$ the same argument can be applied on $B_{\tau\rr}$ and eventually on all scales $B_{\tau^i\rr}$. Specifically, proceeding by induction we see that 
\eqn{together}
$$
\begin{cases}
\tx{E}_{u}(x_{0},\tau^i\rr)<\ti{\eps}\\[3pt]
\displaystyle \tx{E}_{u}(x_{0},\tau^{i+1}\rr)\leq  \frac{\tau^{\gamma_{0}}}{2}\tx{E}_{u}(x_{0},\tau^i\rr)+c_{*}
(\tau^i\rr)^{2s}\nra{f}_{L^\chi(B_{\tau^i\rr}(x_0))}
\end{cases}
$$ 
holds for every $i \in \en_0$. 
Again by induction, the above display implies
\begin{flalign}\label{exx.6}
\tx{E}_{u}(x_{0},\tau^{i+1}\rr)&\leq  \tau^{\gamma_{0}(i+1)}\tx{E}_{u}(x_{0},\rr)+c_{*}\sum_{\kappa=0}^{i}\tau^{\gamma_{0}(i-\kappa)}(\tau^{\kappa}\rr)^{2s}\nra{f}_{L^\chi(B_{\tau^{\kappa}\rr}(x_0))}\nonumber \\
&\leq \tau^{\gamma_{0}(i+1)}\tx{E}_{u}(x_{0},\rr)+\frac{c_{*}}{1-\tau^{\gamma_0}}\sup_{t\le \rr}\, \pptto,
\end{flalign}
for every $i \in \en_0$. 
Next, if $\sigma\in (0, \rr)$ we can find $i \in \en_0$ such that $\tau^{i+1}\rr\le \sigma<\tau^{i}\rr$, and
\eqn{sopra0}
$$
\tx{E}_{u}(x_{0},\sigma)\stackrel{\eqref{scataildopoff}}{\le} \ti{c}\tau^{-n/2}\tx{E}_{u}(x_{0},\tau^{i}\rr)=:  c_{3}\tx{E}_{u}(x_{0},\tau^{i}\rr)\stackrel{\eqref{together}}{<}c_{3} \ti{\eps}
$$
where $c_{3}\equiv c_{3}(\data,\gamma_{0}) := \ti{c}\tau^{-n/2}\geq 1$. Observe that $c_{3}$ is independent of $\ti{\eps}$. This completes the proof of \rif{exx.7}. 
When $i\geq 1$, we continue to estimate from the last display as follows:
\begin{eqnarray}
\tx{E}_{u}(x_{0},\sigma)&\stackrel{\eqref{exx.6}}{\le}&\ti{c}\tau^{-n/2+\gamma_{0}i}\tx{E}_{u}(x_{0},\rr)+\frac{c_{*}\ti{c}\tau^{-n/2}}{1-\tau^{\gamma_0}}\sup_{t\le \rr}\, \pptto\nonumber \\
&\le&\ti{c}\tau^{-n/2-\gamma_{0}}\left(\frac{\sigma}{\rr}\right)^{\gamma_{0}}\tx{E}_{u}(x_{0},\rr)+\frac{c_{*}\ti{c}\tau^{-n/2}}{1-\tau^{\gamma_0}}\sup_{t\le \rr}\, \pptto\nonumber 
\end{eqnarray}
and therefore we conclude with
\eqn{sopra}
$$
\tx{E}_{u}(x_{0},\sigma) \leq c_{1}\left(\frac{\sigma}{\rr}\right)^{\gamma_{0}}\tx{E}_{u}(x_{0},\rr)+c_{1}\sup_{t\le \rr}\, \pptto
$$
for
\eqn{lac1}
$$
c_{1}:=\frac{c_{*}\ti{c}\tau^{-n/2-\gamma_0}}{1-\tau^{\gamma_0}} \Longrightarrow c_1\equiv c_{1}(\data,\gamma_{0})> \max\{4c_*, c_3\}\,.
$$
Finally, the same estimate holds when $i=0$ too
$$
\tx{E}_{u}(x_{0},\sigma) \stackleq{scataildopoff} \ti{c}\tau^{-n/2}\left(\frac{\sigma}{\rr}\right)^{-\gamma_{0}}\left(\frac{\sigma}{\rr}\right)^{\gamma_{0}}\tx{E}_{u}(x_{0},\rr)\leq c_{1}\left(\frac{\sigma}{\rr}\right)^{\gamma_{0}}\tx{E}_{u}(x_{0},\rr)\,.
$$
Observe that $c_{1}$ is independent of $\ti{\eps}$. This, in particular, completes the proof of \eqref{exx.5} in the special case $r=\rr$ provided \rif{exx.4} holds. In fact, the constant $c_1$ found in \rif{lac1} fixes the one in \rif{exx.5} also in the case $r< \rr$, as we are going to see in the next step by choosing a suitable value of $\ti{\eps}$. 

{\em Step 2}. With $c_{1}, c_{3}\equiv c_{1}, c_{3}(\data,\gamma_{0})$ determined in Step 1, see \rif{sopra0} and \rif{lac1}, we now want to prove that choosing 
\eqn{eppi1}
$$
 \epsb{b} := \frac{\varepsilon_{*}}{4c_{3}(c_*+1)} \equiv \frac{\varepsilon_{*}}{c_{2}}\,, \qquad c_{2}:= 4c_{3}(c_*+1)
$$
in \rif{exx.42} yields the assertion of Proposition \ref{cor.1}. Indeed, note that the choice in \rif{eppi1} implies
$$
\tx{E}_{u}(x_{0},\rr)<\frac{\varepsilon_{*}}{c_{3}}\qquad \mbox{and}\qquad \sup_{t\le \rr}\, \pptt<\frac{\varepsilon_{*}}{2c_{*}c_{3}}\,.
$$
This is \rif{exx.4} with $\ti{\eps}\equiv \varepsilon_{*}/c_{3}\leq \varepsilon_{*}$ and therefore Step 1, estimate \rif{exx.7},  yields
$$
\sup_{\sigma\le \rr}\, \tx{E}_{u}(x_{0},\sigma)< c_{3}\ti{\eps}= \eps_{*} = c_{2} \epsb{b}
$$
that is \rif{exx.72}. Moreover, this, together with \rif{exx.42} and the choice in \rif{eppi1}, implies
$$
\tx{E}_{u}(x_{0},r)< \varepsilon_{*} \qquad \mbox{and}\qquad \sup_{t\le r}\, \pptt<\frac{\varepsilon_{*}}{2c_{*}}
$$
whenever $0 < r \leq \rr$, that is exactly the starting condition \rif{exx.4} with $\ti{\eps}=\eps_{*}$, but with $B_{r}$ as a starting ball instead of $B_{\rr}$. 
Step 1, and in particular \rif{sopra} with $r$ replacing $\rr$, now implies \rif{exx.5}. 

{\em Step 3}. We finally prove \rif{vmo.c}. Using \rif{exx.72} in \rif{exx.5}, and letting $\sigma \to 0$, yields
$$
\limsup_{\sigma \to 0}\, \tx{E}_{u}(x_{0},\sigma) \leq  c_{1}\sup_{t\le r}\, \pptto,
$$
and \rif{vmo.c} follows eventually letting $r\to 0$. 
\end{proof}
\subsection{Excess decay for linear systems} The basic excess decay for linear systems of the type \rif{eqweaksol-linear} avoids smallness conditions as in \rif{exx.42} and reads
\begin{proposition}\label{cor.1-linear}
Under assumptions \eqref{bs.1}-\eqref{bs.5}, let $u$ be a weak solution to \eqref{nonlocaleqn}, $B_{\rr}(x_{0})\Subset \Omega$ be a ball and $\gamma_{0}$ as in \eqref{gammazero}. There exist $R \equiv R(\data,\gamma_{0}) \geq 1$, $\delta \equiv \delta(\data,\gamma_{0})\in (0,1)$  such that if 
$a(\cdot)$ is $\delta$-vanishing in $B_{R\rr}(x_{0})\Subset \Omega$,  then  the excess decay estimate
\eqref{exx.5} 
holds as well as \eqref{vmo.c}. 
\end{proposition}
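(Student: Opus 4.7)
The plan is to prove Proposition \ref{cor.1-linear} by revisiting the proof of Proposition \ref{cor.1} step by step, observing that in the linear setting of \eqref{eqweaksol-linear} the smallness assumptions \eqref{exx.42} can be dropped altogether. The crucial observation is that for \eqref{eqweaksol-linear} the matrix $a(x,y)$ has no dependence on $(v,w)$, so in the blow-up \eqref{bs.4.0} of Proposition \ref{psp} the rescaled coefficient $a_\rr(x,y):=a(x_0+\rr x,x_0+\rr y)$ inherits \eqref{bs.1}--\eqref{bs.2} while \eqref{bs.4bis} is trivially satisfied with $\omega_{\tx{E}}\equiv 0$. As a direct consequence, in Lemma \ref{cflem} one has $\lambda_\rr\equiv 0$, and in the approximate $s$-harmonicity estimate \eqref{byby} the contributions $\omega(\varepsilon_*)^{(t-s)/(2(n+s-t))}$, $\sqrt{\omega(\varepsilon_*)}$ and $\omega(\varepsilon_*)$ simply disappear. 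Since these were precisely the terms whose smallness forced the requirement $\tx{E}_u(x_0,\rr)<\varepsilon_*$ and the dependence of $\varepsilon_*$ on $\omega(\cdot)$ in Proposition \ref{psp}, this smallness threshold on the excess drops out: the parameters $R,\delta,\tau$ remain as in \eqref{sceltona} depending only on $\data$ and $\gamma_0$.

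Next, I would keep the dichotomy of Section \ref{sec.sp} unchanged. The large potential regime estimate \eqref{exx.3} from Proposition \ref{psp.1} is purely based on \eqref{scataildopoff} and carries over verbatim. The small potential regime argument of Proposition \ref{psp}, now freed from the $\omega_{\tx{E}}$-terms, still requires the condition $\nra{f_\rr}_{L^\chi(\ttB_1)}\le \varepsilon^*$, i.e.\ $\pprhoo\le \varepsilon^*\tx{E}_u(x_0,\rr)$, but nothing else. Merging the two regimes as in Proposition \ref{psp.2} yields the unconditional one-scale decay
$$
\tx{E}_u(x_0,\tau\rr)\le \frac{\tau^{\gamma_0}}{2}\tx{E}_u(x_0,\rr)+c_{*}\pprhoo,
$$
which is the exact analog of \eqref{exx.333} and now holds for every ball $B_\rr(x_0)\Subset\Omega$ such that $a(\cdot)$ is $\delta$-vanishing in $B_{R\rr}(x_0)$, with no smallness hypothesis required on either $\tx{E}_u$ or $f$.

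With this unconditional one-scale decay in hand, the iteration over dyadic scales $B_{\tau^i\rr}(x_0)$ proceeds as in Step 1 of the proof of Proposition \ref{cor.1}: the induction producing \eqref{exx.6} goes through unchanged, and interpolating via \eqref{scataildopoff}$_1$ for arbitrary $\sigma\le r\le \rr$ yields \eqref{exx.5} with the same constant $c_1\equiv c_1(\data,\gamma_0)$ recorded in \eqref{lac1}. The limit \eqref{vmo.c} then follows verbatim from Step 3 of that proof by sending $\sigma\to 0$ and subsequently $r\to 0$. The only real obstacle — and it amounts essentially to bookkeeping — is to verify that the higher-integrability input \eqref{cf.2} for the cut-off $\ti{u}_\rr$ and the approximate harmonicity test \eqref{sh.1} of Lemma \ref{shar} do survive the removal of the modulus $\omega(\cdot)$; since $\omega(\cdot)$ ever entered the arguments of Section \ref{sec6} only through $\omega_{\tx{E}}$, which is now identically zero, this reduces to a direct inspection of the proof, with no new estimate needed.
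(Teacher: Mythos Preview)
Your proposal is correct and follows essentially the same approach as the paper. The paper's proof is terse—it simply notes that when $a(\cdot)$ is independent of the solution one may take $\omega(\cdot)\equiv 0$ in \eqref{bs.4}, so that the first smallness condition in \eqref{exx} becomes unnecessary in Proposition \ref{psp}, and consequently the second one is not required to sustain the iteration \eqref{together}; your write-up expands exactly this observation with the appropriate bookkeeping through Lemma \ref{cflem}, the approximate $s$-harmonicity estimate \eqref{byby}, and Steps 1--3 of Proposition \ref{cor.1}.
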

The proof of Proposition \ref{cor.1-linear}, which is identical to - and actually simpler than - the one of Proposition  \ref{cor.1}, and it relies on the fact that when the coefficient matrix $a(\cdot)$ does not depend on the solution then \rif{bs.4} is satisfied with $\omega(\cdot)\equiv 0$. This reflects in that in the proof of Proposition \ref{psp} the first condition in \rif{exx} is not needed and, as a consequence, also the second one is not required to iterate as in \rif{together}.

\section{Regular points}\label{rs.sec} Here we quantify the Hausdorff dimension of the set of points of $\Omega$ on which a smallness condition of type $\tx{E}_{w}(x_{0},\rr)<\varepsilon$ as in $\eqref{exx.4}$ does not take place. 
\begin{proposition}\label{regp1} Let $w$ be a $W^{t,p}_{\loc}(\Omega;\mathbb{R}^N) \cap L^{1}_{2s}$-regular map, with $0<t<1$, $p\geq 2$ and define $\Sigma\subset \Omega$ as
\eqn{sigmone}
$$
\Sigma:=\left\{x \in \Omega \, \colon \, \limsup_{\sigma\to 0}\,  \tx{E}_{w}(x,\sigma)>0\right\}\,.
$$
Then 
\eqn{accade}
$$
\begin{cases}
\ddim(\Sigma)\leq n-pt & \mbox{when}  \ \  pt<n\\
\mbox{$\Sigma$ is empty} &\mbox{when} \ \   pt\geq n\,.
\end{cases}
$$
\end{proposition}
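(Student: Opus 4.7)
The plan is to locate $\Sigma$ inside the singular set of a suitable finite Borel measure on $\er^n$ built from the Gagliardo seminorm of $w$, and then to invoke a classical Vitali-type density lemma to bound its Hausdorff dimension. Introduce
$$F(z):=\int_\Omega \frac{|w(z)-w(y)|^p}{|z-y|^{n+tp}}\,\dy,\qquad \|F\|_{L^1(\Omega)}=[w]_{t,p;\Omega}^p<\infty,$$
and let $\mu$ be the finite Borel measure on $\er^n$ with density $F\mathds{1}_\Omega$; by monotonicity, $[w]_{t,p;B_r(x)}^p\leq\mu(B_r(x))$ whenever $B_r(x)\subset\Omega$. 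Define
$$D(x):=\limsup_{r\to 0}r^{tp-n}\mu(B_r(x)),\quad \Sigma_F:=\{x\in\Omega:D(x)>0\},\quad H(x,\lambda):=[\lambda^{tp-n}\mu(B_\lambda(x))]^{1/p}.$$
The argument now reduces to (a) $\Sigma\subset\Sigma_F$ and (b) the size estimates on $\Sigma_F$.

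For (a), fix $x\in\Omega$ and $r_0>0$ with $B_{r_0}(x)\Subset\Omega$. The fractional Poincar\'e inequality \eqref{poinfrac} together with Jensen's inequality (using $p\geq 2$) yields, for every $\lambda\leq r_0$,
$$\nra{w-(w)_{B_\lambda(x)}}_{L^2(B_\lambda(x))}+\nra{w-(w)_{B_\lambda(x)}}_{L^1(B_\lambda(x))}\leq c\,\lambda^{t-n/p}[w]_{t,p;B_\lambda(x)}\leq c\,H(x,\lambda).$$
Taken at $\lambda=r$, this bounds the $L^2$ part of $\tx{E}_w(x,r)$. The tail part is controlled via \eqref{scatail}, applied with outer radius $r_0$ and inner radius $r$: the resulting bulk term is finite thanks to $w\in L^1_{2s}$ and is damped by the prefactor $(r/r_0)^{2s}\to 0$, while the integral term is majorised by $c\int_r^{r_0}(r/\lambda)^{2s}H(x,\lambda)\,d\lambda/\lambda$. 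If $D(x)=0$, then $H(x,\lambda)\to 0$ as $\lambda\to 0$; splitting this integral at a threshold $\lambda_\eps$ below which $H(x,\cdot)<\eps$, together with $\int_r^{r_0}(r/\lambda)^{2s}d\lambda/\lambda\leq 1/(2s)$ and $r^{2s}\lambda_\eps^{-2s}\to 0$, shows that the integral tends to $0$ as $r\to 0$. Hence $\tx{E}_w(x,r)\to 0$ whenever $x\notin\Sigma_F$, proving $\Sigma\subset\Sigma_F$.

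For (b), I invoke the classical fact that for any finite Borel measure $\nu$ on $\er^n$ and any $\alpha\geq 0$, the set $\{x:\limsup_{r\to 0}r^{-\alpha}\nu(B_r(x))>0\}$ has Hausdorff dimension at most $\alpha$: fixing $\lambda>0$, a Vitali selection from the balls $B_{r_i}(x_i)$ satisfying $\nu(B_{r_i}(x_i))>\lambda r_i^\alpha$ with $r_i$ arbitrarily small yields $\sum(5r_i)^\alpha\leq 5^\alpha\nu(\er^n)/\lambda$, so $\mathcal{H}^{\alpha+\eps}\{D>\lambda\}=0$ for every $\eps>0$; letting $\lambda\downarrow 0$ gives $\ddim\{D>0\}\leq\alpha$. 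Applied with $\alpha=n-tp>0$ this proves \eqref{accade}$_1$. When $tp\geq n$ one has $D\equiv 0$ on $\Omega$: for $tp>n$, $r^{tp-n}\mu(B_r(x))\leq r^{tp-n}\mu(\Omega)\to 0$; for $tp=n$, absolute continuity of $\mu=F\,\dz$ forces $\mu(B_r(x))\to 0$ as $r\to 0$, for every $x\in\Omega$. Hence $\Sigma_F=\emptyset$, giving \eqref{accade}$_2$.

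The main technical point is the tail control in step (a), where the global information $w\in L^1_{2s}$ must be reconciled with the local Gagliardo structure encoded in $\mu$; the form of \eqref{scatail} is precisely tailored to this purpose, separating an ``outer'' contribution (finite, and scaling away as $(r/r_0)^{2s}$) from an ``inner'' one (subordinate to $H(x,\cdot)$ and thus to the density of $\mu$). Once this control is in place, everything reduces to the standard Hausdorff content estimate of the density set, uniformly in the exponent regime.
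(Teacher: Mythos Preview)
Your proof is correct and follows essentially the same strategy as the paper's: both reduce $\Sigma$ to the upper-density set of the Gagliardo seminorm (your $\Sigma_F$ is the paper's $\Sigma_3$, since $\sigma^t\snra{w}_{t,p;B_\sigma}\approx H(x,\sigma)$) via fractional Poincar\'e and the tail decomposition \eqref{scatail}, then invoke the standard density/Hausdorff-dimension estimate. The only presentational differences are that the paper interposes the intermediate set $\Sigma_0=\{\limsup\nra{w-(w)_{B_\sigma}}_{L^p(B_\sigma)}>0\}$ and handles the integral term in \eqref{scatail} via H\^opital rather than your $\eps$-splitting, and that it cites \cite[Lemma~4.2]{min03} for the dimension bound whereas you reproduce the Vitali argument; neither of these is a genuine difference in approach.
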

\begin{proof} By a standard exhaustion argument, it is enough to prove the claim under the
stronger assumption $w\in W^{t,p}(\Omega;\mathbb{R}^N) \cap L^{1}_{2s}$. Let us first show that
\begin{flalign}\label{basic-inc}
\Sigma_{1} & :=\left\{x \in \Omega \, \colon \, \limsup_{\sigma\to 0}\,  \tail(w-(w)_{B_{\sigma}(x)};B_{\sigma}(x))>0\right\}\notag \\
&\subset \left\{x \in \Omega \, \colon \, \limsup_{\sigma\to 0}\,  \nra{w-(w)_{B_{\sigma}(x)}}_{L^p(B_{\sigma}(x))}>0\right\}=:\Sigma_0. 
\end{flalign}
For this we consider a generic point $x\in \er^n\setminus \Sigma_0$ and prove that $x\in \er^n\setminus \Sigma_1$, that is 
\eqn{hoppi}
$$
\lim_{\sigma\to 0}\,  \nra{w-(w)_{B_{\sigma}(x)}}_{L^p(B_{\sigma}(x))}=0 \Longrightarrow \lim_{\sigma\to 0}\,  \tail(w-(w)_{B_{\sigma}(x)};B_{\sigma}(x))=0\,.
$$
We take a ball $B_{\rr}(x) \subset \Omega$ and $\sigma \leq \rr$; 
by  \eqref{scatail} and H\"older inequality
\begin{align*}
&\tail(w-(w)_{B_{\sigma}(x)};B_{\sigma}(x))\leq c\left(\frac{\sigma}{\rr}\right)^{2s}\tail(w-(w)_{B_{\rr}(x)};B_{\rr}(x)) \nonumber \\
&\qquad +c\left(\frac{\sigma}{\rr}\right)^{2s}\nra{w-(w)_{B_{\rr}(x)}}_{L^{p}(B_{\rr}(x))}\nonumber +c \int_{\sigma}^{\rr}\left(\frac{\sigma}{\lambda}\right)^{2s}\nra{w-(w)_{B_{\lambda}(x)}}_{L^{p}(B_{\lambda}(x))}\frac{\dlam}{\lambda}\,.
\end{align*}
It is sufficient to show that the last term in the above display converges to zero when $\sigma \to 0$. For this note that we can assume, without loss of generality, that 
\eqn{ostro}
$$
\lim_{\sigma\to 0} \int_{\sigma}^{\rr}\left(\frac{1}{\lambda}\right)^{2s}\nra{w-(w)_{B_{\lambda}(x)}}_{L^{p}(B_{\lambda}(x))}\frac{\dlam}{\lambda}=\infty
$$
otherwise we are done. Note that the limit in the right-hand side of \rif{ostro} always exists as its argument is a non-increasing function of $\sigma$, and that the integrand is a non-negative, continuous function of $\lambda$. Applying de l'Hôpital criterion gives 
\begin{flalign*}
 \lim_{\sigma\to 0}\int_{\sigma}^{\rr}\left(\frac{\sigma}{\lambda}\right)^{2s}\nra{w-(w)_{B_{\lambda}(x)}}_{L^{p}(B_{\lambda}(x))}\frac{\dlam}{\lambda} &= 
   \lim_{\sigma\to 0}\frac{1}{\sigma^{-2s}}\int_{\sigma}^{\rr}\left(\frac{1}{\lambda}\right)^{2s}\nra{w-(w)_{B_{\lambda}(x)}}_{L^{p}(B_{\lambda}(x))}\frac{\dlam}{\lambda}\\
 &=\lim_{\sigma\to 0} \frac{1}{2s}\nra{w-(w)_{B_{\sigma}(x)}}_{L^{p}(B_{\sigma}(x))}=0 
\end{flalign*}
and \rif{hoppi} follows together with \rif{basic-inc}. To continue with the proof note that as $p\geq 2$, by H\"older's inequality and \rif{basic-inc} we have $\Sigma\subset \Sigma_0\cup\Sigma_1 = \Sigma_0$ and therefore to complete the proof we can prove \rif{accade} with $\Sigma$ replaced by $\Sigma_0$. We first consider the case $pt\geq n$ for which it is sufficient to observe that the emptiness of $\Sigma_0$ is an immediate consequence of \rif{poinfrac} (absolute continuity of the integral is needed in the borderline case $pt=n$). It remains to treat the case $pt<n$. For this denote
$$
\Sigma_3 :=\left\{x \in \Omega \, \colon \, \limsup_{\sigma\to 0}\,  \sigma^{t}  \snra{w}_{t,p;B_{\sigma}(x)}>0\right\}
$$
and use \cite[Lemma 4.2]{min03} in order to conclude that $\ddim(\Sigma_3)\leq n-pt$. As \rif{poinfrac} implies $\Sigma_{0} \subset \Sigma_3$, it also follows that $\ddim(\Sigma_{0})\leq n-pt$ and the proof is complete. \end{proof}
\begin{proposition}\label{regp2} Let $w\in W^{s,2}_{\loc}(\Omega;\mathbb{R}^N) \cap L^{1}_{2s}$ and let $\mathcal R\geq 1$ be an arbitrary number. The set 
$$
\ti{\Omega}_{\eps, \rr,  \mathcal R}(w):= \left\{x\in \Omega\, \colon  \, \tx{E}_{w}(x,\rr)<\varepsilon \  \mbox{and}\ B_{\mathcal R\rr}(x)\Subset \Omega\right\}
$$
is open for every choice of $\eps, \rr >0$. As a consequence,  for every choice of $\eps, \rr_{0}>0$ the set  
 \begin{flalign}\Omega_{\eps, \rr_0 , \mathcal R}(w) & :=  \bigcup_{0< \rr <  \rr_0} \ti{\Omega}_{\eps, \rr , \mathcal R}(w)\notag \\
 & =
\left\{x \in \Omega \, \colon \, \exists\,  B_{\mathcal R\rr_{x}}(x)\Subset \Omega\  \mbox{such that} \  \tx{E}_{w}(x, \rr_x) < \eps\  \mbox{and} \  \rr_x < \rr_0  \right\} \label{regolar}
 \end{flalign}
is open too. Moreover, if $w\in W^{t,p}_{\loc}(\Omega;\mathbb{R}^N)$  with $0<t<1$ and $p\geq 2$, then 
\eqn{regolar2}
$$ 
\begin{cases}
\, \ddim(\Omega \setminus \Omega_{\eps, \rr_0, \mathcal R}(w))\leq n-pt & \mbox{when}  \ \  pt<n\\[3pt]
\, \mbox{$\Omega \setminus \Omega_{\eps, \rr_0, \mathcal R}(w)$ is empty} &\mbox{when} \ \   pt\geq n\,.
\end{cases}
$$
\end{proposition}
\begin{proof}
The openness of $\ti{\Omega}_{\eps, \rr , \mathcal R}(w)$ stems from the continuity of $x\mapsto \tx{E}_{w}(x,\rr)$, that in turn follows from \rif{affini2}$_1$. Let $\Sigma$ be defined in \rif{sigmone}; note that since $\Omega$ is open, for every $x\in \Omega$ there exists $\ti{\rr}$  such that $B_{\mathcal R\rr}(x)\Subset \Omega$ provided $\rr < \ti{\rr}$. It follows that, if $x \not\in \Omega_{\eps, \rr_0,  \mathcal R}(w)$, then
$
\liminf_{\sigma\to 0}\,  \tx{E}_{w}(x,\sigma)\geq \eps
$
so that $\Omega \setminus \Omega_{\eps, \rr_0 , \mathcal R}(w)\subset \Sigma$ and \rif{regolar2} follows from \rif{accade}.  \end{proof}
\begin{proposition}\label{regp4} Let $u$ be a weak solution to \eqref{nonlocaleqn}. Then \eqref{regolar2}  holds with $w\equiv u$, for numbers $t\in (s,1)$ and $p\geq 2$, both depending on $\data$. 
\end{proposition}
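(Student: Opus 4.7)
The plan is to reduce Proposition \ref{regp4} to the already established Proposition \ref{regp2} by upgrading the a priori regularity of $u$ via the self-improving estimates of Section \ref{higherhigher}. Since \ref{regp2} needs some $W^{t,p}$-membership with $t\in(s,1)$ and $p\geq 2$ in order to apply the fractional Hausdorff dimension estimates, the entire task reduces to producing such exponents out of the ingredients $\data$.

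First I would exploit \eqref{bs.5}, which gives $\chi>2_*=2n/(n+2s)$, so that setting $\delta_1:=\chi-2_*>0$ turns the right-hand side datum $f$ into an element of $L^{2_*+\delta_1}_{\loc}(\Omega;\er^N)$. Then, for every ball $B\equiv B_{\rr}\Subset\Omega$ of radius $\rr\leq 1$, I would invoke Theorem \ref{lamaggiore} applied with $g=f$ to deduce the existence of $\delta_0\in(0,1-s)$, $t\in(s,1)$ and $p>2$, all depending only on $n,N,s,\Lambda,\chi$, i.e., on $\data$, such that $u\in W^{t,p}(B/2;\er^N)$, together with the quantitative estimate \eqref{stimapp}. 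A standard covering argument then promotes this local-on-balls statement to $u\in W^{t,p}_{\loc}(\Omega;\er^N)$, with the exponents $t,p$ universal in the sense that they depend only on $\data$.

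At this point the conclusion is almost immediate: by Definition \ref{def:weaksol} one has $u\in W^{s,2}_{\loc}(\Omega;\er^N)\cap L^{1}_{2s}$, and combined with the just-obtained $W^{t,p}_{\loc}$-membership I would apply Proposition \ref{regp2} to $w\equiv u$. Since $t>s$ and $p\geq 2$ have the prescribed dependence, \eqref{regolar2} follows with exactly the stated dichotomy between the cases $pt<n$ and $pt\geq n$. The statement of Proposition \ref{regp2} is phrased with a global $W^{s,2}(\Omega;\er^N)$ assumption; however, its proof only uses restrictions to relatively compact open subsets (through the local characterization of the singular set $\Sigma$ in \eqref{sigmone}), so the local membership of $u$ is enough.

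I do not anticipate any serious technical obstacle here: the heart of the argument has been done in Theorem \ref{lamaggiore}, which provides the quantitative higher differentiability and integrability, and Proposition \ref{regp2}, which converts this information into a Hausdorff dimension bound for the set where the nonlocal excess fails to vanish. The only nontrivial bookkeeping is to check that all parameters $\delta_0,t,p$ delivered by Theorem \ref{lamaggiore} indeed depend solely on $\data$ rather than on auxiliary quantities such as $\nr{f}_{L^{\chi}}$ or the specific solution $u$, which is guaranteed by the statement of that theorem once $\delta_1$ has been fixed as a function of $\data$.
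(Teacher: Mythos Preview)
Your proposal is correct and follows exactly the same route as the paper, which simply records the proof as a direct consequence of Theorem \ref{lamaggiore} and Proposition \ref{regp2}. Your expansion is faithful: fixing $\delta_1=\chi-2_*>0$ via \eqref{bs.5}, invoking Theorem \ref{lamaggiore} to obtain $u\in W^{t,p}_{\loc}(\Omega;\er^N)$ with $t,p$ depending only on $\data$, and then feeding this into Proposition \ref{regp2} (noting that its proof is local in nature) is precisely what the one-line argument intends.
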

\begin{proof}
This is a direct consequence of Theorem \ref{lamaggiore} and Proposition \ref{regp2}. \end{proof}

\section{\texorpdfstring{Review of the basic parameters and the regular set $\Omega_{u}$}{Review of the basic parameters and the regular set Omega u}}\label{parametri}
It is convenient to gather here some of the relevant parameters that will be used in the forthcoming proofs; these quantities are fixed up to the choice of the parameter $\gamma_{0} \in (0,\min\{2s,1\})$, and are 
\eqn{sceltona2}
$$
\begin{cases}
\,   \mbox{$\tau \equiv \tau(\data,\gamma_{0}) \in (0,1/64)$ is determined in \rif{sceltona}}\\[3pt]
\, \mbox{$\varepsilon_{*}\equiv \varepsilon_{*}(\data,\omega(\cdot),\gamma_{0})\in (0,1)$ is determined  in \rif{sceltona}}\\[3pt]
\, \mbox{$R \equiv R(\data,\gamma_{0}) \geq 1$ is determined  in \rif{sceltona}}\\[3pt]
\, \mbox{$\delta \equiv \delta(\data,\gamma_{0})\in (0,1)$ is determined  in \rif{sceltona}}\\[3pt]
\,  \mbox{$c_{*}\equiv c_{*}(\data,\gamma_{0})\geq 1$ is determined  in Proposition \ref{psp.1}}\\[3pt]
\,  \mbox{$\epsb{b} \equiv \epsb{b} (\data,\omega(\cdot),\gamma_{0}) \in (0,1)$  is determined  in Proposition \ref{cor.1}}\\[3pt]
\,  \mbox{$c_{1}, c_{2}\equiv c_{1}, c_{2}(\data,\gamma_{0})\geq 1$ are determined  in Proposition \ref{cor.1}}\,.
\end{cases}
$$
Moreover, in all the proofs of Theorems \ref{ureg1}-\ref{ureg5} we shall assume that $a(\cdot)$ is $(\delta,r_{0})$-\textnormal{BMO} regular for a certain radius $r_0$ (in fact in Theorem \ref{ureg3} we only prescribe that $a(\cdot)$ is  $\delta$-vanishing at the fixed ball $B_{R\rr}(x_{0})\subset \Omega$). Without loss of generality we shall assume that $r_0$ is the same in every case; the number $\delta$ will always be the one considered in \rif{sceltona} and \rif{sceltona2}. 
\subsection{Regular set}\label{regolarino2} We define the regular set $\Omega_{u}$ as follows. We consider a condition of the type 
\eqn{5,6}
$$
\sup_{B_{\sigma}\Subset \Omega, \sigma \leq \sigma_{0}} \ppsi<\epsb{b}
$$
already encountered in \rif{f.bmo} and the radius 
\eqn{raggiob0}
$$\rhob{b} := \min\{r_0/R, \sigma_{0},1\}$$
in such a way  that $a(\cdot)$ is $\delta$-vanishing in $B_{R\rr}(x_{0})$ whenever $\rr \leq \rhob{b}$ and $B_{R\rr}(x_{0})\Subset \Omega$. 
This said, in view of \rif{regolar}-\rif{regolar2}, we define 
\eqn{singolare}
$$
\Omega_{u}:= \Omega_{\epsb{b}, \rhob{b}, R}(u)= \left\{x \in \Omega \, \colon \, \exists\,  B_{R\rr_{x}}(x)\Subset \Omega\  \mbox{such that} \  \tx{E}_{u}(x, \rr_x) < \epsb{b}\  \mbox{and} \  \rr_x < \rhob{b}  \right\}\,.
$$
By Propositions \ref{regp2} and \ref{regp4} the set  $\Omega_{u}$ is open and 
\eqn{riduzione}
$$\ddim(\Omega \setminus \Omega_{u})\leq n-2s - \gamma \,, \qquad \mbox{for some positive} \ \gamma \equiv \gamma (\data)\in (0,n-2s]\,.$$ 
Specifically, we take $\gamma= \min\{n-2s,(p-2)s+(t-s)p\}$. 
\subsection{Reference neighbourhood}\label{refne}  For every $x_{0}\in \Omega_{u}$ we can find a positive radius $\rr_{x_{0}}<\rhob{b}\leq 1$ such that $B_{R\rr_{x_{0}}}(x_{0})\Subset \Omega$ and $\tx{E}_{u}(x_{0},\rr_{x_{0}})<\epsb{b}$ and therefore $x_{0}\in \tilde{\Omega}_{\epsb{b},\rr_{x_{0}}, R}(u)$. This last set is open by Proposition \ref{regp2}, so there exists a {\em reference neighbourhood} 
\eqn{referenza}
$$B_{r_{x_{0}}}(x_{0})\Subset \tilde{\Omega}_{\epsb{b},\rr_{x_{0}},R}(u)\subset \Omega_{u}$$ such that
\eqn{5,7}
$$
\tx{E}_{u}(x,\rr_{x_{0}})<\epsb{b} \quad \mbox{and} \quad 
B_{R\rr_{x_{0}}}(x)\Subset \Omega\,, \quad  \forall \  x \in B_{r_{x_{0}}}(x_{0})\,.
$$
For later use, see Section \ref{secbelow} below, we can without loss of generality assume that 
\eqn{raggio4}
$$
r_{x_{0}} \leq \rr_{x_{0}}/8\,.
$$
Note that with the definitions above the regular set $\Omega_{u}$ still depends on two parameters, that is $\gamma_{0}$ as in \rif{gammazero} (via $\epsb{b}$ and $R$) and $\sigma_{0}$ appearing in \rif{5,6}. These will be eventually chosen in the forthcoming proofs. Of course, $\Omega_u$ also depends on $\data$, which is of course a fixed set of parameters.
 
\section{Partial BMO/VMO regularity: Proof of Theorems \ref{ureg1}-\ref{ureg2}} 
\begin{proof}[Proof of Theorem \ref{ureg1}] Referring to Section \ref{parametri} for the relevant definitions, we take $\gamma_{0}=s$, $\sigma_{0}$ from \rif{f.bmo} and for the rest consider the parameters in \rif{sceltona2}. This fixes $\epsb{b}$ in \rif{f.bmo} and the regular set $\Omega_{u}$ in \rif{singolare}. 
By \rif{f.bmo} and \rif{5,7}, and recalling the notation established in Section \ref{refne}, we can apply Proposition \ref{cor.1} in each ball $B_{\rr_{x_{0}}}(x)$, $x\in B_{r_{x_{0}}}(x_{0})$, $x_0\in \Omega_{u}$ so that 
  \rif{exx.72} gives  
\eqn{recalling}
$$
\sup_{x \in B_{r_{x_{0}}}(x_{0}), \sigma \leq \rr_{x_{0}}}\tx{E}_{u}(x,\sigma)\le c_{2}\epsb{b}\,.
$$
This and a standard covering argument imply that $u\in \textnormal{BMO}_{\loc}(\Omega_{u};\er^N)$. Of course, here and in the following proofs, the Hausdorff dimension estimate of the singular set $\Omega \setminus \Omega_{u}$ follows from \rif{riduzione}. \end{proof}
\begin{proof}[Proof of Theorem \ref{ureg2}] Here the proof is essentially a refinement of the one for \rif{vmo.c} from Proposition \ref{cor.1} in that we ultimately want to show that the limit $ \lim_{\sigma\to 0}\tx{E}_{u}(x_{0},\sigma)=0$ in \rif{vmo.c} is locally uniform in the regular set $\Omega_{u}$. Still taking $\gamma_0=s$, we reconsider and complement the proof of Theorem \ref{ureg1} in view of assumption \rif{f.vmo}. By \rif{f.vmo} we find $\sigma_{0}\equiv \sigma_{0}(\data,\omega(\cdot),f(\cdot))>0$ such that \rif{5,6} is verified. 
This allows us to define the radius $\rhob{b}$ as in \rif{raggiob0}, that is 
\eqn{raggiob}
$$\rhob{b} \equiv \rhob{b} (\data,\omega(\cdot),f(\cdot), r_0)
:= \min\{r_0/R, \sigma_{0},1\}\,.$$
Accordingly, $\Omega_{u}$ is defined as in \rif{singolare} but with the current choice of $\sigma_{0}$. In particular, we can use \rif{recalling}  and the content of Proposition \ref{cor.1}, that is
\eqn{decadenza}
$$
\tx{E}_{u}(x,\sigma) \leq  c_{1}c_{2}\left(\frac{\sigma}{r}\right)^{s}\epsb{b}+c_{1}\sup_{t\le r}\, \ppttx,
$$
whenever $0<\sigma\le  r\leq  \rr_{x_0}$,  $x\in B_{r_{x_0}}(x_0)$, $x_{0}\in \Omega_{u}$; recall that $c_1, c_2$ only depend on $\data$.  
To proceed, fix $\texttt{s} \in (0,\epsb{b})$ and determine $\sigma_1 \in (0, \rhob{b})$ such that 
$$
c_{1}\sup_{B_{t}\Subset \Omega, t \leq \sigma_1}\, \pptt <  \frac{\texttt{s}}{2}\,.
$$ 
Note that $\sigma_1 \equiv \sigma_1 (\data,\omega(\cdot),f(\cdot), r_0, \texttt{s})$. 
For each $x_{0}\in \Omega_u$ we let $\ti{\rr}_{x_{0}}:=\min \{\rr_{x_{0}}, \sigma_1\}\leq 1$, and define $\sigma_{x_{0}}< \ti{\rr}_{x_{0}}$  by requiring that
\eqn{require}
$$
 c_{1}c_{2}\left(\frac{\sigma_{x_{0}}}{\ti{\rr}_{x_{0}}}\right)^{\gamma_{0}(=s)} \epsb{b} < \frac{\texttt{s}}{2}\,.
 $$
Matching the last two displays and \rif{decadenza} with $r\equiv \ti{\rr}_{x_{0}}$ we have 
\eqn{aquella}
$$\sigma \leq \sigma_{x_{0}}\equiv  \sigma_{x_{0}}(\data,\omega(\cdot),f(\cdot), r_0, \texttt{s}, \rr_{x_{0}}) \Longrightarrow \tx{E}_{u}(x,\sigma) < \texttt{s}, \quad \mbox{for every $x\in B_{r_{x_0}}(x_0)$}\,. $$
Note that \rif{aquella} provides a condition that in fact does not depend on the point $x$, and it is therefore uniform in the reference neighbourhood $B_{r_{x_0}}(x_0)$ defined in \rif{referenza}, i.e., 
\eqn{convy}
$$
\lim_{\sigma \to 0}  \tx{E}_{u}(x,\sigma) =0,  \quad \mbox{uniformly with respect to $x\in B_{r_{x_0}}(x_0)$}
$$
that immediately implies \rif{vazero}. 
By a standard covering argument this implies that $u\in \textnormal{VMO}_{\loc}(\Omega_{u};\er^N)$. (Just note that the covering argument uses a finite covering of reference neighbourhoods $B_{r_{x_{0}}}(x_{0})$ thereby determining $\sigma_{x_{0}}$ in \rif{require} corresponding to the smallest value of $\rr_{x_{0}}$, and therefore of  $\ti{\rr}_{x_{0}}$). 
\end{proof}  
\section{Bounds on the oscillations: Proof of Theorem \ref{ureg3}} 
Here everything is pointwise and we do not have to determine a regular set $\Omega_{u}$. Needless to say, we can assume that the right-hand side in \rif{oscosc} is finite and that therefore, by \rif{azero}
\eqn{azerodd}
$$ 
 \lim_{\sigma \to 0}\, \mathbf{I}^{f}_{2s,\chi}(x_{0},\sigma)=0\,.
$$ 
In the proof of Theorem \ref{ureg3} we assume that the hypotheses, and in particular \rif{osc.t0}, are verified in the ball $B_{R\rr}(x_{0})\subset \Omega$ with the choice of the parameters $\delta, R, \epsb{b}$ determined in Section \ref{parametri} with $\gamma_{0}= s$; we correspondingly determine all the remaining parameters in \rif{sceltona2} keeping $\gamma_0=s$.  
For any integer $i\geq -1$ we set 
$
\rr_{i}:=\tau^{i+1}\rr$, $B_{i}:=B_{\rr_{i}}(x_{0})
$ (note that $\rr_{-1}=\rr$ and $\rr_{0}=\tau\rr$), 
thus defining a sequence of shrinking balls $B_{i+1}\subset B_{i}\subset \cdots\subset B_{-1}\equiv B_{\rr}(x_{0})$. 
All the forthcoming balls will be centred at $x_{0}$. Note that 
\begin{flalign}
\notag \sum_{j\geq i}\ppjj& \le\frac{1}{\log(1/\tau)\tau^{n/\chi}}  \sum_{j\geq i}\int_{\rr_{j}}^{\rr_{j-1}}\pplam\frac{\dlam}{\lambda} \\
&\le\frac{1}{\tau^{n}}\mathbf{I}^{f}_{2s,\chi}(x_{0},\rr_{i-1})\,,\label{5,3}
\end{flalign}
for all $i\ge 0$. 
Via interpolation this gives
\eqn{5.7.2.0}
$$
\ppsi\le \frac{1}{\tau^{2n}}\mathbf{I}^{f}_{2s,\chi}(x_{0},\rr_{i-1})\qquad \mbox{for all} \ \ 0<\sigma\le \rr_{i}, \quad i\geq 0,
$$
so that \rif{azerodd} implies
\eqn{5.7.2.2}
$$
\lim_{\sigma\to 0}\, \ppsi=0\,.
$$
By \rif{osc.t0}, that is \eqref{exx.42} from Proposition \ref{cor.1}, and \rif{5.7.2.2}, we can apply \rif{vmo.c}; this yields
\eqn{5,0}
$$
\lim_{\sigma\to 0}\tx{E}_{u}(x_{0},\sigma)=0\,.
$$
Recalling that $\epsb{b}< \eps_*/(4c_*)$ by \rif{leduecos}, we see that \rif{exx.4} are verified with $\tilde \eps = \eps_{*}$ and therefore \rif{together} implies
$$
\begin{cases}
\, \tx{E}_{u}(x_{0},\rr_{j})<\eps_{*}\\[3pt]
 \displaystyle \, \tx{E}_{u}(x_{0},\rr_{j+1}) \leq   \tfrac{1}{2}\tx{E}_{u}(x_{0},\rr_{j})+c_{*}\rr_j^{2s}\nra{f}_{L^\chi(B_{j})}
\end{cases}
$$
for every integer $j\geq -1$. Summing up the above inequalities yields 
$$
\sum_{j=i}^{k+1}\tx{E}_{u}(x_{0},\rr_{j}) \leq \tx{E}_{u}(x_{0},\rr_{i})+   \frac{1}{2}\sum_{j=i}^{k}\tx{E}_{u}(x_{0},\rr_{j}) +c_{*}\sum_{j=i}^{k}\ppjj
$$
for any choice of integers $k\geq i\geq -1$ 
and therefore 
\eqn{therefore}
$$
\sum_{j=i}^{k+1}\tx{E}_{u}(x_{0},\rr_{j}) \leq   2 \tx{E}_{u}(x_{0},\rr_{i}) +2c_{*}\sum_{j=i}^{k}\ppjj\,.
$$
For integers $k>i\geq 0$ we now have 
\begin{align}
& \snr{(u)_{B_{k+1}}-(u)_{B_{i+1}}} \leq \sum_{j=i+1}^{k}\snr{(u)_{B_{j+1}}-(u)_{B_{j}}}\leq   \frac 1{\tau^{n/2}}\sum_{j=i+1}^{k}\nra{u-(u)_{B_{j}}}_{L^{2}(B_{j})}\nonumber \\
&\qquad \qquad \stackrel{\eqref{therefore}}{\le}  c\, \tx{E}_{u}(x_{0},\rr_{i}) +c\sum_{j\geq i}\ppjj
\stackrel{\eqref{5,3}}{\le} c\, \tx{E}_{u}(x_{0},\rr_{i})+c\, \mathbf{I}^{f}_{2s,\chi}(x_{0},\rr_{i-1})\label{5,4}
\end{align}
with $c\equiv c(\data)$; we have used that $\tau \equiv \tau(\data)$. The content of the previous display, together with \rif{azerodd} and \eqref{5,0} implies that $\{(u)_{B_{i}}\}$ is a Cauchy sequence, thus there exists $ u_0\in \er^N$ such that 
$(u)_{B_{i}} \to u_0$ in $\er^N$. 
Using this last piece of information, and letting $k\to \infty$ in \eqref{5,4}, we get
\eqn{5,4.1}
$$
\snr{u_0-(u)_{B_{i+1}}}\le c\, \tx{E}_{u}(x_{0},\rr_{i})+c\, \mathbf{I}^{f}_{2s,\chi}(x_{0},\rr_{i-1}),
$$
for $c\equiv c(\data)$ and every $i \in \en_0$. Let us now consider $\sigma \leq \rr$, and find an integer $i_{\sigma} \geq 1$ such that $\rr_{i_{\sigma}-1}<\sigma\le \rr_{i_{\sigma}-2}$. We then estimate, by means of \rif{scataildopoff}$_1$ and \rif{5,4.1}
\begin{flalign*}
\snr{u_0-(u)_{B_{\sigma}}}&\le \snr{u_0-(u)_{B_{i_{\sigma}+1}}}+\snr{(u)_{B_{i_{\sigma}+1}}-(u)_{B_{\sigma}}}\nonumber \\
&\le c\, \tx{E}_{u}(x_{0}, \rr_{i_{\sigma}})+c\, \mathbf{I}^{f}_{2s,\chi}(x_{0},\rr_{i_{\sigma}-1}) +c\tau^{-3n/2}\nra{u-(u)_{B_{\sigma}}}_{L^{2}(B_{\sigma})}\nonumber \\
&\le c\tau^{-2n}\, \tx{E}_{u}(x_{0}, \sigma)+c\, \mathbf{I}^{f}_{2s,\chi}(x_{0},\sigma)\,.
\end{flalign*}
This is exactly \rif{oscosc}, as $\tau \equiv \tau(\data)$. Moreover, 
thanks to \rif{azerodd} and \eqref{5,0}, letting $\sigma \to 0$ in the above display leads to \rif{lebp} with $u(x_0):= u_0$.

\section{Partial continuity: Proof of Theorem \ref{ureg4}} 
In the spirit of classical potential theory, here continuity of $u$ in $\Omega_u$ means that we shall first identify the solution $u$ with its precise representative \rif{lebp}, that will be shown to exist for every point $x_0\in \Omega_u$. Then, in a second step, we shall show that the precise representative is continuous so that the solution $u$ is continuous in $\Omega_u$ in the sense that it admits a continuous representative. To proceed, we again take $\gamma_{0}=s$ in Section \ref{parametri}. By \eqref{unic} and \rif{5.7.2.0}, 
\eqn{locale}
$$
\lim_{\sigma\to 0}\, \ppsix=0 \quad \mbox{uniformly with respect to $x$}.
$$
This is the main assumption \rif{f.vmo} in Theorem \ref{ureg2}. Therefore we determine the regular set $\Omega_{u}$ as in the proof of Theorem \ref{ureg2} and can use  \rif{convy} for every regular point $x_0\in \Omega_u$, where $B_{r_{x_0}}(x_0)$ denotes the canonical reference neighbourhood of $x_0$ in the sense of Section \ref{refne}. In particular, determining $\rhob{b}$ as in \rif{raggiob}, with $4r_{x_0} \leq \rr_{x_0}< \rhob{b}$ we have 
$$
 \tx{E}_u(x,\rr_{x_0})< \epsb{b} \quad \mbox{and}\quad \sup_{\sigma \leq \rr_{x_0}} \ppsix<\epsb{b}\,,\quad  \forall\  x \in B_{r_{x_0}}(x_0), \ \  \forall \  x_{0}\in \Omega_{u}
 $$
 and  that $a(\cdot)$ is $\delta$-vanishing in $B_{R\rr_{x_0}}(x)\subset \Omega$. 
These are in fact the assumptions \rif{osc.t0} from Theorem \ref{ureg3}, that imply 
\eqn{gsopra}
$$
\snr{u(x)-(u)_{B_{\sigma}(x)}}\le c\, \tx{E}_{u}(x,\sigma)+c\, \mathbf{I}^{f}_{2s,\chi}(x,\sigma)\,,\quad  \forall\  \sigma \leq \rr_{x_0},\quad  \forall\  x \in B_{r_{x_0}}(x_0), \ \  \forall \  x_{0}\in \Omega_{u}
$$
where
$
u(x):=\lim_{\sigma\to 0}(u)_{B_{\sigma}(x)}. 
$
Note that we have in fact proved in Theorem \ref{ureg3} that the precise representative of $u$ exists at every point $x \in B_{r_{x_0}}(x_0)$.  
We now consider a fixed regular point $x_0\in \Omega_u$. By \rif{unic} and \eqref{convy}, the right-hand side in \eqref{gsopra} converges to zero uniformly in $B_{r_{x_{0}}}(x_{0})$, thus proving that the continuous maps $B_{r_{x_{0}}}(x_{0})\ni x\mapsto (u)_{B_{\sigma}(x)}$ uniformly converge to $u$ as $\sigma\to 0$ in the ball $B_{r_{x_{0}}}(x_{0})$. This proves the continuity of $u$ in $B_{r_{x_{0}}}(x_{0})$ and therefore in $\Omega_{u}$, being $x_{0}$ an arbitrary point of $\Omega_{u}$.

\section{Partial H\"older regularity: Proof of Theorems \ref{ureg0} and \ref{ureg5}}\label{secbelow}

\subsection{Proof of Theorem \ref{ureg5}} We shall actually first derive the stronger estimate \rif{holdest-anc}. With $\beta$ as in \rif{asbeta}, we take $\gamma_{0}\equiv \gamma_{0}(\beta)$ as
\eqn{tantigamma}
$$\beta < \gamma_{0} := \frac{\min\left\{2s,1\right\} +\beta}{2} <\min\left\{2s,1\right\}\,.$$
This fixes  $\gamma_{0}$ in Section \ref{parametri}.  Applying \rif{marhol} with $p=\chi$ and  $d= n/(2s-\beta)$ gives 
\eqn{marci}
$$
\sigma^{2s}\nra{f}_{L^{\chi}(B_{\sigma})}  
  \lesssim_{n,s,\chi, \beta}\sigma^{\beta}\nr{f}_{\mathcal{M}^{\frac{n}{2s-\beta}}(B_{\sigma})}
$$
that in fact implies \rif{f.vmo}. It follows the existence of $\sigma_{0}$ such that \rif{5,6} holds and eventually of the radius $\rhob{b}$ defined in \rif{raggiob}, that now depends on $\data,\omega(\cdot),\beta,f(\cdot), r_0$. With $\gamma_{0}$ and $\rhob{b}$ fixed, we consider the set $\Omega_{u}$ described in \rif{singolare}, with \rif{5,7}-\rif{raggio4} being in force. By \eqref{5,6} and \eqref{5,7}  we are able to apply Proposition \ref{cor.1} on every ball of the type $B_{\rr_{x_{0}}}(x)$ for $x\in B_{r_{x_{0}}}(x_{0})$, that is 
\eqn{5,8}
$$
 \tx{E}_{u}(x,\texttt{t} r) \le c_{1}\texttt{t} ^{\gamma_{0}}\tx{E}_{u}(x,r)+c_{1}\sup_{t\le r}\ppttx
$$
that holds for all $0<r\le \rr_{x_{0}}$, $0 < \texttt{t} < 1$, where, after the choice of $\gamma_{0}$ in \rif{tantigamma}, it is $c_1\equiv c_1(\data, \beta)$. 
We now introduce the following fractional maximal type operators (recall \rif{defimax} and \rif{massimale}):
\eqn{Nsharp0} 
$$ 
\begin{cases}
\, \displaystyle  \MMM^{2s-\beta}_{\rr, \chi}(f)(x):= \sup_{0< \sigma \leq \rr }\, \sigma^{2s-\beta}\nra{f}_{L^\chi(B_{\sigma}(x))} \\[8pt]
 \, \displaystyle \MMM^{\#,\beta}_{\rr^{\star},\rr}(x)\equiv  \MMM^{\#, \beta}_{\rr^{\star},\rr}(u)(x):= \sup_{\rr^{\star}\leq  \sigma \leq \rr }\,\sigma^{-\beta} \tx{E}_{u}(x,\sigma)\\[8pt]
 \,  \displaystyle \MMM^{\#,\beta}_{\rr}(x)\equiv \MMM^{\#, \beta}_{ \rr}(u)(x):=  \sup_{0< \sigma \leq \rr }\, \sigma^{-\beta} \tx{E}_{u}(x,\sigma)
 \end{cases}
 \quad 0 < \rr^{\star} \leq \rr \leq \rr_{x_{0}}\,.
$$
Note that all the three quantities in the above display are non-decreasing functions of $\rr$. Moreover, using \rif{scataildopoff}$_1$, it easily follows that
\eqn{trisca}
$$
 \MMM^{\#,\beta}_{\rr^{\star},\rr}(x)\lesssim_{n,s} (\rr^{\star})^{-\beta}\left(\frac{\rr^{\star}}{\rr}\right)^{-n/2} \tx{E}_{u}(x,\rr)\,.
$$
Inequality \rif{5,8} now becomes 
$$
 \tx{E}_{u}(x,\texttt{t} r) \le c_{1}\texttt{t} ^{\gamma_{0}}\tx{E}_{u}(x,r)+c_1r^{\beta} \MMM^{2s-\beta}_{r, \chi}(f)(x)
$$
that now holds whenever $0< r \leq  \rr_{x_{0}}$.
Next, we determine $\texttt{t}\equiv \texttt{t} (\data,\beta)\in (0,1)$ such that $c_{1}\texttt{t} ^{\gamma_{0}-\beta}\leq 1/2$, so that 
$$
 \tx{E}_{u}(x,\texttt{t} r) \le  \frac{\texttt{t} ^{\beta}}{2}\tx{E}_{u}(x,r)+c_1r^{\beta}\MMM^{2s-\beta}_{r, \chi}(f)(x)\,.
$$
 Multiplying this last inequality by $(\texttt{t} r)^{-\beta}$, and using the resulting expression with $\rr_{x_{0}}/2^k \leq r \leq 
 \rr_{x_{0}}/2$, $2\leq k \in \en$, and finally keeping \rif{Nsharp0} in mind, we find
\begin{flalign*}
 \notag\MMM^{\#,\beta}_{\texttt{t} \rr_{x_{0}}/2^k, \texttt{t} \rr_{x_{0}}/2}(x) & \leq  \frac 12 
  \MMM^{\#,\beta}_{\rr_{x_{0}}/2^k,  \rr_{x_{0}}/2}(x) + \frac{c_1}{\texttt{t}^{\beta}}\MMM^{2s-\beta}_{\rr_{x_{0}}/2, \chi}(f)(x)\\
  & \leq \frac 12 
  \MMM^{\#,\beta}_{\texttt{t} \rr_{x_{0}}/2^k, \texttt{t} \rr_{x_{0}}/2}(x)+\frac 12\MMM^{\#,\beta}_{\texttt{t}  \rr_{x_{0}}/2,  \rr_{x_{0}}/2}(x)  + \frac{c_1}{\texttt{t}^{\beta}}\MMM^{2s-\beta}_{\rr_{x_{0}}/2, \chi}(f)(x)\,.
\end{flalign*}
By means of \eqref{trisca} we have 
\eqn{trisca22} 
$$
\MMM^{\#,\beta}_{\texttt{t}  \rr_{x_{0}}/2,  \rr_{x_{0}}/2}(x) \leq 
\frac{c}{\texttt{t}^{n/2+\beta}} \rr_{x_{0}}^{-\beta}\tx{E}_{u}(x,\rr_{x_{0}}/2)
$$
so that
\begin{flalign*}
 \MMM^{\#,\beta}_{\texttt{t} \rr_{x_{0}}/2^k, \texttt{t} \rr_{x_{0}}/2}(x) & \leq  \frac 12  \MMM^{\#,\beta}_{\texttt{t} \rr_{x_{0}}/2^k, \texttt{t} \rr_{x_{0}}/2}(x) \\
 & \quad +  \frac{c}{\texttt{t}^{n/2+\beta}} \rr_{x_{0}}^{-\beta}\tx{E}_{u}(x,\rr_{x_{0}}/2)+ \frac{c_1}{\texttt{t}^{\beta}}\MMM^{2s-\beta}_{\rr_{x_{0}}/2, \chi}(f)(x)
\end{flalign*}
holds, and reabsorbing terms yields
$$
 \MMM^{\#,\beta}_{\texttt{t} \rr_{x_{0}}/2^k, \texttt{t} \rr_{x_{0}}/2}(x) \leq c \rr_{x_{0}}^{-\beta}\tx{E}_{u}(x,\rr_{x_{0}}/2)+ c\MMM^{2s-\beta}_{\rr_{x_{0}}/2, \chi}(f)(x)
$$
that easily implies, again using \rif{trisca22}
$$
 \MMM^{\#,\beta}_{\texttt{t} \rr_{x_{0}}/2^k, \rr_{x_{0}}/2}(x) \leq c \rr_{x_{0}}^{-\beta}\tx{E}_{u}(x,\rr_{x_{0}}/2)+ c\MMM^{2s-\beta}_{\rr_{x_{0}}/2, \chi}(f)(x)\,.
$$
Letting $k \to \infty$ in the above inequality finally leads to 
$$
 \MMM^{\#,\beta}_{\rr_{x_{0}}/2}(x) \leq c \rr_{x_{0}}^{-\beta}\tx{E}_{u}(x,\rr_{x_{0}}/2)+ c\MMM^{2s-\beta}_{\rr_{x_{0}}/2, \chi}(f)(x)
$$
where  $c\equiv  c (\data,\beta)$ and for every $x \in B_{r_{x_0}}(x_0)$. Note that we have several times used that $\texttt{t}$ is a function of $\data$ and $\beta$. Recalling the definition in \rif{massimale} the above inequality implies
\eqn{biscotto}
$$
\textnormal{M}^{\#,\beta}_{\rr_{x_{0}}/2}(u;x) 
\leq c \rr_{x_{0}}^{-\beta}\tx{E}_{u}(x,\rr_{x_{0}}/2)+ c\MMM^{2s-\beta}_{\rr_{x_{0}}/2, \chi}(f)(x)\,.
$$
Recall that by \rif{raggio4} we have $r_{x_{0}}\leq \rr_{x_{0}}/8$ and therefore \rif{scataildopoff}$_2$ gives $\tx{E}_{u}(x,\rr_{x_{0}}/2) \lesssim_{n,s} \tx{E}_{u}(x_{0},\rr_{x_{0}})$. This and \rif{biscotto} imply
$$
\nr{\textnormal{M}^{\#,\beta}_{\rr_{x_{0}}/2}(u;\cdot) }_{L^\infty(B_{r_{x_{0}}}(x_{0}))}\leq c  \rr_{x_{0}}^{-\beta}\tx{E}_{u}(x_{0},\rr_{x_{0}})+c \nr{\MMM^{2s-\beta}_{\rr_{x_{0}}/2, \chi}(f) }_{L^\infty(B_{r_{x_{0}}}(x_{0}))}\,.
$$
Again using that $r_{x_0}\leq \rr_{x_0}/8$ inequality \rif{stimalfa2}  yields \rif{holdest-anc} with $c\equiv c (\data, \beta)$. Moreover 
\rif{marci}  implies that 
$$\nr{\MMM^{2s-\beta}_{\rr_{x_{0}}/2, \chi}(f)}_{L^{\infty}(B_{r_{x_{0}}}(x_{0}))}\leq \nr{\MMM^{2s-\beta}_{\rr_{x_{0}}/2, \chi}(f)}_{L^{\infty}(B_{\rr_{x_{0}}/2}(x_{0}))}  \lesssim_{n,s,\chi, \beta}
 \nr{f}_{\mathcal M^{\frac{n}{2s-\beta}}(B_{\rr_{x_{0}}}(x_{0}))}
$$
so that \rif{holdest} directly follows from \rif{holdest-anc}.

\vspace{2mm}

\begin{remark}\label{isingolari}{\em In Theorems \ref{ureg1}-\ref{ureg2} and \ref{ureg4} the regular set $\Omega_{u}$ has been defined in \rif{ilsingolare}, with $\gamma_{0}=s$ and $\sigma_0$ defined via the verification of \rif{5,6}, accordingly to the discussion made in Section \ref{parametri}. In Theorem \ref{ureg5} we still take $\Omega_{u}$ as in \rif{ilsingolare}, but with $\gamma_{0}\equiv \gamma_{0}(\beta)$ coming from \rif{tantigamma}. }\end{remark}

\subsection{Proof of Theorem \ref{ureg0}}\label{lasecagg} Thanks to \rif{hoppi}, we actually have 
\eqn{identity}
$$
\Omega_{u}= \{x \in \Omega\, \colon \, \lim_{\sigma\to 0 } \nra{u-(u)_{B_{\sigma}(x)}}_{L^{2}(B_{\sigma}(x))} =0\}=\{x \in \Omega\, \colon \, \lim_{\sigma\to 0 } \tx{E}_{u}(x,\sigma) =0\}\,.
$$
Moreover, note that the assumed uniform continuity of $a(\cdot)$ implies that for every $\delta>0$ there exists a positive radius $r_0\equiv r_0(\delta, \tilde{\omega}(\cdot), \Lambda)$ such that $a(\cdot)$ is $(\delta,r_{0})$-\textnormal{BMO} and therefore we are in a setting similar to the one of Theorem \ref{ureg5}. Fix $\beta$ such that $0<\beta <\min\{1, 2s\}$ and determine $\gamma_0$ as in \rif{tantigamma}; this allows us to fix $R, \delta \equiv R, \delta(\data, \beta)$ and $\epsb{b}\equiv \epsb{b}(\data, \tilde{\omega}(\cdot), \beta)$ by means of \rif{sceltona2}. Accordingly, we determine $r_0\equiv r_0(\data, \gamma_0, \tilde{\omega}(\cdot))$ relative to the value of $\delta$ we have just determined. We then consider the set $ \Omega_{\epsb{b}, \rhob{b}, R}(u)$ defined in \eqref{singolare} with 
$\rhob{b} := \min\{r_0/R, \sigma_{0},1\}$ as in \rif{raggiob0}, with $r_0$  we have just determined, and $\sigma_0:=[\epsb{b}/(1+\nr{f}_{L^\infty})]^{1/(2s)}$ via 
$$\sup_{B_{\sigma}\Subset \Omega, \sigma  \leq \sigma_{0}}\ppsi\leq  \sigma_0^{2s}\nr{f}_{L^\infty}<\epsb{b}$$ 
so that \rif{5,6} is verified.  
Note that \rif{identity} implies
$\Omega_u\subset \Omega_{\epsb{b}, \rhob{b}, R}(u).$ 
Therefore, with any $x_0 \in  \Omega_{u}$, proceeding as in the proof of Theorem \ref{ureg5} we deduce that 
$$
\rr_{x_{0}}^{\beta}[u]_{0,\beta;B_{r_{x_{0}}}(x_{0})}\lesssim \tx{E}_{u}(x_{0},\rr_{x_{0}})+\rr_{x_{0}}^{2s} \nr{f}_{L^{\infty}(B_{\rr_{x_{0}}}(x_{0}))}
$$
exactly as for \rif{holdest}. This obviously implies that 
$$\lim_{\sigma\to 0 } \nra{u-(u)_{B_{\sigma}(y)}}_{L^{2}(B_{\sigma}(y))} =0, \quad \mbox{for every $y \in B_{r_{x_{0}}}(x_{0})$}.
$$ We have proved that every point of $\Omega_u$ is a regular point and that $\Omega_u$ is open, that is, we have proved \rif{partclassic}$_1$, recalling that $0<\beta <\min\{1, 2s\}$ was arbitrary. In order to prove \rif{partclassic}$_2$ we again use Theorem \ref{lamaggiore} that yields $u \in W^{t,p}_{\loc}(\Omega;\er^N)$, for some $t\equiv t(\data)\in (s,1)$, $p\equiv p(\data)>2$, therefore we can apply  \cite[Lemma 4.2]{min03} that yields $\ddim(\Omega \setminus \Omega_{u})\leq n-2s - \gamma$  with $\gamma= \min\{n-2s,(p-2)s+(t-s)p\}$ and the proof is complete.

 \section{Partial Lipschitz continuity via potentials and Theorem \ref{gradreg1}}\label{final} 
In this section we prove Theorem \ref{gradreg1}. The proof goes along Sections \ref{identfinal}-\ref{sceltabeta} and involves arguments that will be useful for the proofs of Theorems \ref{gradreg2}-\ref{gradreg3} too. From now on, and for the rest of the paper, we shall always assume that $s \in (1/2,1)$ together with \rif{modco1}-\rif{modco2}. In particular, we shall always assume that $\alpha\in (0,2s-1)$.  
\subsection{Identification of the regular set $\Omega_{u}$}\label{identfinal}   Let us fix the main setting. We keep the choice made in \rif{sceltona2}, with $\epsb{b} \equiv \epsb{b} (\data,\alpha,\gamma_{0})$ in view of \rif{modco1}; in the following the dependence on $\omega(\cdot)$ will be replaced by dependence on $\alpha$. We first observe that, with $\delta \equiv \delta(\data,\gamma_{0})\in (0,1)$ determined in \rif{sceltona2}, assumption \eqref{modco2} clearly implies that $a(\cdot)$ is $(\delta,r_{0})$-\textnormal{BMO} for $r_{0}\equiv r_{0}(\data,\gamma_{0}, \omega_{\star}(\cdot))$ small enough; recall that $\omega_{\star}(\cdot)$ was defined in \rif{modco2}. Moreover, arguing as in \rif{5,3}-\rif{5.7.2.0}, we find
\eqn{maggioras}
$$
 \sigma^{2s-1}\nra{f}_{L^\chi(B_{\sigma}(x))}\lesssim \mathbf{I}^{f}_{2s-1,\chi}(x, 2\sigma)\,, \quad \mbox{whenever $B_{\sigma}(x)\subset \er^n$ and $\sigma \leq 1/2$}
$$
that together with \rif{assilipanc} implies \rif{locale} via
$$
 \sigma^{2s}\nra{f}_{L^\chi(B_{\sigma}(x))} \lesssim \left[\sup_{x\in \Omega}\, \mathbf{I}^{f}_{2s-1,\chi}(x, 1)\right]\sigma\,.
$$
This allows us to determine the radius $\sigma_{0}>0$ from \rif{5,6} as described in Section \ref{parametri}, towards the identification of the regular set $\Omega_{u}$.  
Indeed, we go back to $\Omega_{u}$ defined in \rif{singolare}, with $r_0, \sigma_{0}$ just determined; these in turn determine $\rhob{b}\equiv \rhob{b}(\data, \gamma_{0},  \omega_{\star}(\cdot), f)$ in \rif{raggiob0}, while $\epsb{b}\equiv \epsb{b}(\data, \gamma_{0})$ has been determined in \rif{sceltona2}. Observe $\Omega_{u}$ is at this stage determined essentially up to the choice of $\gamma_{0}$, that we here take as in \rif{tantigamma}, with $\beta $ to be determined later on. Therefore in $\epsb{b}$ and $\rhob{b}$ the dependence on $\gamma_{0}$ can be replaced with the dependence on $\beta$. Accordingly, $\Omega_{u}$ is determined up to the choice of $\beta$ such that $ 0< \beta <\min\left\{2s,1\right\}=1 $. Later on, in Section \ref{sceltabeta}, we shall finally choose one value of $\beta$, i.e., we shall take $\beta=1/2$.

\subsection{Fixing the reference ball $B_{r_{x_0}}(x_0)$, with estimates}\label{eidos}
Fix $x_{0}\in \Omega_{u}$, so that \rif{5,7} and \rif{raggio4} hold as described in Section \ref{parametri}. According to \rif{holdest-anc}, we have that 
$$
\snr{u(\ti{x})-u(\ti{y})}\le c\, \tx{E}_{u}(x_{0},\rr_{x_{0}})\left(\frac{\snr{\ti{x}-\ti{y}}}{\rr_{x_{0}}}\right)^{\beta}+c\nr{\MMM^{2s-\beta}_{\rr_{x_{0}}/2, \chi}(f)}_{L^{\infty}(B_{\rr_{x_{0}}/2}(x_{0}))}\snr{\ti{x}-\ti{y}}^{\beta}
$$
holds whenever $\ti{x}, \ti{y} \in B_{r_{x_{0}}}(x_{0})$ and $0< \beta <\min\left\{2s,1\right\} $. Recall that the definition of the maximal operator $\MMM^{2s-\beta}_{\rr_{x_{0}}/2, \chi}(f)$ is given in \rif{Nsharp0}$_1$. Note that \rif{maggioras} readily implies
$$
\nr{\MMM^{2s-\beta}_{\rr_{x_{0}}/2, \chi}(f)}_{L^{\infty}(B_{\rr_{x_{0}}/2}(x_{0}))} \lesssim  \nr{\mathbf{I}^{f}_{2s-1,\chi}(\cdot, 1)}_{L^\infty(\er^n)}
$$
and the right-hand side is finite thanks to \rif{assilipanc}. 
The content of the last two displays yields that
\eqn{holdest22}
$$
\snr{u(\ti{x})-u(\ti{y})}\le c H  \snr{\ti{x}-\ti{y}}^{\beta}\,, \quad \mbox{for all $\ti{x}, \ti{y} \in B_{r_{x_{0}}}(x_{0})$}
$$
and $0< \beta <\min\left\{2s,1\right\} $, where $c\equiv c (\data, \beta)$ and, recalling that $\rr_{x_{0}}^{-\beta} \leq \rr_{x_{0}}^{-1}$, also
\eqn{holdest222}
$$
H \equiv H(B_{\rr_{x_0}}(x_0)):= \rr_{x_{0}}^{-1}\nra{u}_{L^{2}(B_{\rr_{x_0}}(x_0))}+ \rr_{x_{0}}^{-1}\tail(u;B_{\rr_{x_0}}(x_0)) +\nr{\mathbf{I}^{f}_{2s-1,\chi}(\cdot, 1)}_{L^\infty(\er^n)}\,.
$$
\subsection{A comparison estimate}\label{sezi1} In the following we fix an arbitrary affine map $\ell$. Note that $u_\ell:=u-\ell$ still belongs to $W^{s,2}_{\loc}(\Omega;\er^N)\cap L^{1}_{2s}$, and this follows from \rif{tritri}$_2$. We then fix $x$ and a ball $B_{\rr_{\star}}(x)$ such that 
\eqn{recallradius}
$$
\begin{cases}
x \in B_{r_{x_{0}}/2}(x_{0})\\[7pt]
0< \rr_{\star} \leq \frac{r_{x_{0}}}{8} \Longrightarrow B_{4\rr_{\star}}(x)\subset B_{r_{x_0}}(x_0)\Subset \Omega\,.
\end{cases}
$$
Note that $\rr_{\star}\leq 1/64$ as from Section \ref{parametri} and \rif{raggio4} we have $r_{x_{0}}\leq 1/8$. A further restriction on the size $ \rr_{\star}$ will be taken later on, in \rif{meetme} below, making $\rr_{\star}$ a function of the objects $\data,\alpha,\beta,H,\omega_{\star}(\cdot)$. We consider a ball $B_{\rr}\equiv B_\rr(x)$, where 
\eqn{ilraggio}
$$\rr:=\frac{\rr_{\star}}{2^{N_\rr} }\quad \mbox{for some positive integer $N_\rr \geq 2$}$$ and set  
\eqn{aoao}
$$a_{0}:=a(x,x,u(x),u(x))\,.$$
In the rest of the proof, unless otherwise specified, all the balls will be centred at $x$, therefore we shall often abbreviate $B_r\equiv B_r(x)$, $r>0$.    
With $0 < \rr^\star\leq  \rr_{\star}$, we use the sharp maximal-type operators already considered in \rif{Nsharp0}
\eqn{Nsharp} 
$$
\begin{cases}
 \, \displaystyle \MMM^{\#,1}_{\rr^{\star},\rr_{\star}}(x)\equiv  \MMM^{\#,1}_{\rr^{\star},\rr_{\star}}(u)(x):= \sup_{\rr^{\star} \leq \sigma \leq \rr_{\star}}\, \sigma^{-1} \tx{E}_{u}(x,\sigma)\\[7pt]
 \,  \displaystyle \MMM^{\#,1}_{\rr_{\star}}(x)\equiv \MMM^{\#,1}_{\rr_{\star}}(u)(x):=  \sup_{0< \sigma \leq    \rr_{\star}}\, \sigma^{-1} \tx{E}_{u}(x,\sigma)\,.
 \end{cases}
$$
 By means of Lemma \ref{esiste} we now define $h\in \mathbb{X}^{s,2}_{u_\ell}(B_{\rr},B_{\rr_{\star}})$ as the unique solution to\footnote{Here observe again that every affine map belongs to $W^{s,2}_{\loc}(\Omega;\mathbb{R}^N) \cap L^{1}_{2s}$ when $s>1/2$.}
 \eqn{solvesh}
$$
\begin{cases}
\ -\mathcal{L}_{a_{0}}h=0\quad &\mbox{in} \ \ B_\rr\\
\ h= u_\ell\quad &\mbox{in} \ \ \mathbb{R}^{n}\setminus B_\rr\,.
\end{cases}
$$ 
Since $s>1/2$, every affine map belongs to $L^1_{2s}$ and is locally in
$W^{s,2}$. Hence $u_{\ell}$ is an admissible exterior datum. 
By Proposition \ref{phr} $h$ is smooth in $B_{\rr}$. 
As in \cite[Remark 3.4]{kns}, $\ell$ is a weak solution of $-\mathcal{L}_{a_{0}} \ell =0$ in $\er^n$ in the sense of Definition \ref{def:weaksol} so that 
\eqn{comparazione}
$$w \equiv w_{B_{\rr}, \ell}:=u_\ell- h \in \mathbb{X}^{s,2}_{0}(B_{\rr},B_{\rr_{\star}})$$ and solves
$$
\begin{cases}
\ -\mathcal{L}_{a_{0}}w=-\mathcal{L}_{a_{0}-a} u + f\quad &\mbox{in} \ \ B_\rr\\
\ w= 0\quad &\mbox{in} \ \ \mathbb{R}^{n}\setminus B_\rr
\end{cases}
$$ 
in the weak sense corresponding to Definition \ref{defidir}, i.e., 
		\begin{flalign} 
		\notag
			\noindent &\int_{\mathbb{R}^n} \int_{\mathbb{R}^n} \langle a_0(w(\ti{x})-w(\ti{y})),\varphi(\ti{x})-\varphi(\ti{y}) \rangle \frac{\dxyt}{|\ti{x}-\ti{y}|^{n+2s}} \\
			& \  =\int_{\mathbb{R}^n} \int_{\mathbb{R}^n} \langle (a_0-a(\ti{x},\ti{y}, u(\ti{x}), u(\ti{y})))(u(\ti{x})-u(\ti{y})),\varphi(\ti{x})-\varphi(\ti{y}) \rangle \frac{\dxyt}{|\ti{x}-\ti{y}|^{n+2s}}  + \int_{\Omega} \langle f, \varphi \rangle\d\ti{x} \label{weq}
		\end{flalign}
holds for every $\varphi \in W^{s,2}(\er^n;\er^N)$ such that $\varphi \equiv 0$ in $\er^n\setminus B_\rr$.
\begin{lemma}\label{comparisonestimate} With $w$ as in \eqref{comparazione}, the comparison estimate 
\eqn{compestuvx}
$$
		\nra{w}_{L^2(B_{\rr})} \leq c \rr \mathbf{R}(x,\rr)
$$
holds with $c\equiv c(\data,\alpha,\beta,H)$, where
\eqn{compestuvx2} 
$$
 \mathbf{R}(x,\rr):= \bigg [ \sum_{i=0}^{N_\rr} 2^{(1-2s)i} \omega_{\star} (2^i \rr) + \omega_{\star} (2\rr)+\left (\frac{\rr}{\rr_{\star}} \right )^{\alpha \beta} \bigg ] \MMM^{\#,1}_{\rr,\rr_{\star}}(x) + \rr^{2s-1} \nra{f}_{L^{\chi}(B_{4\rr})}\,.
$$	  
\end{lemma}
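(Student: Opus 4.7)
The plan is to test the weak formulation of \eqref{weq} against $w$ itself, which is an admissible test function since $w \in \mathbb{X}^{s,2}_0(B_\rr, B_{\rr_\star}) \subset W^{s,2}(\er^n;\er^N)$ with support in $\bar B_\rr$ by Remark \ref{remarkino}. The coercivity \eqref{bs.1bis} of $a_0$ yields $\Lambda^{-1}[w]_{s,2;\er^n}^2 \leq |\mathrm{RHS}|$, and then the Poincaré-Friedrichs inequality \eqref{poincf} (with $\ti\Omega = B_\rr$) converts Gagliardo seminorm bounds back into the desired bound on $\nra{w}_{L^2(B_\rr)}$. The right-hand side decouples into an $f$-contribution and the coefficient-mismatch contribution
\begin{equation*}
J := \int_{\er^n}\int_{\er^n} \langle (a_0 - a(y,z,u(y),u(z)))(u(y)-u(z)), w(y)-w(z)\rangle \frac{\dy\dz}{|y-z|^{n+2s}}.
\end{equation*}
The $f$-piece is handled exactly as in the estimate of term (I) inside the proof of Lemma \ref{prop:cacc}: Hölder's inequality together with the fractional Sobolev embedding \eqref{poincf2} applied to $w$ yields, after matching scales, the summand $\rr^{2s-1}\nra{f}_{L^\chi(B_{4\rr})}$ appearing in $\mathbf{R}(x,\rr)$.

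The main work is estimating $J$. Using the symmetry \eqref{bs.2} and that $w\equiv 0$ outside $B_\rr$, one splits $J$ into a diagonal piece over $B_{2\rr}\times B_{2\rr}$ and a long-range piece over $(\er^n\setminus B_{2\rr})\times B_\rr$. Combining \eqref{modco2} applied around the fixed center $x$ with \eqref{bs.4} and the explicit modulus \eqref{modco1}, one has pointwise
\begin{equation*}
|a(y,z,u(y),u(z))-a_0| \leq \omega_\star(\mathrm{diam}) + \Lambda\bigl(|u(y)-u(x)|^\alpha + |u(z)-u(x)|^\alpha\bigr)\wedge \Lambda,
\end{equation*}
where the first term encodes the spatial BMO/continuity oscillation and the second the nonlinear $u$-dependence. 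On $B_{4\rr}\subset B_{r_{x_0}}(x_0)$ the inequality \eqref{holdest22} upgrades the second term to $H^\alpha(|y-x|^{\alpha\beta}+|z-x|^{\alpha\beta})$, and straightforward bookkeeping on $B_{2\rr}\times B_{2\rr}$ (using Cauchy-Schwarz against $[w]_{s,2;B_{2\rr}}$ and the fact that $(2\rr)^{\alpha\beta}\leq \rr_\star^{\alpha\beta}(\rr/\rr_\star)^{\alpha\beta}$ with $\rr_\star\leq 1$) produces the $\omega_\star(2\rr)+(\rr/\rr_\star)^{\alpha\beta}$ contribution.

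For the tail piece, I would decompose $\er^n\setminus B_{2\rr} = \bigsqcup_{i=1}^{N_\rr}(B_{2^i\rr}\setminus B_{2^{i-1}\rr}) \sqcup (\er^n\setminus B_{\rr_\star})$ and use $|y-z|\approx 2^i\rr$ on the $i$-th annulus for $z\in B_\rr$. On each annulus $A_i$ the coefficient bound gives $\omega_\star(2^i\rr)$ from the first term, while the $u$-dependent term is controlled by the weighted tail $(2^i\rr)^{-n-2s}\int_{A_i}|u(y)-u(x)|\,\dy$. The key analytic identity, and the main obstacle of the proof, is that this tail integral on $A_i$ is comparable to $(2^i\rr)\,\tx{E}_u(x,2^i\rr) \leq (2^i\rr)^2\,\MMM^{\#,1}_{\rr,\rr_\star}(x)$ by definition \eqref{Nsharp}, so that after multiplying by the kernel weight $(2^i\rr)^{-2s}$ and summing in $i$ one precisely generates the series $\sum_{i=0}^{N_\rr} 2^{(1-2s)i}\omega_\star(2^i\rr)\,\MMM^{\#,1}_{\rr,\rr_\star}(x)$; here $s>1/2$ is essential so that the exponent $1-2s$ is negative and the series converges dyadically, weighted by $\omega_\star(2^i\rr)$. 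Finally the outer region $\er^n\setminus B_{\rr_\star}$ is absorbed into $\tail(u;B_{\rr_\star})\leq \rr_\star\,\MMM^{\#,1}_{\rr,\rr_\star}(x)$, which in turn is bounded by the last summand of the series. Reassembling all contributions and dividing by $[w]_{s,2;\er^n}$ yields \eqref{compestuvx}.
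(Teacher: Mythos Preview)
Your overall plan---test \eqref{weq} against $w$, use coercivity, split into a diagonal piece on $B_{2\rr}\times B_{2\rr}$ and annular tail pieces---is exactly what the paper does, and your handling of the $f$-term and the final absorption $\rr^{\alpha\beta}\leq(\rr/\rr_\star)^{\alpha\beta}$ are fine. However, two essential steps are missing or misstated.

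First, on the diagonal piece you write ``Cauchy--Schwarz against $[w]_{s,2;B_{2\rr}}$'' but never say how to control the remaining factor $[u]_{s,2;B_{2\rr}}$. This is not free: the paper invokes the Caccioppoli inequality \eqref{cacce} on $B_{4\rr}$ to obtain $\snra{u}_{s,2;B_{2\rr}}\lesssim \rr^{-s}\tx{E}_u(x,4\rr)+\rr^s\nra{f}_{L^{2_*}(B_{4\rr})}$, from which the factor $\rr^{1-s}\MMM^{\#,1}_{\rr,\rr_\star}(x)$ emerges via $\tx{E}_u(x,4\rr)\leq 4\rr\,\MMM^{\#,1}_{\rr,\rr_\star}(x)$.

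Second, and more seriously, your tail analysis conflates two distinct $u$-oscillations. The coefficient mismatch $|a-a_0|$ is indeed bounded via \eqref{bs.4}, \eqref{modco1}, \eqref{modco2} and then \eqref{holdest22}, yielding uniformly $\nr{a-a_0}_{L^\infty(\mathcal B_{2^i\rr})}\lesssim (2^i\rr)^{\alpha\beta}+\omega_\star(2^i\rr)$ (this is the paper's \eqref{osci}). But the integrand \emph{also} carries the separate factor $|u(\ti y)-u(\ti x)|$ from the equation, and the heart of the estimate is controlling $\nra{u-(u)_{B_\rr}}_{L^2(B_{2^i\rr})}$ after centering at $(u)_{B_\rr}$. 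Your assertion that ``this tail integral on $A_i$ is comparable to $(2^i\rr)\tx{E}_u(x,2^i\rr)$'' is not correct: $\tx{E}_u(x,2^i\rr)$ subtracts $(u)_{B_{2^i\rr}}$, not $(u)_{B_\rr}$ or $u(x)$. The paper bridges this gap by the telescoping bound
\[
\nra{u-(u)_{B_\rr}}_{L^2(B_{2^i\rr})}\;\lesssim\;\sum_{k=1}^{i}\nra{u-(u)_{B_{2^k\rr}}}_{L^2(B_{2^k\rr})}\;\lesssim\;\Bigl(\sum_{k=1}^i 2^k\rr\Bigr)\MMM^{\#,1}_{\rr,\rr_\star}(x)\;\approx\; 2^i\rr\,\MMM^{\#,1}_{\rr,\rr_\star}(x),
\]
and it is precisely this geometric sum that produces the factor $2^i$ combining with the kernel weight $2^{-2si}$ to give $2^{(1-2s)i}$. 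The same telescoping is needed for the outer shell: $\tail(u;B_{\rr_\star})$ is \emph{not} bounded by $\rr_\star\MMM^{\#,1}_{\rr,\rr_\star}(x)$---only $\tail(u-(u)_{B_{\rr_\star}};B_{\rr_\star})\leq\tx{E}_u(x,\rr_\star)$ is---and one must still account for $|(u)_{B_{\rr_\star}}-(u)_{B_\rr}|$ via the same chain, which produces the contribution $(\rr/\rr_\star)^{2s-1}$ in \eqref{compestuv} that is then absorbed into $(\rr/\rr_\star)^{\alpha\beta}$ using \eqref{centralfa}.
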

\begin{proof}
 According to the notation fixed in \rif{ballnotation}, for any $r \in (0,\rr_{\star}]$, we set $\mathcal B_{r}:= B_r(x)\times B_r(x)=B_r\times B_r$ and denote 
   $$||a-a_{0}||_{L^\infty(\mathcal B_{r})}:= \sup_{\ti{x},\ti{y} \in B_r(x)} |a(\ti{x},\ti{y},u(\ti{x}),u(\ti{y}))-a_{0}|\,.$$
We shall also denote $
B_{j} = B_{2^j \rr}(x)$ and  
$ \mathcal B_{j} =\mathcal B_{2^j \rr}=B_{2^j \rr}\times B_{2^j \rr}
$ for every integer $j\leq N_\rr$. Note that
\eqn{catena}
$$B_{\rr}=B_{0} \Subset B_1 \Subset \ldots \Subset  B_{N_{\rr}}=B_{\rr_{\star}}\,.$$ 
Thanks to \rif{bs.4} and \rif{modco2}, and recalling \rif{aoao}, for every $r \in (0,\rr_{\star}]$ we have
\begin{flalign}
	||a-a_{0}||_{L^\infty(\mathcal B_{r})} & \leq  \sup_{\ti{x},\ti{y} \in B_r(x)} |a(\ti{x},\ti{y},u(\ti{x}),u(\ti{y}))-a(\ti{x},\ti{y},u(x),u(x))| \nonumber\\
	& \quad \  + \sup_{\ti{x},\ti{y} \in B_r(x)} |a(\ti{x},\ti{y},u(x),u(x))-a_{0}| \nonumber\\
	& \leq  \Lambda \omega \left (2\sup_{B_{r}(x)} |u-u(x)| \right ) + \Lambda\omega_{\star} (r) \leq 2\Lambda \left (\osc_{B_{r}(x)} u \right )^\alpha + \Lambda\omega_{\star} (r)\,.\notag
	\end{flalign}
Appealing to \eqref{holdest22}, as $B_r(x)\subset B_{r_{x_0}}(x_0)$ by \rif{recallradius}$_2$, we conclude with
\eqn{osci}
$$
||a-a_{0}||_{L^\infty(\mathcal B_{r})}  \leq cr^{\alpha \beta} +c \omega_{\star} (r)\,, \qquad r \in (0,\rr_{\star}]\,,
$$ 
where the constant $c$ depends on $\data,\beta$ and $H$ accordingly to \rif{holdest22}-\rif{holdest222}. For later use we record the following direct consequence of \rif{osci} that holds for any number $\sigma \in [0,1]$:
\begin{flalign}
	\sum_{i=0}^{N_\rr} 2^{(\sigma-2s)i} ||a-a_{0}||_{L^\infty(\mathcal B_{i}) } & \leq c \sum_{i \geq 0} 2^{(\sigma+\alpha \beta-2s)i} \rr^{\alpha \beta} + c \sum_{i=0}^{N_\rr} 2^{(\sigma-2s)i} \omega_{\star} (2^i \rr) \nonumber\\
	& = c \rr^{\alpha \beta} + c\sum_{i=0}^{N_\rr} 2^{(\sigma-2s)i} \omega_{\star} (2^i \rr)
	\label{osci2}
\end{flalign}
again with $c\equiv c(\data,\alpha,\beta,H)$ where $\rr$ is in \rif{ilraggio}; here we used that $1+\alpha\beta -2s< 0$ by \rif{modco1}.
To proceed, testing \eqref{weq} by $\varphi\equiv w$, we find
\begin{eqnarray}
	[w]_{s,2;\er^n}^2& \stackrel{\eqref{bs.1}}{\le}& \Lambda \int_{\mathbb{R}^n} \int_{\mathbb{R}^n} \frac{ \langle a_{0} (w(\ti{x})-w(\ti{y})),w(\ti{x})-w(\ti{y}) \rangle}{|\ti{x}-\ti{y}|^{n+2s}}\dyxt \nonumber \\
	& \stackrel{\eqref{weq}}{=}& \Lambda \int_{\mathbb{R}^n} \int_{\mathbb{R}^n} \langle (a_{0}-a(\ti{x},\ti{y},u(\ti{x}),u(\ti{y})))(u(\ti{x})-u(\ti{y})),w(\ti{x})-w(\ti{y}) \rangle \frac{\dyxt}{|\ti{x}-\ti{y}|^{n+2s}} \nonumber \\
	&&+ \Lambda \int_{B_\rr} \langle f, w\rangle\d\ti{x} \nonumber \\
	& \leq& \Lambda \underbrace{\int_{B_{2\rr}} \int_{B_{2\rr}} |a(\ti{x},\ti{y},u(\ti{x}),u(\ti{y}))-a_{0}| \frac{|u(\ti{x})-u(\ti{y})||w(\ti{x})-w(\ti{y})|}{|\ti{x}-\ti{y}|^{n+2s}}\dyxt}_{=: \textnormal{I}_1} \nonumber \\
	&& + 2 \Lambda \underbrace{\int_{B_{\rr}} \int_{\mathbb{R}^n \setminus B_{2\rr}} |a(\ti{x},\ti{y},u(\ti{x}),u(\ti{y}))-a_{0}| \frac{|u(\ti{x})-(u)_{B_\rr}||w(\ti{x})|}{|\ti{x}-\ti{y}|^{n+2s}}\dyxt}_{=: \textnormal{I}_2} \nonumber \\
	&& + 2 \Lambda \underbrace{\int_{B_{\rr}} \int_{\mathbb{R}^n \setminus B_{2\rr}} |a(\ti{x},\ti{y},u(\ti{x}),u(\ti{y}))-a_{0}| \frac{|u(\ti{y})-(u)_{B_\rr}||w(\ti{x})|}{|\ti{x}-\ti{y}|^{n+2s}}\dyxt}_{=: \textnormal{I}_3}\nonumber  \\
	&& + \Lambda \underbrace{\int_{B_{\rr}} \snr{f}\snr{w}\d\ti{x}}_{=:\textnormal{I}_4} \label{combi}
\end{eqnarray}
and  proceed with the estimation of the terms $\textnormal{I}_1, \textnormal{I}_2, \textnormal{I}_3$ and $\textnormal{I}_4$. Thanks to \eqref{cacce}, and recalling that by \rif{ilraggio} we have $4\rr \le \rr_{\star}$, for $\textnormal{I}_1$ we have
\begin{flalign}
	\textnormal{I}_{1} & \leq c \rr^{n/2} ||a-a_{0}||_{L^\infty( \mathcal B_{2\rr})} \snra{u}_{s,2;B_{2\rr}} [w]_{s,2;\er^n} \nonumber\\
	& \leq  c \rr^{n/2} ||a-a_{0}||_{L^\infty( \mathcal B_{2\rr})}  
	\left(\rr^{-s}  \tx{E}_{u}(x,4\rr)+\rr^{s}\nra{f}_{L^{2_*}(B_{4\rr})}\right)[w]_{s,2;\er^n} \nonumber\\
		& \leq c \rr^{n/2} ||a-a_{0}||_{L^\infty( \mathcal B_{2\rr})} \left (\rr^{1-s} \MMM^{\#,1}_{\rr,\rr_{\star}}(x) + \rr^{s} \nra{f}_{L^{\chi}(B_{4\rr})} \right ) [w]_{s,2;\er^n}\nonumber \\
		& \leq c \rr^{n/2+1-s} ||a-a_{0}||_{L^\infty(\mathcal B_{2\rr})}  \MMM^{\#,1}_{\rr,\rr_{\star}}(x) [w]_{s,2;\er^n} 
		+ c \rr^{n/2+s} \nra{f}_{L^{\chi}(B_{4\rr})}   [w]_{s,2;\er^n}.\nonumber 
\end{flalign}
Therefore,  \rif{osci} implies
\eqn{sti1}
$$
\textnormal{I}_{1}  \leq c  \rr^{n/2+1-s} \left( \rr^{\alpha \beta} +\omega_{\star} (2\rr) \right) \MMM^{\#,1}_{\rr,\rr_{\star}}(x) [w]_{s,2;\er^n} + c\rr^{n/2+s} \nra{f}_{L^{\chi}(B_{4\rr})}   [w]_{s,2;\er^n}
$$ 
with $c\equiv c(\data,\alpha,\beta,H)$.
For $\textnormal{I}_{2}$ note that 
\eqn{bas}
$$
\begin{cases}
\, \mbox{$\snr{\ti{y}-\ti{x}} \approx \snr{\ti{y}-x} \gtrsim 2^{i}\rr$ when $\ti{y}\in \er^n\setminus B_{i-1}$ and $\ti{x} \in B_{\rr}$ for $i \geq 2$}\\[3pt]
\,  \nr{w}_{L^2(B_\rr)} \lesssim_{n,s}  \rr^{s}  [w]_{s,2;\er^n}
\end{cases}
$$ the second one being a consequence of \rif{poincf}. 
Also using the content of the above display, we then have 
\begin{flalign*}
	\textnormal{I}_{2} & \leq  2\Lambda \sum_{i=2}^{N_\rr} \int_{B_\rr} \int_{B_{i} \setminus B_{i-1}} |a(\ti{x},\ti{y},u(\ti{x}),u(\ti{y}))-a_{0}| \frac{|u(\ti{x})-(u)_{B_\rr}||w(\ti{x})|}{|\ti{y}-x|^{n+2s}}\dyxt \nonumber\\
	&\qquad  + 2\Lambda \int_{B_\rr} \int_{\mathbb{R}^n \setminus B_{\rr_{\star}}} |a(\ti{x},\ti{y},u(\ti{x}),u(\ti{y}))-a_{0}| \frac{|u(\ti{x})-(u)_{B_\rr}||w(\ti{x})|}{|\ti{y}-x|^{n+2s}}\dyxt \nonumber\\
	& \leq c  \left [ \rr^{-2s} \sum_{i=2}^{N_\rr} 2^{-2si} \nr{a-a_{0}}_{L^\infty( \mathcal B_{i})} + \rr_{\star}^{-2s} \right] \int_{B_\rr} |u-(u)_{B_\rr}||w|\d\ti{x} \nonumber\\
	& \leq c \rr^{n/2-2s} \left[ \sum_{i=0}^{N_\rr} 2^{-2si} \nr{a-a_{0}}_{L^\infty( \mathcal B_{i})} +  \left (\frac{\rr}{\rr_{\star}} \right)^{2s} \right]\nra{u-(u)_{B_\rr}}_{L^2(B_\rr)} \nr{w}_{L^2(B_\rr)} \nonumber\\
	& \leq c \rr^{n/2+1-s} \left[ \sum_{i=0}^{N_\rr} 2^{-2si} \nr{a-a_{0}}_{L^\infty( \mathcal B_{i})} + \left (\frac{\rr}{\rr_{\star}} \right)^{2s} \right] \MMM^{\#,1}_{\rr,\rr_{\star}}(x) [w]_{s,2;\er^n}\,.
\end{flalign*}
By finally using \rif{osci2} with $\sigma=0$ we conclude with 
\eqn{sti2}
$$  \textnormal{I}_{2}\leq c \rr^{n/2+1-s} \left[ \sum_{i=0}^{N_\rr} 2^{-2si} \omega_{\star} (2^i \rr) +\rr^{\alpha \beta} + \left (\frac{\rr}{\rr_{\star}} \right )^{2s}\right] \MMM^{\#,1}_{\rr,\rr_{\star}}(x) [w]_{s,2;\er^n}
$$
where $c\equiv c(\data,\alpha,\beta,H)$. 
For $\textnormal{I}_3$, again using \rif{bas} we estimate
\begin{flalign}
	\textnormal{I}_{3} & \leq 2\Lambda  \sum_{i=2}^{N_\rr} \int_{B_\rr} \int_{B_{i} \setminus B_{i-1}} |a(\ti{x},\ti{y},u(\ti{x}),u(\ti{y}))-a_{0}| \frac{|u(\ti{y})-(u)_{B_\rr}||w(\ti{x})|}{|\ti{y}-x|^{n+2s}}\dyxt \nonumber\\
	&\qquad  + 2\Lambda  \int_{B_\rr} \int_{\mathbb{R}^n \setminus B_{\rr_{\star}}} |a(\ti{x},\ti{y},u(\ti{x}),u(\ti{y}))-a_{0}| \frac{|u(\ti{y})-(u)_{B_\rr}||w(\ti{x})|}{|\ti{y}-x|^{n+2s}}\dyxt \nonumber\\
		& \leq c\rr^{-2s} \sum_{i=2}^{N_\rr} 2^{-2si} \nr{a-a_{0}}_{L^\infty( \mathcal B_{i})} \nra{u-(u)_{B_\rr}}_{L^1(B_{i})} \nr{w}_{L^1(B_\rr)} \nonumber\\
	&\qquad  + c \rr_{\star}^{-2s} \tail(u-(u)_{B_\rr};B_{\rr_{\star}}) \nr{w}_{L^1(B_\rr)} \nonumber\\
	& \leq c\rr^{n/2-2s}  \nr{w}_{L^2(B_\rr)}\sum_{i=2}^{N_\rr} 2^{-2si} \nr{a-a_{0}}_{L^\infty( \mathcal B_{i})} \nra{u-(u)_{B_\rr}}_{L^2(B_{i})}\notag \\
	&\qquad  + c \rr^{n/2}\rr_{\star}^{-2s} \tail(u-(u)_{B_\rr};B_{\rr_{\star}}) \nr{w}_{L^2(B_\rr)} \,.\label{prev0}
\end{flalign} 
In turn, for $2\leq i \leq N_{\rr}$ and recalling \rif{catena}, we have 
\begin{flalign*}
\nra{u-(u)_{B_\rr}}_{L^2(B_{i})} & \leq \nra{u-(u)_{B_{i}}}_{L^2(B_{i})} + \sum_{k=1}^{i} \snr{(u)_{B_{k}}-(u)_{B_{k-1}}}\\
& \leq \nra{u-(u)_{B_{i}}}_{L^2(B_{i})} + 2^n\sum_{k=1}^{i}\nra{u-(u)_{B_{k}}}_{L^2(B_{k})}\\
& \leq 2^{n+1}\sum_{k=1}^{i}\nra{u-(u)_{B_{k}}}_{L^2(B_{k})}
\end{flalign*}
and applying this with $i=N_{\rr}$, we find 
\begin{flalign*}
\tail(u-(u)_{B_\rr};B_{\rr_{\star}}) & \lesssim_{n,s}\, \tail(u-(u)_{B_{\rr_{\star}}};B_{\rr_{\star}})  + \snr{(u)_{B_{\rr_{\star}}}-(u)_{B_\rr}}\\
 & \lesssim_{n,s}\, \tail(u-(u)_{B_{\rr_{\star}}};B_{\rr_{\star}})  + \nra{u-(u)_{B_\rr}}_{L^2(B_{N_{\rr}})} \\
& \lesssim_{n,s} \, \tail(u-(u)_{B_{\rr_{\star}}};B_{\rr_{\star}}) + \sum_{k=1}^{N_{\rr}}\nra{u-(u)_{B_{k}}}_{L^2(B_{k})}\,.
\end{flalign*}
Inserting this last two estimates in \rif{prev0} and using \rif{bas}$_2$ yields
\begin{flalign}
	\textnormal{I}_{3} & \leq  c\rr^{n/2-2s}  \nr{w}_{L^2(B_\rr)}\sum_{i=2}^{N_\rr} \sum_{k=1}^{i} 2^{-2si} \nr{a-a_{0}}_{L^\infty( \mathcal B_{i})}\nra{u-(u)_{B_{k}}}_{L^2(B_{k})}\notag\\
	&\qquad  + c \rr^{n/2}\rr_{\star}^{-2s}\nr{w}_{L^2(B_\rr)} \sum_{k=1}^{N_{\rr}}\nra{u-(u)_{B_{k}}}_{L^2(B_{k})} \nonumber\\
&\qquad  + c \rr^{n/2}\rr_{\star}^{-2s}  \nr{w}_{L^2(B_\rr)}\tail(u-(u)_{B_{\rr_{\star}}};B_{\rr_{\star}}) \nonumber\\ 
& \leq  c\rr^{n/2+1-s}\sum_{i=2}^{N_\rr} \sum_{k=1}^{i} 2^{k-2si} \nr{a-a_{0}}_{L^\infty( \mathcal B_{i})}\MMM^{\#,1}_{\rr,\rr_{\star}}(x) [w]_{s,2;\er^n}\notag \\
&\qquad  + c \rr^{n/2+s}\rr_{\star}^{-2s} \left(\rr \sum_{k=1}^{N_{\rr}}2^k+\rr_{\star} \right)\MMM^{\#,1}_{\rr,\rr_{\star}}(x)[w]_{s,2;\er^n}\nonumber\\
& \leq c \rr^{n/2+1-s} \left[  \sum_{i=2}^{N_\rr} 2^{(1-2s)i} \nr{a-a_{0}}_{L^\infty( \mathcal B_{i})} + \left (\frac{\rr}{\rr_{\star}} \right )^{2s-1}\right] \MMM^{\#,1}_{\rr,\rr_{\star}}(x) [w]_{s,2;\er^n}\,. \label{ulti}
\end{flalign} 
Note that here we have used the obvious relations 
$$
\rr \sum_{k=1}^{N_{\rr}}2^k \approx 2^{N_{\rr}} \rr \stackrel{\eqref{ilraggio}}{=} \rr_{\star} \quad \mbox{and} \quad 
\sum_{k=1}^{i} 2^{k} \approx 2^i\,.
$$
In order to estimate the sum appearing in the last line of \rif{ulti} we use \rif{osci2} with $\sigma=1$, so that we conclude with 
\eqn{sti3}
$$
\textnormal{I}_{3}   \leq c \rr^{n/2+1-s} \left[ \sum_{i=0}^{N_\rr} 2^{(1-2s)i} \omega_{\star} (2^i \rr) +\rr^{\alpha \beta} + \left (\frac{\rr}{\rr_{\star}} \right )^{2s-1}\right] \MMM^{\#,1}_{\rr,\rr_{\star}}(x) [w]_{s,2;\er^n}\,.
$$
Finally, recalling that $2^*= 2n/(n-2s)$ and $2_*=2n/(n+2s)$ are conjugate exponents, for $\textnormal{I}_4$ we have 
\begin{eqnarray}
\notag	\textnormal{I}_4 & \leq  &c \nr{f}_{L^{2_{*}}(B_\rr)} \nr{w}_{L^{2^{*}}(\mathbb{R}^n)}\\ &\stackleq{poincf2}  &c \rr^{n/2+s} \nra{f}_{L^{2_*}(B_\rr)} [w]_{s,2;\er^n}\notag \\ &\stackleq{bs.5}  &c \rr^{n/2+s} \nra{f}_{L^{\chi}(B_\rr)} [w]_{s,2;\er^n}\,.
\label{sti4}
\end{eqnarray}
Combining the content of \rif{combi}, \rif{sti1}, \rif{sti2}, \rif{sti3} and \rif{sti4} yields
\begin{flalign}
	\notag 	\rr^{-n/2}[w]_{s,2;\er^{n}} & \leq c \rr^{1-s} \left[\sum_{i=0}^{N_\rr} 2^{(1-2s)i} \omega_{\star} (2^i \rr) + \omega_{\star} (2\rr)+\rr^{\alpha \beta} + \left (\frac{\rr}{\rr_{\star}} \right )^{2s-1}+ \left (\frac{\rr}{\rr_{\star}} \right )^{2s}\right]\MMM^{\#,1}_{\rr,\rr_{\star}}(x)\\
		& \quad  + c \rr^{s} \nra{f}_{L^{\chi}(B_{4\rr})}\,.\notag 
\end{flalign}
Finally using \rif{poincf} and that $2s-1> \alpha  > \alpha \beta$ by \rif{modco1}, we arrive at \rif{compestuvx} and the Lemma is proved. \end{proof}

\subsection{First-order excess decay}\label{sezi2}  We fix 
\eqn{intero}
$$
\mbox{$\texttt{t}=1/2^{m}$ with $m\geq 4$ being an integer to be determined}.
$$ 
We consider the affine map 
\eqn{defffie}
$$\ell_{x}(\ti{x}):=D h(x)(\ti{x}-x) + h(x) \qquad \tilde{x}\in \er^n\,,$$ where $h$ has been defined in \rif{solvesh}. 
Note that Proposition \ref{phr} and Mean Value Theorem provide
\eqn{eq:higherreg}
$$
\begin{cases}
\,\displaystyle   \nra{h-\ell_{x}}_{L^2(B_{\lambda}) }\leq c  \lambda^2   \nr{D^2h}_{L^\infty(B_{\rr/2}) }
 \\[5pt] \displaystyle   \qquad \leq c \left(\frac{\lambda}{\rr}\right)^2\left(\nra{h}_{L^{2}(B_{\rr})}+\tail(h;B_{\rr})\right), \quad 
 0 < \lambda \leq \frac{\rr}{2} \\[10pt]
\,  \displaystyle    \snr{D  h(x)} \rr + \snr{ h(x)}\leq c \nra{h}_{L^{2}(B_{\rr})}+c\, \tail(h;B_{\rr})
 \end{cases}
$$
where $c\equiv c (n,N,s,\Lambda)$. We shall also use
 \eqn{eq:higherreg22}
$$
\begin{cases}
\,  \nra{h}_{L^{2}(B_{\rr})}  + \tail(h;B_{\rr}) \lesssim \tx{E}_{u}(\ell;x,\rr)+ \nra{w}_{L^2(B_{\rr})} \lesssim \tx{E}_{u}(\ell;x,\rr)+ \rr \mathbf{R}(x,\rr)  \\[3pt]
\,   \tail(u_\ell;B_{\rr/2})\lesssim_{n,s}
\tail(u_\ell;B_{\rr})+\nra{u_{\ell}}_{L^2(B_\rr)}\,,
 \end{cases}
$$
where $c\equiv c(\data,\alpha,\beta,H)$. 
Note that \rif{eq:higherreg22}$_1$ follows using triangle inequality, \rif{compestuvx} and observing that  $\tail(u_\ell;B_{\rr})= \tail(h;B_{\rr})$. Inequality  \rif{eq:higherreg22}$_2$ instead follows just using \rif{scatailancora}. We can now estimate, using \rif{ovvia} repeatedly
\begin{eqnarray*}
	\tx{E}_{u}(\ell + \ell_{x};x,\texttt{t} \rr)
	&\leq & \nra{u_\ell- \ell_{x}}_{L^{2}(B_{\texttt{t} \rr})} + \tail(u_\ell- \ell_{x};B_{\texttt{t} \rr})\nonumber \\ 
	&\stackrel{\eqref{scatailancora}}{\le}&c \nra{w}_{L^2(B_{\texttt{t}\rr}) } +c \nra{h-\ell_{x}}_{L^2(B_{\texttt{t} \rr}) } \nonumber \\
	&& +c\texttt{t}^{2s}\tail(u_\ell;B_{\rr/2}) + c\texttt{t}^{2s}\tail(\ell_{x};B_{\rr/2}) \nonumber \\
	&&+c \texttt{t}^{2s} \nra{u_\ell}_{L^{2}(B_{\rr/2})} + c \texttt{t}^{2s} \nra{\ell_{x}}_{L^{2}(B_{\rr/2})} \nonumber \\
	&& +c\int_{\texttt{t} \rr}^{\rr/2}\left(\frac{\texttt{t} \rr}{\lambda}\right)^{2s}\nra{u_\ell- \ell_{x}}_{L^{1}(B_{\lambda} )}\frac{\dlam}{\lambda} \nonumber \\
	&\stackrel{\eqref{tritri}, \eqref{eq:higherreg22}}{\le}&c \nra{w}_{L^2(B_{\texttt{t}\rr}) } +c \nra{h-\ell_{x}}_{L^2(B_{\texttt{t} \rr}) } \nonumber \\
	&&+c \texttt{t}^{2s} \nra{u_\ell}_{L^{2}(B_{\rr})} +c\texttt{t}^{2s}\tail(u_\ell;B_{\rr}) + c\texttt{t}^{2s}\left(\snr{D  h(x)} \rr + \snr{ h(x)}\right)  \nonumber \\
		&& +c\int_{\texttt{t} \rr}^{\rr/2}\left(\frac{\texttt{t} \rr}{\lambda}\right)^{2s}\nra{w}_{L^{2}(B_{\lambda} )}\frac{\dlam}{\lambda}  \nonumber \\
	&& +c\int_{\texttt{t} \rr}^{\rr/2}\left(\frac{\texttt{t} \rr}{\lambda}\right)^{2s}\nra{h- \ell_{x}}_{L^{2}(B_{\lambda} )}\frac{\dlam}{\lambda}  \nonumber \\
	&\stackrel{\eqref{eq:higherreg}}{\le}&c\texttt{t}^{-n/2} \nra{w}_{L^2(B_\rr)} +c\texttt{t}^{2} \left ( \nra{h}_{L^{2}(B_{\rr})} + \tail(h;B_{\rr})\right ) \nonumber \\
	&&+ c\texttt{t}^{2s}\tx{E}_{u}(\ell;x,\rr) +c\texttt{t}^{2s}\left ( \nra{h}_{L^{2}(B_{\rr})} + \tail(h;B_{\rr})\right ) \nonumber \\ &&+c\texttt{t}^{2s}\rr^{2s}  \int_{\texttt{t} \rr}^{\rr/2} \lambda^{-2s}\nra{w}_{L^2(B_{\lambda}) } \frac{\dlam}{\lambda}\nonumber \\
	&&+c\texttt{t}^{2s} \rr^{2s-2} \left (\nra{h}_{L^{2}(B_{\rr})}  + \tail(h;B_{\rr}) \right ) \int_{\texttt{t} \rr}^{\rr/2} \lambda^{2-2s} \frac{\dlam}{\lambda}  \nonumber \\
	&\stackrel{\eqref{compestuvx},\eqref{eq:higherreg}}{\le} &c \texttt{t}^{-n/2} \rr\mathbf{R}(x,\rr) +c \texttt{t}^{2s} \left ( \nra{h}_{L^{2}(B_{\rr})} + \tail(h;B_{\rr}) \right ) \nonumber \\ &&+ c\texttt{t}^{2s}\tx{E}_{u}(\ell;x,\rr)   +c\texttt{t}^{2s} \rr^{n/2+2s+1} \int_{\texttt{t}\rr}^{\rr/2}\lambda^{-2s-n/2}\frac{\dlam}{\lambda} \, \mathbf{R}(x,\rr) \,.
\end{eqnarray*}
By yet using \eqref{compestuvx} and \eqref{eq:higherreg22} we conclude with 
\eqn{olga}
$$
\tx{E}_{u}(\ell + \ell_{x};x,\texttt{t} \rr) \leq \ti{c}\texttt{t}^{2s}\tx{E}_{u}(\ell;x,\rr) +c \texttt{t}^{-n/2} \rr\mathbf{R}(x,\rr)\,,
$$
where $\ti{c}, c=\ti{c}, c(\data,\alpha,\beta,H)>0$. Now, with $\gamma_{1}$ such that
$0 < \gamma_{1} < 2s$, we choose $\texttt{t}$ sufficiently small, that is we choose $m\geq 4$ large enough in \rif{intero}, in order to have 
\eqn{dep.2x}
$$
\ti{c}\texttt{t}^{2s-\gamma_{1}} <1/2\ \Longrightarrow \texttt{t}\equiv \texttt{t}(\data,\alpha,\beta, \gamma_{1}, H)>0.
$$
So far, for any $\gamma_{1} \in (0,2s)$ we have proved the first-order excess decay estimate
\eqn{firstorderex}
$$\tx{E}_{u}(\ell+\ell_{x};x,\texttt{t}\rr) \leq \frac{\texttt{t}^{\gamma_{1}}}{2}\tx{E}_{u}(\ell;x,\rr) +c  \rr \mathbf{R}(x,\rr),$$
holds for a constant $c\equiv c(\data,\alpha,\beta, \gamma_{1},H)\geq 1$. 
\begin{remark}\label{betterexpl}{\em 
Note that in \rif{firstorderex} we have used the explicit dependence of $\texttt{t}$ on the various constants determined in \rif{dep.2x} to establish the correct dependence of the involved constant $c$. Most importantly, note that, all in all, \rif{firstorderex} holds whenever $\ell$ is an arbitrary affine map, initially fixed at the very beginning of Section \ref{sezi1}. On the contrary, the affine map $\ell_x$ defined in \rif{defffie} depends on $\ell$ via the definition of the map $h$ in \rif{solvesh}, which takes into account the presence of $\ell$ in the boundary datum $u_{\ell}$. We keep in mind this to conclude the argument in the next lines. }\end{remark} For $r>0$, we denote 
\eqn{espressione}
$$ \mathcal{E}_{\gamma_{1}}(x,r):= r^{-\gamma_{1}} \inf_{\ell \textnormal{ affine}} \tx{E}_{u}(\ell;x,r)\,. $$
Note that the very definition of $\mathcal{E}_{\gamma_{1}}(x,r)$ implies
\eqn{hachiko}
$$
r^{\gamma_{1}}\mathcal{E}_{\gamma_{1}}(x,r) \leq \tx{E}_{u}(x,r)
\lesssim_{n,s} \nra{u}_{L^{2}(B_{r})}+ \tail(u;B_{r})\,.
$$
Then \eqref{firstorderex} implies 
$$ \mathcal{E}_{\gamma_{1}}(x,\texttt{t} \rr) \leq \frac{\rr^{-\gamma_{1}}}{2}\tx{E}_{u}(\ell;x,\rr) +c  \texttt{t}^{-\gamma_1}\rr^{1-\gamma_1} \mathbf{R}(x,\rr),$$
and, since $\ell$ is arbitrary and $\texttt{t}$ exhibits the dependence displayed in \eqref{dep.2x}, we conclude with 
\eqn{E1est}
$$ \mathcal{E}_{\gamma_{1}}(x,\texttt{t} \rr) \leq \tfrac 12 \mathcal{E}_{\gamma_{1}}(x,\rr) +c \rr^{1-\gamma_{1}} \mathbf{R}(x,\rr),
$$ 
where $c\equiv c(\data,\alpha,\beta, \gamma_{1},H)\geq 1$. 
Estimate \rif{E1est} holds whenever the radius $\rr$ is of the type in \rif{recallradius}-\rif{ilraggio} and whenever $x \in B_{r_{x_{0}}/2}(x_{0})$. 

\subsection{The chain}\label{thechain} We apply \rif{E1est} with $\rr\equiv \rr_{j}$
where for every $j \in \en_0$ it is
\eqn{iraggit}
$$\rr_{j}:=\frac{\texttt{t}^j \rr_{\star}}{8} =\frac{\rr_{\star}}{2^{mj+3}}\stackrel{\eqref{ilraggio}}{\Longrightarrow} N_{\rr_{j}}= mj+3\,,$$
and $\texttt{t}$ has been introduced in \rif{dep.2x}. We find 
\eqn{somme}  
$$ \mathcal{E}_{\gamma_{1}}(x,\rr_{j+1}) \leq \tfrac 12 \mathcal{E}_{\gamma_{1}}(x,\rr_j) +c \rr_j^{1-\gamma_{1}} \mathbf{R}(x,\rr_j),$$
so that for any $k \in \mathbb{N}$, summing over $j$ leads to
$$
\sum_{j=0}^{k+1} \mathcal{E}_{\gamma_{1}}(x,\rr_{j})  \leq \mathcal{E}_{\gamma_{1}}(x,\rr_{0})+\frac 12 \sum_{j=0}^{k} \mathcal{E}_{\gamma_{1}}(x,\rr_{j}) +c \sum_{j=0}^{k} \rr_{j}^{1-\gamma_{1}}  \mathbf{R}(x,\rr_{j}) ,
$$
where $c\equiv c(\data,\alpha,\beta,\gamma_{1},H)\geq 1$. Reabsorbing the first sum on the right-hand side we obtain
\begin{flalign} \sum_{j=0}^{k+1} \mathcal{E}_{\gamma_{1}}(x,\rr_{j}) &\leq 2\mathcal{E}_{\gamma_{1}}(x,\rr_0) +c \sum_{j=0}^{k} \rr_{j}^{1-\gamma_{1}} \mathbf{R}(x,\rr_{j})\notag\\& \leq c\, \rr_{\star}^{-\gamma_1}\tx{E}_{u}(x,\rr_{\star}) +c \sum_{j=0}^{k} \rr_{j}^{1-\gamma_{1}}  \mathbf{R}(x,\rr_{j})\,, \label{flatnesscontrol}
\end{flalign} 
for any $\gamma_{1} \in (0,2s)$, where $c\equiv c(\data,\alpha,\beta,\gamma_{1},H)\geq 1$. Note that in the last estimate we have used \rif{hachiko} and then \rif{scataildopoff}$_1$. 
\begin{remark}[A more general version - will be useful later]\label{amoregen}\footnote{The reader can skip the content of Remark \ref{amoregen} until Section \ref{secprova} below.}{\em In the setting of  Lemma \ref{comparisonestimate}, consider a radius $\rr_{\star \star}$ of the type 
$
\rr_{\star \star} := 2^M \rr$ with  $M \in \en$ being such that   $2\leq M \leq N_{\rr}$ so that 
$\rr <  \rr_{\star \star} \leq  \rr_{\star}$. 
Then 
\eqn{viene}
$$
		\nra{w}_{L^2(B_{\rr})} \leq c \rr \mathbf{R}_{\rr_{\star \star}}(x,\rr)
$$
holds with $c\equiv c(\data,\alpha,\beta,H)$, where
\eqn{compestuvx222re} 
$$
 \mathbf{R}_{\rr_{\star\star }}(x,\rr):= \bigg [ \sum_{i=0}^{M} 2^{(1-2s)i} \omega_{\star} (2^i \rr) + \omega_{\star} (2\rr)+\left (\frac{\rr}{\rr_{\star\star }} \right )^{\alpha \beta} \bigg ] \MMM^{\#,1}_{\rr,\rr_{\star \star}}(x) + \rr^{2s-1} \nra{f}_{L^{\chi}(B_{4\rr})}\,.
$$
The proof of this fact is the same as Lemma \ref{comparisonestimate} once we replace $\rr_{\star}$ by $\rr_{\star\star }$, as in fact it is $\mathbf{R}_{\rr_{\star}}=\mathbf{R}$ according to the definition given in \rif{compestuvx2}. As a consequence, we can apply this with the choice 
$\rr_{\star\star}= \rr_{k_0}$, $\rr= \rr_{j}$ with integers  $1\leq k_0 <j$, so that $M=m(j-k_0)$, and state the following version of \eqref{somme}:
\eqn{sommeancora}  
$$ \mathcal{E}_{\gamma_{1}}(x,\rr_{j+1}) \leq \tfrac 12 \mathcal{E}_{\gamma_{1}}(x,\rr_j) +c \rr_j^{1-\gamma_{1}} \mathbf{R}_{\rr_{k_0}}(x,\rr_j)$$
that holds with $c\equiv c(\data,\alpha,\beta,\gamma_{1},H)\geq 1$ and that follows using \rif{viene} instead of \rif{compestuvx} in the proof of \rif{somme}. 
 }
\end{remark}

\subsection{Proof of Theorem \ref{gradreg1} completed}\label{sceltabeta} In \rif{espressione} we take $\gamma_1=1$. For any $j \in \en_0$, denote by $\ell_{j,x}$ the affine map such that 
$$ \tx{E}_{u}(\ell_{j,x};x,\rr_j)= \sqrt{\nra{u-\ell_{j,x}}_{L^{2}(B_{\rr_j})}^2 + \tail(u-\ell_{j,x};B_{\rr_j})^2}=\inf_{\ell \textnormal{ affine}} \tx{E}_{u}(\ell;x,\rr_j)$$
and therefore attaining the infimum in the expression $\mathcal{E}_{1}(x,\rr_{j})$,  i.e., 
\eqn{minimizzano}
$$ \mathcal{E}_{1}(x,\rr_j)=\rr_{j}^{-1}  \sqrt{\nra{u-\ell_{j,x}}_{L^{2}(B_{\rr_j})}^2 + \tail(u-\ell_{j,x};B_{\rr_j})^2}= \rr_j^{-1} \inf_{\ell \textnormal{ affine}} \tx{E}_{u}(\ell;x,\rr_j)\,.$$
This map can obviously be written in the form $\ell_{j,x}(\tilde x)=  D \ell_{j,x}(\ti{x}-x) +\ell_{j,x}(x)$, $\ti{x} \in \er^n$ and its existence follows as in Remark \ref{esisteminimo}. Using Jensen inequality, \rif{ovvia} and \rif{tritri} we observe that
\begin{flalign*}
\rr_{j}^{-1} \tx{E}_{u}(x,\rr_{j})&  \lesssim \rr_{j}^{-1}  \left[ \nra{u-\ell_{j,x}(x)}_{L^{2}(B_{\rr_j})} + \tail(u-\ell_{j,x}(x);B_{\rr_j})\right]
+ \rr_{j}^{-1}\snr{\ell_{j,x}(x)-(u)_{B_{\rr_j}}}
\\
&  \lesssim\rr_{j}^{-1}  \left[ \nra{u-\ell_{j,x}(x)}_{L^{2}(B_{\rr_j})} + \tail(u-\ell_{j,x}(x);B_{\rr_j})\right]\\
&  \lesssim\rr_{j}^{-1}  \left[ \nra{u-\ell_{j,x}}_{L^{2}(B_{\rr_j})} + \tail(u-\ell_{j,x};B_{\rr_j})\right] \\
& \qquad + \rr_{j}^{-1} \left[\nra{\ell_{j,x}-\ell_{j,x}(x)}_{L^{2}(B_{\rr_j})} +  \tail(\ell_{j,x}-\ell_{j,x}(x);B_{\rr_j})\right] \\
 & \lesssim \mathcal{E}_{1}(x,\rr_{j}) + |D \ell_{j,x}| 
\end{flalign*}
holds for every $j\geq 0$ and the implied constant only depends on $n,s$. 
Now, fix an integer $k\geq 1$; also thanks to \rif{scataildopoff}$_1$, the content of the last display implies the following estimate on the maximal function  in \eqref{Nsharp}:
\eqn{maxfuncontrol}
$$	\MMM^{\#,1}_{\rr_k,\rr_{\star}}(x) \leq c\texttt{t}^{-n/2-1}  \max_{j \in \{0,...,k\}} \left(\mathcal{E}_{1}(x,\rr_{j}) + |D \ell_{j,x}| \right) + c  \rr_{\star}^{-1}\tx{E}_{u}(x,\rr_{\star})\,.$$
To estimate $|D \ell_{j,x}|$, we start by recalling that $\ti{x} \mapsto (D \ell_{j,x}-D \ell_{j+1,x})(\ti{x}-x) $ has null average on $B_{\rr_{j+1}}$, so that, also using H\"older inequality, for every $j \in \en_0$ we have 
\begin{eqnarray}
\snr{D \ell_{j,x}-D \ell_{j+1,x}} & \stackleq{tritri} & c \rr_{j+1}^{-1} \dashint_{B_{\rr_{j+1}}} \snr{(D \ell_{j,x}-D \ell_{j+1,x})(\ti{x}-x) }\d\ti{x}\qquad  \nonumber\\ & \stackleq{minav} & c \rr_{j+1}^{-1} \dashint_{B_{\rr_{j+1}}} \snr{(D \ell_{j,x}-D \ell_{j+1,x})(\ti{x}-x)+ \ell_{j,x}(x) - \ell_{j+1,x}(x)}\d\ti{x} \nonumber  \\ & =  &c \rr_{j+1}^{-1} \, \nra{\ell_{j,x}-\ell_{j+1,x}}_{L^1(B_{\rr_{j+1}})} \nonumber\\
&\leq   &c \rr_{j+1}^{-1} \, \nra{\ell_{j,x}-\ell_{j+1,x}}_{L^2(B_{\rr_{j+1}})} \nonumber\\
& \leq  & c \texttt{t}^{-n/2-1}\rr_{j}^{-1} \, \nra{u-\ell_{j,x}}_{L^2(B_{\rr_{j}})}+c\rr_{j+1}^{-1} \, \nra{u-\ell_{j+1,x}}_{L^2(B_{\rr_{j+1}})}\,.\nonumber 
\end{eqnarray}
Recalling the dependence of $\texttt{t}$ in \rif{dep.2x} (here it is $\gamma_1=1$), we conclude with 
\eqn{somme1} 
$$
|D \ell_{j,x}-D \ell_{j+1,x}|  \leq c\, \mathcal{E}_{1}(x,\rr_{j})+c\, \mathcal{E}_{1}(x,\rr_{j+1}) \,,
$$
where $c =c (\data,\alpha,\beta,H)\geq 1$. 
Summing up the previous inequalities yields 
$$
\max_{j \in \{0,...,k\}} |D \ell_{j,x}|   \leq |D \ell_{0,x}| +  \sum_{j=0}^{k-1} |D \ell_{j,x}-D \ell_{j+1,x}| \leq |D \ell_{0,x}| + c \sum_{j=0}^{k} \mathcal{E}_{1}(x,\rr_{j}) \,, $$
so that, thanks to \eqref{maxfuncontrol}, we arrive at
\eqn{semplice00}
$$	\MMM^{\#,1}_{\rr_k,\rr_{\star}}(x) \leq c|D \ell_{0,x}| +  c \sum_{j=0}^{k+1} \mathcal{E}_{1}(x,\rr_{j})+ c  \rr_{\star}^{-1}\tx{E}_{u}(x,\rr_{\star})\,.$$
For the term $|D \ell_{0,x}|$, using \rif{tritri} and the minimality of $\ell_{0,x}$ we have
\eqn{semplice111}
$$
\snr{D\ell_{0,x}}\rr_{\star}\lesssim \snr{D\ell_{0,x}}\rr_{0}+ \snr{\ell_{0,x}(x)} \approx  \nra{\ell_{0,x}}_{L^{2}(B_{\rr_0})} \lesssim \nra{u}_{L^{2}(B_{\rr_0})} + \tx{E}_{u}(x,\rr_0)\,.
$$
Combining \rif{semplice00} and \rif{semplice111} with \eqref{flatnesscontrol} applied with $\gamma_{1}=1$, and yet recalling \rif{scataildopoff}$_1$, now yields
\eqn{maxf1}
$$\MMM^{\#,1}_{\rr_k,\rr_{\star}}(x) \leq c_{4} \rr_{\star}^{-1} \left( \nra{u}_{L^{2}(B_{\rr_{\star}})}+ \tx{E}_{u}(x,\rr_{\star}) \right) +c_{4} \sum_{j=0}^{k} \mathbf{R}(x,\rr_{j})\,,$$
where $c_{4}=c_{4}(\data,\alpha,\beta,H)\geq 1$. Note that this constant does not depend on $\rr_{\star}$. Estimate \rif{maxf1} works whenever $k\geq 1$ is an integer. We proceed to find a bound for the last term on the right-hand side of \eqref{maxf1}, so that by means of \eqref{compestuvx2} we obtain
\begin{flalign} 
	\notag c_{4} \sum_{j=0}^{k} \mathbf{R}(x,\rr_{j}) &\leq 
	c_{4} \left(  \sum_{j\geq 0}  \sum_{i=0}^{N_{\rr_j}} 2^{(1-2s)i} \omega_{\star} (2^i \rr_j) \right) \MMM^{\#,1}_{\rr_k,\rr_{\star}}(x)+ c_4 \sum_{j\geq 0} \omega_{\star} (2\rr_j)
	\,  \MMM^{\#,1}_{\rr_k,\rr_{\star}}(x)\notag \\ \notag & \qquad +c_4\sum_{j=0}^{k}\left (\frac{\rr_j}{\rr_{\star}} \right )^{\alpha \beta}   \MMM^{\#,1}_{\rr_j,\rr_{\star}}(x) + c_4\sum_{j\geq 0}\rr^{2s-1}_j \nra{f}_{L^{\chi}(B_{4\rr_j})}\\ & =: \mathcal R_1 + \mathcal R_2 + \mathcal R_3 +\mathcal R_4\,.\label{combina00}
\end{flalign} 
Note that in the above estimation we have used that $i \mapsto \MMM^{\#,1}_{\rr_i,\rr_{\star}}(x)$ is non-decreasing to stimate $\MMM^{\#,1}_{\rr_j,\rr_{\star}}(x)\leq \MMM^{\#,1}_{\rr_k,\rr_{\star}}(x)$ for $j\leq k$. To proceed, as in \rif{5,3}, for every $k_1\geq0$ it holds that 
\begin{flalign}
\notag \sum_{j\geq k_1}\rr^{2s-1}_j \nra{f}_{L^{\chi}(B_{4\rr_j})} &\leq c \sum_{j> k_1}\int_{\rr_{j}}^{\rr_{j-1}}\lambda^{2s-1} \nra{f}_{L^{\chi}(B_{4\lambda})}\frac{\dlam}{\lambda} 
+ c \int_{\rr_{k_1}}^{2\rr_{k_1}}\lambda^{2s-1} \nra{f}_{L^{\chi}(B_{4\lambda})}\frac{\dlam}{\lambda} \\
&\leq c \, \int_{0}^{2\rr_{k_1}}\lambda^{2s-1} \nra{f}_{L^{\chi}(B_{4\lambda})}\frac{\dlam}{\lambda}\notag \\ & \leq  c  \mathbf{I}^{f}_{2s-1,\chi}(x,8\rr_{k_1})\,,\label{reacher1}
\end{flalign}
with $c\equiv c (n,\texttt{t})$. Similarly, we have 
\eqn{reacher2}
$$
\sum_{j\geq k_1} \omega_{\star} (2\rr_j)   
\leq c \sum_{j> k_1}\int_{\rr_{j}}^{\rr_{j-1}}\omega_{\star}(2\lambda)\frac{\dlam}{\lambda} 
+ c \int_{\rr_{k_1}}^{2\rr_{k_1}}\omega_{\star}(2\lambda)\frac{\dlam}{\lambda}
\leq c \int_0^{4\rr_{k_1}} \omega_{\star}(\lambda)\frac{\dlam}{\lambda}\,.
$$
Using \rif{reacher1} with $k_1=0$ and recalling that $8\rr_0=\rr_{\star}$ by \rif{iraggit} we have 
\eqn{5,3bissi}
$$
\mathcal R_4\leq c\,   \mathbf{I}^{f}_{2s-1,\chi}(x,\rr_{\star})\,,
$$
with $c\equiv c(\data,\alpha,\beta,H)$; here we are using that $\texttt{t}$ is determined in \rif{dep.2x} as a function of $\data,\alpha,\beta,H$. To proceed, using discrete Fubini, we have
\begin{align}
& \sum_{j \geq 0} \sum_{i=0}^{N_{\rr_{j}}} 2^{(1-2s)i} \omega_{\star} (2^i \rr_{j} ) \stackrel{\eqref{iraggit}} {=} \sum_{j \geq 0} \sum_{i=0}^{mj+3} 2^{(1-2s)i} \omega_{\star} (2^{i-mj-3} \rr_{\star} ) \nonumber\\
&\qquad \leq   \sum_{\mathfrak{m} \geq 0} 2^{(1-2s)\mathfrak{m}}  \sum_{i \geq 0} \omega_{\star} (2^{-i} \rr_{\star} ) \leq c(s)   \sum_{i \geq 0} \omega_{\star} (2^{-i} \rr_{\star} ) \nonumber\\
& \qquad\qquad\leq c(s) \sum_{i \geq 0} \int_{2^{-i} \rr_{\star} }^{2^{-i+1} \rr_{\star} }\omega_{\star} (\lambda) \,\frac{\dlam}{\lambda} =  c_{5} \int_0^{2\rr_{\star}} \omega_{\star}(\lambda)\frac{\dlam}{\lambda} \label{smalldini0}
\end{align}
where $c_{5}\equiv c_{5}(s)\geq 1$. Similarly, by \rif{reacher2} with $k_1=0$ we obtain
\eqn{smalldini0ancora}
$$
\sum_{j\geq 0} \omega_{\star} (2\rr_j) \leq c_{5} \int_0^{2\rr_{\star}} \omega_{\star}(\lambda)\frac{\dlam}{\lambda}\,.
$$
The starting radius $\rr_{\star}$ has been initially chosen to satisfy \rif{recallradius}. 
We now further reduce its size in order to meet
\eqn{meetme}
$$ c_{4} c_{5} \int_0^{2\rr_{\star}} \omega_{\star}(\lambda)\frac{\dlam}{\lambda} \leq \frac 1{8} \Longrightarrow  \rr_{\star}\equiv \rr_{\star}(\data,\alpha,\beta,H,\omega_{\star}(\cdot))$$ and therefore, thanks to \rif{smalldini0}-\rif{smalldini0ancora}, we have
\eqn{smalldini}
$$
\mathcal R_1 + \mathcal R_2\leq \frac 1{4}\MMM^{\#,1}_{\rr_k,\rr_{\star}}(x)\,.
$$
In order to estimate the remaining term $\mathcal R_3$ we again use that $i \mapsto \MMM^{\#,1}_{\rr_i,\rr_{\star}}(x)$ is non-decreasing to get that, for every choice of integers $j_0\geq 0$ and $k \geq 1$, it holds 
$$
\sum_{j=0}^{k} \left (\frac{\rr_{j}}{\rr_{\star}} \right )^{\alpha \beta} \MMM^{\#,1}_{\rr_{j},\rr_{\star}}(x)  \leq \sum_{j=0}^{j_0} \left (\frac{\rr_{j}}{\rr_{\star}} \right )^{\alpha \beta} \MMM^{\#,1}_{\rr_{j_{0}},\rr_{\star}}(x)  +
\sum_{j>j_0} \left (\frac{\rr_{j}}{\rr_{\star}} \right )^{\alpha \beta} \MMM^{\#,1}_{\rr_{k},\rr_{\star}}(x) \,.
$$
We then choose $\beta=1/2$ and define the integer $j_{0}=j_{0}(\data,\alpha,H) \in \mathbb{N}$ by
$$j_{0} :=\min \left \{\mathfrak m \in \mathbb{N} \, \colon \,  c_{4} \sum_{j>\mathfrak m}\left (\frac{\rr_{j}}{\rr_{\star}} \right )^{\alpha/2} \stackrel{\eqref{iraggit}}{=}\frac{c_{4} \texttt{t}^{\alpha(\mathfrak m+1)/2}}{8^{\alpha/2}(1-\texttt{t}^{\alpha/2})} \leq \frac 1{4} \right \}\,.$$
Note that using \rif{scataildopoff}$_1$ and the dependence of $\texttt{t}$ in \rif{dep.2x}, we can estimate 
$$
\MMM^{\#,1}_{\rr_{j_0},\rr_{\star}}(x) \leq c \texttt{t}^{-j_0(n/2+1)}\rr_{\star}^{-1}\tx{E}_{u}(x,\rr_{\star}) \leq c 
\rr_{\star}^{-1}\tx{E}_{u}(x,\rr_{\star})\,.
$$
Summarising the content of the last three displays yields 
\eqn{smalldini2}
$$
\mathcal R_3  \leq
\frac14 \MMM^{\#,1}_{\rr_{k},\rr_{\star}}(x) +\rr_{\star}^{-1-\alpha/2}\tx{E}_{u}(x,\rr_{\star})\,.
$$
Using in \rif{combina00} the estimates \rif{5,3bissi},\rif{smalldini} and \rif{smalldini2} found for the terms $\mathcal R_1, \dots, \mathcal R_4$ yields
\eqn{combina}
$$
	 c_{4} \sum_{j=0}^{k} \mathbf{R}(x,\rr_{j}) 
		\leq  \tfrac 12 \MMM^{\#,1}_{\rr_k,\rr_{\star}}(x) +c\rr_{\star}^{-1-\alpha/2}\, \tx{E}_{u}(x,\rr_{\star}) + c\, \mathbf{I}^{f}_{2s-1,\chi}(x,\rr_{\star})\,.
$$
Combining \rif{combina} with \eqref{maxf1} for any integer $k \geq 1$ we arrive at
$$ \MMM^{\#,1}_{\rr_k,\rr_{\star}}(x) \leq c\rr_{\star}^{-1} \nra{u}_{L^{2}(B_{\rr_{\star}})}+ c\rr_{\star}^{-1-\alpha/2}\, \tx{E}_{u}(x,\rr_{\star}) + c\, \mathbf{I}^{f}_{2s-1,\chi}(x,\rr_{\star}) \,,$$
and 
letting $k \to \infty$ yields the pointwise maximal function estimate
$$ \MMM^{\#,1}_{\rr_{\star}}(x) \leq c\rr_{\star}^{-1} \nra{u}_{L^{2}(B_{\rr_{\star}})}+ c\rr_{\star}^{-1-\alpha/2}\, \tx{E}_{u}(x,\rr_{\star}) + c\, \mathbf{I}^{f}_{2s-1,\chi}(x,\rr_{\star}) \,,$$
where $c\equiv c(\data,\alpha,H)\geq 1$. Note that this holds whenever the point $x$ and the radius $\rr_{\star}$ obey \rif{recallradius} and \rif{meetme}. Moreover, using \rif{scataildopoff}$_2$ and \rif{hachiko} we estimate 
$$
 \sup_{\rr_{\star}\leq \sigma < r_{x_0}/2}\, \sigma^{-1}\tx{E}_{u}(x,\sigma) \leq  c\rr_{\star}^{-n-1}\left[
 \nra{u}_{L^{2}(B_{r_{x_0}}(x_0))}+  \tail(u;B_{r_{x_0}}(x_0))
 \right]
$$
so that, recalling the definition in \rif{massimale}, we conclude with  
\begin{flalign} 
\notag & \nr{\textnormal{M}^{\#,1}_{r_{x_0}/2}(u;\cdot)}_{L^\infty(B_{r_{x_0}/2}(x_0))}+ \nr{\MMM^{\#,1}_{r_{x_0}/2}}_{L^\infty(B_{r_{x_0}/2}(x_0))}\\
& \qquad \leq  c \, \nra{u}_{L^{2}(B_{r_{x_0}}(x_0))}+ c\, \tail(u;B_{r_{x_0}}(x_0))  + c\,  \nr{\mathbf{I}^{f}_{2s-1,\chi}(\cdot, 1)}_{L^\infty(B_{r_{x_{0}}/2}(x_{0}))}\label{pwest}
\end{flalign}
where $c\equiv c(\data,\alpha,H,\omega_{\star}(\cdot))$ (for this recall the dependence of $\rr_{\star}$ in \rif{meetme}). Lemma \ref{campmax} now yields  
\eqn{finalissima}
$$  [u]_{0,1;B_{r_{x_0}/8}(x_{0})} \leq  c \, \nra{u}_{L^{2}(B_{r_{x_0}}(x_0))}+ c\, \tail(u;B_{r_{x_0}}(x_0))  + c\,  \nr{\mathbf{I}^{f}_{2s-1,\chi}(\cdot, 1)}_{L^\infty(B_{r_{x_{0}}/2}(x_{0}))}
$$
with  $c\equiv c(\data,\alpha,H,\omega_{\star}(\cdot))$.
In particular, a standard covering argument leads to conclude that $u \in C^{0,1}_{\loc}(\Omega_u;\mathbb{R}^N)$, so that the proof of Theorem \ref{gradreg1} is complete.

\section{Gradient oscillations and Theorems \ref{gradreg2}-\ref{gradreg3}}\label{final2} 
Theorem \ref{gradreg1} applies in the settings of Theorem \ref{gradreg2}-\ref{gradreg3} as the assumptions considered on $f$ in \rif{assilipul} and \rif{assilip2} imply \rif{assilipanc}. In the case of \rif{assilipul} this is obvious. In the case of \rif{assilip2} we simply apply \rif{marhol} with $d= n/(2s-1-\alpha)$, and this yields 
\eqn{marci222}
$$
\sigma^{2s-1}\nra{f}_{L^{\chi}(B_{\sigma})}  
  \lesssim_{n,s,\chi, \alpha}\sigma^{\alpha}\nr{f}_{\mathcal{M}^{\frac{n}{2s-1-\alpha}}(B_{\sigma})}
$$
for every ball $B_{\sigma} \subset \er^n$ so that, integrating, we find
$$
\mathbf{I}^{f}_{2s-1,\chi}(x,1)  \lesssim   \nr{f}_{\mathcal{M}^{\frac{n}{2s-1-\alpha}}(\er^n)}\int_{0}^{1}\sigma^{\alpha}\frac{\d \sigma }{\sigma} =c  \nr{f}_{\mathcal{M}^{\frac{n}{2s-1-\alpha}}(\er^n)} 
$$
where $c\equiv c(n,s,\chi, \alpha)$. Therefore, with a slight abuse of notation on the identity of the radius $r_{x_0}$ (which here will not anymore be the one identified in Section \ref{parametri} and used in Section \ref{eidos}), we can assume to be exactly in the same starting setting of Section \ref{final} with the agreement that now we have
\eqn{Lipest}
$$
\begin{cases}
 \, \nr{u}_{L^\infty(B_{r_{x_0}}(x_{0}))}+\nr{Du}_{L^{\infty}(B_{r_{x_0}}(x_{0}))} \leq H_2< \infty\\[5pt]
 \, \MMM^{\#,1}_{r_{x_0}}(x) \leq H_2 \,, \quad \mbox{for every $x\in B_{r_{x_0}}(x_0)$}
 \end{cases}
 $$
for some absolute, local constant $H_2\geq 1$\footnote{In fact, by \rif{pwest}-\rif{finalissima} we should denote $r_{x_0}/8$ instead of $r_{x_0}$ in \rif{Lipest}.  We prefer this slight abuse of notation - renaming $r_{x_0}/8$ by $r_{x_0}$ - as we are then able to use the machinery, and with indeed the same notation, developed in Section \ref{final}. Essentially, by \rif{Lipest}$_1$ we can perform all the steps of Section \ref{final}, with  \rif{holdest22} being replaced by its Lipschitz counterpart ($\beta=1$), i.e., 
$$
\snr{u(x)-u(y)}\le c H_2  \snr{x-y}\,, \quad \mbox{for all $x,y \in B_{r_{x_{0}}}(x_{0})$}\,.
$$ Also note that, throughout Sections \ref{identfinal}-\ref{thechain} the parameter $\rr_*$ remains a free number only obeying $\rr_* \leq r_{x_0}/8$, and we shall use precisely this freedom of choice here. Only later on, in \rif{meetme}, a restriction on the size of $\rr_*$ is assumed. This fact will not enter in this section; see the proof of Proposition \ref{finalprop}.}. The first inequality in \rif{Lipest} is essentially the content of Theorem \ref{gradreg1}, and specifically of \rif{finalissima}, while \rif{Lipest}$_2$ is a consequence of \rif{pwest}. 
We shall use, with suitable modifications, the content and the notation employed in Section \ref{final}. In particular, we start by \rif{recallradius}, where we take 
\eqn{raggiostella}
$$\rr_{\star} =\frac{r_{x_0}}{2^{\mfh}}\,, \quad \mbox{where}\ \en\ni\mfh=\begin{cases} \mbox{$\geq 3$ to be determined in the proof of Theorem \ref{gradreg2}}\\3 \ \mbox{in the proof of Theorem \ref{gradreg3}}\,.
\end{cases} $$
As in Sections \ref{final} in the following, unless differently specified, all the balls will be centred at $x\in B_{r_{x_0}/2}(x_{0})$. 
In fact, whenever the exponent $\beta$ will appear in the estimates of Section \ref{final}, this will be replaced by $1$. In the rest of the proofs of Theorems \ref{gradreg2}-\ref{gradreg3} the symbol $\lesssim$ will involve constants depending on  $\data,\alpha, H_2$ (and on $\nr{f}_{\mathcal{M}^{\frac{n}{2s-1-\alpha}}}$ in the proof of Theorem \ref{gradreg3}), but not on the radii denoted by $\rr$ or $\rr_j$ and neither on the point $x\in B_{r_{x_0}/2}(x_{0})$ considered. Occasionally, an additional dependence of the involved constants on $r_{x_0}$ will show up. As we are not going to provide final explicit estimates, we are not going to precisely keep track of the constants dependence as we did in the previous proofs, because this is not needed here.  As a final consequence of the above discussion, the regular set $\Omega_{u}$ for Theorems \ref{gradreg2}-\ref{gradreg3} will be the one already identified in Section \ref{identfinal} for Theorem \ref{gradreg1} (which is in turn the same one identified in Theorem \ref{ureg5} for the choice $\beta=1/2$). For clarity of exposition, we begin with the proof of Theorem \ref{gradreg3}, which follows from the arguments developed in Section \ref{final}. We then proceed to the more elaborate proof of Theorem \ref{gradreg2}.

\subsection{Proof of Theorem \ref{gradreg3}}  With reference to \rif{compestuvx2}, and with $\rr$ as in \rif{ilraggio}, note that \rif{assilip2}, $\alpha < 2s-1$ and \rif{marci222} imply
$$
\sum_{i=0}^{N_\rr} 2^{(1-2s)i} \omega_{\star} (2^i \rr) \lesssim \rr^\alpha \sum_{i=0}^{\infty} 2^{(1+\alpha-2s)i} \lesssim  \rr^\alpha\,, \qquad   \rr^{2s-1} \nra{f}_{L^{\chi}(B_{4\rr})} \lesssim \rr^{\alpha}\,.
$$
Using this and \rif{Lipest}$_2$ in \rif{compestuvx2} (used with $\beta=1$ as described after \rif{raggiostella}) yields
\eqn{compestuvx222} 
$$
\mathbf{R}(x,\rr)\lesssim (\rr^{\alpha}+ r_{x_0}^{-\alpha}\rr^{\alpha})H_2  \lesssim \rr^{\alpha}
$$
for every $x\in B_{r_{x_0}/2}(x_{0})$, where the involved constant is independent of $x,\rr$. Proceeding on, in Section \ref{sezi2} we choose $\gamma_{1}= 1+\alpha< 2s$ and determine $\texttt{t}\equiv \texttt{t}(\data,\alpha, H_2)$ as in \rif{dep.2x}. 
Using \rif{compestuvx222} in \rif{E1est} then yields 
$ \mathcal{E}_{1+\alpha}(x,\texttt{t} \rr) \leq    \mathcal{E}_{1+\alpha}(x,\rr)/2+c$
and therefore 
\eqn{E1estbissi}
$$ \mathcal{E}_{1+\alpha}(x, \rr_{j+1}) \leq \tfrac 12 \mathcal{E}_{1+\alpha}(x,\rr_{j}) +c$$
for every $j \in \en_0$ (recall the definition in \rif{iraggit}), where $c$ is independent of $j$ and $x$. In analogy to \rif{Nsharp}, it is therefore convenient to  use the fractional sharp maximal operators defined by 
\eqn{Nsharp2} 
$$
\begin{cases}
 \, \displaystyle \MMM^{\#,1+\alpha}_{k}(x)\equiv  \MMM^{\#,1+\alpha}_{k}(u)(x):= \sup_{0\leq j \leq k}\, \mathcal{E}_{1+\alpha}(x,\rr_{j})\\[12pt]
  \, \displaystyle \MMM^{\#,1+\alpha}_{\infty}(x)\equiv \MMM^{\#,1+\alpha}_{\infty}(u)(x):=  \sup_{j \geq 0}\,  \mathcal{E}_{1+\alpha}(x,\rr_j)
 \end{cases}
$$
for any $k \in \en_0$. 
With such definitions from \rif{E1estbissi} it follows that 
$$
\MMM^{\#,1+\alpha}_{k+1}(x) \leq \frac 12 \MMM^{\#,1+\alpha}_{k+1}(x) + \mathcal{E}_{1+\alpha}(x,\rr_{0}) + c 
$$
holds for every non-negative integer $k$, where $c$ is independent of $k$, so that 
$$\MMM^{\#,1+\alpha}_{k+1}(x) \leq   2\mathcal{E}_{1+\alpha}(x,\rr_{0}) + c $$
and, ultimately 
\eqn{5,900}
$$\MMM^{\#,1+\alpha}_{\infty}(x) \leq 2   \mathcal{E}_{1+\alpha}(x,r_{x_0}/2^6) + c \,,$$
where we recall that $\rr_0=\rr_{\star}/2^3=r_{x_0}/2^6$ by the definition in \rif{iraggit} and \rif{raggiostella}.   
By  \rif{Nsharp2}$_2$ and \rif{5,900} we have  
\begin{eqnarray*}
 \inf_{\ell \textnormal{ affine}} \tx{E}_{u}(\ell;x,\rr_j) & \lesssim & \left[\mathcal{E}_{1+\alpha}(x,r_{x_0}/2^6) +1\right] \rr_j^{1+\alpha}\\
 &  \stackrel{\eqref{hachiko}}{\lesssim} & r_{x_0}^{-1-\alpha} \left[\tx{E}_{u}(x,r_{x_0}/2^6)+1\right]  \rr_j ^{1+\alpha}  \\
&  \stackrel{\eqref{scataildopoff}}{\lesssim} & r_{x_0}^{-1-\alpha} \left[\tx{E}_{u}(x_{0},r_{x_{0}})+1\right]  \rr_j ^{1+\alpha}  \\
  &\stackrel{\eqref{hachiko}}{\lesssim} & r_{x_0}^{-1-\alpha}\left[
 \nra{u}_{L^{2}(B_{r_{x_0}}(x_0))}+ c\, \tail(u;B_{r_{x_0}}(x_0))+1
 \right]  \rr_j ^{1+\alpha} \\ &\lesssim  &  \rr_j ^{1+\alpha} 
\end{eqnarray*}
whenever $j \geq 0$ and $x \in B_{r_{x_{0}}/2}(x_{0})$ so that
$$
 \inf_{\ell \textnormal{ affine}}  \nra{u-\ell}_{L^2(B_{\rr_j})} \lesssim  \rr_j ^{1+\alpha}\,.
$$ 
Next, with $\sigma \in (0,\rr_0]$, find $j\geq 0$ such that $\rr_{j+1}< \sigma \leq \rr_j$, note that the above inequality implies
$$
 \inf_{\ell \textnormal{ affine}}  \nra{u-\ell}_{L^2(B_{\sigma})} \leq \texttt{t}^{-n/2}   
  \inf_{\ell \textnormal{ affine}}  \nra{u-\ell}_{L^2(B_{\rr_j})} \lesssim \texttt{t}^{-n/2-1-\alpha} \sigma^{1+\alpha}\,.
$$
Summarizing, we have proved that 
$$
 \inf_{\ell \textnormal{ affine}}  \nra{u-\ell}_{L^2(B_{\sigma}(x))} \lesssim  \sigma ^{1+\alpha}\,, \qquad \forall \ \sigma \leq r_{x_0}/2^6, \  x \in B_{r_{x_0}/2}(x_0)\,.
$$ 
Thanks to this last inequality, the standard integral characterization of gradient H\"older continuity due to Campanato and Meyers (see for instance \cite{cam00}) applies  and concludes the proof via a standard covering argument.

\subsection{Proof of Theorem \ref{gradreg2}}  We shall prove that $Du$ is continuous in the regular set $\Omega_{u}$ in the sense that its precise representative in
\rif{lebpdopo} can be extended in $\Omega_{u}$ to a continuous map. In other words, $Du$ is almost everywhere equal to a continuous map in $\Omega_{u}$. 
\subsubsection{Step 1: A \textnormal{VMO} type result}\label{secprova} That is 
\eqn{uniform2}
$$
\lim_{k \to \infty} \mathcal{E}_{1}(x,2^{-k}r_{x_{0}}) =0 \quad \mbox{uniformly w.r.t. $x\in B_{r_{x_0}/2}(x_0)$}\,.
$$ In Step 1 we use the machinery of Section \ref{final}, and start by \rif{recallradius}, with the choice $\rr_{\star}= 2^{-\mfh}r_{x_0}$ with  $\en\ni \mfh\geq 3$.  We go back to \rif{olga}, that can be restated as 
\eqn{olgavmo}
$$
\frac{\tx{E}_{u}(\ell + \ell_{x};x,\texttt{t} \rr)}{\texttt{t} \rr} \leq c\texttt{t}^{2s-1}\frac{\tx{E}_{u}(\ell;x,\rr)}{\rr} +c \texttt{t}^{-n/2-1} \mathbf{R}(x,\rr)\,,
$$
where $\rr$ and $\texttt{t}$ are as in \rif{ilraggio} and \rif{intero}, respectively, and are yet to be chosen. Being in \rif{olgavmo} $\ell$ an arbitrary affine map, using \rif{hachiko} and \rif{Lipest}$_2$ it follows 
\begin{flalign} \notag \mathcal{E}_{1}(x,\texttt{t} \rr) & \leq   c\texttt{t}^{2s-1} \mathcal{E}_{1}(x,\rr) +c \texttt{t}^{-n/2-1} \mathbf{R}(x,\rr)\\
\notag & \leq c\texttt{t}^{2s-1}\rr^{-1}\tx{E}_{u}(x,\rr)+c \texttt{t}^{-n/2-1} \mathbf{R}(x,\rr)\\
 & \leq c\texttt{t}^{2s-1} \MMM^{\#,1}_{r_{x_0}}(x)+c \texttt{t}^{-n/2-1} \mathbf{R}(x,\rr)
\notag \\ &  \leq c\texttt{t}^{2s-1}H_2+c \texttt{t}^{-n/2-1} \mathbf{R}(x,\rr) \,.\label{wanna}
 \end{flalign}
Estimating 
$$
\sum_{i=0}^{N_\rr} 2^{(1-2s)i} \omega_{\star} (2^i \rr)  \leq\left( \sum_{i=0}^{\infty} 2^{(1-2s)i} \right) \omega_{\star} (\rr_{\star}) \lesssim_{s} 
\omega_{\star} (\rr_{\star}) 
$$
by \rif{compestuvx2}, and choosing $\rr=2^{-k_1}\rr_{\star}=2^{-k_1-\mfh} r_{x_0}$, where $k_1\geq 4$ is an arbitrary integer, we find that 
 \eqn{wanna2}
 $$\mathbf{R}(x,\rr) =\mathbf{R}(x,2^{-k_1}\rr_{\star})\lesssim  \omega_{\star}(\rr_{\star})+ \left(\frac{\rr}{\rr_{\star}}\right)^{\alpha} + \rr^{2s-1} \nra{f}_{L^{\chi}(B_{4\rr})}\,.$$
Combining \rif{wanna} and \rif{wanna2} and recalling \rif{maggioras} and \rif{intero}, we arrive at
\begin{flalign*}
 \mathcal{E}_{1}(x, 2^{-k_1-m-\mfh}r_{x_0}) & \leq c_62^{-m(2s-1)}\\
 & \quad +c_7 2^{m(n/2+1)}\left[\omega_{\star}(2^{-\mfh}r_{x_0}) +2^{-\alpha k_1}+ \mathbf{I}^{f}_{2s-1,\chi}(x,2^{-k_1-\mfh+3}r_{x_0})\right]
\end{flalign*}
where $c_6, c_7\equiv c_6, c_7(\data,\alpha,H_2)$. This last inequality holds whenever 
$x \in B_{r_{x_0}/2}(x_0)$ and $m,k_1, \mfh\geq 4$ are integers.  We now fix $\eps>0$ and pick $m\equiv m (\data,\alpha,H_2, \eps)$ such that $c_62^{-m(2s-1)}< \eps/2$; once $m$ is fixed, thanks to \rif{assilipul}, we find $\ti{k}\equiv \ti{k} (\data,\alpha,H_2, \eps)$ and 
 $\mfh\equiv \mfh (\data,\alpha,\omega_{\star},H_2, \eps)$
 such that $$c_7 2^{m(n/2+1)}\left[\omega_{\star}(2^{-\mfh}r_{x_0}) + 2^{-\alpha k_1}+\mathbf{I}^{f}_{2s-1,\chi}(x,2^{-k_1-\mfh+3}r_{x_0})\right]< \frac{\eps}{2}$$ whenever $k_1 \geq \ti{k}$. We have therefore proved that $ \mathcal{E}_{1}(x, 2^{-k}r_{x_0}) <\eps$ provided $k > \ti{k}+m+\mfh+4$. This proves \rif{uniform2} noting that both $m,\tilde{k}, \mfh$ are independent of the point $x\in B_{r_{x_0}/2}(x_0)$.

\subsubsection{Step 2: A continuous limit}  From now on, and in the rest of the proof, in \rif{raggiostella} we take $\rr_{\star}=r_{x_0}/8$, that is, $\mfh=3$. We take the sequence of shrinking radii $\{\rr_j\}$ in \rif{iraggit} with the choice of $\texttt{t}$  in \rif{dep.2x} with $\beta =\gamma_1=1$ and $H\equiv H_2$, therefore it is $\texttt{t}\equiv \texttt{t}(\data,\alpha,H_2)$ (recall that $H_2$ is defined in \rif{Lipest}). The sequence of affine maps $\{\ell_{j,x}\}$ is defined in \rif{minimizzano}. We now consider three indices $2 \leq k_0 < k_1< k_2$ with corresponding radii defined through \rif{iraggit}
$$
\rr_{k_0}= \texttt{t}^{k_0} \rr_{0}=\frac{\rr_{\star}}{2^{mk_0+3}} \,, \quad \ \rr_{k_1}= \texttt{t}^{k_1} \rr_{0}=\frac{\rr_{\star}}{2^{mk_1+3}}\,, \quad \ 
\rr_{k_2}= \texttt{t}^{k_2} \rr_{0}=\frac{\rr_{\star}}{2^{mk_2+3}}\,.
$$
In Remark \ref{amoregen}, taking \rif{sommeancora} with $\gamma_1=1$, we obtain  
\eqn{E1estffgg}
$$ \mathcal{E}_{1}(x,  \rr_{j+1}) \leq \tfrac 12 \mathcal{E}_{1}(x,\rr_j) +c  \mathbf{R}_{\rr_{k_0}}(x,\rr_j)$$ 
for $j \geq k_1$, where, with $c\equiv c(\data,\alpha,H_2)\geq 1$ and thanks to \rif{Lipest}$_2$ and \rif{compestuvx222re}, it is 
\begin{flalign}
 \mathbf{R}_{\rr_{k_0}}(x,\rr_j) &\lesssim \sum_{i=0}^{m(j-k_0)} 2^{(1-2s)i} \omega_{\star} (2^i \rr_j) + \omega_{\star} (2\rr_j)\\
 & \quad +\left (\frac{\rr_j}{\rr_{k_0 }} \right )^{\alpha} + \rr^{2s-1}_j \nra{f}_{L^{\chi}(B_{4\rr_j})}\,.\label{compestuvx2222} 
\end{flalign}
Writing \rif{E1estffgg} with $j\in \{k_1, \ldots, k_2\}$ and arguing as for the proof of \rif{flatnesscontrol} we arrive at
\eqn{banjo}
$$ \sum_{j=k_1}^{k_2} \mathcal{E}_{1}(x,\rr_{j})\lesssim \mathcal{E}_{1}(x,\rr_{k_1}) + \sum_{j\geq k_1} \mathbf{R}_{\rr_{k_0}}(x,\rr_{j})\,.
$$  
We continue estimating the series in the above display; by \rif{compestuvx2222}  
it follows that 
\begin{flalign} 
	\notag \sum_{j\geq k_1} \mathbf{R}_{\rr_{k_0}}(x,\rr_{j}) &\lesssim 
	  \sum_{j\geq k_1}  \sum_{i=0}^{m(j-k_0)} 2^{(1-2s)i} \omega_{\star} (2^i \rr_j) +   \sum_{j\geq k_1} \omega_{\star} (2\rr_j)
	\notag \\  & \qquad +\sum_{j\geq k_1}\frac{1}{2^{m(j-k_0)\alpha}} + \sum_{j\geq k_1}\rr^{2s-1}_j \nra{f}_{L^{\chi}(B_{4\rr_j})}\,.\label{reacher3}
\end{flalign} 
As in \rif{smalldini0} we find
\begin{flalign*}
 \sum_{j \geq k_1} \sum_{i=0}^{m(j-k_0)} 2^{(1-2s)i} \omega_{\star} (2^i \rr_{j} )  &\leq  \sum_{j \geq k_0} \sum_{i=0}^{m(j-k_0)} 2^{(1-2s)i} \omega_{\star} (2^{i-m(j-k_0)} \rr_{k_0} )\nonumber\\
&\lesssim  \sum_{\mathfrak{m} \geq 0} 2^{(1-2s)\mathfrak{m}}  \sum_{i \geq 0} \omega_{\star} (2^{-i} \rr_{k_0} ) \\
& \lesssim   \int_0^{2\rr_{k_0}} \omega_{\star}(\lambda)\frac{\dlam}{\lambda} \,.
\end{flalign*}
Connecting the content of the last two displays and using \rif{reacher1}-\rif{reacher2} to estimate the second and the fourth sum in the right-hand side of \rif{reacher3}, we conclude with
$$
\sum_{j\geq k_1}\mathbf{R}_{\rr_{k_0}}(x,\rr_{j}) \lesssim  \int_0^{2\rr_{k_0}} \omega_{\star}(\lambda)\frac{\dlam}{\lambda}+ \frac{1}{\alpha 2^{m(k_1-k_0)\alpha}} +  \mathbf{I}^{f}_{2s-1,\chi}(x, 8\rr_{k_1})\,.
$$
This and \rif{banjo} give 
$$ \sum_{j=k_1}^{k_2} \mathcal{E}_{1}(x,\rr_{j})\lesssim \mathcal{E}_{1}(x,\rr_{k_1}) + \int_0^{2\rr_{k_0}} \omega_{\star}(\lambda)\frac{\dlam}{\lambda}+ \frac{1}{\alpha 2^{m(k_1-k_0)\alpha}} +  \mathbf{I}^{f}_{2s-1,\chi}(x, 8\rr_{k_1})\,.
$$  
Recalling \rif{somme1}, we
have 
$$ \snr{D \ell_{k_2,x}-D \ell_{k_1,x}} \leq 
\sum_{j=k_1}^{k_2-1} |D \ell_{j+1,x}-D \ell_{j,x}| 
\lesssim \sum_{j=k_1}^{k_2} \mathcal{E}_{1}(x,\rr_{j})$$
and therefore we conclude with
$$
 \snr{D \ell_{k_2,x}-D \ell_{k_1,x}} 
 \leq c_8 \mathcal{E}_{1}(x,\rr_{k_1}) + c_8\int_0^{2\rr_{k_0}} \omega_{\star}(\lambda)\frac{\dlam}{\lambda}+ \frac{c_8}{2^{m(k_1-k_0)\alpha}} + c_8 \mathbf{I}^{f}_{2s-1,\chi}(x, 8\rr_{k_1})\,,
$$
that holds whenever $k_2>k_1> k_0>2$ with $c_8 \equiv c_8 (\data,\alpha, H_2)\geq 1$. Fix $\eps >0$; thanks to \eqref{assilip} determine $k_0\equiv k_0(\data,\alpha,\omega_{\star}(\cdot), H_2, \eps)>2$ such that 
$$
\int_0^{2\rr_{k_0}} \omega_{\star}(\lambda)\frac{\dlam}{\lambda} < \frac{\eps}{2c_8}\,.
$$ 
Then by \rif{assilipul} and \rif{uniform2} determine $\texttt{k}\equiv \texttt{k}(\data,\alpha,\omega_{\star}(\cdot), H_2, \eps)> k_0$ such that $k_1>\texttt{k}$ implies 
$$
\mathcal{E}_{1}(x,\rr_{k_1})  +\frac{1}{2^{m(k_1-k_0)\alpha}}+  \mathbf{I}^{f}_{2s-1,\chi}(x, 8\rr_{k_1}) < \frac{\eps}{2 c_8}\,.
$$
Summarizing, we have proved that $k_2 > k_1 > \texttt{k}$ implies $ \snr{D \ell_{k_2,x}-D \ell_{k_1,x}} < \eps$ with the index $\texttt{k}$ being independent of the point $x \in B_{r_{x_0}/2}(x_0)$. 
This means that $\{D \ell_{j,x}\}_j$ is a uniformly Cauchy sequence in $B_{r_{x_0}/2}(x_0)$ (uniform with respect to $x$); therefore there exists a map $\mathcal D\colon B_{r_{x_0}/2}(x_0) \to \er^{N\times n}$ such that
\eqn{identifica}
$$
D \ell_{j,x} \to \mathcal D(x) \qquad \mbox{ uniformly w.r.t. $x\in B_{r_{x_0}/2}(x_0)$}\,.
$$
On the other hand, by Proposition \ref{continues}, for each fixed index $j$, and therefore for each fixed radius $\rr_j$,    
the map $x \mapsto D\ell_{j,x}$ is continuous; it follows that $x \mapsto \mathcal D(x)$ is continuous in $B_{r_{x_0}/2}(x_0)$ being a uniform limit of continuous maps. 

\subsubsection{Step 3: Pointwise convergence of the minimizing affine maps} Here we finally prove the gradient continuity. We do this by showing that $Du(x)\equiv \mathcal D(x)$ a.e. in $B_{r_{x_0}/2}(x_0)$, where $\mathcal D(\cdot)$  is the limit map identified in \rif{identifica}. We therefore conclude, via a standard covering argument, that $Du$ has a continuous representative in $\Omega_u$, as required. All this is achieved in the following 
\begin{proposition} \label{finalprop}Let $x\in B_{r_{x_0}/2}(x_0)$ be a point such that both the precise representatives of $u$ and $Du$ exist, i.e., a point such that 
\eqn{lebpdopo}
$$
u(x):=\lim_{\sigma \to 0}(u)_{B_{\sigma}(x)} \quad \mbox{and} \quad Du(x):=\lim_{\sigma \to 0}(Du)_{B_{\sigma}(x)}
$$
hold, and such that 
\eqn{graddi}
 $$
 \begin{cases} \displaystyle \lim_{\sigma\to 0} \,  \frac{\nra{u-\ell_{d}}_{L^{2}(B_{\sigma}(x))}}{\sigma}= 0\\[10pt]
\ell_{d}(\ti{x})\equiv \ell_{d,x}(\ti{x}) =  Du(x)( \ti{x}-x)
+u(x)\,, \quad \ti{x} \in \er^n\,.
\end{cases}
$$
Then
\eqn{convergenze}
$$\mbox{$D\ell_{j,x}\to Du(x)$ in $\er^{N\times n}$ \quad and \quad $\ell_{j,x}(x)\to u(x)$ in $\er^N$}\,.
$$
In particular, \eqref{convergenze} holds a.e.\,in $B_{r_{x_0}/2}(x_0)$ and therefore, as a consequence of \eqref{identifica}, $Du(x) \equiv \mathcal D(x)$ holds for a.e. $x \in B_{r_{x_0}/2}(x_0)$.
\end{proposition}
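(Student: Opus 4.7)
The plan is to exploit the minimality of $\ell_{j,x}$ in \eqref{minimizzano}: taking $\ell_d$ as an affine competitor yields $\tx{E}_{u}(\ell_{j,x};x,\rr_j)\leq \tx{E}_{u}(\ell_d;x,\rr_j)$, so it suffices to prove that $\rr_j^{-1}\tx{E}_{u}(\ell_d;x,\rr_j)\to 0$ as $j\to\infty$. The $L^2$-component $\rr_j^{-1}\nra{u-\ell_d}_{L^{2}(B_{\rr_j}(x))}$ vanishes by hypothesis \eqref{graddi}, so the entire difficulty reduces to showing that $\rr_j^{-1}\tail(u-\ell_d;B_{\rr_j}(x))\to 0$.

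For this tail estimate I would split $\er^n\setminus B_{\rr_j}(x)=(B_{r_{x_0}}(x)\setminus B_{\rr_j}(x))\cup(\er^n\setminus B_{r_{x_0}}(x))$. The far-field part is handled by bounding $|u(y)-\ell_d(y)|\lesssim |u(y)|+H_2(1+|y-x|)$ via \eqref{Lipest}, using the finiteness of $\tail(u;B_{r_{x_0}}(x))$, and noting that the prefactor $\rr_j^{2s-1}$ (recall $s>1/2$) tends to zero. The crucial inner region is decomposed dyadically into $A_k:=B_{2^{k+1}\rr_j}(x)\setminus B_{2^{k}\rr_j}(x)$, on which
\[\rr_j^{2s-1}\int_{A_k}\frac{|u(y)-\ell_d(y)|}{|y-x|^{n+2s}}\,dy\lesssim \rr_j^{2s-1}(2^k\rr_j)^{-2s}\nra{u-\ell_d}_{L^{2}(B_{2^{k+1}\rr_j}(x))}.\]
Given $\varepsilon>0$, hypothesis \eqref{graddi} supplies $\sigma_0>0$ such that $\nra{u-\ell_d}_{L^{2}(B_\sigma(x))}\leq\varepsilon\sigma$ whenever $\sigma\leq\sigma_0$; the annuli with $2^{k+1}\rr_j\leq\sigma_0$ thus contribute $\lesssim\varepsilon\sum_{k\geq 0}2^{(1-2s)k}\lesssim_s\varepsilon$, where summability relies precisely on $2s>1$. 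The remaining annuli, finitely many and of cardinality controlled uniformly in $\rr_j$ by $1+\log_2(r_{x_0}/\sigma_0)$, contribute $\lesssim H_2\rr_j^{2s-1}\sigma_0^{1-2s}\to 0$ thanks to the Lipschitz bound \eqref{Lipest}, and letting $\varepsilon\to 0$ closes the argument.

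Once $\rr_j^{-1}\tx{E}_{u}(\ell_{j,x};x,\rr_j)\to 0$ is in hand, triangle inequality gives $\rr_j^{-1}\nra{\ell_{j,x}-\ell_d}_{L^{2}(B_{\rr_j}(x))}\to 0$, and \eqref{onelinemore} applied to the affine map $\ell_{j,x}-\ell_d$ centred at $x$ (with linear part $D\ell_{j,x}-Du(x)$ and value $\ell_{j,x}(x)-u(x)$ at $x$) yields
\[\nra{\ell_{j,x}-\ell_d}_{L^{2}(B_{\rr_j}(x))}^2\approx_{n,N}|D\ell_{j,x}-Du(x)|^2\rr_j^2+|\ell_{j,x}(x)-u(x)|^2,\]
forcing both $|D\ell_{j,x}-Du(x)|\to 0$ and $|\ell_{j,x}(x)-u(x)|\to 0$. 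The final a.e.\ conclusion follows because $u\in W^{1,\infty}_{\loc}(\Omega_u)$ by Theorem \ref{gradreg1}: Rademacher's theorem and standard Lebesgue point arguments guarantee that \eqref{lebpdopo} and the approximate first-order expansion \eqref{graddi} are both valid at a.e.\ $x\in B_{r_{x_0}/2}(x_0)$. Combined with the uniform convergence \eqref{identifica}, this identifies $Du(x)=\mathcal{D}(x)$ almost everywhere, and continuity of $\mathcal{D}$ then exhibits a continuous representative of $Du$ in $B_{r_{x_0}/2}(x_0)$, whence in $\Omega_u$ by covering. The main obstacle is the tail estimate, whose delicacy lies in simultaneously exploiting the infinitesimal differentiability \eqref{graddi} at small scales and the global $L^1_{2s}$ tail control at large scales — $s>1/2$ being the quantitative input that makes the dyadic sum convergent.
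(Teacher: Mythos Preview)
Your argument is correct and takes a genuinely different route from the paper's. The paper obtains $\mathcal{E}_1(x,\rr_j)\to 0$ by invoking the full PDE excess-decay machinery developed in Section~\ref{final}: it uses \eqref{combina} together with \eqref{Lipest}$_2$ to show $\sum_j \mathbf{R}(x,\rr_j)<\infty$, then feeds this into \eqref{flatnesscontrol} with $\gamma_1=1$ to conclude $\sum_j \mathcal{E}_1(x,\rr_j)<\infty$, and only then appeals to \eqref{graddi} and \eqref{tritri}$_1$ for the endgame. You bypass all of this via the one-line observation that minimality of $\ell_{j,x}$ in \eqref{minimizzano} gives $\rr_j\,\mathcal{E}_1(x,\rr_j)\leq \tx{E}_u(\ell_d;x,\rr_j)$, and then you control the right-hand side directly: the $L^2$-part vanishes by hypothesis \eqref{graddi}, while the tail is handled by a dyadic decomposition exploiting \eqref{Lipest}$_1$ on the near annuli and $u\in L^1_{2s}$ on the far field, with $s>1/2$ providing the summability $\sum_k 2^{(1-2s)k}<\infty$. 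This is essentially the mechanism behind \eqref{scatail} (or the proof of \eqref{hoppi}) specialized to $u-\ell_d$, and is entirely PDE-free once the Lipschitz bound is in hand. One cosmetic point: your inner/outer split should be at radius $r_{x_0}/2$ rather than $r_{x_0}$, so that \eqref{Lipest}$_1$ is available throughout the inner region (recall $x\in B_{r_{x_0}/2}(x_0)$, hence $B_{r_{x_0}/2}(x)\subset B_{r_{x_0}}(x_0)$). The endgame---triangle inequality, then \eqref{tritri}$_1$---is identical in both proofs. Your route is more elementary and makes transparent that this particular step is really a statement about first-order approximation of Lipschitz functions with controlled nonlocal tails, rather than about the equation; the paper's route, by contrast, recycles machinery already in place and avoids re-estimating the tail from scratch.
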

\begin{proof} By \rif{Lipest}$_1$ we have $u \in W^{1,\infty}(B_{r_{x_0}}(x_0))$ and therefore almost every point $x \in B_{r_{x_0}}(x_0)$ is such that \rif{lebpdopo}-\rif{graddi} hold \cite[Theorem 6.2]{eg} (in fact the first convergence in \rif{lebpdopo} holds for every $x\in B_{r_{x_0}}(x_0)$ as $u$ is Lipschitz continuous in $B_{r_{x_0}}(x_0)$). Now, we go back to the proof of \rif{combina} and note that it gives that 
\eqn{combinaaa}
$$
	 \sum_{j=0}^{k} \mathbf{R}(x,\rr_{j}) 
		\leq c \MMM^{\#,1}_{\rr_k,\rr_{\star}}(x) +c\rr_{\star}^{-1-\alpha/2}\, \tx{E}_{u}(x,\rr_{\star}) + c\, \mathbf{I}^{f}_{2s-1,\chi}(x,\rr_{\star})\,.
$$
for every choice of $\rr_*\leq r_{x_0}/8$, independently from the smallness condition in \rif{meetme}. Indeed, this was only needed to reabsorbe $\MMM^{\#,1}_{\rr_k,\rr_{\star}}(x)$, which is now already bounded by \rif{Lipest}. Using \rif{Lipest}$_2$ in \rif{combinaaa}, and eventually letting $k\to \infty$, we obtain
$$
 \sum_{j=0}^{\infty} \mathbf{R}(x,\rr_{j})\lesssim  H_2 +r_{x_0}^{-1-\alpha/2}\, \tx{E}_{u}(x,\rr_{\star}) +   \mathbf{I}^{f}_{2s-1,\chi}(x,r_{x_0}/8)\,.
$$
In particular, the series on the left-hand side converges.  In turn, using this last inequality in \rif{flatnesscontrol} with $\gamma_1=1$, and again letting $k \to \infty$, we obtain
$$
\sum_{j\geq 0} \mathcal{E}_{1}(x,\rr_{j}) < \infty \Longrightarrow \lim_{j\to \infty} \mathcal{E}_{1}(x,\rr_{j}) =0
\Longrightarrow \lim_{j\to \infty} \, \frac{\nra{u-\ell_{j,x}}_{L^{2}(B_{\rr_j}(x))}}{\rr_j}= 0\,.
$$
This, \rif{graddi} and triangle inequality give
$$ 
 \lim_{j\to \infty} \, \frac{\nra{\ell_{j,x}-\ell_{d}}_{L^{2}(B_{\rr_j}(x))}}{ \rr_j}=0
$$
so that \rif{tritri}$_1$ yields
$$
 \lim_{j\to \infty} \,\snr{D\ell_{j,x}-Du(x)}+\lim_{j\to \infty} \, \frac{\snr{\ell_{j,x}(x)-u(x)}}{\rr_j}
= 0
$$
from which \rif{convergenze} follows. 
\end{proof}

\section{Everywhere regularity for linear systems}\label{indicazioni} The proof of Theorem \ref{ppaolo} is identical to the one of Theorems \ref{gradreg1}-\ref{gradreg3} once we note that we do not need to start with the preliminary H\"older information \rif{holdest22} in the ball
$B_{r_{x_{0}}}(x_{0})$. This only serves later to control the oscillations of the coefficient matrix $a(\cdot)$ with respect to the solution and it is obviously not needed when such a dependence is not present. See for instance \rif{osci} that here can be replaced by 
$
||a-a_{0}||_{L^\infty(\mathcal B_{r})}  \lesssim \omega_{\star} (r)$. For the same reason, as we do not need any preliminary H\"older information on $u$, we do not need to start from a ball $B_{r_{x_{0}}}(x_{0})$ where \rif{holdest22} is satisfied, but we can replace it by any interior ball $B_r\Subset \Omega$, as it is visible in \eqref{finale-linear}. We now briefly comment on the adaptation to systems as \rif{eqweaksol-linear} of the  rest of the results, that is Theorems \ref{ureg0}-\ref{ureg5}. As already mentioned in Section \ref{everywhere}, they hold in the same way as the original statements/proofs, but in every case we can take $\Omega_u=\Omega$, i.e., all points are regular points. Moreover, all the a priori estimates involved are valid on every interior ball. This can be easily checked using the excess decay result of Proposition \ref{cor.1-linear} upon assuming that 
\eqn{reacher5}
$$ 
\sup_{B_{\sigma} \Subset \Omega; \sigma \leq \sigma_{0}} \ppsi<\infty
$$ 
so that the right-hand side in \eqref{exx.5} is finite and the estimate is effective. 
 In addition to this, in Theorem \ref{ureg3} condition \rif{osc.t0}$_1$ is not any longer needed and \rif{oscosc}-\rif{lebp} hold at every point $x_0\in \Omega$ provided 
 \eqn{reacher555}
 $$ \sup_{\sigma \leq \rr} \ppsixo<\infty.$$  On the other hand, note that when dealing with systems of the type in \rif{eqweaksol-linear} there is no loss of generality in assuming that smallness conditions as \rif{osc.t0}$_2$ are replaced by \rif{reacher5}-\rif{reacher555}. 
This follows considering $\ti{f}:=\mathfrak{A}^{-1}f$ and $\ti{u}:= \mathfrak{A}^{-1}u$, $\mathfrak{A}\in (0,\infty)$, so that $-\mathcal{L}_{a}\ti{u}=\ti{f}$  and therefore it is possible to satisfy \rif{f.bmo} for any $\epsb{b}>0$ by choosing $\mathfrak{A}\equiv \mathfrak{A}(\epsb{b})$ large enough. 
For the same reason, the BMO-regularity of Theorem \ref{ureg1} holds without requiring \rif{f.bmo} but just the finiteness condition \rif{reacher5}.


\begin{thebibliography}{}

\bibitem{BK} R.F. Bass, M. Kassmann,
Harnack inequalities for non-local operators of variable order. {\em Trans. Am. Math. Soc.} 357, 837-850 (2005).

\bibitem{bl} L. Brasco, E. Lindgren,
Higher Sobolev regularity for the fractional $p$-Laplace equation in the superquadratic case. {\em Adv. Math.} 304, 300-354 (2017).

\bibitem{bls} L. Brasco, E. Lindgren, A. Schikorra,
Higher H\"older regularity for the fractional $p$-Laplacian in the superquadratic case. {\em Adv. Math.} 338, 782-846 (2018).

\bibitem{brascoparini} L. Brasco, E. Parini,
The second eigenvalue of the fractional $p$-Laplacian. {\em Adv. Calc. Var.} 9, 323-355 (2016).

\bibitem{by} S.-S. Byun, Y. Youn,
Potential estimates for elliptic systems with subquadratic growth. {\em J. Math. Pures Appl.} 131, 193-224 (2019).

\bibitem{byunnon} S.-S. Byun, K. Kim, D. Kumar,
Regularity results for a class of nonlocal double phase equations with VMO coefficients. {\em Publ. Mat.} 68, 507-544 (2024).

\bibitem{cd} L. Caffarelli, G. D\'avila,
Interior regularity for fractional systems. {\em Ann. I. H. Poincar\'e - AN} 36, 165-180 (2019).

\bibitem{caff} L. Caffarelli, L. Silvestre,
An extension problem related to the fractional Laplacian. {\em Comm. PDE} 32, 1245-1260 (2007).

\bibitem{cam0} S. Campanato,
Propriet\`a di H\"olderianit\`a di alcune classi di funzioni. {\em Ann. Sc. Norm. Super. Pisa, Sci. Fis. Mat. (III)} 17, 175-188 (1963).

\bibitem{cam00} S. Campanato,
Propriet\`a di una famiglia di spazi funzionali. {\em Ann. Sc. Norm. Super. Pisa, Sci. Fis. Mat. (III)} 18, 137-160 (1964).

\bibitem{cam1} S. Campanato,
Equazioni ellittiche del II ordine e spazi $\mathcal L^{2, \lambda}$. {\em Ann. Mat. Pura Appl. (IV)} 69, 321-381 (1965).

\bibitem{cam2} S. Campanato,
{\em Sistemi ellittici in forma divergenza. Regolarit\`a all'interno}. Pubblicazioni della Classe di Scienze: Quaderni. Pisa: Scuola Normale Superiore (1980).

\bibitem{calderon} A. Calder\'on, R. Scott,
Sobolev type inequalities for $p>0$. {\em Studia Math.} 62, 75-92 (1978).

\bibitem{colombo1} M. Colombo, C. De Lellis, A. Massaccesi,
The generalized Caffarelli-Kohn-Nirenberg theorem for the hyperdissipative Navier-Stokes system. {\em Comm. Pure Appl. Math.} 73, 609-663 (2020).

\bibitem{cozzi} M. Cozzi,
Interior regularity of solutions of non-local equations in Sobolev and Nikol'skii spaces. {\em Ann. Mat. Pura Appl. (IV)} 196, 555-578 (2017).

\bibitem{dalio1} F. Da Lio, T. Rivi\`ere,
Three-term commutator estimates and the regularity of $1/2$-harmonic maps into spheres. {\em Anal. PDE} 4, 149-190 (2011).

\bibitem{dqc} C. De Filippis,
Quasiconvexity and partial regularity via nonlinear potentials. {\em J. Math. Pures Appl. (IX)} 163, 11-82 (2022).

\bibitem{dm} C. De Filippis, G. Mingione,
Gradient regularity in mixed local and nonlocal problems. {\em Math. Ann.} 388, 261-328 (2024).

\bibitem{dmn} C. De Filippis, G. Mingione,
Nonuniformly elliptic Schauder theory. {\em Invent. Math.} 234, 1109-1196 (2023).

\bibitem{dmn2} C. De Filippis, G. Mingione,
The sharp growth rate in nonuniformly elliptic Schauder theory. {\em Duke Math. J.} 174, 1775-1848 (2025).

\bibitem{follow} C. De Filippis, G. Mingione, S. Nowak,
Partial regularity in nonlocal systems II. {\em arXiv preprint} \href{https://arxiv.org/abs/2602.18848}{arXiv:2602.18848} (2026).

\bibitem{ds} C. De Filippis, B. Stroffolini,
Singular multiple integrals and nonlinear potentials. {\em J. Funct. Anal.} 285, 109952 (2023).

\bibitem{deg} E. De Giorgi,
Frontiere orientate di misura minima. {\em Seminario Mat. Scu. Normale Sup. Pisa}, 1960-61.

\bibitem{dgce} E. De Giorgi,
Un esempio di estremali discontinue per un problema variazionale di tipo ellittico. {\em Boll. Un. Mat. Ital. (IV)} 1, 135-137 (1968).

\bibitem{des} R.A. DeVore, R.C. Sharpley,
Maximal functions measuring smoothness. {\em Memoirs Amer. Math. Soc.} 47, 293 (1984).

\bibitem{des2} R.A. DeVore, R.C. Sharpley,
Besov spaces on domains in $\er^d$. {\em Trans. Am. Math. Soc.} 335, 843-864 (1993).

\bibitem{emmanuele} E. DiBenedetto,
{\em Real analysis}. Basel: Birkh\"auser (2002).

\bibitem{DKP} A. Di Castro, T. Kuusi, G. Palatucci,
Local behavior of fractional $p$-minimizers. {\em Ann. I. H. Poincar\'e - AN} 33, 1279-1299 (2016).

\bibitem{dnowak} L. Diening, K. Kim, H.S. Lee, S. Nowak,
Nonlinear nonlocal potential theory at the gradient level. {\em J. Eur. Math. Soc.}, doi:10.4171/JEMS/1706.

\bibitem{dms} L. Diening, J. M\'alek, M. Steinhauer,
On Lipschitz truncations of Sobolev functions (with variable exponent) and their selected applications. {\em ESAIM: COCV} 14, 211-232 (2008).

\bibitem{dn} L. Diening, S. Nowak,
Calder\'on-Zygmund estimates for the fractional $p$-Laplacian. {\em Ann. PDE} 11, 6 (2025).

\bibitem{dsv} L. Diening, B. Stroffolini, A. Verde,
The $\varphi$-harmonic approximation lemma and the regularity of $\varphi$-harmonic maps. {\em J. Diff. Equ.} 253, 1943-1958 (2012).

\bibitem{dpv} E. Di Nezza, G. Palatucci, E. Valdinoci,
Hitchhiker's guide to the fractional Sobolev spaces. {\em Bull. Sci. Math.} 136, 521-573 (2012).

\bibitem{dispa} S. Dispa,
Intrinsic characterizations of Besov spaces on Lipschitz domains. {\em Math. Nachr.} 260, 21-33 (2003).

\bibitem{dong1} H. Dong, H. Zhang,
Dini estimates for nonlocal fully nonlinear elliptic equations. {\em Ann. Inst. Henri Poincar\'e, Anal. Non Lin\'eaire} 35, 971-992 (2018).

\bibitem{dong2} H. Dong, T. Jin, H. Zhang,
Dini and Schauder estimates for nonlocal fully nonlinear parabolic equations with drifts. {\em Anal. PDE} 11, 1487-1534 (2018).

\bibitem{dudini} F. Duzaar, A. Gastel,
Nonlinear elliptic systems with Dini continuous coefficients. {\em Arch. Math.} 78, 58-73 (2002).

\bibitem{dumi} F. Duzaar, G. Mingione,
Harmonic type approximation lemmas. {\em J. Math. Anal. Appl.} 352, 301-335 (2009).

\bibitem{dust} F. Duzaar, K. Steffen,
Optimal interior and boundary regularity for almost minimizers to elliptic variational integrals. {\em J. Reine Angew. Math.} 546, 73-138 (2002).

\bibitem{eg} L.G. Evans, R.F. Gariepy,
{\em Measure theory and fine properties of functions. 2nd revised ed.}. Textbooks in Mathematics. Boca Raton, FL: CRC Press (2015).

\bibitem{fall} M. Fall,
Regularity results for nonlocal equations and applications. {\em Calc. Var.} 59, 181 (2020).

\bibitem{feros} X. Fern\'andez-Real, X. Ros-Oton,
Schauder and Cordes-Nirenberg estimates for nonlocal elliptic equations with singular kernels. {\em Proc. London Math. Soc.} (2024), doi:10.1112/plms.12629.

\bibitem{giaorange} M. Giaquinta,
{\em Multiple integrals in the calculus of variations and nonlinear elliptic systems}. Annals of Mathematics Studies 105 (1983).

\bibitem{giagreen} M. Giaquinta, L. Martinazzi,
{\em An introduction to the regularity theory for elliptic systems, harmonic maps and minimal graphs}. Edizioni della Normale (2012).

\bibitem{giusti} E. Giusti,
{\em Direct methods in the calculus of variations}. Singapore: World Scientific. vii, 403 p. (2003).

\bibitem{gmce} E. Giusti, M. Miranda,
Un esempio di soluzioni discontinue per un problema di minimo relativo ad un integrale regolare del calcolo delle variazioni. {\em Boll. Un. Mat. Ital. (IV)} 1, 219-226 (1968).

\bibitem{giumi} E. Giusti, M. Miranda,
Sulla regolarit\`a delle soluzioni deboli di una classe di sistemi ellittici quasi-lineari. {\em Arch. Ration. Mech. Anal.} 31, 173-184 (1968/69).

\bibitem{gu} S. Gutierrez,
Lusin approximation of Sobolev functions by H\"older continuous functions. {\em Bull. Inst. Math. Acad. Sin.} 3, 95-116 (2003).

\bibitem{hawi} P. Hartman, A. Winter,
On uniform Dini conditions in the theory of linear partial differential equations of elliptic type. {\em Amer. J. Math.} 77, 329-354 (1955).

\bibitem{finnish} J. Heinonen, T. Kilpel\"ainen, O. Martio,
{\em Nonlinear potential theory of degenerate elliptic equations}. Oxford Mathematical Monographs. Oxford: Clarendon Press (1993).

\bibitem{mazyadini} T. Jin, V. Maz'ya, J. Van Schaftingen,
Pathological solutions to elliptic problems in divergence form with continuous coefficients. {\em C. R. Math. Acad. Sci. Paris} 347, 773-778 (2009).

\bibitem{johnmaly} O. John, J. Mal\'y, J. Star\'a,
Nowhere continuous solutions to elliptic systems. {\em Comm. Math. Univ. Carol.} 30, 33-43 (1989).

\bibitem{john} F. John, L. Nirenberg,
On functions of bounded mean oscillation. {\em Comm. Pure Appl. Math.} 14, 415-426 (1961).

\bibitem{jordan} C. Jordan,
{\em Calculus of finite differences. 2nd ed.}. New York: Chelsea Co. XXI (1950).

\bibitem{katz} N.H. Katz, N. Pavlovic,
A cheap Caffarelli-Kohn-Nirenberg inequality for the Navier-Stokes equation with hyperdissipation. {\em Geom. Funct. Anal.} 12, 355-379 (2002).

\bibitem{KiWe} M. Kim, M. Weidner,
Optimal boundary regularity and Green function for nonlocal equations in divergence form. {\em J. Eur. Math. Soc.}, doi:10.4171/JEMS/1760 (2025).

\bibitem{kokupa} J. Korvenp\"a\"a, T. Kuusi, G. Palatucci,
The obstacle problem for nonlinear integro-differential operators. {\em Calc. Var.} 55, 63 (2016).

\bibitem{kovats} J. Kovats,
Fully nonlinear elliptic equations and the Dini condition. {\em Comm. PDE} 22, 1911-1927 (1997).

\bibitem{kmjfa} T. Kuusi, G. Mingione,
Universal potential estimates. {\em J. Funct. Anal.} 262, 4205-4269 (2012).

\bibitem{kumig} T. Kuusi, G. Mingione,
Guide to nonlinear potential estimates. {\em Bull. Math. Sci.} 4, 1-82 (2014).

\bibitem{KMdini} T. Kuusi, G. Mingione,
A nonlinear Stein theorem. {\em Calc. Var.} 51, 45-86 (2014).

\bibitem{KMS1} T. Kuusi, G. Mingione, Y. Sire,
Nonlocal equations with measure data. {\em Comm. Math. Phys.} 337, 1317-1368 (2015).

\bibitem{KMS} T. Kuusi, G. Mingione, Y. Sire,
Nonlocal self-improving properties. {\em Anal. \& PDE} 8, 57-114 (2015).

\bibitem{kumi} T. Kuusi, G. Mingione,
Partial regularity and potentials. {\em J. \'Ecole Pol. Math.} 3, 309-363 (2016).

\bibitem{KMSvec} T. Kuusi, G. Mingione,
Vectorial nonlinear potential theory. {\em J. Eur. Math. Soc.} 20, 929-1004 (2018).

\bibitem{kns} T. Kuusi, S. Nowak, Y. Sire,
Gradient regularity and first-order potential estimates for a class of nonlocal equations. {\em Amer. J. Math.}, to appear.

\bibitem{kronz} M. Kronz,
Partial regularity results for minimizers of quasiconvex functionals of higher order. {\em Ann. I. H. Poincar\'e - AN} 19, 81-112 (2002).

\bibitem{leoni} G. Leoni,
{\em A first course in Sobolev Spaces}. Graduate Studies in Mathematics 105. Amer. Math. Society (2009).

\bibitem{yanyanli} Y. Li,
On the $C^1$-regularity of solutions to divergence form elliptic systems with Dini-continuous coefficients. {\em Chin. Ann. Math., Ser. B} 38, 489-496 (2017).

\bibitem{mazo} K. Mazowiecka, A. Schikorra,
$W^{s,n/s}$-harmonic maps in homotopy classes. {\em J. Lond. Math. Soc. (II)} 108, 742-836 (2023).

\bibitem{mazo2} K. Mazowiecka, M. Mi\'skiewicz, A. Schikorra,
On the size of the singular set of minimizing harmonic maps. {\em Memoirs AMS} 302, no. 1519 (2024).

\bibitem{mazya} V. Maz'ya,
Examples of nonregular solutions of quasilinear elliptic equations with analytic coefficients. {\em Funct. Anal. Appl.} 2, 230-234 (1968).

\bibitem{HM} V. Maz'ya, M. Havin,
A nonlinear potential theory. {\em Russ. Math. Surveys} 27, 71-148 (1972).

\bibitem{schi4} T. Mengesha, A. Schikorra, S. Yeepo,
Calder\'on-Zygmund type estimates for nonlocal PDE with H\"older continuous kernel. {\em Adv. Math.} 383, Article 107692 (2021).

\bibitem{meng} T. Mengesha, J. Scott,
Self-improving inequalities for bounded weak solutions to nonlocal double phase equations. {\em Comm. Pure Appl. Anal.} 21, 183-212 (2022).

\bibitem{millot1} V. Millot, Y. Sire,
On a fractional Ginzburg-Landau equation and $1/2$-harmonic maps into spheres. {\em Arch. Ration. Mech. Anal.} 215, 125-210 (2015).

\bibitem{millot2} V. Millot, M. Pegon, A. Schikorra,
Partial regularity for fractional harmonic maps into spheres. {\em Arch. Ration. Mech. Anal.} 242, 747-825 (2021).

\bibitem{min03} G. Mingione,
The singular set of solutions to non-differentiable elliptic systems. {\em Arch. Ration. Mech. Anal.} 166, 287-301 (2003).

\bibitem{min06} G. Mingione,
Regularity of minima: an invitation to the dark side of the Calculus of Variations. {\em Appl. Math.} 51, 355-425 (2006).

\bibitem{min11} G. Mingione,
Gradient potential estimates. {\em J. Eur. Math. Soc.} 13, 459-486 (2011).

\bibitem{morrey} C.B. Morrey,
Partial regularity results for non-linear elliptic systems. {\em J. Math. Mech.} 17, 649-670 (1968).

\bibitem{sn} S. Nowak,
Regularity theory for nonlocal equations with $\textnormal{VMO}$ coefficients. {\em Ann. I. H. Poincar\'e - AN} 40, 61-132 (2023).

\bibitem{sn1} S. Nowak,
Improved Sobolev regularity for nonlocal equations with $\textnormal{VMO}$ coefficients. {\em Math. Ann.} 385, 1323-1378 (2023).

\bibitem{p} D.K. Palagachev,
Quasilinear elliptic equations with $\textnormal{VMO}$-coefficients. {\em Trans. Amer. Math. Soc.} 347, 2481-2493 (1995).

\bibitem{roberts} J.A. Roberts,
A regularity theory for intrinsic minimising fractional harmonic maps. {\em Calc. Var.} 57, 109 (2018).

\bibitem{sarason} D. Sarason,
Functions of vanishing mean oscillation. {\em Trans. Amer. Math. Soc.} 207, 391-405 (1975).

\bibitem{schi1} A. Schikorra,
Integro-differential harmonic maps into spheres. {\em Comm. PDE} 40, 506-539 (2015).

\bibitem{schi2} A. Schikorra,
$\eps$-regularity for systems involving non-local, antisymmetric operators. {\em Calc. Var.} 54, 3531-3570 (2015).

\bibitem{schi3} A. Schikorra,
Nonlinear commutators for the fractional $p$-Laplacian and applications. {\em Math. Ann.} 366, 695-720 (2016).

\bibitem{cornelia} C. Schneider,
Traces of Besov and Triebel-Lizorkin spaces on domains. {\em Math. Nachr.} 284, 572-586 (2011).

\bibitem{simon1} L. Simon,
{\em Lectures on geometric measure theory}. Proc. CMA 3, ANU Canberra (1983).

\bibitem{simon2} L. Simon,
{\em Theorems on regularity and singularity of energy minimizing maps}. Birkh\"auser-Verlag, Basel-Boston-Berlin (1996).

\bibitem{silve} L. Silvestre,
H\"older estimates for solutions of integro-differential equations like the fractional Laplace. {\em Indiana Univ. Math. J.} 55, 1155-1174 (2006).

\bibitem{steinbook} E. Stein,
{\em Singular integrals and differentiability properties of functions}. Princeton Mathematical Series 30. Princeton University Press. XIV (1970).

\bibitem{stein} E. Stein,
Editor's note: the differentiability of functions in $\er^n$. {\em Ann. Math. (II)} 113, 383-385 (1981).

\bibitem{TriebelI} H. Triebel,
{\em Interpolation theory, function spaces, differential operators}. Johann Ambrosius Barth, Heidelberg (1995).

\bibitem{Triebel3} H. Triebel,
{\em Theory of function spaces {III}}. Birkh\"auser Verlag, Basel (2006).

\bibitem{Triebel4} H. Triebel,
{\em Theory of function spaces {IV}}. Birkh\"auser/Springer (2020).

\end{thebibliography}
\end{document}